\numberwithin{equation}{section}
\theoremstyle{plain}
\newtheorem{lemma}[subsection]{Lemma}
\newtheorem{theorem}[subsection]{Theorem}
\newtheorem{corollary}[subsection]{Corollary}
\newtheorem{proposition}[subsection]{Proposition}
\theoremstyle{definition}
\newtheorem{definition}[subsection]{Definition}
\newtheorem{example}[subsection]{Example}
\newtheorem{remark}[subsection]{Remark}
\newcommand{\mC}{{\mathbb C}}
\newcommand{\mN}{{\mathbb N}}
\newcommand{\mS}{{\mathbb S}}
\newcommand{\mZ}{{\mathbb Z}}
\newcommand{\cC}{{\mathcal C}}
\newcommand{\cI}{{\mathcal I}}
\newcommand{\cJ}{{\mathcal J}}
\newcommand{\cK}{{\mathcal K}}
\newcommand{\cL}{{\mathcal L}}
\newcommand{\cM}{{\mathcal M}}
\newcommand{\cO}{{\mathcal O}}
\newcommand{\cP}{{\mathcal P}}
\newcommand{\cS}{{\mathcal S}}
\newcommand{\cW}{{\mathcal W}}
\DeclareMathOperator{\id}{id}
\DeclareMathOperator{\Map}{Map}
\DeclareMathOperator{\Ho}{Ho}
\DeclareMathOperator{\colim}{colim}
\DeclareMathOperator{\hocolim}{hocolim}
\DeclareMathOperator{\obj}{Ob}
\DeclareMathOperator{\THH}{THH}
\DeclareMathOperator{\concat}{\sqcup}
\newcommand{\eins}{\mathbf{1}}
\newcommand{\tensor}{\otimes}
\newcommand{\ovl}{\overline}
\newcommand{\ot}{\leftarrow}
\newcommand{\iso}{\cong}
\newcommand{\op}{{\mathrm{op}}}
\newcommand{\sm}{\wedge}
\newcommand{\wdg}{\vee}
\newcommand{\Spsym}{{\mathrm{\textrm{Sp}^{\Sigma}}}}
\newcommand{\SpN}{{\mathrm{\textrm{Sp}^{\mN}}}}
\newcommand{\bld}[1]{{\mathbf{#1}}}
\DeclareMathOperator{\capitalGL}{GL}
\DeclareMathOperator{\gl}{gl}
\DeclareMathOperator{\bgl}{bgl}
\newcommand{\bof}[1]{b{#1}}
\newcommand{\OmegaI}{\Omega^{\cI}}
\newcommand{\OmegaIof}[1]{\Omega^{\cI}\!{#1}}
\newcommand{\OmegaJ}{\Omega^{\cJ}}
\newcommand{\OmegaJof}[1]{\Omega^{\cJ}\!{#1}}
\newcommand{\gloneIof}[1]{\gl^{\cI}_1\!\!{#1}}
\newcommand{\gloneI}{\gl^{\cI}_1}
\newcommand{\gloneJof}[1]{\gl^{\cJ}_1\!\!{#1}}
\newcommand{\gloneJ}{\gl^{\cJ}_1}
\newcommand{\glonegraded}[1]{\gl^{*}_1\!\!{#1}}
\newcommand{\bglonegraded}{\bgl^{*}_1}
\newcommand{\bglonegradedof}[1]{\bgl^{*}_1\!\!{#1}}
\newcommand{\bglone}{\bgl_1}
\newcommand{\bgloneof}[1]{\bgl_1\!\!{#1}}
\newcommand{\gloneof}[1]{\gl_1\!\!{#1}}
\newcommand{\GLoneIof}[1]{\capitalGL^{\cI}_1\!{#1}}
\newcommand{\GLoneJof}[1]{\capitalGL^{\cJ}_1\!\!{#1}}
\newcommand{\GammaJ}{\Gamma^{\op}\!\cJ}
\newcommand{\GammaJS}{\Gamma^{\op}\!\cJ\!\text{-}\cS}
\newcommand{\GammaS}{\Gamma^{\op}\!\text{-}\cS}
\newcommand{\HcJ}{H\!\cJ}
\newcommand{\HcK}{H\!\cK}
\newcommand{\lev}{\mathrm{lev}}
\newcommand{\pre}{\mathrm{pre}}
\newcommand{\sta}{\mathrm{st}}
\newcommand{\abs}{\mathrm{abs}}
\newcommand{\pos}{\mathrm{pos}}
\newcommand{\grp}{\mathrm{gp}}
\newcommand{\cof}{\mathrm{cof}}
\newcommand{\fib}{\mathrm{fib}}
\DeclareMathOperator{\diag}{diag}
\newcommand{\arxivlink}[1]{\href{http://arxiv.org/abs/#1}{\texttt{arXiv:#1}}}
\newcommand{\doilink}[1]{\href{http://dx.doi.org/#1}{doi:#1}}
\begin{document} 
\title[Spectra of units and group completion of graded \texorpdfstring{$E_{\infty}$}{E-infinity} spaces]{Spectra of units for periodic ring spectra and group completion of graded \texorpdfstring{$E_{\infty}$}{E-infinity} spaces}

\author{Steffen Sagave} \address{Mathematical
Institute, University of Bonn, Endenicher Allee 60, 53115 Bonn,
Germany} \email{sagave@math.uni-bonn.de}

\date{\today}
\begin{abstract}
  We construct a new spectrum of units for a commutative symmetric
  ring spectrum that detects the difference between a periodic ring
  spectrum and its connective cover. It is augmented over the sphere
  spectrum. The homotopy cofiber of its augmentation map is a
  non-connected delooping of the usual spectrum of units whose bottom
  homotopy group detects periodicity.

  Our approach builds on the graded variant of
  \texorpdfstring{$E_{\infty}$}{E-infinity} spaces introduced in joint
  work with Christian Schlichtkrull. We construct a group completion
  model structure for graded \texorpdfstring{$E_{\infty}$}{E-infinity}
  spaces and use it to exhibit our spectrum of units functor as right
  adjoint on the level of homotopy categories. The resulting group
  completion functor is an essential tool for studying ring spectra
  with graded logarithmic structures.
\end{abstract}
\subjclass[2010]{Primary 55P43; Secondary 55P48} \keywords{E-infinity
  spaces, symmetric spectra, group completion, units of ring spectra,
  Gamma-spaces}
\maketitle

\section{Introduction} 
Having a notion of units for ring spectra is useful in connection with
algebraic \mbox{$K$-theory} and Thom spectra. Sufficiently commutative
ring spectra give rise to spectra of units. This was first made
precise by May, Quinn, Ray, and Tornehave~\cite{May_ring_spaces} who
defined a spectrum of units associated with an $E_{\infty}$ ring
spectrum and showed how it controls its orientations.  Ando, Blumberg,
Gepner, Hopkins, and Rezk~\cite{ABGHR_Thom-infinity,ABGHR_Thom-rigid} 
generalized this by giving an extensive treatment of units, Thom
spectra, and orientation theory in the context of structured ring
spectra and  $\infty$-categories.

In joint work with Schlichtkrull~\cite{Sagave-S_diagram}, we
introduced the \emph{graded units} $\GLoneJof{E}$ of a commutative
symmetric ring spectrum $E$. One key feature of these graded units is
that they detect the difference between a periodic ring spectrum and
its connective cover, which is essential for applications to
logarithmic ring spectra (that are outlined at the end of this
introduction).  The object $\GLoneJof{E}$ can be viewed as a space
valued lax symmetric monoidal on a certain indexing category
$\cJ$. One aim of the present paper is to explain how this approach to
graded units leads to \emph{spectra} of graded units that detect
periodicity. To this end, we construct a functor
\begin{equation}\label{eq:GLonehJ-intro}
\gloneJ\colon \cC\Spsym \to \GammaS / \bof{\cJ},\qquad E \mapsto \left(\gloneJof{E} \to \bof{\cJ}\right).
\end{equation}
Here $\cC\Spsym$ is the category of commutative symmetric ring
spectra, $\GammaS$ is the category of $\Gamma$-spaces in the sense of
Segal, and $\bof{\cJ}$ is a certain explicit $\Gamma$-space coming
from a symmetric monoidal category. The spectrum associated with
$\bof{\cJ}$ is an $\Omega$-spectrum that is stably equivalent to the
sphere spectrum, i.e., an $\Omega$-spectrum with zeroth space $QS^0
\simeq \Omega^{\infty}\Sigma^{\infty}S^0$. Since $\Gamma$-spaces model
connective spectra, the category $\GammaS/\bof{\cJ}$ is a convenient
model for the category of connective spectra over the sphere
spectrum. We explain below why it is reasonable for a spectrum of
units to take values in this category, rather than just in connective
or non-connective spectra.

Commutative symmetric ring spectra are strictly commutative models for
$E_{\infty}$ spectra, and the definition of $\gloneJ$ makes explicit
use of their combinatorics. A commutative symmetric ring spectrum $E$
can be defined as a collection of pointed spaces $E_m$ together with a
basepoint preserving left action of the symmetric group $\Sigma_m$,
multiplication maps $E_m \sm E_n \to E_{m+n}$, and unit maps $S^{m}
\to E_m$ that satisfy appropriate relations. By definition, the value
of the $\Gamma$-space $\gloneJof{E}$ at the based set
$k^+=\{0,\dots,k\}$ is a certain homotopy colimit over a $k$-fold
product of subspaces consisting of those path components of $\Omega^{n_2}E_{n_1}$
that correspond to units in the underlying multiplicative graded
monoid of $\pi_*(E)$. The augmentation to $\bof{\cJ} \iso
\gloneJof{(*)}$ results from the choice for the indexing category of
the homotopy colimits.

\subsection{A non-connected delooping of \texorpdfstring{$\gloneof{E}$}{gl\_1E}}
A first indication for why $\gloneJof{E}$ is of interest is the
following relation to ordinary spectra of units. Let
$\bglonegradedof{E}$ be the spectrum associated with the homotopy
cofiber of the augmentation \mbox{$\gloneJof{E}\to \bof{\cJ}$}.
\begin{theorem}\label{thm:bglonegraded-introduction}
  Let $E$ be a positive fibrant commutative symmetric ring spectrum.
  Then $\bglonegradedof{E}$ is a connective spectrum, and the ordinary
  spectrum of units $\gloneof{E}$ is the connective cover of
  $\Omega\bglonegradedof{E}$.

  The bottom homotopy group $\pi_0(\bglonegradedof{E})$ is isomorphic
  to $\mZ/n_E \mZ$ where $n_E\in\mN_0$ is the \emph{periodicity} of
  $E$. By definition, $n_E = 0$ (and $\mZ/n_E \mZ \iso \mZ$) if all
  units of the underlying multiplicative graded monoid of $\pi_*(E)$
  have degree $0$, and $n_E$ is the smallest positive degree of a unit
  in $\pi_*(E)$ otherwise.
\end{theorem}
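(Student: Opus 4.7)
The plan is to exploit the fact that Segal's $\Gamma$-spaces model connective spectra, so that extracting the homotopy groups of $\bglonegradedof{E}$ reduces to running the long exact sequence associated with the homotopy cofiber defining $\bglonegradedof{E}$. Since the spectrum functor on $\Gamma$-spaces takes values in connective spectra and preserves homotopy cofibers, the connectivity of $\bglonegradedof{E}$ is immediate. Passing to spectra, the cofiber sequence becomes a fiber sequence
\[
\Omega\bglonegradedof{E} \to \gloneJof{E} \to \bof{\cJ}.
\]

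For the $\pi_0$-computation, the stable equivalence of $\bof{\cJ}$ with the sphere gives $\pi_0(\bof{\cJ}) \cong \mZ$. Unwinding the homotopy colimit description of $\gloneJof{E}$ over $\cJ$, the group $\pi_0(\gloneJof{E})$ is identified with the units $\units(\pi_*(E))$ of the underlying graded multiplicative monoid of $\pi_*(E)$, and the augmentation on $\pi_0$ is identified with the degree homomorphism $\units(\pi_*(E)) \to \mZ$ that sends a graded unit in $\pi_n(E)$ to $n$. By the very definition of $n_E$ the image of this degree map is $n_E\mZ$, so the exactness of $\pi_0(\gloneJof{E}) \to \pi_0(\bof{\cJ}) \to \pi_0(\bglonegradedof{E}) \to 0$ together with the already established connectivity yields $\pi_0(\bglonegradedof{E}) \cong \mZ/n_E\mZ$.

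For the connective cover claim, the inclusion of degree-zero graded units provides a map of $\Gamma$-spaces $\gloneof{E} \to \gloneJof{E}$ whose postcomposition with the augmentation to $\bof{\cJ}$ is canonically nullhomotopic, and a coherent choice of nullhomotopy produces a lift $\gloneof{E} \to \Omega\bglonegradedof{E}$ of spectra. Since $\gloneof{E}$ is connective, this lift factors through the connective cover of $\Omega\bglonegradedof{E}$, and one has to verify that the factored map is an isomorphism on $\pi_n$ for every $n \geq 0$. For $n \geq 1$, the unit components of $\Omega^{n_2}E_{n_1}$ in different degrees are translates of each other via multiplication by a graded unit, which yields $\pi_n(\gloneof{E}) \cong \pi_n(\gloneJof{E})$, and a diagram chase in the long exact sequence identifies this with $\pi_n(\Omega\bglonegradedof{E})$. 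At $n=0$, the lift realizes $\capitalGL_1(\pi_0(E))$ as the kernel of the degree map on $\pi_0(\gloneJof{E})$, which is precisely the relevant portion of the long exact sequence.

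The main obstacle I anticipate is the identification of $\pi_0(\gloneJof{E})$ with $\units(\pi_*(E))$ and the recognition of the augmentation as the degree homomorphism; this requires carefully unwinding $\gloneJof{E}$ as a homotopy colimit over $\cJ$, using the component structure of $\Omega^{n_2}E_{n_1}$ in terms of the graded multiplicative monoid of $\pi_*(E)$ and tracking how the morphisms in $\cJ$ implement the addition of degrees. A secondary technical complication is the comparison, in positive degrees, between $\gloneof{E}$ and the connective cover of $\Omega\bglonegradedof{E}$, where one must control the image of $\pi_n(\bof{\cJ})$ in $\pi_n(\bglonegradedof{E})$ well enough to see that the factored map from $\gloneof{E}$ hits all relevant classes.
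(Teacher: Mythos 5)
Your connectivity argument and $\pi_0$-computation match the paper's approach. One small imprecision: $\pi_0(\gloneJof{E})$ is actually $\pi_*(E)^{\times}/\{\pm 1\}$ (the quotient by the sign action), not $\units(\pi_*(E))$ itself; this follows from the paper's Corollary 4.17 and Proposition 4.26 in \cite{Sagave-S_diagram}. The distinction does not affect the image of the degree map, so $\pi_0(\bglonegradedof{E})\cong\mZ/n_E\mZ$ still comes out.

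The gap is in your verification of the connective cover claim. You assert that $\pi_n(\gloneof{E})\cong\pi_n(\gloneJof{E})$ for $n\geq 1$ by a translation argument and then identify the latter with $\pi_n(\Omega\bglonegradedof{E})$ via the long exact sequence. Neither step is correct. The spectrum $\bof{\cJ}(\mS)$ is equivalent to the sphere spectrum, so $\pi_n(\bof{\cJ})\cong\pi_n(\mS)$ is nontrivial for many $n\geq 1$, and the long exact sequence
\[ \pi_{n+1}(\bof{\cJ}) \to \pi_n(\Omega\bglonegradedof{E}) \to \pi_n(\gloneJof{E}) \to \pi_n(\bof{\cJ}) \]
does \emph{not} collapse to an isomorphism $\pi_n(\Omega\bglonegradedof{E})\cong\pi_n(\gloneJof{E})$. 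The translation argument only shows that the path components of $(\GLoneJof{E})_{h\cJ}$ are pairwise homotopy equivalent; it gives no comparison between the homotopy colimit over $\cI$ and the one over $\cJ$, which differ precisely by the contribution of $\pi_*(\mS)$ coming from $B\cJ\simeq QS^0$. Your $n=0$ claim is similarly off: by the paper's Proposition~\ref{prop:low_dim_hty_of_bglonegraded}, the connecting map $\pi_1(\bof{\cJ})\cong\mZ/2 \to \pi_0(\Omega\bglonegradedof{E})\cong(\pi_0 E)^{\times}$ is the additive sign action and is generally nonzero, so $\pi_0(\Omega\bglonegradedof{E})$ is an extension involving the kernel of the degree map, not the kernel itself.

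What is actually needed is the statement that $\gloneIof{E}\to\gloneJof{E}\to\bof{\cJ}$ is a homotopy fiber sequence of $\Gamma$-spaces, which the paper proves as Proposition~\ref{prop:hty-fiber-seq-Gamma-spaces}. The proof is not a translation argument: it uses a Quillen-theorem-B-type result (\cite[Lemma 4.12]{Sagave-S_diagram}) to show that the square with corners $(\GLoneJof{E})(\bld{m},\bld{m})$, $(\GLoneJof{E})_{h\cJ}$, $\{(\bld{m},\bld{m})\}$, $B\cJ$ is homotopy cartesian, and then identifies $(\GLoneJof{E})(\bld{m},\bld{m})\simeq(\GLoneIof{E})_{h\cI}$ via the diagonal $\Delta\colon\cI\to\cJ$ and contractibility of $B\cI$. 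Given that fiber sequence, the paper deduces the connective cover claim by applying the right adjoint of $X\mapsto X(\mS)$ (which models the connective cover) to the spectrum-level fiber sequence $\Omega\bglonegradedof{E}\to\gloneJof{E}(\mS)\to\bof{\cJ}(\mS)$ and comparing. You would need to supply an argument of this strength; your lift $\gloneof{E}\to\Omega\bglonegradedof{E}$ is a reasonable object to construct, but the verification that it induces isomorphisms on $\pi_n$ for $n\geq 0$ is precisely the content of the homotopy cartesian square above and cannot be sidestepped.
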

The theorem says that $\bglonegradedof{E}$ is a not necessarily
connected delooping of the usual spectrum of units whose bottom
homotopy group detects periodicity. If $E$ is for example
$2$-periodic, then its connective cover $e \to E$ induces the
surjection
\[ \mZ \iso \pi_0(\bglonegradedof{e}) \to \pi_0(\bglonegradedof{E}) \iso \mZ/2. \]
By construction there is a map of spectra $\mS  \to \bglonegradedof{E}$. 
The induced map 
\[ \mZ/2 \iso \pi_1(\mS) \to \pi_1(\bglonegradedof{E}) \iso
\pi_0(\gloneof{E}) \iso (\pi_0(E))^{\times} \] turns out to be the
sign action of the additive group structure on $\pi_0(E)$. This
relation of the stable Hopf map that generates $\pi_1(\mS)$ and the
units does not seem to be visible if one only considers the ordinary
$\gloneof{E}$.

A different way of extending the notion of units of $E$ so that
periodicity gets detected is to form the \emph{Picard space} of
$E$. By definition, $\mathrm{Pic}(E)$ is the $\infty$-groupoid of
invertible elements in the symmetric monoidal $\infty$-category of
$E$-modules; see e.g.~\cite[Section 8]{Ando-B-G_parametrized}. The
bottom homotopy group of $\mathrm{Pic}(E)$ is the Picard group of $E$,
and we expect that $\Omega^{\infty}(\bglonegradedof{E})$ is equivalent
to the full $\infty$-subgroupoid of $\mathrm{Pic}(E)$ spanned by the
shifts of $E$.

In view of the $E$-algebra Thom spectra associated with maps to
$\bglone{E}$ or $\mathrm{Pic}(E)$ constructed by Ando et
al.~\cite{ABGHR_Thom-rigid,Ando-B-G_parametrized}, it is natural to
ask if maps to $\bglonegradedof{E}$ give rise to graded $E$-algebra
Thom spectra. The construction of such a graded Thom spectrum functor
will be given in a forthcoming joint project with
Schlichtkrull~\cite{Sagave-S_Thom-graded-units}. There we also give an
alternative model of the $E_{\infty}$ space
$\Omega^{\infty}(\bglonegradedof{E})$ that can be interpreted as a
classifying space for $\GLoneJof{E}$-modules.

\subsection{\texorpdfstring{$\cI$}{I}-spaces and
  \texorpdfstring{$E_{\infty}$}{E-infinity} spaces}\label{subsec:I-and-E_infty-introduction} Our main tools for
building and studying the functor $\gloneJ$ are methods for the
homotopical analysis of diagram spaces developed together with
Schlichtkrull in~\cite{Sagave-S_diagram}. One important instance for
such diagram spaces are $\cI$-spaces. These are space valued functors
on the category of finite sets $\bld{m}=\{1,\dots,m\}, m\geq 0$, and
injective maps. The category of $\cI$-spaces $\cS^{\cI}$ has a
symmetric monoidal product $\boxtimes$ induced by the ordered
concatenation of finite sets and the cartesian product of spaces.

The commutative monoids in $(\cS^{\cI},\boxtimes)$ are called
\emph{commutative $\cI$-space monoids} and form a category denoted by
$\cC\cS^{\cI}$. This category admits a \emph{positive $\cI$-model
  structure} such that $\cC\cS^{\cI}$ is Quillen equivalent to the
category of $E_{\infty}$ spaces. So all $E_{\infty}$-spaces admit a
strictly commutative model in $(\cS^{\cI},\boxtimes)$. The point here
is that the extra symmetry of $\cI$-spaces and the use of a
\emph{positive} model structure ensure that we do not try to represent
the homotopy type of an arbitrary $E_{\infty}$ space by a strictly
commutative monoid in spaces (which would be a contradiction). This is
related to J. Smith's insight that only a positive model structure on
symmetric spectra lifts to commutative symmetric ring
spectra~\cite{MMSS}.

If $E$ is a commutative symmetric ring spectrum, there is a
commutative $\cI$-space monoid $\OmegaIof{E}$ representing its
multiplicative $E_{\infty}$ space. It is on objects defined by
$(\OmegaIof{E})(\bld{m}) = \Omega^{m}E_m$. The grouplike $E_{\infty}$
space of units of $E$ is represented by a sub commutative $\cI$-space
monoid $\GLoneIof{E}$ of $\OmegaIof{E}$. It gives rise to a very
special $\Gamma$-space $\gloneIof{E}$ whose associated spectrum models
the ordinary spectrum of
units~\cite{Schlichtkrull_units,Lind-diagram}.

The passage from $\GLoneIof{E}$ to $\gloneIof{E}$ is an implementation
of the \emph{recognition principle} which states that grouplike
$E_{\infty}$~spaces have the homotopy types of infinite loop
spaces. In~\cite{Sagave-S_group-compl} we prove in joint work with
Schlichtkrull that the category $\cC\cS^{\cI}$ admits a \emph{group
  completion model structure} $\cC\cS^{\cI}_{\grp}$ whose fibrant
objects are positive $\cI$-fibrant and grouplike. We further show that
$\cC\cS^{\cI}_{\grp}$ is Quillen equivalent to $\Gamma$-spaces with a
stable model structure. The resulting equivalence between the homotopy
categories of grouplike commutative $\cI$-space monoids and connective
spectra may be viewed as an incarnation of the recognition
principle. We use this Quillen equivalence
in~\cite{Sagave-S_group-compl} to show that the functor of homotopy
categories $\Ho(\cC\Spsym) \to \Ho(\GammaS)$ induced by $\gloneI$ is a
right adjoint, reproving  a result by Ando et al.~\cite[Theorem
5.1]{ABGHR_Thom-rigid}.

\subsection{\texorpdfstring{$\cJ$}{J}-spaces and graded
  \texorpdfstring{$E_{\infty}$}{E-infinity} spaces} The functor
$\gloneI$ does not detect periodicity because the commutative
$\cI$-space monoids $\OmegaIof{E}$ and $\GLoneIof{E}$ have no
information about the negative dimensional homotopy groups of
$E$. Together with Schlichtkrull we showed in~\cite{Sagave-S_diagram}
that one can overcome this by using a different indexing category
$\cJ$ instead of $\cI$. This $\cJ$ is defined to be Quillen's
localization construction $\Sigma^{-1}\Sigma$ on the category of
finite sets and bijections $\Sigma$. It is symmetric monoidal and
gives rise to $\cJ$-spaces and a category of commutative $\cJ$-space
monoids $\cC\cS^{\cJ}$ just as in the case of $\cI$. The classifying
space $B\cJ$ of $\cJ$ has the homotopy type of
$QS^0$. Objects of $\cJ$ are pairs of finite sets
$(\bld{n_1},\bld{n_2})$, and a close relation between $\cJ$ and the
combinatorics of symmetric spectra ensures that there is a functor
$\OmegaJ\colon \cC\Spsym \to \cC\cS^{\cJ}$ with
$(\OmegaJof{E})(\bld{n_1},\bld{n_2}) = \Omega^{n_2}E_{n_1}$. There is
a sub commutative $\cJ$-space monoid $\GLoneJof{E}$ of $\OmegaJof{E}$
that is used to define the $\Gamma$-space $\gloneJof{E}$ outlined
in~\eqref{eq:GLonehJ-intro} above.

As for $\cI$-spaces, there is a positive $\cJ$-model structure for
$\cC\cS^{\cJ}$. With this model structure, $\cC\cS^{\cJ}$ is related
by a chain of Quillen equivalences to the category of $E_{\infty}$
spaces over $B\cJ$. This result is a reason for why we think of
commutative $\cJ$-space monoids as ``graded $E_{\infty}$ spaces'':
While a $\mZ$-graded monoid in algebra can be defined as a commutative
monoid over the underlying additive monoid $(\mZ,+)$ of the initial
commutative ring $\mZ$, a commutative $\cJ$-space monoid is up to
homotopy the same as an $E_{\infty}$ space over $QS^0$, the underlying
``additive'' infinite loop space of the initial commutative ring
spectrum $\mS$. So $QS^0 \simeq B\cJ$ plays the role of
$(\mZ,+)$. From this point of view, $\OmegaJof{E}$ is the underlying
graded multiplicative $E_{\infty}$ space of $E$, and $\GLoneJof{E}$ is
the ``graded units'' of $E$.

\subsection{Group completion of graded commutative spaces}
Combining the last result and the recognition principle mentioned
above suggests that grouplike commutative $\cJ$-space monoids should
correspond to connective spectra over the sphere spectrum. One of the
main result of the present paper is a proof of this statement on the
level of model categories.
\begin{theorem}\label{thm:gp-model-str-and-comparison-introduction}
  The category of commutative $\cJ$-space monoids admits a group
  completion model structure in which the fibrant objects are the
  grouplike positive $\cJ$-fibrant objects and the fibrant replacement
  is a group completion.  

  There is a chain of Quillen equivalences relating this model
  category to the category of $\Gamma$-spaces over $\bof{\cJ}$ with a
  stable model structure.
\end{theorem}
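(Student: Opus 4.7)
The plan is to follow the strategy used for $\cI$-spaces in \cite{Sagave-S_group-compl}, adapted to account for the fact that the classifying space $B\cJ \simeq QS^0$ is non-contractible, so that everything must be done relative to the $\Gamma$-space $\bof{\cJ}$.

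For the first assertion, I would construct $\cC\cS^{\cJ}_{\grp}$ as a left Bousfield localization of the positive $\cJ$-model structure on $\cC\cS^{\cJ}$ established in \cite{Sagave-S_diagram}. This requires the positive $\cJ$-model structure to be left proper and combinatorial, which should follow from the corresponding properties of the positive $\cJ$-model structure on $\cS^{\cJ}$ together with a standard transfer argument for commutative monoids. The localizing set $S$ should consist of morphisms of cofibrant commutative $\cJ$-space monoids that freely invert generators of $\pi_0$ with respect to the monoidal structure inherited from $\boxtimes$. To identify the $S$-local objects with the grouplike positive $\cJ$-fibrant objects, and to identify $S$-fibrant replacement with group completion, one computes $\pi_0$ of the localization as an algebraic group completion, relative to the $\mZ$-grading coming from $\pi_0(B\cJ) \iso \mZ$.

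For the chain of Quillen equivalences to $\GammaS / \bof{\cJ}$, I propose to factor the comparison through an intermediate category of $\Gamma^{\op}\cJ$-spaces in which the $\cJ$-diagram structure and the $\Gamma$-space structure coexist, parallel to the intermediate step used in the $\cI$-space case. A commutative $\cJ$-space monoid $M$ produces a $\Gamma$-space via the bar construction $k^+ \mapsto B(M^{\boxtimes k})$, and the unique map $M \to \ast$ to the terminal commutative $\cJ$-space monoid yields a natural augmentation to $\bof{\cJ}$, defining the functor $\cC\cS^{\cJ} \to \GammaS / \bof{\cJ}$. The first comparison functor implements the passage from commutative $\cJ$-space monoids to $\Gamma$-objects (and thereby incorporates the group completion), while the second one evaluates in the $\cJ$-direction to land in $\Gamma$-spaces over $\bof{\cJ}$. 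Both should be right Quillen functors for suitable stable model structures on the intermediate category, and the verification that they are Quillen equivalences reduces to checking the derived unit and counit on grouplike fibrant-cofibrant objects.

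The main obstacle is expected to be this last derived-equivalence check in the relative setting over $\bof{\cJ}$. In the $\cI$-space case the analogous verification reduces to the statement that the bar construction is a group completion, combined with the infinite loop space recognition principle. The same arguments apply component-wise over $\bof{\cJ}$ in the $\cJ$-space setting, but promoting the component-wise statement to an honest equivalence of $\Gamma$-spaces over $\bof{\cJ}$ requires compatibility of group completion with the $\mZ$-grading coming from $\pi_0(B\cJ)$, together with a descent argument for maps of connective spectra over $\bof{\cJ}$. I expect this to be the step that demands the most technical care.
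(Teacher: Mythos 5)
The high-level skeleton of your proposal matches the paper's: obtain $\cC\cS^{\cJ}_{\grp}$ as a left Bousfield localization of the positive $\cJ$-model structure at a set of shear-type maps, and factor the comparison with $\GammaS/\bof{\cJ}$ through an intermediate category of $\Gamma^{\op}\cJ$-spaces. The paper indeed forms $\GammaJ = \Gamma^{\op}\!\int \HcJ$ and works with $\GammaJ$-spaces, and the localizing set is a set of explicit shear maps $\Phi(Q)$ obtained by applying a left adjoint to free $\GammaJ$-spaces.

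However, your proposal has a gap precisely at the place the paper flags as the essential difficulty. You propose to pass from $\cC\cS^{\cJ}$ to $\Gamma$-objects via $k^+ \mapsto B(M^{\boxtimes k})$, ``parallel to the intermediate step used in the $\cI$-space case.'' This construction does not lift to $\cC\cS^{\cJ}$. The obstruction is that $\cC\cS^{\cJ}$ has no zero object: the terminal commutative $\cJ$-space monoid $*$ is not $\cJ$-equivalent to the initial one $\eins_{\cJ}$ (since $(*)_{h\cJ} \simeq B\cJ \simeq QS^0$ is not contractible), so there is in general no map $* \to M$ and no reduced $\Gamma$-object $k^+ \mapsto M^{\boxtimes k}$ with the required value $*$ at $0^+$. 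The paper makes this point explicitly: ``the group completion functor $\Omega B$ for simplicial monoids does not lift to $\cC\cS^{\cJ}$,'' and the later remark that ``the usual bar construction $BA = B(*,A,*)$ in $\cC\cS^{\cJ}$ seems to be of limited use \ldots since the terminal object of $\cC\cS^{\cJ}$ is not initial.''

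The paper's workaround, which your proposal does not contain, is to build the comparison as a \emph{right} Quillen functor rather than a bar construction. Concretely, one forms the Grothendieck construction $\GammaJ$, and defines an adjunction $\Phi\colon \GammaJS \rightleftarrows \cC\cS^{\cJ} \colon \Psi$ with $\Psi(A)(S;\bld{s},\sigma) = \prod_{i\in\ovl{S}} A(\bld{s}_i)$. This $\Psi$ encodes the iterated products of $A$ without ever attempting to realize them as objects of $\cC\cS^{\cJ}$, and its target $\GammaJS$ is then related to $\GammaS/\bof{\cJ}$ via a levelwise application of the $\cK$-spaces vs.\ spaces-over-$B\cK$ comparison. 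The hard step is proving that $(\Phi,\Psi)$ is a Quillen equivalence for the pre-stable model structure (Proposition~\ref{prop:J-model-pre-stable-equiv}), which proceeds by cell induction: checking the derived unit on free $\GammaJ$-spaces, and showing $\Psi$ preserves coproducts and homotopy cocartesian squares up to weak equivalence. Your suggestion of a ``descent argument for maps of connective spectra over $\bof{\cJ}$'' does not correspond to anything in that cell-induction argument, and without the $(\Phi,\Psi)$ device there is no clear way to set it up.
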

In the theorem, a map of commutative $\cJ$-space monoids is a group
completion if the associated map of $E_{\infty}$ spaces is a group
completion in the usual sense.  The proof of the theorem is more
difficult than (and different to) the proof of the corresponding
result about $\cI$-spaces in~\cite{Sagave-S_group-compl}: In the
context of $\cJ$-spaces, the group completion functor $\Omega B$ for
simplicial monoids does not lift to $\cC\cS^{\cJ}$. Moreover, we only
get a chain of Quillen equivalences here, rather than a single Quillen
equivalence as in the case of $\cI$-spaces.

Together with a model category treatment of the passage from
$\OmegaJof{E}$ to $\GLoneJof{E}$ that works as in the case of
$\cI$-spaces,
Theorem~\ref{thm:gp-model-str-and-comparison-introduction} is the
key ingredient for
\begin{theorem}\label{thm:sp-of-graded-units-as-righ-adj-introduction}
  The functor $\Ho(\cC\Spsym)\to\Ho(\GammaS/\bof{\cJ})$ induced by
  $\gloneJ$ is a right adjoint.
\end{theorem}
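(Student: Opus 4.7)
The plan is to combine Theorem~\ref{thm:gp-model-str-and-comparison-introduction} with a graded analogue of the $\cI$-space argument of~\cite{Sagave-S_group-compl}, reducing the problem to showing that $\GLoneJ$ descends to a right adjoint on the homotopy categorical level from $\Ho(\cC\Spsym)$ to $\Ho(\cC\cS^{\cJ}_{\grp})$.

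First, I would observe that by construction $\gloneJof{E}$ is the $\Gamma$-space built from the grouplike commutative $\cJ$-space monoid $\GLoneJof{E}$, equipped with its canonical augmentation to $\bof{\cJ} \iso \gloneJof{(*)}$. Under the chain of Quillen equivalences $\cC\cS^{\cJ}_{\grp} \simeq \GammaS/\bof{\cJ}$ provided by Theorem~\ref{thm:gp-model-str-and-comparison-introduction}, this identifies the derived functor of $\gloneJ$ with the derived functor of $\GLoneJ\colon \cC\Spsym \to \cC\cS^{\cJ}_{\grp}$. Hence it suffices to prove that $\GLoneJ$ induces a right adjoint on homotopy categories.

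To produce such an adjunction I would invoke the Quillen adjunction $\mS^{\cJ}[-] \colon \cC\cS^{\cJ} \rightleftarrows \cC\Spsym \colon \OmegaJ$ between the positive model structures, which on homotopy categories yields a natural bijection $[\mS^{\cJ}[M], E]_{\cC\Spsym} \iso [M, \OmegaJof{E}]_{\cC\cS^{\cJ}}$. The essential point is then that $\GLoneJof{E}$ is the full sub commutative $\cJ$-space monoid of $\OmegaJof{E}$ on those path components that are invertible in the graded monoid $\pi_*(E)$, and that any map of commutative $\cJ$-space monoids from a grouplike object into $\OmegaJof{E}$ must land in this sub monoid. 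Combined with the fact that the group completion model structure is a left Bousfield localization of the positive $\cJ$-model structure whose local objects are the grouplike ones, so that $\Ho(\cC\cS^{\cJ}_{\grp})$ embeds fully faithfully into $\Ho(\cC\cS^{\cJ})$ on grouplike objects, this gives the natural bijection
\[ [\mS^{\cJ}[M], E]_{\cC\Spsym} \iso [M, \OmegaJof{E}]_{\cC\cS^{\cJ}} \iso [M, \GLoneJof{E}]_{\cC\cS^{\cJ}_{\grp}} \]
for cofibrant grouplike $M$ and positive fibrant $E$, exhibiting $\GLoneJ$ as the desired right adjoint.

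The main obstacle will be to justify the factorization step carefully on the homotopy categorical level: one needs that a homotopy class of maps of commutative $\cJ$-space monoids from a grouplike $M$ into $\OmegaJof{E}$ is represented by a map whose image really lies in $\GLoneJof{E}$, and that this factorization is well-defined up to homotopy. This requires combining the construction of $\GLoneJof{E}$ as a union of path components of $\OmegaJof{E}$ with homotopical control of maps from grouplike objects, likely by a $\pi_0$-argument at the level of the underlying $E_{\infty}$ spaces together with the fact that positive fibrant replacements preserve the unit subobject. Once that is in place, the transfer along Theorem~\ref{thm:gp-model-str-and-comparison-introduction} delivers Theorem~\ref{thm:sp-of-graded-units-as-righ-adj-introduction} essentially formally.
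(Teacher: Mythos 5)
Your overall strategy is essentially the one the paper uses, and the place you flag as the ``main obstacle'' is exactly where your argument has a genuine gap. The bijection $[M, \OmegaJof{E}]_{\cC\cS^{\cJ}} \iso [M, \GLoneJof{E}]_{\cC\cS^{\cJ}_{\grp}}$ for cofibrant grouplike $M$ and positive fibrant $E$ does not follow from the pointwise observation that a map from a grouplike object lands in the invertible path components: one has to check that this works at the level of homotopy classes, i.e.\ that cylinder objects and left homotopies of maps $M \to \OmegaJof{E}$ also factor through $\GLoneJof{E}$ uniquely up to homotopy, and your proposal only gestures at a ``$\pi_0$-argument'' without supplying it. The issue is precisely what the paper packages model-categorically via the \emph{units model structure} $\cC\cS^{\cJ}_{\mathrm{un}}$ of Theorem~\ref{thm:units-model-structure}: there, the cofibrant objects are the positive $\cJ$-cofibrant grouplike ones, and $\GLoneJof{E} \to \OmegaJof{E}$ is identified as a cofibrant replacement. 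This is the clean, complete form of the factorization lemma you need, and you do not cite or reprove it.

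Once that is in hand, the paper's proof is a short consequence of the chain of Quillen adjunctions~\eqref{eq:chain-adjunctions-glone-right-adj}: $\OmegaJ$ is a right Quillen functor $\cC\Spsym \to \cC\cS^{\cJ}$, the identity is a right Quillen functor $\cC\cS^{\cJ} \to \cC\cS^{\cJ}_{\mathrm{un}}$ (whose derived functor is the units), the identity is a Quillen equivalence $\cC\cS^{\cJ}_{\mathrm{un}} \rightleftarrows \cC\cS^{\cJ}_{\grp}$, and Theorem~\ref{thm:identification-of-group-compl-model-structure} gives Quillen equivalences to $(\GammaS)_{\sta}/\bof{\cJ}$. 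Each of these induces a right adjoint (or an equivalence) on homotopy categories, so the composite, which Theorem~\ref{thm:identification-of-group-compl-model-structure} identifies with $\gloneJ$, is a right adjoint. Your first paragraph's reduction and your final sentence (``essentially formally'') are correct, but the reduction is only ``essentially formal'' because Theorem~\ref{thm:units-model-structure} has already done the work; as written, your proposal leaves that crucial step unproved.
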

This theorem may be viewed as the graded analog of the result about
ordinary units mentioned in
Section~\ref{subsec:I-and-E_infty-introduction}. Group completions are relevant for this theorem about units because  the stably
fibrant replacement of a special $\Gamma$-space models the group
completion.

\subsection{Applications to ring spectra with logarithmic structures}\label{subsec:log-applications}
A \emph{graded pre-log ring spectrum} is a commutative symmetric ring
spectrum $E$ together with a commutative $\cJ$-space monoid $M$ and a
map $M \to \Omega^{\cJ}(E)$ in $\cC\cS^{\cJ}$. A variant of this
notion employing $\cI$-spaces instead of $\cJ$-spaces was introduced
by Rognes in~\cite{Rognes_TLS} in order to use ideas from logarithmic
algebraic geometry to understand various phenomena related to
topological Hochschild homology and algebraic $K$-theory.  As
explained in~\cite[\S 4.30]{Sagave-S_diagram}, the $\cJ$-space version
of this definition has the advantage that it easier to extend
topological $K$-theory spectra to (graded) pre-log ring spectra in an
interesting way.

The results of the present paper are relevant for the study of pre-log
ring spectra. The group completion functor for commutative $\cJ$-space
monoids resulting from
Theorem~\ref{thm:gp-model-str-and-comparison-introduction} is an
essential foundation for the author's work on the \emph{logarithmic
  topological Andr\'{e}-Quillen
  homology}~\cite{Sagave_log-on-k-theory} and the author's joint work
with Rognes and Schlichtkrull on \emph{logarithmic topological
  Hochschild homology}~\cite{RSS_LogTHH-I, RSS_LogTHH-II}. For example,
the group completion functor is already used in the definition of
the logarithmic topological Hochschild homology of graded pre-log ring
spectra, and it is a key ingredient for showing that the inclusion of
the Adams summand extends to a map of graded pre-log ring spectra
which is formally {\'e}tale through the eyes of logarithmic $\THH$ and
$\mathrm{TAQ}$. We refer to Rognes' ICM talk~\cite[Section
7]{Rognes-ICM} for a survey of these results.

\subsection{Organization} We begin with a review of diagram spaces in
Section~\ref{sec:diagram-spaces-and-units} and use this material in
Section~\ref{sec:Gamma-spaces-from-monoids} to define the
$\Gamma$-space $\gloneJof{E}$ appearing in the
introduction. Section~\ref{sec:spectra-of-units} contains the proof of
Theorem~\ref{thm:bglonegraded-introduction}. In
Section~\ref{sec:group-compl-model-str} we give a more detailed
formulation of
Theorem~\ref{thm:gp-model-str-and-comparison-introduction} and show
how it can be used to prove
Theorem~\ref{thm:sp-of-graded-units-as-righ-adj-introduction}.
Sections~\ref{sec:GammaJ-spaces} and~\ref{sec:stable-model-str} are
devoted to the construction and identification of group completions
needed to prove
Theorem~\ref{thm:gp-model-str-and-comparison-introduction}.
\subsection{Conventions} 
In this paper, \emph{space} means unpointed simplicial set. Most
proofs would work equally well for compactly generated weak Hausdorff
spaces, although some arguments in connection with localizations of
model categories differ.  When using the symbols $\rightleftarrows$
and $\leftrightarrows$ for (possibly unlabeled) adjoint pairs of
functors, we always denote the left adjoint on the top. We assume some
familiarity with model categories and mostly use Hirschhorn's
book~\cite{Hirschhorn_model} as a reference.
\subsection{Acknowledgments} The author would like to thank Christian
Schlichtkrull and Stefan Schwede for helpful conversations related to
this project, and John Rognes for comments on an earlier version of the manuscript.
Moreover, the author likes to thank an unnamed referee for helpful comments on 
an earlier version of this manuscript.  
\section{Structured diagram spaces and units}\label{sec:diagram-spaces-and-units}
In this section we review some terminology about diagram spaces
from~\cite{Sagave-S_diagram}.  For a commutative symmetric ring
spectrum $E$, we define its units $\GLoneIof{E}$, its graded units
$\GLoneJof{E}$, and the diagram categories in which these objects are
defined.

\subsection{Diagram spaces} We begin with some general definitions
that will be used frequently throughout the paper.
\begin{definition}\label{def:K-space}
  Let $\cK$ be a small category. A \emph{$\cK$-space} is a functor
  $\cK \to \cS$ to the category of (unpointed) simplicial sets. We write
  $\cS^{\cK}$ for the category of $\cK$-spaces.
\end{definition}

A map of $\cK$-spaces $f \colon X \to Y$ is a \emph{level equivalence}
if $f(\bld{k})\colon X(\bld{k}) \to Y(\bld{k})$ is a weak equivalence
for every object $\bld{k}$ of $\cK$. Level equivalences are often too
rigid, and the following weaker notion of equivalence is useful for
many purposes.
\begin{definition}\label{def:K-equivalences}
  A map of $f\colon X \to Y$ of $\cK$-spaces is a $\cK$-equivalence if
  it induces a weak equivalence $f_{h\cK} \colon X_{h\cK} \to
  Y_{h\cK}$ of homotopy colimits. Here
  \[ X_{h\cK}=\hocolim_{\cK}X = \diag\left( [i] \mapsto
    \displaystyle\coprod_{\bld{k_0} \ot \dots \ot \bld{k_i}}
    X(\bld{k_i})\right)\] is the usual Bousfield-Kan homotopy colimit of a
  $\cK$-space $X$.
\end{definition}
Now let $\cK$ be a small symmetric monoidal category with product
$\concat$ and monoidal unit~$\bld{0}$.  Then the category of
$\cK$-spaces has a symmetric monoidal product $\boxtimes$ induced by
the symmetric monoidal structure of $\cK$ and the cartesian product of
spaces: For $\cK$-spaces $X$ and $Y$, we define $X \boxtimes Y$ as the
left Kan extension of the object-wise cartesian product along $\concat
\colon \cK \times \cK \to \cK$.  The unit for $\boxtimes$ is the
$\cK$-space $\bld{1}_{\cK} = \cK(\bld{0},-)$.

\begin{definition}
  A \emph{commutative $\cK$-space monoid} is a commutative monoid in
  $(\cS^{\cK},\boxtimes, \eins_{\cK})$, and $\cC\cS^{\cK}$ denotes the
  category of commutative $\cK$-space monoids.
\end{definition}

Unraveling the definition of a left Kan extension, a commutative
$\cK$-space monoid is a $\cK$-space $A$ together with multiplication
maps $A(\bld{k})\times A(\bld{l}) \to A(\bld{k}\concat\bld{l})$ and a
unit map $* \to A(\bld{0})$ which are associative, unital, and
commutative.

For a commutative $\cK$-space monoid $A$, the homotopy colimit $A_{h\cK}$
inherits the structure of a simplicial monoid with product
\[ A_{h\cK} \times A_{h\cK} \iso
\hocolim\displaylimits_{(\bld{k},\bld{l})\in \cK \times \cK}
A(\bld{k})\times A(\bld{l}) \to
\hocolim\displaylimits_{(\bld{k},\bld{l})\in \cK \times \cK}
A(\bld{k}\concat\bld{l}) \to A_{h\cK}. \] Here the first map is induced by
the multiplication of $A$ and the second map induced by the monoidal
structure of $\cK$. The monoid $A_{h\cK}$ is usually not commutative
since the symmetry isomorphism of $\cK$ may differ from the
identity. However, commutativity of $A$ implies that the monoid of path
components $\pi_0(A_{h\cK})$ of $A_{h\cK}$ is commutative.

\begin{definition}\label{def:units-k-space-monoid}
  Let $A$ be a commutative $\cK$-space monoid. \begin{enumerate}[(i)]
  \item $A$ is \emph{grouplike} if the commutative monoid
    $\pi_0(A_{h\cK})$ is a group. 
  \item The units $A^{\times}$ of $A$ is the grouplike sub commutative
    $\cK$-space monoid of $A$ with $A^{\times}(\bld{k})$ consisting of
    those components of $A(\bld{k})$ that map to a unit in the
    commutative monoid $\pi_0(A_{h\cK})$.
  \end{enumerate}
\end{definition}
One can check that $A^{\times}$ is well defined and that $A^{\times} \to A$ realizes
the inclusion $(\pi_0(A_{h\cK}))^{\times} \to \pi_0(A_{h\cK})$. 

\subsection{\texorpdfstring{$\cI$}{I}-spaces and units} The category
of finite sets and injections $\cI$ is one indexing category for
diagram spaces which is of importance for us.
\begin{definition}
  Let $\cI$ be the category with objects the sets
  $\bld{m}=\{1,\dots,m\}$ for $m \geq 0$ and morphisms the injective
  maps. The ordered concatenation $\concat$ of ordered sets makes
  $\cI$ a permutative category, i.e., a symmetric monoidal category
  with strict unit and associativity. The monoidal unit is the empty set
  $\bld{0}$ and the symmetry isomorphism the shuffle $\chi_{m,n}\colon
  \bld{m}\concat\bld{n} \to \bld{n}\concat \bld{m}$ moving the first
  $m$ elements past the last $n$ elements.
\end{definition}

We obtain a category $\cC\cS^{\cI}$ of \emph{commutative $\cI$-space
  monoids}. This category is useful because it admits a \emph{positive
  $\cI$-model structure} such that $\cC\cS^{\cI}$ is Quillen
equivalent to the category of $E_{\infty}$ spaces over an $E_{\infty}$
operad~\cite[Theorem 1.2]{Sagave-S_diagram}. The weak equivalences in
the positive $\cI$-model structure are the $\cI$-equivalences of
Definition~\ref{def:K-equivalences}. So all homotopy types of
$E_{\infty}$ spaces are represented by commutative $\cI$-space
monoids. In many examples one can write down explicit models for such
commutative $\cI$-space monoids. We now review
from~\cite{Schlichtkrull_units,Sagave-S_diagram} how this is done
for the units of a commutative symmetric ring spectrum.

A commutative symmetric ring spectrum $E$ may be described by a
sequence of pointed spaces $E_n$ for $n \geq 0$ with base point
preserving left $\Sigma_n$-actions on $E_n$, multiplication maps
$E_{m} \sm E_{n} \to E_{m+n}$, and unit maps $S^n \to E_n$ satisfying
appropriate associativity, commutativity, and unitality
conditions~\cite[I.1]{Schwede_SymSp}.  In this
description, the iterated structure map $\sigma\colon E_m \sm S^n \to
E_{m+n}$ of the underlying symmetric spectrum of $E$ is the composite
$E_m \sm S^n \to E_m \sm E_n \to E_{m+n}$ of the unit and the
multiplication. We write $\cC\Spsym$ for the category of commutative
symmetric ring spectra.

The commutative $\cI$-space monoid $\OmegaIof{E}$ modeling the
underlying multiplicative $E_{\infty}$ space of $E$ is defined as
follows. Its value at an object $\bld{m}$ in $\cI$ is the space
$\Omega^{m}(E_{m})$. To define the action of a morphism $\alpha\colon
\bld{m} \to \bld{n}$ in $\cI$, choose an extension $\pi \colon \bld{n}
\to \bld{n}$ of $\alpha$ to a bijection and define $\alpha_*(f)$ for a
zero simplex $f\colon S^m \to E_m$ in $\Omega^m(E_m)$ to be the composite
\[ S^{n} \xrightarrow{\pi^{-1}_*} S^{n} \iso S^m \sm S^{n-m}
\xrightarrow{f \sm \id} E_m \sm S^{n-m} \xrightarrow{\sigma} E_{n}
\xrightarrow{\pi_{*}} E_{n}.\] This does not depend on the choice of
$\pi$ and extends to a morphism of mapping spaces $\Omega^{m}(E_m) \to
\Omega^{n}(E_{n})$ that makes $\OmegaIof{E}$ an $\cI$-space. The
multiplication of $E$ defines maps $\Omega^{m}(E_m) \times
\Omega^{n}(E_n) \to \Omega^{m+n}(E_{m+n})$ by sending $f\colon S^m \to
E_m$ and $g\colon S^n \to E_n$ to
\[ S^{m+n}\iso S^{m} \sm S^{n} \xrightarrow{f \sm g} E_m \sm E_n \to
E_{m+n},\] and $\OmegaIof{E}$ becomes a commutative $\cI$-space
monoid with this structure.

One can only expect $\OmegaIof{E}$ to be homotopically well behaved
for sufficiently fibrant~$E$. (Already the mapping spaces involved are
not homotopy invariant for general simplicial sets). We will therefore
only apply $\OmegaI$ to fibrant objects in the positive model
structure on $\cC\Spsym$~\cite{MMSS}. By definition, $E$ is positive
fibrant if for $m\geq 1$, the $E_m$ are fibrant as a simplicial sets and
the adjoint structure maps $E_m \to \Omega(E_{m+1})$ are weak
equivalences. In fact, $\OmegaI$ is a right Quillen functor with
respect to the positive model structure on $\cC\Spsym$ and the
\emph{positive $\cI$-model structure} on
$\cC\cS^{\cI}$~\cite[Proposition 3.19(ii)]{Sagave-S_diagram}.
\begin{definition}\label{def:I-units-ring-spectrum}
  Let $E$ be a positive fibrant commutative symmetric ring
  spectrum. The \emph{units} $\GLoneIof{E}$ of $E$ is the commutative
  $\cI$-space monoid $(\OmegaIof{E})^{\times}$. 
\end{definition}
For such $E$, the fact that the classifying space $B\cI$ is
contractible can be used to show that the commutative monoid
$\pi_0((\OmegaIof{E})_{h\cI})$ is isomorphic to the underlying
multiplicative monoid of the $0$th stable homotopy group $\pi_0(E)$ of
$E$. The sub commutative $\cI$-space monoid $\GLoneIof{E}$ is
characterized by $\pi_0(\GLoneIof{E})_{h\cI} \iso
(\pi_0(E))^{\times}$. It provides the desired strictly commutative
model for the units of $E$.

\subsection{\texorpdfstring{$\cJ$}{J}-spaces and graded
  units}\label{subsec:graded-units} Our second example of an indexing category 
for diagram spaces will lead to the graded units outlined in the
introduction.

\begin{definition}[{\cite[Definition 4.2]{Sagave-S_diagram}}]
Let $\cJ$ be the category whose objects  are
pairs $(\bld{m_1},\bld{m_2})$ of objects in $\cI$. There are no morphisms
$(\bld{m_1},\bld{m_2})\to (\bld{n_1},\bld{n_2})$ unless $m_2-m_1 = n_2
-n_1$, and in this case a morphism $(\bld{m_1},\bld{m_2})\to
(\bld{n_1},\bld{n_2})$ is a triple $(\beta_1, \beta_2, \sigma)$ with
the $\beta_i\colon \bld{m_i}\to\bld{n_i}$ morphisms in $\cI$ and
$\sigma \colon \bld{n_1}\setminus\beta_1(\bld{m_1}) \to
\bld{n_2}\setminus\beta_2(\bld{m_2})$ a bijection identifying the
complements of $\beta_1$ and $\beta_2$.
\end{definition}
This $\cJ$ is indeed a category with the composite
\[ (\bld{l_1},\bld{l_2}) \xrightarrow{(\alpha_1,\alpha_2,\rho)}
(\bld{m_1},\bld{m_2})\xrightarrow{(\beta_1, \beta_2, \sigma)}
(\bld{n_1},\bld{n_2}) \] defined to be the composite $\beta_i
\alpha_i$ in the first two components and the bijection $\sigma \cup
\beta_2\rho\beta_1^{-1}$ in the last component. Concatenation in both entries makes $\cJ$ a permutative
category.

As before let $E$ be a commutative symmetric ring spectrum. The
category $\cJ$ is made up so that there is a commutative $\cJ$-space
monoid $\OmegaJof{E}$ whose value at an object
$(\bld{m_1},\bld{m_2})$ is the space $\Omega^{m_2}(E_{m_1})$. To
describe the action of a morphism $(\beta_1, \beta_2, \sigma)\colon
(\bld{m_1},\bld{m_2})\to (\bld{n_1},\bld{n_2})$ in $\cJ$, we choose
two bijections $\pi_i\colon \bld{n_i} \to \bld{n_i}$ extending the
$\beta_i$ such that $\sigma(\pi_1(m_1 + j)) = \pi_2(m_2 + j)$ for $1
\leq j \leq n_1 - m_1 = n_2 - m_2$. On a zero simplex $f \colon
S^{m_2} \to E_{m_1}$ of $\Omega^{m_2}(E_{m_1})$, the induced map
$(\beta_1, \beta_2, \sigma)_*(f)$ is  defined as the composite
\[ S^{n_2} \xrightarrow{(\pi_2^{-1})_*} S^{n_2} \iso S^{m_2} \sm S^{n_2-m_2}
\xrightarrow{f \sm \id} E_{m_1} \sm S^{n_1-m_1} \xrightarrow{\sigma} E_{n_1}
\xrightarrow{(\pi_1)_{*}} E_{n_1}.\] As in the case of $\cI$-spaces, this
does not depend on the choice of the $\pi_i$ and extends to a morphism
of mapping spaces $ \Omega^{m_2}(E_{m_1}) \to \Omega^{n_2}(E_{n_1})$ that
makes $\OmegaJof{E}$ a $\cJ$-space. The monoid structure on $\OmegaJof{E}$
is defined similarly as for $\OmegaIof{E}$. 

Again $\OmegaJ$ is not homotopy invariant on the whole category
$\cC\Spsym$. Since it is a right Quillen functor with respect to the
positive stable model structure on $\cC\Spsym$ and the \emph{positive
  $\cJ$-model structure} on $\cC\cS^{\cJ}$ to be reviewed in
Section~\ref{sec:group-compl-model-str}, we only apply $\OmegaJ$ to
positive fibrant commutative symmetric ring spectra.

\begin{definition}\label{def:J-units-ring-spectrum}
  Let $E$ be a commutative symmetric ring spectrum. The \emph{graded
    units} $\GLoneJof{E}$ of $E$ is the commutative $\cJ$-space monoid
  $(\OmegaJof{E})^{\times}$.
\end{definition}
The graded units $\GLoneJof{E}$ now captures the units in all degrees
of the graded commutative ring $\pi_*(E)$: In~\cite[Section
4.14]{Sagave-S_diagram} we prove that one can associate a \emph{graded
  signed monoid} $\pi_{0,*}(A)$ with a commutative $\cJ$-space monoid
$A$.  Here a \emph{graded signed monoid} is a $\mZ$-graded monoid
$M_*$ together with $\{\pm 1\}$-actions on each $M_s$ such that $ab =
(-1)^{st} ba$ holds for $a \in M_s$ and $b \in M_t$.   For
$\pi_{0,*}(\OmegaJof{E})$, the $\{\pm 1\}$-action is the additive sign
action of the graded ring $\pi_*(E)$. The inclusion
$\GLoneJof{E} \to \OmegaJof{E}$ realizes the inclusion
$\pi_*(E)^{\times} \to \pi_*(E)$ under $\pi_{0,*}(-)$.

There is an important difference to the situation of $\cI$-spaces. The
graded signed monoid $\pi_0(A)$ of a commutative $\cJ$-space monoid
$A$ is not simply given by $\pi_0(A_{h\cJ})$. As we will see in
Proposition~\ref{prop:low_dim_hty_of_bglonegraded}, the reason behind
this is that the classifying space $B\cJ$ of $\cJ$ is not
contractible, but rather homotopy equivalent to $QS^0$. However, these
two monoids are related: The $\pi_{0}(A_{h\cJ})$ is isomorphic to the
quotient of the underlying ungraded monoid of $\pi_{0}(A)$ by the
$\{\pm 1\}$-action~\cite[Corollary 4.17]{Sagave-S_diagram}.

A commutative $\cJ$-space monoid $A$ has an underlying commutative
$\cI$-space monoid: There is a strong symmetric monoidal diagonal
functor $\Delta\colon \cI \to \cJ$ with
$\Delta(\bld{m})=(\bld{m},\bld{m})$ and $\Delta(\alpha\colon \bld{m}
\to\bld{n})=(\alpha,\alpha, \id_{\bld{n}\setminus \alpha(\bld{m})})$,
and it is easy to see that $\Delta^{*}(A) = A\circ \Delta$ is a
commutative $\cI$-space monoid. This recovers $\GLoneIof{E}$ from
$\GLoneJof{E}$:
\begin{lemma}\label{lem:units-correspond-under-diagonal}
There is a natural isomorphism $\Delta\!^{*}\!(\GLoneJof{E}) \iso \GLoneIof{E}$.
\end{lemma}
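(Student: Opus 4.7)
The plan is to first establish the stronger identification $\Delta^{*}(\OmegaJof{E}) \iso \OmegaIof{E}$ of commutative $\cI$-space monoids and then argue that this isomorphism restricts, on sub monoids, to the claimed identification of units.

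For the ambient monoids, on objects both sides return $\Omega^{m} E_m$. For a morphism $\alpha\colon \bld{m}\to\bld{n}$ in $\cI$, its image $\Delta(\alpha) = (\alpha, \alpha, \id_{\bld{n}\setminus\alpha(\bld{m})})$ has $\sigma = \id$, which allows us to take $\pi_1 = \pi_2 = \pi$ for any bijection $\pi\colon \bld{n}\to\bld{n}$ extending $\alpha$; the compatibility condition $\sigma(\pi_1(m+j)) = \pi_2(m+j)$ is then automatic. With this choice, the composite defining the $\cJ$-action of $\Delta(\alpha)$ on $\OmegaJof{E}(\bld{m},\bld{m})$ coincides verbatim with the composite defining the $\cI$-action of $\alpha$ on $\OmegaIof{E}(\bld{m})$. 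Because $\Delta$ is strong symmetric monoidal, $\Delta^{*}$ sends commutative $\cJ$-space monoids to commutative $\cI$-space monoids, and an inspection shows that the multiplications and units on $\OmegaJof{E}$ and $\OmegaIof{E}$—both induced from the smash products $E_{m_1}\sm E_{n_1} \to E_{m_1+n_1}$—agree on the nose under this identification.

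To pass to units, observe that $\Delta^{*}$ preserves sub $\cI$-space monoids, so $\Delta^{*}(\GLoneJof{E})$ is a sub commutative $\cI$-space monoid of $\OmegaIof{E}$; it suffices to check equality with $\GLoneIof{E}$ on path components at each object $\bld{m}$. A component of $\Omega^{m}E_m = \OmegaJof{E}(\bld{m},\bld{m})$ lies in $\GLoneJof{E}(\bld{m},\bld{m})$ exactly when the corresponding class under $\pi_{0,*}(\OmegaJof{E}) = \pi_*(E)$ is a unit in the graded ring $\pi_*(E)$. But $(\bld{m},\bld{m})$ has $\cJ$-degree zero, so this class lies in $\pi_0(E)\subseteq \pi_*(E)$; and since $1\in\pi_0(E)$, the inverse of a degree-zero unit in $\pi_*(E)$ is forced to lie in $\pi_0(E)$ as well. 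Hence being a unit in $\pi_*(E)^{\times}$ at degree zero is equivalent to being a unit in $\pi_0(E)^{\times}$, which is precisely the condition characterizing $\GLoneIof{E}(\bld{m})$. Naturality in $E$ is immediate since every construction is functorial in the ring spectrum.

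The only point requiring care is verifying that the $\cJ$-action on $\OmegaJof{E}$ pulls back to the $\cI$-action on $\OmegaIof{E}$ under $\Delta$; this hinges on the freedom to choose $\pi_1 = \pi_2$ when $\sigma = \id$, and on the resulting collapse of the two geometric composites. No model-categorical input is needed, so the lemma can be stated and proved before invoking any of the group completion machinery to appear later in the paper.
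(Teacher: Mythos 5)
Your argument is correct and follows the same route as the paper: first identify $\Delta^{*}(\OmegaJof{E}) \iso \OmegaIof{E}$, then match the unit conditions on components at degree zero. The paper's proof asserts the second step in one line (a component of $\Omega^m E_m$ hits an invertible component of $(\OmegaJof{E})_{h\cJ}$ iff it hits one of $(\OmegaIof{E})_{h\cI}$); you spell out the same observation via the cited identification of $\pi_{0,*}(\OmegaJof{E})$ with $\pi_*(E)$ and the degree argument showing a degree-zero unit of $\pi_*(E)$ already has its inverse in $\pi_0(E)$, which is exactly what makes the paper's terse claim go through.
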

\begin{proof}
  The isomorphism $\Delta\!^{*}(\OmegaJof{E}) \iso \OmegaIof{E}$ is
  evident, and a $0$-simplex in $\Omega^{m}(E_m)$ maps to an
  invertible path component of $(\OmegaJof{E})_{h\cJ}$ if and only if
  it maps to an invertible path component of $(\OmegaIof{E})_{h\cI}$.
\end{proof}

\section{\texorpdfstring{$\Gamma$}{Gamma}-spaces associated 
with structured diagram spaces}\label{sec:Gamma-spaces-from-monoids}
The aim of this section is to build $\Gamma$-spaces whose associated
spectra provide deloopings of the simplicial monoids
$(\GLoneIof{E})_{h\cI}$ and $(\GLoneJof{E})_{h\cJ}$. The construction
will involve deloopings of the respective indexing categories that we
construct first. Since it covers the relevant examples and simplifies
the bookkeeping, we restrict ourselves to permutative indexing
categories.
\subsection{\texorpdfstring{$\Gamma$}{Gamma}-spaces}\label{subsec:GammaS}
We begin with recalling some basic terminology about $\Gamma$-spaces
from~\cite{Segal_categories}
and~\cite{Bousfield-F_Gamma-bisimplicial}. Let $\Gamma^{\op}$ be the
category of finite based sets. We write $k^+$ for the object of
$\Gamma^{\op}$ given by the set $\{0,\dots,k\}$ with basepoint $0$.  A
\emph{$\Gamma$-space} is a covariant functor $X\colon \Gamma^{\op} \to
\cS$ such that $X(0^+) = *$. The $\Gamma$-space $X$ is called
\emph{special} if for each pair of finite based sets $(S,T)$ the
natural map $X(S \wdg T) \to X(S) \times X(T)$ induced by the
projections $S\wdg T \to S$ and $S\wdg T \to T$ is a weak
equivalence. For a special $\Gamma$-space $X$ the set of path
components $\pi_0(X(1^+))$ has the structure of a commutative monoid,
and $X$ is called \emph{very special} if this monoid is a group. A
$\Gamma$-space $X$ has an associated (sequential) spectrum $X(\mS)$
with $X(\mS)_0 = X(1^+)$. If $X$ is very special, the level fibrant
replacement of $X(\mS)$ is an $\Omega$-spectrum and therefore exhibits
$X(\mS)_0 = X(1^+)$ as an infinite loop space.

\subsection{Delooping permutative categories}\label{subsec:delooping-perm} 
We now explain how to deloop the classifying space $B\cK$ of a small
permutative category.  The idea is to build a special $\Gamma$-space
from $\cK$ whose associated spectrum is a delooping of the simplicial
monoid $B\cK$ if $\pi_0(B\cK)$ is a group. The naive choice would be
to try to define a $\Gamma$-space by $S \mapsto B(\cK^{\times
  \ovl{S}})$ where $\cK^{\times \ovl{S}}$ denotes a product of copies
of $\cK$ indexed by the set $\ovl{S} = S \setminus 0^+$.  However,
this cannot be made functorial in $\Gamma^{\op}$: The objects of
$\Gamma^{\op}$ are unordered sets while a non-trivial symmetry
isomorphism of $\cK$ makes it necessary to specify an ordering of the
factors before using the monoidal product to define the structure maps
of the $\Gamma$-space. The following construction by Shimada and
Shimakawa (see~\cite{Shimada-Shimakawa_delooping} or~\cite[II, \S
3.1]{Dundas_GMc_local}) avoids this problem.

\begin{definition}\label{def:Shi-Shi-construction} Let $(\cK,\concat,\bld{0})$ be a permutative
  category. For an object $S$ in $\Gamma^{op}$, let $\HcK(S)$ be the
  category whose objects are maps $\bld{s} \colon \cP(\ovl{S}) \to
  \obj(\cK)$ from the power set of $\ovl{S} = S \setminus 0^+$ to the
  set of objects of $\cK$ together with isomorphisms $\sigma_{U,V}
  \colon \bld{s}_{U}\concat \bld{s}_{V} \to \bld{s}_{U\cup V}$ for
  every pair of disjoint subsets $U,V \subseteq \ovl{S}$ such that the
  following conditions hold:
\begin{enumerate}[(i)]
\item $\bld{s}_{\emptyset} = \bld{0}$
\item $\sigma_{\emptyset, V}\colon \bld{0}\concat \bld{s}_{V} \to
  \bld{s}_{V}$ and $\sigma_{U,\emptyset} \colon \bld{s}_{U}\concat
  \bld{0} \to \bld{s}_{U}$ are the identity
\item $\sigma_{U\cup V,W} \circ (\sigma_{U,V} \concat \bld{s}_W) =
  \sigma_{U,V\cup W} \circ (\bld{s}_U \concat \sigma_{V,W})$ as
  maps $\bld{s}_U \concat \bld{s}_V \concat \bld{s}_W \to
  \bld{s}_{U\cup V\cup W}$
\item $\sigma_{V,U} \circ \chi_{\bld{s}_U,\bld{s}_V} = \sigma_{U,V}$
  as maps $\bld{s}_U \concat \bld{s}_V \to \bld{s}_{U\cup V}$
\end{enumerate}
Morphisms $f \colon (\bld{s},\sigma) \to (\bld{t},\tau)$ are families
of morphisms $f_U\colon \bld{s}_U\to\bld{t}_U$ such that
$f_{\emptyset} = \id_{\bld{0}}$ and $\tau_{U,V} \circ (f_{U}\concat
f_{V}) = f_{U\cup V} \circ \sigma_{U,V}$ hold.
\end{definition}
It is easy to see that $\HcK(S)$ is again a permutative category. Its 
monoidal unit~$\bld{0}_S$ is the map $\cP(\ovl{S}) \to \obj(\cK)$ with 
constant value $\bld{0}$. 

For a map $\alpha \colon S \to T$ in $\Gamma^{\op}$ and an object
$(\bld{s},\sigma)$ in $\HcK(S)$, we set
\[ \alpha_*(\bld{s}) = \left(\cP(\ovl{T}) \xrightarrow{\alpha^{-1}}
  \cP(\ovl{S}) \xrightarrow{\bld{s}} \obj(\cK)\right) \] and
$\alpha_*(\sigma)_{U,V} = \sigma_{\alpha^{-1}(U),\alpha^{-1}(V)}$ in
order to define an object $\alpha_*(\bld{s},\sigma)$ of $\HcK(T)$.
\begin{lemma}\label{lem:Shi-Shi-functorial}
  This defines a covariant functor $\HcK$  from $\Gamma^{\op}$
  to the category of categories. Evaluation of an object at the one
  element subsets of $S$ induces an equivalence of categories
  $\HcK(S) \to \cK^{\times\ovl{S}}$.
\end{lemma}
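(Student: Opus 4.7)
The plan is to verify in turn that $\alpha_*$ is well-defined for each morphism $\alpha$ in $\Gamma^{\op}$, that $\alpha\mapsto\alpha_*$ is functorial, and finally that the singleton-evaluation functor is an equivalence. For the first two assertions, the key observation is that for $\alpha\colon S\to T$ the preimage map $\alpha^{-1}\colon \cP(\ovl{T})\to\cP(\ovl{S})$ preserves the empty set, unions, and intersections, hence disjointness. Substituting preimages directly into conditions (i)--(iv) for $(\bld{s},\sigma)$ yields the corresponding conditions for $\alpha_*(\bld{s},\sigma)$, and the check on morphisms is analogous. The identities $\id^{-1}=\id$ and $(\beta\alpha)^{-1}=\alpha^{-1}\beta^{-1}$ then give $\id_*=\id$ and $(\beta\alpha)_*=\beta_*\alpha_*$.

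For the equivalence, let $\Ev_S\colon \HcK(S)\to\cK^{\times\ovl{S}}$ send $(\bld{s},\sigma)$ to $(\bld{s}_{\{u\}})_{u\in\ovl{S}}$ and $(f_U)_U$ to $(f_{\{u\}})_{u\in\ovl{S}}$. To prove essential surjectivity I would fix a total ordering of $\ovl{S}$ and, given $(a_u)_{u\in\ovl{S}}$, define $\bld{s}_U$ as the left-bracketed $\concat$-product of the $a_u$ for $u\in U$ in the chosen order, with $\bld{s}_{\emptyset}=\bld{0}$. The structure map $\sigma_{U,V}$ is then assembled from instances of the associativity, unit, and symmetry isomorphisms of $\cK$, transforming the ordered product for $U$ followed by the ordered product for $V$ into the ordered product for $U\cup V$. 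Conditions (i) and (ii) are immediate from the construction, while (iii) and (iv) follow from Mac Lane's coherence theorem for symmetric monoidal categories: both sides of each equation are canonical isomorphisms built from associators, unitors, and symmetries between the same ordered tensor products, hence agree.

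For full faithfulness, the axiom $\tau_{U,V}\circ(f_U\concat f_V)=f_{U\cup V}\circ\sigma_{U,V}$ combined with $f_{\emptyset}=\id_{\bld{0}}$ forces $f_U$ to be determined by the $f_{\{u\}}$ for $u\in U$ by induction on $|U|$: choosing any $u\in U$, one must have
\[ f_U=\tau_{U\setminus\{u\},\{u\}}\circ(f_{U\setminus\{u\}}\concat f_{\{u\}})\circ\sigma_{U\setminus\{u\},\{u\}}^{-1}. \]
This yields faithfulness. For fullness, the same formula defines, from any tuple $(\varphi_u)_{u\in\ovl{S}}$ of morphisms in $\cK$, candidate maps $f_U$; one then checks that they are independent of the inductive decomposition chosen and that the compatibility with $\sigma_{U,V}$ and $\tau_{U,V}$ holds for arbitrary disjoint $U,V$, both of which again reduce to coherence in $\cK$.

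The main obstacle is the repeated appeal to coherence in the essential-surjectivity and fullness steps: spelling out the isomorphisms $\sigma_{U,V}$ as explicit composites of associators and symmetries is combinatorially cumbersome, and one must argue that any two composites with matching source and target agree on the nose rather than merely up to a further coherence cell. Invoking the coherence theorem for symmetric monoidal categories once and for all makes these verifications tractable and bypasses an explicit combinatorial analysis of the subset lattice of $\ovl{S}$.
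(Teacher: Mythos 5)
Your proof is correct and takes essentially the same approach as the paper: the paper's proof states that functoriality is immediate and that a choice of total ordering on $\ovl{S}$ induces an inverse to $\HcK(S)\to\cK^{\times\ovl{S}}$, which is precisely what your essential-surjectivity construction produces, while your full-faithfulness argument makes explicit why that ordering-dependent construction really is an inverse. One small point of precision worth flagging: since $\cK$ is \emph{permutative}, the associators and unitors are identities, so the coherence isomorphisms you need are built from symmetries alone; moreover, the slogan in your final paragraph that ``any two composites with matching source and target agree'' is not literally what coherence asserts (if factors repeat, distinct permutations can share source and target) --- the correct statement, which is what your verifications of conditions (iii) and (iv) actually use, is that any two composites of symmetries implementing the \emph{same} permutation of tensor factors coincide.
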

\begin{proof}
  The functoriality is immediate. The choice of an ordering of
  $\ovl{S}$ induces an inverse to the functor $\HcK(S) \to
  \cK^{\times\ovl{S}}$.
\end{proof}

\begin{definition}
  We write $\bof{\cK}$ for the $\Gamma$-space obtained by composing
  $\HcK$ with the classifying space functor.
\end{definition}
The $\Gamma$-space $\bof{\cK}$ is also known as the \emph{algebraic $K$-theory} of
$\cK$~\cite[II, \S 3.1]{Dundas_GMc_local}.
\begin{corollary}
  The $\Gamma$-space $\bof{\cK}$ is special. If the monoid of path
  components $\pi_0(B\cK)$ of $\cK$ is a group, then $\bof{\cK}$ is
  very special.\qed
\end{corollary}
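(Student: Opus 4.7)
The plan is to deduce both assertions directly from Lemma~\ref{lem:Shi-Shi-functorial}. The equivalence of categories $\HcK(S) \to \cK^{\times\ovl{S}}$ given by evaluation at the one-element subsets, after applying the classifying space functor, becomes a natural weak equivalence $\bof{\cK}(S) \xrightarrow{\simeq} (B\cK)^{\times\ovl{S}}$ for every $S \in \Gamma^{\op}$.

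For specialness I would fix a pair $(S,T)$ of finite based sets and use the natural identification $\ovl{S\wdg T} = \ovl{S}\sqcup\ovl{T}$ to rewrite $(B\cK)^{\times\ovl{S\wdg T}} = (B\cK)^{\times\ovl{S}}\times (B\cK)^{\times\ovl{T}}$. I then check that the square whose top horizontal map is the map $\bof{\cK}(S\wdg T) \to \bof{\cK}(S) \times \bof{\cK}(T)$ induced by the two projections and whose vertical maps are the weak equivalences above commutes. This reduces to the statement that evaluating an object $(\bld{s},\sigma) \in \HcK(S\wdg T)$ at the one-element subsets of $\ovl{S\wdg T}$ recovers, after restriction along $\cP(\ovl{S})\hookrightarrow \cP(\ovl{S\wdg T})$ and $\cP(\ovl{T})\hookrightarrow \cP(\ovl{S\wdg T})$, precisely the values at one-element subsets of $\ovl{S}$ and $\ovl{T}$. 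Since the right-hand vertical map is an isomorphism and the outer two are weak equivalences, the top map is a weak equivalence.

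For the very special claim I specialize the equivalence at $S=1^+$ to obtain $\bof{\cK}(1^+)\simeq B\cK$. The commutative monoid structure on $\pi_0(\bof{\cK}(1^+))$ coming from specialness is the composite $\pi_0(\bof{\cK}(1^+))\times\pi_0(\bof{\cK}(1^+)) \xleftarrow{\iso} \pi_0(\bof{\cK}(2^+)) \xrightarrow{\nabla_{\ast}} \pi_0(\bof{\cK}(1^+))$ associated with the fold $\nabla\colon 2^+\to 1^+$. Tracing this through the Shimada--Shimakawa construction, one sees that it is induced by the structural isomorphism $\sigma_{\{1\},\{2\}}\colon \bld{s}_{\{1\}}\concat \bld{s}_{\{2\}}\to \bld{s}_{\{1,2\}}$, so on $\pi_0$ it recovers the monoid operation on $\pi_0(B\cK)$ coming from the permutative product $\concat$. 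Hence $\pi_0(\bof{\cK}(1^+))\iso \pi_0(B\cK)$ as monoids, and the hypothesis supplies the group structure.

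The main obstacle is bookkeeping rather than genuine difficulty: one must verify that the Shimada--Shimakawa construction transports the combinatorial projections $S\wdg T\to S$, $S\wdg T\to T$ and the fold $\nabla\colon 2^+\to 1^+$ to the expected categorical operations under the equivalences $\HcK(S)\simeq \cK^{\times\ovl{S}}$. Each such compatibility is explicit once the definitions are unfolded, so neither step requires more than a careful chase through Definition~\ref{def:Shi-Shi-construction}.
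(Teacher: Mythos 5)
Your proposal is correct and follows exactly the route the paper leaves implicit with its $\qed$: the corollary is a direct consequence of Lemma~\ref{lem:Shi-Shi-functorial}, since equivalences of categories induce weak equivalences on classifying spaces, the square comparing $\bof{\cK}(S\wdg T)\to\bof{\cK}(S)\times\bof{\cK}(T)$ with $(B\cK)^{\times\ovl{S\wdg T}}\xrightarrow{\iso}(B\cK)^{\times\ovl{S}}\times(B\cK)^{\times\ovl{T}}$ commutes because $(p_S)_*$ preserves values at one-element subsets, and the monoid structure on $\pi_0$ coming from the fold $2^+\to 1^+$ is carried to $\concat$ via $\sigma_{\{1\},\{2\}}$. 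Nothing to add.
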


\begin{example}
  The very special $\Gamma$-space $\bof{\cI}$ associated with the
  category $\cI$ is not very interesting: It is levelwise contractible
  because $\cI$ has an initial object.
\end{example}
\begin{example}\label{ex:spectrum-of-J}
  It is shown in~\cite[Proposition 4.4]{Sagave-S_diagram} that the
  category $\cJ$ is equivalent to Quillen's localization construction
  $\Sigma^{-1}\Sigma$ on the category of finite sets and bijections
  $\Sigma$. Hence its classifying space $B\cJ$ is equivalent to $QS^0$
  by the Barratt-Priddy-Quillen theorem. Segal's version of the
  Barratt-Priddy-Quillen theorem~\cite{Segal_categories} implies that
  the special $\Gamma$-space $\bof{\Sigma}$ is stably equivalent
  to the $\Gamma$-space $\Gamma^{\op}(1^+,-)$ representing the sphere
  spectrum. Since the functor $\Sigma \to \Sigma^{-1}\Sigma = \cJ$
  induces a group completion $B\Sigma \to B\cJ$, the map $\bof{\Sigma}
  \to \bof{\cJ}$ is a stable equivalence of $\Gamma$-spaces. So the
  very special $\Gamma$-space $\bof{\cJ}$ is represents the sphere
  spectrum. The associated spectrum $\bof{\cJ}(\mS)$ is thus (up to
  level fibrant replacement) an $\Omega$-spectrum representing the
  homotopy type of the sphere spectrum. As opposed to other
  $\Omega$-spectra modeling the sphere spectrum, its definition does
  not involve taking a loop space or forming a telescope.
\end{example}

\subsection{\texorpdfstring{$\Gamma$}{Gamma}-spaces from commutative \texorpdfstring{$\cK$}{K}-space monoids}\label{subsec:Gamma-from-CSK}
In~\cite{Schlichtkrull_units}, Schlichtkrull constructs
$\Gamma$-spaces from commutative $\cI$-space monoids. We give a
description of his approach that applies to both commutative $\cI$-
and $\cJ$-space monoids.

Let $\cK$ be a small permutative category, let $A$ be a commutative $\cK$-space
monoid, and let $\HcK$ be the $\Gamma$-category associated with $\cK$. For
every object $S$ in $\Gamma^{\op}$, we define a $\HcK(S)$-space $A_S$ by
\[ (\bld{s},\sigma) \mapsto \textstyle\prod_{i\in \ovl{S}} A(\bld{s}_i). \]
Here we write $\bld{s}_i$ for $\bld{s}(\{i\})$. 

We saw that a map $\alpha\colon S \to T$ in $\Gamma^{\op}$ induces a
functor $\alpha_* \colon \HcK(S) \to \HcK(T)$. There
is an induced functor $\alpha^*\colon \cS^{\HcK(T)}\to\cS^{\HcK(S)}$
sending $A_T$ to $\alpha^*(A_T)=A_T\circ \alpha_*$.  The value of
$\alpha^*(A_T)$ at an object $(\bld{s},\sigma)$ of $\HcK(S)$ is
\[ \textstyle\prod_{j\in \ovl{T}} A(\bld{s}(\alpha^{-1}(j))). \] 

The structure maps of the commutative $\cK$-space monoid $A$
induce a map
\begin{equation}\label{eq:map-of-HKS-diagrams}
  \widetilde{\alpha} \colon A_S \to \alpha^*(A_T)
\end{equation} of $\HcK(S)$-spaces: For every
element $j \in \ovl{T}$, we choose an ordering of the set
$V=\alpha^{-1}(j)$ and define the map to be the product over all $j
\in \ovl{T}$ of the maps
\begin{equation}\label{eq:map-of-HKS-diagrams-components} 
  \textstyle\prod_{i\in V} A(\bld{s}_i) \to
  A(\concat_{i\in V}\bld{s}_i) \to A(\bld{s}_V).
\end{equation}
Here the iterated monoidal product in $\cK$ in the middle term is
formed using the chosen ordering of $V$, the first map comes from the
product of $A$, and the last map is induced by the isomorphisms
$\sigma$ that belong to the object of $\HcK(S)$ in question. The
commutativity of $A$ ensures that the
map~\eqref{eq:map-of-HKS-diagrams-components} does not depend on the
choice of the ordering. If $\alpha^{-1}(j)$ is empty, the
map~\eqref{eq:map-of-HKS-diagrams-components} is the unit $* \to
A(\bld{0})$ of $A$.
\begin{lemma}\label{lem:compatibilibty-of-AS}
  For maps $\alpha\colon S \to T$ and $\beta\colon T \to U$ in
  $\Gamma^{op}$, the composite $ (\alpha^*(\widetilde{\beta}))
  \widetilde{\alpha}$ and $\widetilde{\beta\alpha}$ coincide as maps
  $A_S \to \alpha^* (\beta^* A_U)= (\beta \alpha)^* A_U$.  \qed
\end{lemma}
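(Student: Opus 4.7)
The plan is to verify the identity pointwise in $\HcK(S)$ by decomposing both composites into factors indexed by $\ovl{U}$ and then comparing them factor-by-factor using the coherence axioms of $\HcK(S)$ and the commutativity of $A$. Fix an object $(\bld{s},\sigma)$ of $\HcK(S)$. The target $((\beta\alpha)^*A_U)(\bld{s},\sigma) = \prod_{k\in\ovl{U}} A(\bld{s}_{(\beta\alpha)^{-1}(k)})$ is a product indexed by $\ovl{U}$, so it suffices to check the statement coordinate-wise for each $k \in \ovl{U}$.

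Fix $k\in\ovl{U}$ and set $W = (\beta\alpha)^{-1}(k)\subseteq\ovl{S}$ together with its decomposition $W = \bigsqcup_{j\in\beta^{-1}(k)} V_j$, where $V_j = \alpha^{-1}(j)$. The $k$-component of $\widetilde{\beta\alpha}$ is the single-step map
\[ \textstyle\prod_{i\in W}A(\bld{s}_i) \to A(\concat_{i\in W}\bld{s}_i) \to A(\bld{s}_W), \]
where the first map uses the iterated product of $A$ with respect to a chosen ordering of $W$, and the second map is the iterated instance of $\sigma_{-,-}$ assembling the $\bld{s}_i$'s into $\bld{s}_W$. The $k$-component of $(\alpha^*(\widetilde{\beta}))\widetilde{\alpha}$ is the two-step map
\[ \textstyle\prod_{i\in W}A(\bld{s}_i) = \prod_{j\in\beta^{-1}(k)}\prod_{i\in V_j}A(\bld{s}_i) \xrightarrow{\prod_j \widetilde{\alpha}_j} \prod_{j\in\beta^{-1}(k)}A(\bld{s}_{V_j}) \xrightarrow{\widetilde{\beta}_k} A(\bld{s}_W), \]
where each $\widetilde{\alpha}_j$ is the map from~\eqref{eq:map-of-HKS-diagrams-components} for the subset $V_j$, and $\widetilde{\beta}_k$ is the one for $\beta^{-1}(k)$, with chosen orderings on each $V_j$ and on $\beta^{-1}(k)$.

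At this point the identity becomes an assertion about iterated products in $A$ combined with iterated applications of $\sigma$. For the $A$-part, the associativity and commutativity of the multiplication of $A$ imply that either way of multiplying the factors $A(\bld{s}_i)$ for $i\in W$ yields a single well-defined map $\prod_{i\in W}A(\bld{s}_i)\to A(\concat_{i\in W}\bld{s}_i)$, where the target is formed with respect to any chosen ordering of $W$; condition~(iv) in Definition~\ref{def:Shi-Shi-construction} together with commutativity of $A$ in turn shows that the resulting composite into $A(\bld{s}_W)$ is independent of the chosen ordering. For the $\sigma$-part, the nested application $\sigma_{\bigcup_{j<j_0}V_j,\,V_{j_0}}\circ(\cdots\concat \sigma^{\text{it}}_{V_{j_0}})$ equals the single iterated $\sigma$ assembling $W$ directly; this is exactly the iterated form of the associativity axiom~(iii) of Definition~\ref{def:Shi-Shi-construction}, applied to the partition $W = \bigsqcup_j V_j$. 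Combining these two identifications shows that both $k$-components agree.

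The main obstacle, and essentially the only non-formal point, is the careful bookkeeping of the orderings used in the three separate instances of~\eqref{eq:map-of-HKS-diagrams-components} and the verification that the associativity coherence~(iii) extends from pairs of disjoint subsets to an arbitrary finite partition. Once one sets up the notation so that the decomposition $W = \bigsqcup_{j\in\beta^{-1}(k)} V_j$ is taken with compatible orderings (which is legitimate by commutativity of $A$ and axiom~(iv)), the identity is a direct consequence of iterating axiom~(iii), and the lemma follows.
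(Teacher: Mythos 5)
The paper gives no written proof of this lemma; it is stated with a $\qed$, leaving the verification to the reader as a routine consequence of the definitions. Your proposal supplies precisely that verification, and the argument is correct: reduce to the $k$-component for $k\in\ovl{U}$, match the iterated $A$-multiplications by strict associativity (inherited from $\cK$ being permutative) after choosing compatible orderings, and match the iterated assembly isomorphisms by iterating axiom~(iii), using axiom~(iv) and commutativity of $A$ to justify the freedom in choosing orderings. One small point that is implicit in your write-up and worth making explicit: the second stage of the two-step composite, $\widetilde{\beta}_k$, is computed at the pushed-forward object $\alpha_*(\bld{s},\sigma)$ of $\HcK(T)$, so the isomorphisms it uses are $\tau := \alpha_*(\sigma)$ with $\tau_{U_1,U_2} = \sigma_{\alpha^{-1}U_1,\,\alpha^{-1}U_2}$. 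Once one unwinds this translation, the iterated $\tau$'s become exactly the iterated $\sigma$'s associated with the partition $W = \bigsqcup_{j\in\beta^{-1}(k)} V_j$, and your invocation of axioms~(iii) and~(iv) then applies directly. With that clarification in place the proof is complete.
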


\begin{definition}\label{def:Gamma-space-from-K-space-monoid}
  The $\Gamma$-space $\gamma(A)$ associated with the commutative
  $\cK$-space monoid $A$ is defined by $\gamma(A)(S) =
  \hocolim_{\HcK(S)}A_S$ on the objects of $\Gamma^{\op}$. The
  structure map associated with $\alpha\colon S \to T$ in
  $\Gamma^{\op}$ is the composite
  \[\hocolim_{\HcK(S)}A_S \to \hocolim_{\HcK(S)}(\alpha^*(A_T)) \to
  \hocolim_{\HcK(T)}A_T \] induced by the map $\widetilde{\alpha}$
  and the map of homotopy colimits induced by $\alpha_*$.
\end{definition}
The previous lemma ensures that $\gamma(A)$ has indeed the structure maps of
a $\Gamma$-space. By definition, there is an isomorphism
$\gamma(A)(1^+) \iso A_{h\cK}$.

It is clear that $\gamma(A)$ is natural in $A$.  Let $*$ be the
terminal commutative $\cK$-space monoid. Since $\hocolim_{\cK}(*) \iso
B\cK$, it follows that $\gamma(*) \iso \bof{\cK}$. We obtain
\begin{corollary}\label{cor:hocolim-as-space-over-BK}
  There is a canonical natural map $\gamma(A) \to \bof{\cK}$ of
  $\Gamma$-spaces.\qed
\end{corollary}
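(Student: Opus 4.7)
The plan is to obtain the desired map as the composite $\gamma(A) \to \gamma(*) \iso \bof{\cK}$, where $*$ denotes the terminal commutative $\cK$-space monoid. The two ingredients are functoriality of $\gamma$ in its argument and an identification of $\gamma$ applied to the terminal object with $\bof{\cK}$.

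First I would check that $\cC\cS^{\cK}$ has a terminal object. The constant $\cK$-diagram at the one-point space $*$ carries a unique commutative $\cK$-space monoid structure (the multiplication and unit maps are forced, and the associativity, unitality and commutativity diagrams are automatic), and every commutative $\cK$-space monoid $A$ admits a unique structure map $A \to *$. Next I would observe that $\gamma$ is functorial in $A$: any map $A \to A'$ of commutative $\cK$-space monoids induces compatible maps $A_S \to A'_S$ of $\HcK(S)$-spaces and, because the maps $\widetilde{\alpha}$ of \eqref{eq:map-of-HKS-diagrams} are natural in the underlying commutative $\cK$-space monoid, the induced maps $\gamma(A)(S)\to\gamma(A')(S)$ assemble into a map of $\Gamma$-spaces. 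Applying this to $A \to *$ yields a natural map $\gamma(A) \to \gamma(*)$.

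It remains to identify $\gamma(*)$ with $\bof{\cK}$. For each object $S$ of $\Gamma^{\op}$, the $\HcK(S)$-space $*_S$ sends an object $(\bld{s},\sigma)$ to $\prod_{i \in \ovl{S}} *(\bld{s}_i) = *$, so $*_S$ is the constant $\HcK(S)$-diagram on the one-point space. Therefore
\[ \gamma(*)(S) = \hocolim\nolimits_{\HcK(S)} *_S \iso B(\HcK(S)) = \bof{\cK}(S). \]
The structure maps of $\gamma(*)$ are, under this identification, the maps $B(\HcK(S)) \to B(\HcK(T))$ induced by the functors $\alpha_*\colon \HcK(S) \to \HcK(T)$, since the maps $\widetilde{\alpha}$ are forced to be identities when $A=*$. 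These are precisely the structure maps of $\bof{\cK}$, so the identification is an isomorphism of $\Gamma$-spaces. Composing with $\gamma(A)\to \gamma(*)$ yields the required natural map $\gamma(A) \to \bof{\cK}$.

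There is no real obstacle here; the content is bookkeeping that was already set up in the constructions of $\gamma$ and $\bof{\cK}$. The only point to be mindful of is that the identification $\gamma(*) \iso \bof{\cK}$ should be checked to respect both the $\HcK(S)$-indexing and the $\Gamma^{\op}$-functoriality, which follows directly from the definitions of the two constructions.
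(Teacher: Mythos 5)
Your proposal is correct and matches the paper's proof exactly: the paper also obtains the map as $\gamma(A) \to \gamma(*) \iso \bof{\cK}$ by combining functoriality of $\gamma$ with the observation $\hocolim_{\HcK(S)}(*) \iso B\HcK(S)$. You have merely filled in the routine details the paper leaves implicit.
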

We have the following analogue of~\cite[Proposition
5.3]{Schlichtkrull_units}:
\begin{proposition}\label{prop:gamma-A-special}
  The $\Gamma$-space $\gamma(A)$ is special, and it is very special if $A$
  is grouplike. For general $A$ there is a weak equivalence $B(A_{h\cK})\to \gamma(A)(S^1)= \gamma(A)(\mS)_1$. 
\end{proposition}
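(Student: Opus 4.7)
The plan is to establish the three assertions in order, with the main work being the specialness of $\gamma(A)$.

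First I would show that $\gamma(A)$ is special. Fix objects $S, T$ of $\Gamma^{\op}$ and unpack the map $\gamma(A)(S \wdg T) \to \gamma(A)(S) \times \gamma(A)(T)$ induced by the two projections $S \wdg T \to S$ and $S \wdg T \to T$. Since $\ovl{S \wdg T} = \ovl{S} \sqcup \ovl{T}$, Lemma~\ref{lem:Shi-Shi-functorial} provides a commutative diagram of equivalences of categories showing that the functor $\HcK(S \wdg T) \to \HcK(S) \times \HcK(T)$ is itself an equivalence, and therefore induces a weak equivalence on homotopy colimits of any diagram. Under this equivalence the $\HcK(S \wdg T)$-diagram $A_{S \wdg T}$ corresponds to the external product $(x,y) \mapsto A_S(x) \times A_T(y)$ via the canonical identification $\prod_{i \in \ovl{S \wdg T}} A(\bld{s}_i) \iso \prod_{i \in \ovl{S}} A(\bld{s}_i) \times \prod_{j \in \ovl{T}} A(\bld{s}_j)$. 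A standard Fubini argument for homotopy colimits over products then yields
\[ \gamma(A)(S \wdg T) \simeq \hocolim_{\HcK(S) \times \HcK(T)} (A_S \times A_T) \simeq \gamma(A)(S) \times \gamma(A)(T). \]
The very-special claim is then immediate from $\gamma(A)(1^+) \iso A_{h\cK}$: the monoid $\pi_0(\gamma(A)(1^+)) \iso \pi_0(A_{h\cK})$ is a group exactly when $A$ is grouplike.

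For the final assertion, I would apply the classical delooping result for special $\Gamma$-spaces. Using the simplicial model $(S^1)_k = k^+$, the space $\gamma(A)(\mS)_1 = \gamma(A)(S^1)$ is the realization of the simplicial space $[k] \mapsto \gamma(A)(k^+)$, and the specialness established above provides a levelwise weak equivalence from the bar construction $[k] \mapsto \gamma(A)(1^+)^k \iso (A_{h\cK})^k$, inducing the desired map $B(A_{h\cK}) \to \gamma(A)(S^1)$. One also needs to check that the Segal monoid structure on $\gamma(A)(1^+)$ coincides with the natural monoid structure on $A_{h\cK}$ induced by the multiplication of $A$, which follows by inspecting the structure map associated with the active map $2^+ \to 1^+$ that collapses both non-basepoint elements.

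I expect the main obstacle to be the bookkeeping in the specialness argument: verifying that the structure maps~\eqref{eq:map-of-HKS-diagrams} and~\eqref{eq:map-of-HKS-diagrams-components} specialize under the two projections of $S \wdg T$ to the evident product projections, so that Fubini applies directly. Once this is set up, the rest reduces to standard results about homotopy colimits of external products, Segal's delooping of special $\Gamma$-spaces, and the construction of $A_{h\cK}$ as a simplicial monoid.
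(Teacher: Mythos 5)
Your proof is correct and takes essentially the same approach as the paper, which in turn cites Schlichtkrull's argument: both rely on the equivalence of categories supplied by Lemma~\ref{lem:Shi-Shi-functorial} to reduce $\gamma(A)(S)$ to a homotopy colimit over a product category where Fubini applies, followed by the standard delooping of a special $\Gamma$-space. The only cosmetic difference is that you decompose $\HcK(S\wdg T)\simeq\HcK(S)\times\HcK(T)$ and then verify the Segal map directly, whereas the paper goes through $\HcK(S)\simeq\cK^{\times\ovl{S}}$ to identify $\gamma(A)(S)\simeq\prod_{\ovl S}A_{h\cK}$ outright; these are the same observation.
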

\begin{proof}
  The proof of~\cite[Proposition 5.3]{Schlichtkrull_units} applies almost
verbatim: The equivalence 
\[ 
\gamma(A)(S) = \hocolim_{\HcK(S)}A_S \to \hocolim_{\cK^{\times \ovl{S}}}A^{\times
  \ovl{S}} \iso \textstyle\prod_{\ovl{S}} A_{h\cK}
\]
show that $\gamma(A)$ is special. It is easy to see that the condition
of $A$ being grouplike and $\gamma(A)$ being very special refer to the
same monoid structure on $\pi_0(A_{h\cK})$. The equivalence of the
evaluation at the sphere $S^1$ with the bar construction follows as
in~\cite[Proposition 5.3]{Schlichtkrull_units} from the choice of an
ordering of the simplices in $S^1$.
\end{proof}

The motivating examples for this construction come from the diagram
space models for units of ring spectra introduced in the previous
section.
\begin{definition}\label{def:units-graded-units}
  Let $E$ be a commutative symmetric ring spectrum. The \emph{units}
  of $E$ is the $\Gamma$-space $\gloneIof{E} = \gamma(\GLoneIof{E})$
  associated with the commutative $\cI$-space monoid
  $\GLoneIof{E}$. The \emph{graded units} is the $\Gamma$-space
  $\gloneJof{E}=\gamma(\GLoneJof{E})$ associated with the commutative
  $\cJ$-space monoid $\GLoneJof{E}$. We view these constructions as
  functors
\[ \gloneI \colon \cC\Spsym \to \GammaS \quad \text{ and } \quad   \gloneJ \colon \cC\Spsym \to \GammaS / \bof{\cJ}. \]
\end{definition}
By construction, both $\gloneIof{E}$ and $\gloneJof{E}$ are very
special. While the augmentation $\gloneJof{E} \to \bof{\cJ}$ will be
central in what follows, we will usually ignore the augmentation of
$\gloneIof{E}$ to the levelwise contractible $\Gamma$-space
$\bof{\cI}$. Both functors will only be homotopically well behaved for
sufficiently fibrant (e.g. positive fibrant) $E$ since this was
already the case with $\GLoneIof{E}$ and $\GLoneJof{E}$.

\begin{remark}
  There is a classical notion of a spectrum of units associated with
  an $E_{\infty}$ spectrum due to May and
  coauthors~\cite{May_ring_spaces}. The spectrum associated with
  $\gloneIof{E}$ is a possible construction of this object in the
  context of commutative symmetric ring spectra. It was introduced by
  Schlichtkrull~\cite{Schlichtkrull_units}. Lind~\cite{Lind-diagram}
  shows that $(\gloneIof{E})(\mS)$ is indeed equivalent to the spectra
  of units associated with other kinds of spectra.
\end{remark}

We note that the $\Gamma$-spaces considered here are natural in the indexing category:
\begin{lemma}\label{lem:Gamma-space-map-from-space-of-cat}
  Let $F\colon \cL \to \cK$ be a strong symmetric monoidal functor of
  small permutative categories and let $A$ be a commutative
  $\cK$-space monoid. Then $F^*(A)$ is a commutative $\cL$-space
  monoid, and there is a natural map $\gamma(F^*A) \to \gamma(A)$ of
  $\Gamma$-spaces.  \qed
\end{lemma}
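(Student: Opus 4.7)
The plan is to lift the strong symmetric monoidal functor $F$ to a natural transformation of $\Gamma$-categories $H(F)\colon \HcL \to \HcK$, identify the pulled-back diagram $(F^{*}A)_{S}$ with the pullback of $A_{S}$ along $H(F)_{S}$, and then take homotopy colimits.

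First, because $F$ is strong symmetric monoidal, the structure isomorphisms $F(\bld{l})\concat F(\bld{l'}) \iso F(\bld{l}\concat \bld{l'})$ and $\bld{0}_{\cK}\iso F(\bld{0}_{\cL})$ make the composite $F^{*}A = A\circ F$ inherit a commutative monoid structure in $\cS^{\cL}$: the multiplication and unit are given by conjugating those of $A$ with the structure isomorphisms of $F$, and the associativity, unitality, and commutativity axioms follow from the axioms for $A$ together with the coherence of $F$ as a symmetric monoidal functor. This handles the first claim.

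Next, for each based set $S$, I would define a functor $H(F)_{S}\colon \HcL(S) \to \HcK(S)$ by sending an object $(\bld{s},\sigma)$ with $\bld{s}\colon \cP(\ovl{S})\to \obj(\cL)$ to the composite $F\circ \bld{s}\colon \cP(\ovl{S}) \to \obj(\cK)$, equipped with the structure isomorphisms
\[
F(\bld{s}_{U})\concat F(\bld{s}_{V}) \xrightarrow{\iso} F(\bld{s}_{U}\concat \bld{s}_{V}) \xrightarrow{F(\sigma_{U,V})} F(\bld{s}_{U\cup V})
\]
obtained by combining the monoidal structure of $F$ with $\sigma$. The axioms (i)--(iv) of Definition~\ref{def:Shi-Shi-construction} for the result follow from the corresponding axioms for $(\bld{s},\sigma)$ together with the coherence diagrams for a strong symmetric monoidal functor. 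A direct inspection shows that the collection $\{H(F)_{S}\}_{S\in\Gamma^{\op}}$ is a natural transformation $\HcL \to \HcK$: the assignment $\alpha_{*}(\bld{s}) = \bld{s}\circ \alpha^{-1}$ commutes with post-composition by $F$.

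Now compare the diagrams defining $\gamma$. Unwinding definitions, for every $(\bld{s},\sigma)\in \HcL(S)$ there is an identification
\[
(F^{*}A)_{S}(\bld{s},\sigma) = \textstyle\prod_{i\in \ovl{S}} A(F\bld{s}_{i}) = A_{S}(H(F)_{S}(\bld{s},\sigma)) = (H(F)_{S})^{*}(A_{S})(\bld{s},\sigma),
\]
and this identification is compatible with morphisms, yielding a natural isomorphism of $\HcL(S)$-diagrams $(F^{*}A)_{S} \iso (H(F)_{S})^{*}(A_{S})$; this uses that the multiplication on $F^{*}A$ was defined via the monoidal structure of $F$, exactly the same ingredient entering $H(F)_{S}$. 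Applying $\hocolim_{\HcL(S)}$ and composing with the canonical map to the homotopy colimit over $\HcK(S)$ induced by $H(F)_{S}$ produces
\[
\gamma(F^{*}A)(S) \xrightarrow{\iso} \hocolim_{\HcL(S)} (H(F)_{S})^{*}(A_{S}) \longrightarrow \hocolim_{\HcK(S)} A_{S} = \gamma(A)(S).
\]

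Finally, one must check that this is a map of $\Gamma$-spaces, i.e., that for every $\alpha\colon S\to T$ in $\Gamma^{\op}$ the square involving the structure maps $\widetilde{\alpha}$ of Definition~\ref{def:Gamma-space-from-K-space-monoid} commutes. This is the only real point to verify and is where I expect the bookkeeping to live: the naturality of $H(F)$ in $S$ gives the required compatibility on the homotopy-colimit indexing categories, while the maps of diagrams~\eqref{eq:map-of-HKS-diagrams-components} for $A$ and for $F^{*}A$ agree under the identification above precisely because the multiplications on $A$ and on $F^{*}A$ are related by the monoidal constraints of $F$. Naturality in $A$ is immediate, since all constructions are functorial in the input commutative $\cK$-space monoid.
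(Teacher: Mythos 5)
Since the paper records this lemma as immediate (its statement ends with a tombstone and no proof is given), there is nothing in the text to compare against; your argument is the natural one and is essentially correct. The one point worth flagging is a strictness issue in the construction of $H(F)_S$. Condition (i) of Definition~\ref{def:Shi-Shi-construction} requires $\bld{s}_{\emptyset}=\bld{0}$ on the nose, but if $F$ is only strong (not strict) symmetric monoidal, the object function $F\circ\bld{s}$ sends $\emptyset$ to $F(\bld{0}_{\cL})$, which the unit constraint only makes \emph{isomorphic} to $\bld{0}_{\cK}$. So $H(F)_S(\bld{s},\sigma)$ as you wrote it need not lie in $\HcK(S)$. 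The standard repair is to redefine $(H(F)_S\bld{s})_{\emptyset}=\bld{0}_{\cK}$, declare the structure isomorphisms with one index $\emptyset$ to be identities (as condition (ii) demands), and verify that the remaining axioms follow from those of $(\bld{s},\sigma)$ together with the coherence diagrams of $F$ — a short check. With this patch in place, your identification $(F^{*}A)_S\iso (H(F)_S)^{*}(A_S)$ (which only involves the values at nonempty $U$) and the resulting map on homotopy colimits go through, and the compatibility with the structure maps $\widetilde{\alpha}$ reduces to the observation that the multiplication on $F^{*}A$ and the monoidal constraints of $F$ enter both sides of the diagram in the same way. For the sole application in the paper (Lemma~\ref{lem:units-correspond-under-diagonal}, where $F=\Delta\colon\cI\to\cJ$ is strictly monoidal), the subtlety does not even arise.
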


\section{Spectra of units}\label{sec:spectra-of-units}
Throughout this section we let $E$ be a positive fibrant commutative
symmetric ring spectrum. We will compare the various spectra of units
associated with $E$ and prove Theorem~\ref{thm:bglonegraded-introduction}
from the introduction. 

We begin by applying Lemma~\ref{lem:Gamma-space-map-from-space-of-cat}
and Lemma~\ref{lem:units-correspond-under-diagonal} to the units and
the graded units of Definition~\ref{def:units-graded-units} in order
to obtain a sequence of $\Gamma$-spaces
\begin{equation}\label{eq:seq-of-Gamma-sp}
  \gloneIof{E} \to \gloneJof{E} \to \bof{\cJ}.
\end{equation}

\begin{proposition}\label{prop:hty-fiber-seq-Gamma-spaces}
  The sequence~\eqref{eq:seq-of-Gamma-sp} is a homotopy fiber sequence
  of $\Gamma$-spaces.
\end{proposition}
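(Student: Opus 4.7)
The plan is to reduce the assertion to a statement about pointed spaces at level~$1^{+}$ and then to apply a Quillen-type theorem to identify the homotopy fiber of the right-hand map.

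All three $\Gamma$-spaces in the sequence are very special: $\gloneIof{E}$ and $\gloneJof{E}$ by Proposition~\ref{prop:gamma-A-special}, and $\bof{\cJ}$ by Corollary~\ref{cor:hocolim-as-space-over-BK} together with the observation from Example~\ref{ex:spectrum-of-J} that $\pi_{0}(B\cJ) \iso \mZ$ is a group. Their associated spectra are therefore $\Omega$-spectra after fibrant replacement, and a sequence of very special $\Gamma$-spaces is a stable homotopy fiber sequence if and only if the evaluation at $1^{+}$ yields a homotopy fiber sequence of pointed spaces. It thus suffices to show that
\[
(\GLoneIof{E})_{h\cI} \to (\GLoneJof{E})_{h\cJ} \to B\cJ
\]
is a homotopy fiber sequence of pointed spaces.

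The key step is to analyze the right-hand map via Quillen's Theorem~B in its homotopy-colimit version for space-valued functors. Since $E$ is positive fibrant, the adjoint structure maps $E_{m_{1}} \to \Omega^{n_{1}-m_{1}}E_{n_{1}}$ are weak equivalences for $m_{1} \geq 1$, so the $\cJ$-space $\GLoneJof{E}$ sends every morphism of $\cJ$ whose source and target have positive first entry to a weak equivalence. After replacing $\GLoneJof{E}$ by a positive cofibrant model one may assume homotopy invariance on all of $\cJ$ without affecting the homotopy colimit. The theorem then identifies the homotopy fiber of $(\GLoneJof{E})_{h\cJ} \to B\cJ$ over the image of any object $(\bld{m_{1}},\bld{m_{2}})$ with the space $\GLoneJof{E}(\bld{m_{1}},\bld{m_{2}})$. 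The basepoint of $B\cJ$ is the image of the monoidal unit $(\bld{0},\bld{0})$, which lies in the connected component of degree-$0$ objects (morphisms in $\cJ$ preserve $m_{2}-m_{1}$); one may therefore equivalently represent it by $(\bld{m},\bld{m})$ for any $m \geq 1$, and Lemma~\ref{lem:units-correspond-under-diagonal} yields $\GLoneJof{E}(\bld{m},\bld{m}) = \Delta^{*}(\GLoneJof{E})(\bld{m}) = \GLoneIof{E}(\bld{m})$.

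To match this fiber with $(\GLoneIof{E})_{h\cI}$, note that $\cI$ has $\bld{0}$ as an initial object, so $B\cI \simeq *$; and since the same positive-fibrancy argument shows that $\GLoneIof{E}$ sends all morphisms between positive objects to weak equivalences, the canonical map $\GLoneIof{E}(\bld{m}) \to (\GLoneIof{E})_{h\cI}$ is a weak equivalence. Tracing through the definitions identifies the resulting composite with the level-$1^{+}$ evaluation of~\eqref{eq:seq-of-Gamma-sp}, completing the argument. The main technical hurdle is the rigorous application of Quillen's Theorem~B to a space-valued functor on the non-directed category $\cJ$, and in particular the handling of the distinguished role of the monoidal unit $(\bld{0},\bld{0})$ on which $\GLoneJof{E}$ fails to be homotopically well behaved; passing to a positive cofibrant replacement (provided by the positive $\cJ$-model structure of Section~\ref{sec:group-compl-model-str}) is the natural device to sidestep this, and allows the argument to proceed in close analogy with the corresponding identification for ordinary $\cI$-spaces.
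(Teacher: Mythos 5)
Your overall strategy matches the paper's: reduce to the evaluation at $1^+$ using that all three $\Gamma$-spaces are very special, identify the homotopy fiber of $(\GLoneJof{E})_{h\cJ}\to B\cJ$ via a Quillen Theorem~B type argument, move the basepoint from $(\bld{0},\bld{0})$ to $(\bld{m},\bld{m})$ using the morphism $(\bld{0},\bld{0})\to(\bld{m},\bld{m})$ in $\cJ$, and invoke Lemma~\ref{lem:units-correspond-under-diagonal} together with contractibility of $B\cI$. That is exactly the skeleton of the paper's proof.

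However, there is a genuine gap in the way you propose to handle the distinguished role of $(\bld{0},\bld{0})$. You claim that ``after replacing $\GLoneJof{E}$ by a positive cofibrant model one may assume homotopy invariance on all of $\cJ$.'' This does not work: cofibrancy in the positive $\cJ$-model structure is a lifting condition and says nothing about the $\cJ$-space sending morphisms to weak equivalences. A positive $\cJ$-cofibrant object will typically be badly behaved at $(\bld{0},\bld{0})$ (and morphisms out of $(\bld{0},\bld{0})$ will not go to weak equivalences), so the hypothesis of the plain Quillen Theorem~B is still not satisfied after such a replacement. What you actually need is a version of Theorem~B that applies directly to a positive $\cJ$-fibrant commutative $\cJ$-space monoid; this is precisely what the paper uses, namely \cite[Lemma 4.12]{Sagave-S_diagram}, which gives that the square
\[
\xymatrix@-1pc{ \left(\GLoneJof{E}\right)(\bld{m},\bld{m}) \ar[r] \ar[d]
      & \left(\GLoneJof{E}\right)_{h\cJ} \ln \ar[d] \\
      \{(\bld{m},\bld{m})\} \ar[r] & B\cJ }
\]
is homotopy cartesian for $m\geq 1$ without any replacement. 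The basepoint is then handled exactly as you suggest, by observing that $(\bld{0},\bld{0})$ and $(\bld{m},\bld{m})$ lie in the same path component of $B\cJ$. If you replace the cofibrant-replacement step by the citation to \cite[Lemma 4.12]{Sagave-S_diagram} (or supply your own proof of that lemma), the rest of your argument goes through and coincides with the paper's.
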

\begin{remark}\label{rem:hty-fiber-squares}
  By definition, we call a sequence of $\Gamma$-spaces $ W \to X
  \xrightarrow{g} Y$ a \emph{homotopy fiber sequence} if $W$ maps by a
  stable equivalence into the base change of $\ovl{g}$ along the map $*
  \to Y$ where $\ovl{g}\colon \ovl{X} \to Y$ is a replacement of $g$
  by a stable fibration. All model category notions in this statement
  refer to the stable $Q$-model structure on $\Gamma$-spaces that we
  review in Section~\ref{sec:group-compl-model-str}.

  Some care is needed here because this model structure on
  $\Gamma$-spaces fails to be right proper~\cite[\S
  5.7]{Bousfield-F_Gamma-bisimplicial} and the above definition only
  provides a well defined notion of homotopy fiber squares in a right
  proper model category~\cite[II.8]{Goerss-J_simplicial}. There is no
  problem in the case at hand because all $\Gamma$-spaces
  in~\eqref{eq:seq-of-Gamma-sp} are (up to a level fibrant
  replacement) stably fibrant. This is one reason for why we do not try to
  replace the $\Gamma$-space $\bof{\cJ}$ by the stably equivalent
  $\Gamma$-space $\Gamma^{\op}(1^+,-)$ whose associated spectrum is
  the sphere spectrum $(S^0, S^1, \dots)$.
\end{remark}
\begin{proof}[Proof of Proposition~\ref{prop:hty-fiber-seq-Gamma-spaces}]
  A general fact about left Bousfield localizations of model
  categories~\cite[Proposition 3.4.7]{Hirschhorn_model} shows that a
  levelwise fibration between very special $\Gamma$-spaces is a
  fibration in the stable model structure on $\Gamma$-spaces. (This
  uses right properness of the level model structure, but not of the
  stable model structure.) So it is enough to show that the
  sequence~\eqref{eq:seq-of-Gamma-sp} gives a homotopy fiber sequence
  of spaces when evaluated at any object $S$ of $\Gamma^{\op}$. Since
  all $\Gamma$-space involved are special, it is enough to check this
  for $S = 1^+$.

  Because $\GLoneJof{E}$ is positive fibrant,~\cite[Lemma
  4.12]{Sagave-S_diagram} (which is in turn based on~\cite[Lemma
  IV.5.7]{Goerss-J_simplicial} leading to Quillen's theorem B) shows
  that the commutative square
  \begin{equation}\label{eq:BJ-hJ-hty-pullback}
    \xymatrix@-1pc{ \left(\GLoneJof{E}\right)(\bld{m},\bld{m}) \ar[r] \ar[d] 
      & \left(\GLoneJof{E}\right)_{h\cJ} \ar[d] \\
      \{(\bld{m},\bld{m})\} \ar[r] & B\cJ }\end{equation} 
  is homotopy cartesian if $m\geq 1$.  Since $B\cI$ is contractible,
  the same argument with $\cJ$ replaced by $\cI$ shows that
  $\left(\GLoneIof{E}\right)(\bld{m}) \to
  \left(\GLoneIof{E}\right)_{h\cI}$ is a weak equivalence. 
  By Lemma~\ref{lem:units-correspond-under-diagonal},
  $\left(\GLoneJof{E}\right)(\bld{m},\bld{m}) \iso
  \left(\GLoneIof{E}\right)(\bld{m})$, and the composite of the resulting
  map $\left(\GLoneJof{E}\right)(\bld{m},\bld{m}) \to (\GLoneIof{E})_{h\cI}$
  with the map $(\GLoneIof{E})_{h\cI} \to (\GLoneJof{E})_{h\cJ}$ is the top horizontal
  map in~\eqref{eq:BJ-hJ-hty-pullback}.
  Hence $(\GLoneIof{E})_{h\cI}$ maps
  by a weak equivalence into the homotopy fiber of $(\GLoneJof{E})_{h\cJ} \to B\cJ$
  over $(\bld{m},\bld{m})$. Since there is a
  morphism $(\bld{0},\bld{0}) \to (\bld{m},\bld{m})$ in $\cJ$, the same
  is true for $(\bld{0},\bld{0})$ instead of $(\bld{m},\bld{m})$. This verifies the claim because the object $(\bld{0},\bld{0})$ is the basepoint of $B\cJ$.
\end{proof}
\begin{definition}
  We define $\bglonegradedof{E}$ to be the homotopy cofiber of the
  map of spectra associated with the map of $\Gamma$-spaces
  $\gloneJof{E} \to \bof{\cJ}$.
\end{definition}
This $\bglonegradedof{E}$ is an interesting object for the following
reason: Since the map $\pi_0(\gloneJof{E}) \to \pi_0(\bof{\cJ}) \iso
\pi_0(\mS) \iso \mZ$ is in general not surjective, the homotopy fiber
sequence~\eqref{eq:seq-of-Gamma-sp} does not necessarily induce a
homotopy fiber sequence of spectra. If it would give a homotopy fiber
sequence of spectra, the spectrum $\bglonegradedof{E}$ would simply be
the suspension of the spectrum $(\gloneIof{E})(\mS)$ associated with
$\gloneIof{E}$. This suspension is usually denoted by~$\bgloneof{E}$.

The failure of preserving homotopy fiber sequences means that
$\bglonegradedof{E}$ and $\bgloneof{E}$ differ in general. The point
of Theorem~\ref{thm:bglonegraded-introduction} is that the ``graded''
version $\bglonegradedof{E}$ defined here extends the usual
$\bgloneof{E}$ in an interesting way:

\begin{proof}[Proof of Theorem~\ref{thm:bglonegraded-introduction}]
  Since the spectra associated with $\gloneJof{E}$ and $\bof{\cJ}$ are
  connective, $\bglonegradedof{E}$ is connective. The right adjoint of
  $X \mapsto X(\mS)$ is a right Quillen functor and may be viewed as a
  model for the connective cover~\cite[\S
  5]{Bousfield-F_Gamma-bisimplicial}. Rewriting the homotopy cofiber
  sequence defining $\bglonegradedof{E}$ as a homotopy fiber sequence of
  spectra
  \[\Omega(\bglonegradedof{E}) \to (\gloneJof{E})(\mS) \to
  (\bof{\cJ})(\mS),\] we can apply the right adjoint and compare with
  the homotopy fiber sequence~\eqref{eq:seq-of-Gamma-sp} to see that
  the connective cover of $\Omega(\bglonegradedof{E})$ has the
  homotopy type of
  $(\gloneIof{E})(\mS)$. Proposition~\ref{prop:low_dim_hty_of_bglonegraded}
  below verifies the claim about $\pi_0(\bglonegradedof{E})$.
\end{proof}

\begin{remark}
  One may also formulate Theorem~\ref{thm:bglonegraded-introduction}
  in terms of the homotopy fiber $\glonegraded{E} \simeq
  \Omega(\bglonegradedof{E})$ of the map of spectra associated with
  $\gloneJof{E} \to \bof{\cJ}$ by saying that $\glonegraded{E}$ is a
  ($-2$)-connected spectrum with $(\gloneIof{E})(\mS)$ as its
  connective cover. Another reformulation is to say that
  $\bgloneof{E}$ is the $0$-connected cover of $\bglonegradedof{E}$.
\end{remark}
We recall the notion of \emph{periodicity} from  Theorem~\ref{thm:bglonegraded-introduction}:

\begin{definition}
  The periodicity $n_E \in \mN_{0}$ of $E$ is defined to be zero if all units in
  the underlying multiplicative graded monoid of $\pi_*(E)$ have degree
  $0$. Otherwise $n_E$ is the minimal positive integer such that
  $\pi_*(E)$ has a unit of degree $n_E$.
\end{definition}

\begin{proposition}\label{prop:low_dim_hty_of_bglonegraded}
  There is an isomorphism of exact sequences
 \[
  \xymatrix@-1pc{  \pi_1(\bof{\cJ}) \ar[r] \ar[d]_{\iso}&
    \pi_1(\bglonegradedof{E}) \ar[r] \ar[d]_{\iso}& \pi_0(\gloneJof{E})
    \ar[r] \ar[d]_{\iso}&\pi_0(\bof{\cJ}) \ar[r] \ar[d]_{\iso}&
    \pi_0(\bglonegradedof{E}) \ar[r]  \ar[d]_{\iso}& 0\\
     \{\pm 1\} \ar[r] & (\pi_0E)^{\times} \ar[r] &
    (\pi_*E)^{\times}\!/\{\pm 1\} \ar[r] & \mathbb Z \ar[r] & \mathbb
    Z/n_E \mathbb Z\ar[r] & 0.  }
  \]
\end{proposition}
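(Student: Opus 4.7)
The plan is to extract the top row from the long exact sequence of stable homotopy groups associated with the cofiber sequence $(\gloneJof{E})(\mS) \to \bof{\cJ}(\mS) \to \bglonegradedof{E}$ defining $\bglonegradedof{E}$, and then match each term and each map with its counterpart in the bottom row.

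First, Example~\ref{ex:spectrum-of-J} gives $\pi_0(\bof{\cJ}) \iso \mZ$ and $\pi_1(\bof{\cJ}) \iso \mZ/2$, with the latter generated by the stable Hopf map $\eta$. Since $\gloneJof{E}$ is very special by Proposition~\ref{prop:gamma-A-special}, the group $\pi_0(\gloneJof{E})$ coincides with $\pi_0((\GLoneJof{E})_{h\cJ})$, and combining \cite[Corollary 4.17]{Sagave-S_diagram} with the identification $\pi_{0,*}(\GLoneJof{E}) \iso (\pi_*E)^{\times}$ recalled in Section~\ref{subsec:graded-units} yields $\pi_0(\gloneJof{E}) \iso (\pi_*E)^{\times}/\{\pm 1\}$, where the $\{\pm 1\}$-action is the additive sign action. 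Unwinding Definition~\ref{def:Gamma-space-from-K-space-monoid}, the augmentation $\pi_0(\gloneJof{E}) \to \pi_0(\bof{\cJ})$ sends the class of a unit supported at $(\bld{n_1},\bld{n_2})$ to $n_2 - n_1$, so it is the degree map. Its image is $n_E\mZ$ by definition of $n_E$, and exactness produces $\pi_0(\bglonegradedof{E}) \iso \mZ/n_E\mZ$.

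For $\pi_1(\bglonegradedof{E})$ I would invoke Theorem~\ref{thm:bglonegraded-introduction}, already established, which identifies $\gloneof{E}$ with the connective cover of $\Omega\bglonegradedof{E}$; hence $\pi_1(\bglonegradedof{E}) \iso \pi_0(\gloneof{E}) \iso (\pi_0E)^{\times}$. Naturality of the long exact sequence along the comparison map $\gloneIof{E} \to \gloneJof{E}$ from~\eqref{eq:seq-of-Gamma-sp} then identifies $\pi_1(\bglonegradedof{E}) \to \pi_0(\gloneJof{E})$ with the composite $(\pi_0E)^{\times} \hookrightarrow (\pi_*E)^{\times} \to (\pi_*E)^{\times}/\{\pm 1\}$, whose kernel is the subgroup $\{1_E, -1_E\} \subseteq (\pi_0E)^{\times}$ (the orbit of $1_E$ under the sign action).

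The most delicate step, which I expect to be the main obstacle, is identifying the map $\pi_1(\bof{\cJ}) \to \pi_1(\bglonegradedof{E})$ with the map $\{\pm 1\} \to (\pi_0E)^{\times}$ sending the generator to $-1_E$. Formal exactness only forces the image to lie in the kernel $\{1_E,-1_E\}$ of the next map, so pinning down the target requires computing how $\eta$ acts on the unit. By naturality along the ring map $\mS \to E$, it suffices to treat the universal case $E = \mS$, where under Barratt--Priddy--Quillen the generator of $\pi_1(\bof{\cJ}) \iso \pi_1(\mS)$ corresponds to the non-trivial class in $B\Sigma_2 \subset B\cJ$; this class acts via the symmetry swap on $\mS \sm \mS$, which on the unit $S^2 = S^1 \sm S^1$ has degree $-1$, yielding the desired identification. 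Once all three vertical identifications are shown to intertwine the horizontal maps, the isomorphism of exact sequences follows, and exactness of the bottom row becomes a formal consequence of the resulting algebraic descriptions of the groups and maps involved.
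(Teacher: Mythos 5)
Your proof follows essentially the same skeleton as the paper's: extract the long exact sequence from the cofiber/fiber sequence $\Omega\bglonegradedof{E} \to (\gloneJof{E})(\mS) \to \bof{\cJ}(\mS)$, identify $\pi_0(\gloneJof{E}) \iso (\pi_*E)^\times/\{\pm1\}$ via \cite[Corollary 4.17, Proposition 4.26]{Sagave-S_diagram}, read off $\pi_0(\bglonegradedof{E}) \iso \mZ/n_E\mZ$ from the image of the degree map, and use Theorem~\ref{thm:bglonegraded-introduction} to get $\pi_1(\bglonegradedof{E}) \iso \pi_0(\gloneIof{E}) \iso (\pi_0E)^\times$. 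All of this matches the paper.

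The one place you diverge is in identifying the map $\pi_1(\bof{\cJ}) \to \pi_1(\bglonegradedof{E})$ with the map sending the generator to $-1_E$. You attempt a direct computation: reduce to $E=\mS$ by naturality, then appeal to the swap on $S^1 \sm S^1$ having degree $-1$. The paper instead simply cites \cite[Proposition 4.24]{Sagave-S_diagram}, which already identifies the $\{\pm1\}$-action (arising from the $\pi_1(B\cJ)$-monodromy on $\pi_0$ of the fiber of $(\OmegaJof{E})_{h\cJ} \to B\cJ$) with the additive sign action of $\pi_*(E)$; the connecting homomorphism in the long exact sequence is then this monodromy applied to the basepoint. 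Your version reaches the same conclusion, and the naturality reduction to $\mS$ is a clean idea, but as written it jumps from ``the transposition acts on $\mS\sm\mS$ with degree $-1$'' to the value of the connecting map without spelling out how the monodromy of the fibration $(\GLoneJof{\mS})_{h\cJ}\to B\cJ$ at the object $(\bld m,\bld m)$ is computed from the $\cJ$-automorphisms of $(\bld m,\bld m)$ and the structure maps of $\OmegaJof{\mS}$. That is exactly what \cite[Proposition 4.24]{Sagave-S_diagram} makes precise, so the paper's citation is doing real work here. A minor caveat in your write-up: the kernel of $(\pi_0E)^\times \to (\pi_*E)^\times/\{\pm1\}$ is the subgroup generated by $-1_E$, which is $\{1_E\}$ when $-1_E = 1_E$ in $\pi_0E$; this does not affect the conclusion but the set $\{1_E,-1_E\}$ as you wrote it is not literally the kernel in that degenerate case.
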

\begin{proof}
  By definition of the $\Gamma$-space $\gloneJof{E}$ we know
  $\pi_0(\gloneJof{E}) \iso \pi_0(\GLoneJof{E})_{h\cJ}$. The latter
  group is isomorphic to $\pi_*(E)^{\times}/\{\pm 1\}$
  by~\cite[Corollary 4.17]{Sagave-S_diagram} and~\cite[Proposition
  4.26]{Sagave-S_diagram}. The image of the map from
  $\pi_0(\gloneJof{E})$ to $\pi_0(\bof{\cJ}) \iso \pi_0(\mZ)\iso \mZ$
  is $n_E\mZ$ since it is the set of all integers that arise as the
  degree of a unit in the graded monoid $\pi_*(E)$. Hence
  $\pi_0(\bglonegradedof{E})\iso \mZ/n_E\mZ$.

  Since $(\gloneIof{E})(\mS)$ is the connective cover of
  $\Omega(\bglonegradedof{E})$, we obtain isomorphisms
  $\pi_1(\bglonegradedof{E}) \iso \pi_0(\gloneIof{E}) \iso
  \pi_0(E)^{\times}$. The action of $\pi_1(\bof{\cJ}) \iso \pi_1(\mS)
  \iso \mZ/2$ is identified with the sign action in~\cite[Proposition
  4.24]{Sagave-S_diagram}
\end{proof}

\begin{remark}The proposition implies that $\bglonegraded$ detects
  periodicity: If $E$ is periodic and $e\to E$ is its connective
  cover, $\pi_0(\bglonegraded(e \to E))$ is the surjection $\mZ \to
  \mZ/n_E\mZ$. In contrast, the map $\gloneIof{e} \to \gloneIof{E}$ is
  a stable equivalence. Its associated map of spectra is equivalent to
  the connective cover of $\Omega\bglonegraded(e \to E)$.
\end{remark}

\begin{remark}\label{rem:Hopf-map-on-bglonegraded}
  Proposition~\ref{prop:low_dim_hty_of_bglonegraded} also shows that
  the stable Hopf map (as a generator of $\pi_1(\mS)$) gives rise
  to the sign action on $(\pi_0E)^{\times}$: On $\pi_1$, the map $\mS
  \to \bglonegradedof{E}$ sends the generator to $-1 \in
  (\pi_0E)^{\times}$. This relation is not visible in the ordinary
  $\bgloneof{E}$.
\end{remark}

In most cases, $\bglonegradedof{E}$ is not just
a product of $\bgloneof{E}$ with an Eilenberg Mac-Lane spectrum on
$\mZ/n_E\mZ$:
\begin{lemma}\label{lem:first-k-inv}
  If the action by $\{\pm 1\}$ on the group $\pi_0(E)^{\times}$
  induced by the additive structure of the ring $\pi_0(E)$ is
  non-trivial, then the spectrum $\bglonegradedof{E}$ has a
  non-vanishing first $k$-invariant.
\end{lemma}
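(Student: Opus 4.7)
The plan is to argue by contradiction, using the splitting of the first Postnikov stage together with Remark~\ref{rem:Hopf-map-on-bglonegraded}. Write $A = \pi_0(E)^{\times}$ and consider the first Postnikov section $\bglonegradedof{E}[0,1]$, which sits in a homotopy fiber sequence of spectra
\[ \Sigma HA \to \bglonegradedof{E}[0,1] \to H(\mZ/n_E\mZ) \]
classified by the first $k$-invariant in $[H(\mZ/n_E\mZ), \Sigma^2 HA]$. If this class were zero, the sequence would split and the truncation $\bglonegradedof{E}[0,1]$ would be equivalent to $H(\mZ/n_E\mZ) \vee \Sigma HA$.

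Next I would analyse the canonical map of spectra $\mS \to \bglonegradedof{E}$ featured in Remark~\ref{rem:Hopf-map-on-bglonegraded}. Composing with the truncation gives a map $\mS \to \bglonegradedof{E}[0,1]$. Under the hypothetical splitting, this decomposes into two components: one to $H(\mZ/n_E\mZ)$, which is trivial on $\pi_1$ since $\pi_1(H(\mZ/n_E\mZ)) = 0$, and one to $\Sigma HA$. The key observation is that maps of spectra $\mS \to \Sigma HA$ are classified by $\pi_0(\Sigma HA) \iso \pi_{-1}(HA) = 0$, so this second component is also null. Hence the induced map $\pi_1(\mS) \to \pi_1(\bglonegradedof{E}) \iso A$ would vanish.

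However, by Remark~\ref{rem:Hopf-map-on-bglonegraded}, the generator $\eta \in \pi_1(\mS) \iso \mZ/2$ is sent to $-1 \in A$ under precisely this map, which provides the desired contradiction once one knows $-1 \ne 1$ in $A$. This latter fact is exactly what the hypothesis of the lemma asserts: the multiplication by $-1$ action on $A$ coming from the additive structure of $\pi_0(E)$ is non-trivial, so $-1 \ne 1$ in $A$.

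I do not expect a serious obstacle here; the argument is a formal consequence of Postnikov theory for connective spectra and the computation of $\pi_*$ carried out in Proposition~\ref{prop:low_dim_hty_of_bglonegraded}. The only point requiring a little care is that $\bglonegradedof{E}$ is connective (which follows from Theorem~\ref{thm:bglonegraded-introduction}), so that its Postnikov tower is defined in the usual way and the first Postnikov section is indeed governed by a $k$-invariant in $[H\pi_0, \Sigma^2 H\pi_1]$.
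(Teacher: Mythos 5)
Your proof is correct and follows essentially the same route as the paper: assume the first $k$-invariant vanishes, conclude $P_1(\bglonegradedof{E}) \simeq H(\mZ/n_E\mZ) \vee \Sigma H(\pi_0(E)^{\times})$, and derive a contradiction from the non-triviality on $\pi_1$ of the canonical map $\mS \to \bglonegradedof{E}$ established in Proposition~\ref{prop:low_dim_hty_of_bglonegraded} and Remark~\ref{rem:Hopf-map-on-bglonegraded}. You merely spell out in more detail than the paper why the splitting forces $\pi_1(\mS) \to \pi_1(P_1)$ to vanish, namely $\pi_1(H(\mZ/n_E\mZ))=0$ and $[\mS,\Sigma H\pi_0(E)^{\times}]=\pi_{-1}(H\pi_0(E)^{\times})=0$, and why the hypothesis is equivalent to $-1\neq 1$ in $\pi_0(E)^{\times}$.
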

\begin{proof}
  If the first $k$-invariant of $\bglonegradedof{E}$ vanishes, the
  Postnikov section $P_1(\bglonegradedof{E})$ decomposes as the
  product of suspended Eilenberg-Mac Lane spectra. In view of
  Proposition~\ref{prop:low_dim_hty_of_bglonegraded}, this is a
  contradiction because the map from $\mS$ to
  $P_1(\bglonegradedof{E})$ that is the surjection $\mZ \to
  \mZ/n_E\mZ$ on $\pi_0$ is non-trivial on $\pi_1$ by assumption.
\end{proof}

\begin{example} The last Lemma applies to the periodic complex
  $K$-theory spectrum~$KU$. We have $\pi_0\bglonegradedof{KU} \iso
  \mZ/2\mZ$ and $\pi_1\bglonegradedof{KU} \iso \pi_0(KU)^{\times} \iso
  \mZ/2\mZ$. So the first $k$-invariant of $\bglonegradedof{KU}$ is a
  non-zero element of $H^2(H\mZ/2; \mZ/2)$ and hence
  equals~$\text{Sq}^2$.
\end{example}

\begin{remark}
  We give a brief sketch of how the map $\mS \to \bglonegradedof{E}$
  resulting from the construction of $\bglonegradedof{E}$ can be
  understood in terms of more well known maps. A detailed account of
  this will be included in~\cite{Sagave-S_Thom-graded-units}.
  Naturality of $\gloneJ$ implies that $\mS \to \bglonegradedof{E}$
  factors as the composite of $\mS \to \bglonegradedof{\,\mS}$ and the
  map $\bglonegradedof{\,\mS} \to \bglonegradedof{E}$ induced by $\mS
  \to E$. So we may restrict to the case $E=\mS$. 

  Let $\mathcal W = \mathcal{O}^{-1}\mathcal{O}$ be Quillen's
  localization construction on the category $\cO$ of standard inner
  product spaces $\mathbb R^n$ and linear isometric isomorphisms.  The
  category $\cW$ is the orthogonal counterpart of $\cJ$ and is studied
  in~\cite{Sagave-S_Virtual-vector}.  It is related to orthogonal
  spectra in the same way as $\cJ$ is related to symmetric spectra,
  and to the orthogonal version of $\cI$-spaces studied by
  Lind~\cite{Lind-diagram} in the same way as $\cJ$ is related to
  $\cI$.  (The $\cW$-spaces considered here and
  in~\cite{Sagave-S_Virtual-vector} should not be confused with the
  continuous functors on based spaces homeomorphic to finite
  CW-complexes studied under this name elsewhere,
  e.g. in~\cite[Example 4.6]{MMSS}.)

   Since $\mS$ can be viewed as an orthogonal spectrum, it has
  an associated commutative $\mathcal W$-space monoid of units and an
  associated $\Gamma$-space $\gl^{\cW}_1\!\!{\,\mS}$ augmented over
  $\bof{\mathcal W}$. The cofiber of the map of spectra induced by the
  augmentation is equivalent to $\bglonegradedof{E}$, an it follows
  that $\mS \to \bglonegradedof{\,\mS}$ factors as the map $\mS \simeq
  \bof{\cJ} \to \bof{\mathcal W} \simeq \mathrm{ko}$ induced by the
  unit and a map $\mathrm{ko} \simeq \bof{\mathcal W} \to
  \bglonegradedof{\,\mS}$. The underlying $E_{\infty}$ space of the
  $0$-connected cover of $\mathrm{ko}$ is $BO$, the underlying
  $E_{\infty}$ space of the $0$-connected cover $\bgloneof{\,\mS}$ of
  $\bglonegradedof{\,\mS}$ is $BF$, and the map of underlying
  $E_{\infty}$ spaces of the $0$-connected covers is the stable
  $J$-homomorphism $BO \to BF$.
\end{remark}

\section{Grouplike graded commutative spaces}\label{sec:group-compl-model-str}
In Section~\ref{sec:Gamma-spaces-from-monoids} we observed that a
grouplike commutative $\cJ$-space monoid $A$ gives rise to a map of
very special $\Gamma$-spaces $\gamma(A) \to \bof{\cJ}$. In this
section we explain how this construction induces an equivalence
between the homotopy categories of grouplike commutative $\cJ$-space
monoids and $\Gamma$-spaces over the $\Gamma$-space $\bof{\cJ}$ and
why this is relevant for the graded units $\gloneJ$.

We start by recalling the definition of the \emph{positive $\cJ$-model
  structure} on commutative $\cJ$-space monoids from~\cite[\S
4]{Sagave-S_diagram}. The weak equivalences are the $\cJ$-equivalences
of Definition~\ref{def:K-equivalences}, that is, the maps which induce
weak equivalences on homotopy colimits over $\cJ$.  A map $f\colon A
\to B$ in $\cC\cS^{\cJ}$ is a \emph{positive $\cJ$-fibration} if every
morphism $(\beta_1, \beta_2, \sigma)\colon (\bld{m_1},\bld{m_2})\to
(\bld{n_1},\bld{n_2})$ in $\cJ$ with $m_1 \geq 1$ induces a homotopy
cartesian square
\[
\xymatrix@-1pc{
  A(\bld{m_1},\bld{m_2}) \ar[d] \ar[r] & A(\bld{n_1},\bld{n_2}) \ar[d] \\
  B(\bld{m_1},\bld{m_2}) \ar[r]& B(\bld{n_1},\bld{n_2}) 
}
\]
in which the vertical maps are Kan fibrations.  The \emph{positive
  $\cJ$-cofibrations} are determined by a lifting property. It is
shown in~\cite[Proposition 4.10]{Sagave-S_diagram} that these classes
of maps form a (cofibrantly generated proper) model structure on
$\cC\cS^{\cJ}$. The analogy to $\cI$-spaces discussed in the introduction and the following
two results indicate that this model structure gives an interesting homotopy 
theory of commutative $\cJ$-space monoids. 
\begin{proposition}[{\cite[Proposition 4.23]{Sagave-S_diagram}}]\label{prop:OmegaJ-right-Quillen}
  There is a Quillen adjunction
  \[ \mS^{\cJ}[-] \colon \cC\cS^{\cJ}\rightleftarrows
  \cC\Spsym\colon\OmegaJ\] with respect to the positive $\cJ$-model
  structure and the positive stable model structure on the category of
  commutative symmetric ring spectra introduced in~\cite{MMSS}.
\end{proposition}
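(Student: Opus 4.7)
My plan is to construct the adjunction via a Kan extension, lift it to commutative monoids using symmetric monoidality, and then verify the Quillen property by analyzing what $\OmegaJ$ does to positive stable (acyclic) fibrations.

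\textbf{Constructing the adjunction.} I would first define a functor $F \colon \cJ \to \Spsym$ sending $(\bld{m_1},\bld{m_2})$ to the free symmetric spectrum $F_{m_1}S^{m_2}$ characterized by the universal property $\Hom_{\Spsym}(F_{m_1}S^{m_2}, E) \iso \Omega^{m_2}E_{m_1}$ natural in $E$. A morphism $(\beta_1,\beta_2,\sigma)$ in $\cJ$ should act so that precomposition on Hom-spaces recovers the explicit formula $(\pi_1)_* \circ \sigma \circ (-\sm\id) \circ (\pi_2^{-1})_*$ defining the $\cJ$-action on $\OmegaJof{E}$. The isomorphisms $F_{m_1}S^{m_2} \sm F_{n_1}S^{n_2} \iso F_{m_1+n_1}S^{m_2+n_2}$ and the way $\cJ$ is built from concatenation make $F$ strong symmetric monoidal with respect to $\concat$ and $\sm$. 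Left Kan extension along $F$ then produces a symmetric monoidal adjunction $\ssets^{\cJ} \rightleftarrows \Spsym$ whose right adjoint is $\OmegaJ$. Since the left adjoint is strong symmetric monoidal, this lifts to an adjunction $\cC\cS^{\cJ} \rightleftarrows \cC\Spsym$ with left adjoint $\mS^{\cJ}[-]$.

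\textbf{Checking the Quillen condition.} I would verify that $\OmegaJ$ sends positive stable fibrations to positive $\cJ$-fibrations and positive stable acyclic fibrations to $\cJ$-equivalences. For a positive stable fibration $E \to F$ between positive fibrant objects, the map $E_{m_1} \to F_{m_1}$ is a Kan fibration for $m_1 \geq 1$, so $\Omega^{m_2}E_{m_1} \to \Omega^{m_2}F_{m_1}$ is a Kan fibration. The homotopy cartesian requirement for the square
\[
\xymatrix@-1pc{\Omega^{m_2}E_{m_1} \ar[r] \ar[d] & \Omega^{n_2}E_{n_1} \ar[d] \\ \Omega^{m_2}F_{m_1} \ar[r] & \Omega^{n_2}F_{n_1}}
\]
reduces to showing that the horizontal maps are weak equivalences, which in turn follows because $(\beta_1,\beta_2,\sigma)$ acts through iterated adjoint structure maps of $E$ and $F$, and these are weak equivalences precisely because $E$ and $F$ are positive $\Omega$-spectra. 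Preservation of acyclic fibrations is easier: such a map is a level equivalence above level zero, so the induced map of $\cJ$-spaces is a level equivalence and hence a $\cJ$-equivalence. By the standard Quillen criterion, this suffices.

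\textbf{Main obstacle.} The subtle step is the homotopy cartesian verification. One has to unpack the $\cJ$-action carefully, decompose an arbitrary morphism $(\beta_1,\beta_2,\sigma)$ into a composite of order-preserving injections and identity-on-complement bijections, and identify the corresponding map on $\Omega^{\bullet}E_{\bullet}$ as a composite of (i) structure maps of $E$ and (ii) rearrangement-of-coordinates isomorphisms. The positive fibrancy hypothesis ($m_1 \geq 1$) guarantees that each instance of (i) is a weak equivalence, so the composite is as well. Once this is checked, the positive $\cJ$-fibration characterization is satisfied, and the adjunction is Quillen.
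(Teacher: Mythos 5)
Your overall plan is sound and (modulo reorganisation) is essentially how this adjunction is set up: one defines a strong symmetric monoidal functor $\cJ \to \Spsym$, $(\bld{m_1},\bld{m_2}) \mapsto F_{m_1}S^{m_2}$, gets a monoidal adjunction between $\cJ$-spaces and symmetric spectra, and lifts it along the free-forgetful adjunctions to commutative monoids; the Quillen condition is then inherited because (acyclic) fibrations in both positive model structures on commutative monoids are created by the forgetful functors. So the construction half of your argument is correct.

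The verification half is also correct, but two places deserve tightening. First, ``the homotopy cartesian requirement reduces to showing that the horizontal maps are weak equivalences'' overstates things: having weak equivalences horizontally is a \emph{sufficient} condition for a square of Kan fibrations to be homotopy cartesian, not an equivalent one. Your argument works because that sufficient condition holds for positive fibrant $E,F$, but you should phrase it as such; and since you only check preservation of fibrations between fibrant objects, you must explicitly invoke the refined criterion that a right adjoint which preserves acyclic fibrations and preserves fibrations between fibrant objects is right Quillen. Second, for acyclic fibrations you only conclude ``$\cJ$-equivalence,'' whereas what you actually have is stronger and what you actually need: a positive level acyclic Kan fibration of $\cJ$-spaces (note it is only a level equivalence at objects $(\bld{m_1},\bld{m_2})$ with $m_1 \geq 1$, not at all $\cJ$-objects), which is precisely an acyclic fibration in the positive $\cJ$-model structure since Bousfield localization does not change acyclic fibrations.

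Finally, a remark on economy: the fibrancy restriction is avoidable. An arbitrary positive stable fibration $E \to F$ already satisfies that the squares $E_{m}\to \Omega E_{m+1}$ over $F_m \to \Omega F_{m+1}$ are homotopy cartesian for $m\geq 1$; every morphism in $\cJ$ starting from a positive object factors as an isomorphism followed by iterated standard inclusions $(\bld{m_1},\bld{m_2}) \to (\bld{m_1+1},\bld{m_2+1})$, and the induced square on $\Omega^{\bullet}E_{\bullet}$ for such an inclusion is $\Omega^{m_2}$ applied to the positive-fibration square above. Since $\Omega^{m_2}$ preserves homotopy pullbacks and homotopy cartesian squares compose, $\OmegaJ$ preserves \emph{all} positive stable fibrations, not just those between fibrant objects, and the ordinary criterion for a Quillen adjunction applies directly. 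This gives a cleaner argument than yours and sidesteps the need for the refined criterion, but your version is valid as long as you are careful to cite the criterion you are actually using.
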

\begin{theorem}[{\cite[Theorem 1.7]{Sagave-S_diagram}}]
  There is a chain of Quillen equivalences between the positive
  $\cJ$-model structure on $\cC\cS^{\cJ}$ and the category of
  $E_{\infty}$ spaces over $B\cJ$.
\end{theorem}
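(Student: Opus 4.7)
The plan is to break the claimed chain into three Quillen equivalences that together bridge strictly commutative $\cJ$-space monoids and $E_{\infty}$ spaces over $B\cJ$, following the same general strategy that works for $\cI$-spaces (where $B\cI$ is contractible) but systematically remembering the augmentation to $B\cJ$.

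First, I would establish a Quillen equivalence
\[
\cS^{\cJ} \rightleftarrows \cS / B\cJ
\]
between $\cJ$-spaces with a $\cJ$-model structure (whose weak equivalences are the $\cJ$-equivalences of Definition~\ref{def:K-equivalences}) and spaces over $B\cJ$. The left adjoint is the homotopy colimit $X \mapsto (X_{h\cJ} \to B\cJ)$, with target map induced by $X \to *$. The right adjoint sends an object $Y \to B\cJ$ to the $\cJ$-space $(\bld{m_1},\bld{m_2}) \mapsto Y \times_{B\cJ} B(\cJ \downarrow (\bld{m_1},\bld{m_2}))$. That this is a Quillen equivalence is essentially tautological once one has the correct model structures, since being a $\cJ$-equivalence is by definition the same as inducing an equivalence of homotopy colimits, and the derived counit is then a levelwise equivalence by Quillen's Theorem~B applied to the categorical slices of $\cJ$.

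Second, I would rectify $E_{\infty}$ structures to strict commutativity on the $\cJ$-space side, obtaining a Quillen equivalence between $\cO$-algebras in $\cS^{\cJ}$ (for a cofibrant $E_{\infty}$ operad $\cO$) and $\cC\cS^{\cJ}$ with its positive $\cJ$-model structure, via the usual free-forgetful adjunction along $\cO \to \mathrm{Com}$. This step is precisely where positivity enters: one must show that for cofibrant $A$ the natural map $\cO(n) \times_{\Sigma_n} A^{\boxtimes n} \to A^{\boxtimes n}/\Sigma_n$ is a weak equivalence, which fails in general but holds after restricting to positive levels, because the iterated $\boxtimes$-product of cells indexed in positive degrees has free $\Sigma_n$-action on the relevant components of $\cJ$. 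Third, I would transfer the first Quillen equivalence to $\cO$-algebras on both sides (using that $(-)_{h\cJ}$ is lax symmetric monoidal and sends $\boxtimes$ to cartesian product up to $\cJ$-equivalence), yielding a Quillen equivalence between $\cO$-algebras in $\cS^{\cJ}$ and $\cO$-algebras in $\cS/B\cJ$, which is one standard model for $E_{\infty}$ spaces over $B\cJ$. Composing the three equivalences gives the claimed chain.

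The hard part will be the rectification in step two. One genuinely has to verify that the positive $\cJ$-model structure interacts well with the symmetric power functors, i.e., that a positive $\cJ$-cofibration of $\cO$-algebras forgets to a ``sufficiently flat'' map of underlying $\cJ$-spaces in the sense that its iterated $\Sigma_n$-quotients of $\boxtimes$-powers compute the correct homotopy type. The analogous statement for $\cI$-spaces already uses the strict freeness of $\Sigma_n$-actions on positive iterated ordered concatenations $\bld{m_1}\concat\cdots\concat\bld{m_n}$ when each $m_i \geq 1$; for $\cJ$, the two-entry structure of the objects means one has to analyze the automorphisms of concatenations in $\cJ$ more carefully, but the same underlying freeness of the $\Sigma_n$-action on ordered $n$-fold concatenations of positive objects should suffice. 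The remaining steps are formal once the machinery of $\cJ$-spaces is in place and the positive $\cJ$-model structure is known to be monoidal.
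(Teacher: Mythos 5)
The overall strategy is close to the one used in \cite{Sagave-S_diagram}, but your first step does not exist as stated, and this is a genuine gap rather than a cosmetic one. The Bousfield--Kan homotopy colimit $X \mapsto X_{h\cJ}$ is \emph{not} a left adjoint (it involves a simplicial replacement and a diagonal), so there is no single Quillen adjunction $\cS^{\cJ}\rightleftarrows\cS/B\cJ$ with $\hocolim$ as left adjoint and the pullback $Y\mapsto\bigl((\bld{m_1},\bld{m_2})\mapsto Y\times_{B\cJ}B(\cJ\downarrow(\bld{m_1},\bld{m_2}))\bigr)$ as right adjoint. What the cited source actually does, and what is recalled explicitly in this paper in \eqref{eq:diagram-spaces-graded-spaces} and Lemma~\ref{lem:HcJS-spaces-vs-graded-sp}, is insert the intermediate comma category $\cS^{\cJ}/E\cJ$ with $E\cJ = B(\cJ\downarrow -)$, giving a zig-zag
\[
\cS^{\cJ}\ \leftrightarrows\ \cS^{\cJ}/E\cJ\ \rightleftarrows\ \cS/B\cJ
\]
in which the two left adjoints point \emph{outward} from the middle category (the forgetful functor on the left, $\colim_{\cJ}$ on the right). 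These two adjunctions cannot be composed into one; the functor you wrote down as a right adjoint is the composite of a left adjoint $L_1$ with a right adjoint $R_2$ and is therefore neither. The role of the replacement $X\mapsto X\times E\cJ$ is precisely to make the strict colimit compute the homotopy colimit, which is why $\colim_{\cJ}$ appears instead of $\hocolim_{\cJ}$. So the ``essentially tautological'' claim for step one hides the real content: without the middle category you do not have an adjunction at all.

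With that correction, your outline is on the right track. Step two (rectification of $E_{\infty}$ structures to strict commutativity via the positive $\cJ$-model structure, with positivity entering through freeness of $\Sigma_n$-actions on iterated concatenations of positive objects of $\cJ$) is the correct ingredient and matches the strategy in \cite{Sagave-S_diagram}. Step three must then lift the corrected three-term zig-zag to algebras over an $E_{\infty}$ operad, which is more work than you indicate: one has to check that the middle functor $X\mapsto X\times E\cJ$ and the pullback along $E\cJ\to B\cJ$ are lax symmetric monoidal so that they lift to operad algebras, and that the lifted left adjoints remain Quillen. The cited source does this in its Section~13; your proposal would need to spell it out for the corrected chain. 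One small further slip: Quillen's Theorem~B is not the mechanism for the first adjunction being an equivalence; rather one uses that each $\cJ\downarrow(\bld{m_1},\bld{m_2})$ has contractible nerve (it has a terminal object), hence $E\cJ\to *$ is a level equivalence and right properness then applies.
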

In the theorem, $E_{\infty}$ spaces are spaces with an action of the
Barratt-Eccles operad. It uses that the latter acts on the classifying
space of a permutative category.

\subsection{Group completion for commutative
  \texorpdfstring{$\cJ$}{J}-space monoids}\label{subsec:group-completion}
The \emph{recognition principle} for infinite loop spaces states that
grouplike $E_{\infty}$ spaces are equivalent to connective
spectra~\cite{Boardman-V_homotopy-invariant,May_geometry}. Together
with the last theorem, this suggests that grouplike commutative
$\cJ$-space monoids should be equivalent to connective spectra over the
spectrum associated with the permutative category $\cJ$. 

We will prove this by first building a model category that is closely
related to group completion and whose homotopy category is equivalent
to the subcategory of grouplike objects in $\Ho(\cC\cS^{\cJ})$. In a
second step, we show that there is a chain of Quillen equivalences
relating this model category to the category of $\Gamma$-spaces over
$\bof{\cJ}$ with a stable model structure.  This implies the above
statement about the equivalence of the respective homotopy
categories. At the same time, it shows the much stronger statement
that homotopy classes of maps and the homotopy types of mapping spaces
coincide.

A similar description of the classical recognition principle in terms
of Quillen equivalences is given in~\cite{Sagave-S_group-compl}.
Working in the context of commutative $\cI$-space monoids, it is even
possible to build a \emph{single} Quillen equivalence inducing an
equivalence between the homotopy category of grouplike commutative
$\cI$-space monoids and the homotopy category of connective spectra.

In the following we use that the homotopy colimit $A_{h\cJ}$
associated with a commutative $\cJ$-space monoid is a simplicial
monoid so that we can form the bar construction
$B(A_{h\cJ})=B(*,A_{h\cJ},*)$.
\begin{definition}\label{def:group-completion} \begin{enumerate}[(i)]
\item   A commutative $\cJ$-space monoid $A$ is \emph{grouplike} if
  the commutative monoid $\pi_0(A_{h\cJ})$ is a group.
\item A map of commutative $\cJ$-space
  monoids $A \to A'$ is a \emph{group completion} if
  $B(A_{h\cJ}) \to B(A'_{h\cJ})$ is a weak equivalence of spaces
  and $A'$ is grouplike.
\item A map $A \to A'$ in $\cC\cS^{\cJ}$ induces an \emph{equivalence
    after group completion} if the induced map of bar constructions
  $B(A_{h\cJ}) \to B(A'_{h\cJ})$ is a weak equivalence.
\end{enumerate}
\end{definition}
Following the usual terminology that map of $\Gamma$-spaces is
a \emph{stable equivalence} if it induces a stable equivalence of
associated spectra, Proposition~\ref{prop:gamma-A-special} implies
that $A \to A'$ induces an equivalence after group completion if and
only if $\gamma(A) \to \gamma(A')$ is a stable equivalence. We note
that the existence of group completions in $\cC\cS^{\cJ}$ is not
needed for this notion of equivalence.

\begin{theorem}\label{thm:group-completion-model-str}
  The category of commutative $\cJ$-space monoids $\cC\cS^{\cJ}$
  admits a model structure in which the weak equivalences are the maps
  that induce weak equivalences after group completion and the
  cofibrations are the positive $\cJ$-cofibrations.

  A commutative $\cJ$-space monoid is fibrant in this model structure
  if and only if it is positive $\cJ$-fibrant and grouplike. The
  fibrant replacement is a group completion in the sense of
  Definition~\ref{def:group-completion}. Fibrations are
  characterized by the right lifting property.
\end{theorem}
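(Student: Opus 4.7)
The plan is to obtain the group completion model structure as a left Bousfield localization of the positive $\cJ$-model structure on $\cC\cS^{\cJ}$. Two preliminary facts about the positive $\cJ$-model structure are needed: left properness (established in \cite{Sagave-S_diagram}) and cellularity. Cellularity should follow along standard lines since $\cC\cS^{\cJ}$ is cofibrantly generated with generators that are small relative to monomorphisms, and the free commutative $\cJ$-space monoid functor preserves the relevant smallness properties. With these in hand, Hirschhorn's theorem produces a left Bousfield localization at any set of maps.

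The crux is to choose a set $S$ of maps whose local equivalences are precisely the maps inducing a weak equivalence $B(A_{h\cJ}) \to B(A'_{h\cJ})$. For each object $(\bld{d_1},\bld{d_2})$ of $\cJ$ with $d_1\geq 1$, I would construct a canonical ``inversion'' map between free positive cofibrant commutative $\cJ$-space monoids that becomes a $\cJ$-equivalence precisely over objects $A$ whose represented component in $\pi_0(A_{h\cJ})$ is invertible. Concretely, one can use the pair of copies of the free monoid on the representable together with the shear-style multiplication to produce such a map, and then take $S$ to be a small set that exhausts all homotopy classes of generators as $(\bld{d_1},\bld{d_2})$ varies. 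A positive $\cJ$-fibrant object is $S$-local if and only if every such generator has an inverse up to homotopy in $\pi_0(A_{h\cJ})$, which is the grouplike condition of Definition~\ref{def:group-completion}. Since the fibrant objects in a left Bousfield localization are characterized as the positive $\cJ$-fibrant $S$-local objects, this identifies the fibrants as exactly the grouplike positive $\cJ$-fibrant commutative $\cJ$-space monoids.

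The main obstacle is the identification of the $S$-local equivalences with the equivalences after group completion in the sense of Definition~\ref{def:group-completion}. In the $\cI$-space setting of \cite{Sagave-S_group-compl} one short-circuits this by lifting the simplicial monoid group completion $\Omega B$ to a functor on $\cC\cS^{\cI}$ providing an explicit fibrant replacement, but as noted in the introduction $\Omega B$ does \emph{not} lift to $\cC\cS^{\cJ}$. I would argue indirectly: by standard properties of left Bousfield localization, it suffices to show that a map between fibrant (i.e., positive $\cJ$-fibrant grouplike) objects is a $\cJ$-equivalence if and only if it induces a weak equivalence on bar constructions. One direction is clear since $(-)_{h\cJ}$ and $B(-)$ preserve weak equivalences; for the converse, grouplikeness together with Proposition~\ref{prop:gamma-A-special} identifies $B(A_{h\cJ})$ with $\gamma(A)(S^1)$, so a weak equivalence on bar constructions upgrades to a stable equivalence of the very special $\Gamma$-spaces $\gamma(A)\to\gamma(A')$, and in particular a weak equivalence of the degree $1^+$ components, i.e., of $A_{h\cJ}\to A'_{h\cJ}$.

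With the $S$-local equivalences identified as the maps inducing weak equivalences after group completion, the remaining statements follow: the fibrations are determined by the right lifting property against trivial cofibrations as in any Bousfield localization, and the fibrant replacement of $A$ is a positive $\cJ$-fibrant grouplike object receiving an $S$-local equivalence from $A$, hence a group completion in the sense of Definition~\ref{def:group-completion}.
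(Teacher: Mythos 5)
Your proposal correctly identifies the overall strategy (localize the positive $\cJ$-model structure at ``shear maps''), and your localizing set is essentially the paper's set $\Phi(Q)$ from \eqref{eq:shear-maps-for-CSJ}; moreover the identification of the fibrant objects as the positive $\cJ$-fibrant grouplike commutative $\cJ$-space monoids is fine, as is your observation that for a map between such objects, being a $\cJ$-equivalence is equivalent to inducing a weak equivalence on bar constructions (via Proposition~\ref{prop:gamma-A-special} and the $\Omega$-spectrum property of very special $\Gamma$-spaces).

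However, there is a genuine gap in your identification of the weak equivalences, and it is exactly the hard point that forces the paper to build the whole $\GammaJ$-space machinery. Your claim that ``it suffices to show that a map between fibrant objects is a $\cJ$-equivalence if and only if it induces a weak equivalence on bar constructions'' is not correct: the standard Bousfield localization theory tells you that a map $f$ is a local equivalence if and only if its fibrant replacement $\bar f$ in the localized structure is a $\cJ$-equivalence, but to translate this into ``$f$ induces a weak equivalence after group completion'' you still need to know that the fibrant replacement $A \to \bar A$ is itself a group completion equivalence. Equivalently, you need to show that every local equivalence is a group completion equivalence (or vice versa), and your argument establishes this only for maps with fibrant source and target. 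The attempt to pass to fibrant replacements is circular, because the fibrant replacement morphism $A \to \bar A$ is only known to be a local equivalence, which is the class you are trying to identify. In the $\cI$-space analogue this circularity is broken by a lifted $\Omega B$ functor, which (as you note) is unavailable here; the paper instead closes the gap by the chain of Quillen equivalences with $\Gamma$-spaces (Corollary~\ref{cor:chain-pre-stable-equiv-and-CSJ}, Corollary~\ref{cor:chain-sta-stable-equiv}) and transports the identification of local equivalences through these equivalences, analogously to Lemma~\ref{lem:gamma-prime-detects-we}. That comparison rests in turn on the nontrivial Proposition~\ref{prop:J-model-pre-stable-equiv} (that $(\Phi,\Psi)$ is a Quillen equivalence), which is the bulk of Section~\ref{sec:stable-model-str} and has no counterpart in your proposal. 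As a smaller remark, the paper obtains the existence of the localization from Barwick's theorem for combinatorial model categories (Lemma~\ref{lem:grp-loc-existence}), not via cellularity; this is not a gap but a different existence criterion.
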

We refer to this model structure as the \emph{group completion model
  structure} and write $\cC\cS^{\cJ}_{\grp}$ for this model category.
Its homotopy category is the homotopy category of grouplike
commutative $\cJ$-space monoids.

The model structure of Theorem 5.5 will be constructed by localizing
the positive $\cJ$-model structure with respect to an explicit set of
``shear maps'' to be defined in~\eqref{eq:shear-maps-for-CSJ}
below. The difficult part is to identify the weak equivalences and
fibrant objects. We prove the theorem in
Section~\ref{subsec:stable-model-Gamma} below.
\begin{remark}
  A similar group completion model structure for commutative
  $\cI$-space monoids has been developed
  in~\cite{Sagave-S_group-compl}.  A key ingredient for the
  $\cI$-space result was the fact that the ordinary group completion
  of simplicial monoids lifts to commutative $\cI$-space monoids: One
  can define a bar construction $B^{\boxtimes}$ and a loop functor
  $\Omega$ so that $A \to \Omega B^{\boxtimes}(A)$ is a group
  completion for the commutative $\cI$-space monoid $A$. This does not
  carry over to commutative $\cJ$-space monoids: Since $\cC\cS^{\cJ}$
  has no zero object, there is no loop functor on $\cC\cS^{\cJ}$ that
  serves for the above purpose.  (The point is that already for
  ordinary $\mZ$-graded commutative monoids, the initial object is
  concentrated in degree $0$ while the terminal object is non-empty in
  every degree.)

  In particular, the fibrant replacement in the above model structure
  is the only construction of a group completion functor for
  commutative $\cJ$-space monoids we are currently aware of.
\end{remark}

\begin{remark}
  As discussed in Section~\ref{subsec:log-applications},
  Theorem~\ref{thm:group-completion-model-str} is used in an essential
  way for the study of ring spectra with graded logarithmic structures
  and their logarithmic topological Andr\'{e}-Quillen homology and
  logarithmic topological Hochschild homology. We refer
  to~\cite{Sagave_log-on-k-theory, RSS_LogTHH-I, RSS_LogTHH-II} for details
  of these applications.
\end{remark}

\begin{example}
  We analyze the group completion of commutative $\cJ$-space monoids
  in an example. Let
  \begin{equation} \label{eq:free-com-J} A = \mC
    F^{\cJ}_{(\bld{m_1},\bld{m_2})}(*) \iso \textstyle\coprod_{n\geq
      0} (F^{\cJ}_{(\bld{m_1},\bld{m_2})}(*))^{\boxtimes
      n}/\Sigma_n \end{equation} be the free commutative $\cJ$-space
  monoid on a point in degree $(\bld{m_1},\bld{m_2})$. We assume $m_1
  \geq 1$ to ensure that the symmetric group action in the definition
  of $A$ is free. It follows that
  \[ A_{h\cJ} \simeq \textstyle\coprod_{n\geq 0} B\Sigma_n.\] Let $A
  \to A^{\grp}$ be a fibrant replacement in the group completion model
  structure. The Barratt-Priddy-Quillen theorem implies that $A_{h\cJ}
  \to (A^{\grp})_{h\cJ}$ has the homotopy type of the group completion
  map $\textstyle\coprod_{n\geq 0} B\Sigma_n \to QS^0$. 

  The map $A^{\grp} \to *$ induces an augmentation $QS^0 \simeq
  (A^{\grp})_{h\cJ} \to B\cJ \simeq QS^0$. The homotopy class of this
  map is determined by the image of a generator under the induced map
  of path components. Since the generator of $\pi_0(A_{h\cJ}) \iso
  \mN$ is mapped to $m_2-m_1$ in $\pi_0(B\cJ)\iso \mZ$, it follows
  that $(A^{\grp})_{h\cJ} \to B\cJ$ is multiplication by
  $m_2-m_1$. This implies that in the special case $m_2-m_1=1$, the
  group completion $A^{\grp}$ is particularly simple: The map $
  A^{\grp}\to *$ is a $\cJ$-equivalence.
\end{example}

\subsection{Grouplike commutative \texorpdfstring{$\cJ$}{J}-space
  monoids and connective spectra}
In order to give the characterization of grouplike commutative
$\cJ$-space monoids outlined above, we will again use $\Gamma$-spaces as a
model for connective spectra. The category $\GammaS$ admits a
\emph{stable Quillen model structure} (stable Q-model structure for
short) whose homotopy category is the homotopy category of connective
spectra~\cite{Schwede_Gamma-spaces,
  Bousfield-F_Gamma-bisimplicial}. We write $(\GammaS)_{\sta}$ for
this model category. It will be reviewed in
Section~\ref{subsec:stable-model-Gamma}.

We will use that for an object $Z$ in a model category $\cM$, the
comma category $\cM\downarrow Z$ of objects over $Z$ inherits a
``overcategory'' model structure in which a map is a weak equivalence,
cofibration or fibration if its projection to $\cM$ is~\cite[\S
7.6]{Hirschhorn_model}.

\begin{theorem}\label{thm:identification-of-group-compl-model-structure}
  There is a chain of Quillen equivalences 
  relating $\cC\cS^{\cJ}$ with the group completion model structure
  and the category $(\GammaS)_{\sta}/\bof{\cJ}$. 

  The chain of Quillen equivalences can be chosen so that the value of
  the composed derived functor $\cC\cS^{\cJ} \to (\GammaS)/\bof{\cJ}$
  on a commutative $\cJ$-space monoid $A$ and the explicit
  $\Gamma$-space $\gamma(A)$ of
  Definition~\ref{def:Gamma-space-from-K-space-monoid} are stably
  equivalent over $\bof{\cJ}$.
\end{theorem}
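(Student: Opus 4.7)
The plan is to mediate through the intermediate category $\GammaJS$ of functors $\Gamma^{\op} \times \cJ \to \cS$ that send $0^+$ to the constant diagram at $\ast$. I would equip it with a stable model structure obtained as a left Bousfield localization of the obvious level model structure, with localizing maps chosen so that the weak equivalences are precisely those maps $X \to Y$ for which the induced map of $\Gamma$-spaces $S \mapsto X(S,-)_{h\cJ}$ is a stable equivalence. The chain of Quillen equivalences will then take the form
\[
\cC\cS^{\cJ}_{\grp} \rightleftarrows \GammaJS \rightleftarrows (\GammaS)_{\sta}/\bof{\cJ}.
\]

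For the second Quillen equivalence I would use the adjunction whose left adjoint sends $X$ to the $\Gamma$-space $S \mapsto X(S,-)_{h\cJ}$, naturally augmented to $\bof{\cJ} \iso (\ast)_{h\cJ}$, with right adjoint forming fibers over the vertices of $\bof{\cJ}$. By design of the model structure on $\GammaJS$ this is essentially formal. For the first Quillen equivalence, the right Quillen functor $\GammaJS \to \cC\cS^{\cJ}_{\grp}$ would evaluate at $1^+$, using the fold $1^+ \wedge 1^+ \to 1^+$ together with the $\boxtimes$-product on $\cJ$ to produce the commutative monoid structure; its left adjoint is a free-symmetric construction analogous to~\eqref{eq:free-com-J}. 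The core technical input is that the $\HcJ(S)$-diagrams $A_S$ of Definition~\ref{def:Gamma-space-from-K-space-monoid} organize precisely into a $\GammaJ$-space, so that the derived unit of this adjunction can be analyzed through the specialness statement Proposition~\ref{prop:gamma-A-special}.

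The hard part will be verifying that the first adjunction restricts to an equivalence on homotopy categories: as noted after Theorem~\ref{thm:group-completion-model-str}, no loop functor on $\cC\cS^{\cJ}$ is available and the fibrant replacement itself must realize group completion. One therefore has to check that the derived right adjoint lands in grouplike commutative $\cJ$-space monoids and that the derived unit and counit are equivalences, which requires a careful comparison of $A_{h\cJ}$ with $B(A_{h\cJ})$ via Proposition~\ref{prop:gamma-A-special}, together with a lifting argument to promote a level-wise group completion of the associated $\GammaJ$-space back to a group completion in $\cC\cS^{\cJ}_{\grp}$. Once both Quillen equivalences are in place, the compatibility statement is obtained by tracing a positive $\cJ$-cofibrant replacement of $A$ through the chain: it is taken to the $\GammaJ$-space $(S,\bld{k}) \mapsto A_S(\bld{k})$ coming from Definition~\ref{def:Gamma-space-from-K-space-monoid} and then to the $\Gamma$-space $S \mapsto \hocolim_{\HcJ(S)} A_S$ with its canonical map to $\bof{\cJ}$, which is precisely $\gamma(A) \to \bof{\cJ}$ by Corollary~\ref{cor:hocolim-as-space-over-BK}.
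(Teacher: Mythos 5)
Your choice of $\GammaJS$ as functors on the product category $\Gamma^{\op}\times\cJ$ is where the proposal goes wrong; the paper instead uses functors on the Grothendieck construction $\GammaJ=\Gamma^{\op}\!\int\HcJ$, whose fibre over $S$ is the Shimada--Shimakawa category $\HcJ(S)$ (equivalent to a $|\ovl S|$-fold product of $\cJ$). This difference is essential: the $\Gamma$-space $\gamma(A)$ of Definition~\ref{def:Gamma-space-from-K-space-monoid} has $\gamma(A)(k^+)=\hocolim_{\HcJ(k^+)}A_{k^+}$, a homotopy colimit over a $k$-fold product of copies of $\cJ$, and there is no way to package this as a single $\cJ$-space $X(k^+,-)$ with $X(k^+,-)_{h\cJ}\cong\gamma(A)(k^+)$ functorially in $k^+$. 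Your formula $S\mapsto X(S,-)_{h\cJ}$ can only produce $\Gamma$-spaces whose value at $k^+$ is a hocolim over \emph{one} copy of $\cJ$, which is not enough to encode the specialness maps $\gamma(A)(k^+)\to\gamma(A)(1^+)^{\times k}$. Once you correct this you are forced into the paper's category of $\GammaJ$-spaces and the description of Lemma~\ref{lem:GammaJS-equivalent}, at which point your statement that ``the $\HcJ(S)$-diagrams $A_S$ organize into a $\GammaJ$-space'' is indeed Corollary~\ref{cor:GammaJ-sp-from-CSJ}.

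Even granting the right intermediate category, your proposed adjunction with $\cC\cS^{\cJ}$ does not work: evaluating a $\GammaJ$-space at $1^+$ and trying to assemble a commutative monoid from the fold map and the $\boxtimes$-product only yields an $E_\infty$-type structure up to homotopy, not a strictly commutative $\cJ$-space monoid. This is exactly the difficulty that strictly commutative diagram monoids are designed to avoid. The paper's resolution is to run the adjunction in the opposite direction: the left adjoint $\Phi\colon\GammaJS\to\cC\cS^{\cJ}$ is a coend of free commutative $\cJ$-space monoids and the right adjoint is $\Psi(A)=\Map_{\cC\cS^{\cJ}}(\mC F(-),A)$, a nerve-like construction (Corollaries~\ref{cor:Phi-Psi-adj} and~\ref{cor:identification-of-Psi}). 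Proving this is a Quillen equivalence for the pre-stable structure (Proposition~\ref{prop:J-model-pre-stable-equiv}) is the technical core of the argument and is not ``essentially formal''; it requires showing $\Psi$ preserves homotopy cocartesian squares and coproducts up to weak equivalence, and a cell-induction over free $\GammaJ$-spaces. Moreover, between $\GammaJS$ and $\GammaS/\bof{\cJ}$ the paper needs the further intermediary $\GammaJS/E\HcJ$ rather than a single Quillen equivalence. Finally, the compatibility statement is not a simple consequence of Corollary~\ref{cor:hocolim-as-space-over-BK}: the augmentation produced by the naive chain is $\gamma'(E\HcJ)\to\gamma'(*)\iso\bof{\cJ}$, which differs from the augmentation of $\gamma(A)$ (cf.\ the two distinct maps $(E\cK)_{h\cK}\to B\cK$ in~\eqref{eq:hocolim-colim-over-BK}); the paper must insert the extra zig-zag $(\GammaS)_{\sta}/\bof{\cJ}\rightleftarrows(\GammaS)_{\sta}/\gamma'(E\HcJ)\leftrightarrows(\GammaS)_{\sta}/\gamma'(*)$ via Lemma~\ref{lem:hocolim-colim-over-BK} to repair this.
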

Here the \emph{derived functor} of a left (or right) Quillen
functor means the functor of model categories obtained by precomposing
it with a cofibrant (or fibrant) replacement functor. The
\emph{composed derived functor} is the composite of the derived
functors associated with the Quillen functors in the zig-zag of
Quillen equivalences.
 
The proof of
Theorem~\ref{thm:identification-of-group-compl-model-structure} and
the construction of the intermediate model categories that are
implicit in its formulation will be given in
Section~\ref{sec:stable-model-str}.
Theorem~\ref{thm:identification-of-group-compl-model-structure} and
Theorem~\ref{thm:group-completion-model-str} combine to give
Theorem~\ref{thm:gp-model-str-and-comparison-introduction} from the
introduction.

\subsection{Graded units of ring spectra as a right adjoint}
We earlier defined the units $A^{\times}$ of a commutative
$\cJ$-space monoid $A$ as the sub commutative $\cJ$-space monoid of
invertible path components.  They come with a natural map $A^{\times}
\to A$. This construction admits a useful formulation in terms of a
model structure which is somewhat dual to the group completions
discussed in Section~\ref{subsec:group-completion}:
\begin{theorem}\label{thm:units-model-structure}
  The category of commutative $\cJ$-space monoids $\cC\cS^{\cJ}$
  admits a model structure in which a map $f\colon A \to B$ is a weak
  equivalences if the induced map $f^{\times}\colon A^{\times}\to
  B^{\times}$ is a $\cJ$-equivalence. The fibrations are the positive
  $\cJ$-fibrations. 

  The cofibrant objects are the commutative $\cJ$-space monoids which
  are positive $\cJ$-cofibrant and grouplike, and the cofibrant
  replacement of $A$ is $\cJ$-equivalent to $A^{\times} \to A$.  The
  general cofibrations are determined by a lifting property.\qed
\end{theorem}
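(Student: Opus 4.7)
The plan is to obtain this model structure as a right Bousfield (cellular) localization of the positive $\cJ$-model structure on $\cC\cS^{\cJ}$ via Hirschhorn's existence theorem \cite[Theorem 5.1.1]{Hirschhorn_model}. This is the appropriate framework because the desired weak equivalences contain the $\cJ$-equivalences (any $\cJ$-equivalence $f\colon A \to B$ restricts to a $\cJ$-equivalence $f^\times$ of units, since membership in an invertible path component of $\pi_0((-)_{h\cJ})$ is invariant under $\cJ$-equivalences), while the fibrations remain unchanged; we therefore have more weak equivalences, the same fibrations, and consequently fewer cofibrations, which is precisely the shape of a right localization.

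I would first verify the prerequisites, namely that the positive $\cJ$-model structure on $\cC\cS^{\cJ}$ is right proper and cellular. Right properness is established in \cite{Sagave-S_diagram}, and cellularity follows from the explicit description of the generating cofibrations as $\mC F^{\cJ}_{(\bld{m_1},\bld{m_2})}$ applied to the boundary inclusions $\partial\Delta^n \to \Delta^n$ for $m_1 \geq 1$, together with the standard smallness arguments in diagram categories.

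Next I would construct the set $K$ of colocal objects, taking positive $\cJ$-cofibrant replacements of the group completions (in the sense of Theorem~\ref{thm:group-completion-model-str}) of the domains and codomains of the generating cofibrations of the positive $\cJ$-model structure. The underlying observation is that grouplike commutative $\cJ$-space monoids form a coreflective subcategory of $\cC\cS^{\cJ}$ with right adjoint the units functor: for grouplike $G$ and arbitrary $A$, every map $G \to A$ factors uniquely through $A^\times \to A$. Passing to derived mapping spaces gives $\Map(G, A) \simeq \Map(G, A^\times)$ for positive $\cJ$-cofibrant grouplike $G$ and positive $\cJ$-fibrant $A$. With $K$ large enough to detect $\cJ$-equivalences between positive $\cJ$-fibrant grouplike objects, the $K$-colocal equivalences are then precisely those $f$ with $f^\times$ a $\cJ$-equivalence.

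The main obstacle will be identifying the cofibrant objects of the localized structure as precisely the positive $\cJ$-cofibrant grouplike commutative $\cJ$-space monoids. Tautologically, the cofibrant objects are the $K$-colocal objects, which are retracts of cell complexes built from $K$; one inclusion is then immediate. For the converse I would, given a positive $\cJ$-cofibrant grouplike $G$, construct a cellular resolution by objects of $K$ by induction on the cell structure of $G$ inherited from the positive $\cJ$-model structure, lifting each attaching map through the group completion of its source at the price of adjoining further $K$-cells that have no effect on the eventual homotopy type. Once this is in place, the identification of the cofibrant replacement is formal: if $QA \to A$ is a cofibrant replacement in the new model structure, then $QA$ is positive $\cJ$-cofibrant and grouplike, so $(QA)^\times = QA$, and since $QA \to A$ is a units equivalence, the factorization $QA \to A^\times \hookrightarrow A$ exhibits $QA \to A^\times$ as a $\cJ$-equivalence over $A$, proving that the cofibrant replacement agrees with $A^\times \to A$ up to $\cJ$-equivalence.
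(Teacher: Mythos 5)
Your overall strategy — right Bousfield localization of the positive $\cJ$-model structure at a set $K$ of grouplike objects, using the coreflection $A^\times \to A$ to show $\Map(G,A)\simeq\Map(G,A^\times)$ for cofibrant grouplike $G$ and fibrant $A$ — is the correct one and is the approach taken in the proof of the $\cI$-space analogue \cite[Theorem~1.8]{Sagave-S_group-compl} to which the paper defers. The choice of $K$ as group completions of the domains and codomains of the generating cofibrations is reasonable, and your derivation that the $K$-colocal equivalences are exactly the units equivalences is essentially right.

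The weak point is the identification of the cofibrant objects, which is the real content of the theorem. You treat ``retracts of cell complexes built from $K$'' as if it were the definition of $K$-colocal, but in Hirschhorn's framework $K$-colocality is the homotopical condition: $W$ is $K$-colocal iff it is cofibrant and $\Map(W,-)$ takes $K$-colocal equivalences between fibrant objects to weak equivalences. The inclusion you call ``immediate'' (that $K$-cell complexes are grouplike) is in fact not: it would require showing that pushouts and transfinite compositions in $\cC\cS^{\cJ}$ preserve grouplike-ness, which is not automatic. The converse ``cellular resolution by objects of $K$'' argument is left as a sketch that is not clearly executable. Both directions are handled more cleanly by verifying the homotopical characterization directly. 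For $W$ cofibrant grouplike and $f\colon X\to Y$ a units equivalence between fibrant objects, note $W\tensor\Delta^n$ is again grouplike (it is $\cJ$-equivalent to $W$), so $\Map(W,X)\iso\Map(W,X^\times)\to\Map(W,Y^\times)\iso\Map(W,Y)$, and the middle map is a weak equivalence since $f^\times$ is a $\cJ$-equivalence between fibrant objects. Conversely, if $W$ is $K$-colocal, apply $\Map(W,-)$ to the units equivalence $(W^{\fib})^\times\to W^{\fib}$ to lift $W\to W^{\fib}$ up to homotopy through $(W^{\fib})^\times$; on $\pi_0((-)_{h\cJ})$ this forces $\pi_0(W_{h\cJ})\iso\pi_0(W^{\fib}_{h\cJ})$ to land in the units, hence to be a group. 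Finally, the cellularity of the positive $\cJ$-model structure on $\cC\cS^{\cJ}$ (in particular the effective-monomorphism condition for cofibrations of commutative monoid objects) is a nontrivial hypothesis of Hirschhorn's existence theorem and deserves more than an appeal to ``standard smallness arguments.''
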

We call the resulting model structure $\cC\cS^{\cJ}_{\mathrm{un}}$ the
\emph{units} model structure.  The proof of the theorem is completely
analogous to the proof of the corresponding statement about
commutative $\cI$-space monoids in~\cite[Theorem
1.8]{Sagave-S_group-compl} and will not be repeated here.

The identity functors form a Quillen adjunction
$\cC\cS^{\cJ}_{\mathrm{un}} \rightleftarrows \cC\cS^{\cJ}$ (where as
always in this paper the left adjoint is the upper arrow). Together
with the Quillen adjunction $\cC\cS^{\cJ} \rightleftarrows
\cC\cS^{\cJ}_{\grp}$ coming from the group completion model structure,
we obtain
\begin{corollary}
  The identity functors form a Quillen equivalence
  $\cC\cS^{\cJ}_{\mathrm{un}} \rightleftarrows \cC\cS^{\cJ}_{\grp}$.
\end{corollary}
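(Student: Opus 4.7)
The plan is to exhibit the identity as a left Quillen functor by composing the two given Quillen adjunctions, and then to verify the standard criterion that upgrades it to a Quillen equivalence.

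Since the composite of left Quillen functors is left Quillen, the composition of the stated Quillen adjunctions $\cC\cS^{\cJ}_{\mathrm{un}} \rightleftarrows \cC\cS^{\cJ}$ and $\cC\cS^{\cJ} \rightleftarrows \cC\cS^{\cJ}_{\grp}$ produces a Quillen adjunction $\cC\cS^{\cJ}_{\mathrm{un}} \rightleftarrows \cC\cS^{\cJ}_{\grp}$ whose left adjoint is the identity. This gets the adjunction for free from the material already in place.

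To check the Quillen equivalence criterion, I would verify that for every $X$ cofibrant in $\cC\cS^{\cJ}_{\mathrm{un}}$ and every $Y$ fibrant in $\cC\cS^{\cJ}_{\grp}$, a morphism $f\colon X \to Y$ is a weak equivalence in the units model structure if and only if it is a weak equivalence in the group completion model structure. By Theorem~\ref{thm:units-model-structure}, such an $X$ is positive $\cJ$-cofibrant and grouplike, and by Theorem~\ref{thm:group-completion-model-str}, such a $Y$ is positive $\cJ$-fibrant and grouplike. In particular $X^{\times}=X$ and $Y^{\times}=Y$, so $f^{\times}=f$, and the units condition reduces to: $f$ is a $\cJ$-equivalence, i.e.\ $f_{h\cJ}$ is a weak equivalence of spaces. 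On the other hand, $f$ is a weak equivalence in $\cC\cS^{\cJ}_{\grp}$ if and only if $B(f_{h\cJ})$ is a weak equivalence. Both $X_{h\cJ}$ and $Y_{h\cJ}$ are grouplike simplicial monoids, and for a grouplike simplicial monoid $M$ the canonical map $M \to \Omega BM$ is a weak equivalence, so $B(f_{h\cJ})$ is a weak equivalence if and only if $f_{h\cJ}$ is. Both notions of weak equivalence therefore coincide on such $(X,Y)$, giving the Quillen equivalence.

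The only substantive input is the classical recognition statement $M\simeq \Omega BM$ for grouplike simplicial monoids, which is standard and can also be seen from Proposition~\ref{prop:gamma-A-special}: the $\Gamma$-space $\gamma(X)$ is very special when $X$ is grouplike, so $\gamma(X)(S^1)\simeq B(X_{h\cJ})$ deloops $X_{h\cJ}$. Everything else is formal bookkeeping with the cofibrant/fibrant descriptions in Theorems~\ref{thm:units-model-structure} and~\ref{thm:group-completion-model-str}, and I expect no genuine obstacle beyond this observation.
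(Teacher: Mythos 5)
Your proof is correct and takes essentially the same approach as the paper: the paper's one-sentence proof is precisely the observation that for $X$ positive $\cJ$-cofibrant and grouplike and $Y$ positive $\cJ$-fibrant and grouplike, a map $X \to Y$ is a weak equivalence in either model structure exactly when it is a $\cJ$-equivalence, which is what you have unpacked. The additional ingredient you make explicit, that $B(-)$ reflects weak equivalences of grouplike simplicial monoids, is the standard fact implicit in the paper's terse phrasing.
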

\begin{proof}
  On both sides, a weak equivalence from a
  positive $\cJ$-cofibrant and grouplike object into a positive
  $\cJ$-fibrant and grouplike object is a $\cJ$-equivalence.
\end{proof}
Combining this with the Quillen adjunction of
Proposition~\ref{prop:OmegaJ-right-Quillen} we
get a chain of Quillen adjunctions 
\begin{equation}\label{eq:chain-adjunctions-glone-right-adj}
  \cC\Spsym \leftrightarrows \cC\cS^{\cJ} \leftrightarrows 
  \cC\cS^{\cJ}_{\mathrm{un}}\rightleftarrows \cC\cS^{\cJ}_{\grp}
\end{equation}
in which the last one is a Quillen equivalence. We can use this to
identify $\gloneJ$ as right adjoint on the level of homotopy
categories:
\begin{proof}[Proof of Theorem~\ref{thm:sp-of-graded-units-as-righ-adj-introduction}]
  For a positive fibrant commutative symmetric ring spectrum,
  Theorem~\ref{thm:units-model-structure} implies that the units
  $\GLoneJof{E}$ are $\cJ$-equivalent to the cofibrant replacement of
  $\OmegaJof{E}$ in $\cC\cS^{\cJ}_{\mathrm{un}}$. Together with
  Theorem~\ref{thm:identification-of-group-compl-model-structure}, the
  chain of adjunctions~\eqref{eq:chain-adjunctions-glone-right-adj}
  this implies that $\gloneJof{E} = \gamma(\GLoneJof{E})$ is as an
  augmented object stably equivalent to the composed derived functor
  $\cC\Spsym \to \left(\GammaS\right) / \bof{\cJ}$ of a zig-zag of
  Quillen adjunctions.

  Right Quillen functors induce right adjoints of homotopy
  categories. Left Quillen functors that participate in Quillen
  equivalences induce equivalences of homotopy categories, which are
  in particular right adjoints. Since all left Quillen functors in the
  zig-zag of Quillen adjunctions describing $\gloneJ$ are part of a
  Quillen equivalence, the claim follows.
\end{proof}
\begin{remark}
  Since the equivalences of homotopy categories arise from inverting
  Quillen equivalences, the previous proof shows more than is stated in
  the theorem: The adjunction $\Ho(\cC\Spsym ) \leftrightarrows
  \Ho(\left(\GammaS\right)_{\sta} / \bof{\cJ})$ induced by $\gloneJ$
  is compatible with the homotopy types of mapping spaces on both
  sides.
\end{remark}
\section{Gamma-\texorpdfstring{$\cJ$}{J}-spaces}\label{sec:GammaJ-spaces}
In this section we construct the zig-zag of adjunctions that will be
used in the proofs of Theorem~\ref{thm:group-completion-model-str} and
Theorem~\ref{thm:identification-of-group-compl-model-structure}, and
we make the first step towards the model structures that appear in the
theorems.

Let $F\colon \cC \to \text{CAT}$ be a functor to the category of small
categories. Its Grothendieck construction is the category $\cC\int F$
whose objects are pairs $(C,X)$ with $C \in \obj(\cC)$ and $X \in
\obj(F(C))$. A morphism $(\alpha, f)\colon (C,X) \to (D,Y)$ consists
of a morphism $\alpha \colon C \to D$ in $\cC$ and a morphism $f\colon
F(\alpha)(X)\to Y$ in $F(D)$. Its composite with $(\beta,g)\colon
(D,Y) \to (E,Z)$ is $(\beta\alpha, g(F(\beta)(f)))$.

The next definition refers to the functor $\HcJ\colon \Gamma^{op} \to
\text{CAT}$ defined in Section~\ref{subsec:delooping-perm}.
\begin{definition}
  We let $\GammaJ$ be the Grothendieck construction $\Gamma^{\op}\int
  \HcJ$ on $\HcJ$.
\end{definition}
Objects of $\GammaJ$ are tuples $(S; \bld{s},\sigma)$ with $S \in
\obj(\Gamma^{\op})$ a finite based set and $(\bld{s},\sigma)$ an
object of the category $\HcJ(S)$. The object $(0^+;\bld{0}_{0^+})$ is
terminal in $\GammaJ$. (It is not initial because not every object in
$\cJ$ receives a map from $(\bld{0},\bld{0})$.) 
\begin{definition}
  Let $\GammaJS$ be the category of functors $X\colon \GammaJ \to \cS$
  sending $(0^+;\bld{0}_{0^+})$ to the one point space.
\end{definition}
\begin{example}
  For $(\bld{s},\sigma) \in \obj(\HcJ(S))$, the functor
  $\GammaJ((S;\bld{s},\sigma), -)\colon \GammaJ \to \cS$ defines a
  $\GammaJ$-space since $(0^+,\bld{0}_{0^+})$ is terminal.
\end{example}
The category $\GammaJS$ admits an equivalent description that will provide
us with more examples.  For this we
recall from Section~\ref{subsec:Gamma-from-CSK} that $\alpha \colon S
\to T$ in $\Gamma^{\op}$ induces a functor $\alpha_*\colon
\HcJ(S)\to\HcJ(T)$ and hence $\alpha^*\colon
\cS^{\HcJ(T)}\to\cS^{\HcJ(S)}$.
\begin{lemma}\label{lem:GammaJS-equivalent}
  A $\GammaJ$-space is the same as a collection of $\HcJ(S)$-spaces
  $X_S$ for every finite based set $S$ and maps of $\HcJ(S)$-spaces
  $\widetilde{\alpha}\colon X_S \to \alpha^*(X_T)$ for every map
  $\alpha\colon S \to T$ in~$\Gamma^{\op}$ such that $X_{0^+}=*$ and
  $(\alpha^*(\widetilde{\beta})) \widetilde{\alpha} =
  \widetilde{\beta\alpha}$ holds for composable maps in
  $\Gamma^{\op}$. Morphisms of $\GammaJ$-spaces correspond to families
  of maps of $\HcJ(S)$-spaces $f_S \colon X_S \to Y_S$ such that
  $(\alpha^{*}(f_T))\widetilde{\alpha} = \widetilde{\alpha}f_S$.
\end{lemma}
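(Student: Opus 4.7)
The lemma is an instance of the standard universal property of the Grothendieck construction: a functor out of $\cC\int F$ into a category $\cD$ is the same as an oplax cocone consisting of functors on the fibers together with coherent comparison natural transformations. Specializing to $\cC = \Gamma^{\op}$, $F = \HcJ$, and $\cD = \cS$ yields exactly the data described in the lemma. The plan is to make this correspondence explicit in both directions and verify it is bijective on objects and morphisms.

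For the forward direction, I would start with a $\GammaJ$-space $X$ and restrict it to each fiber. Explicitly, set $X_S(\bld{s},\sigma) = X(S;\bld{s},\sigma)$, and extend to morphisms of $\HcJ(S)$ by applying $X$ to morphisms of the form $(\id_S, f)$. To define $\widetilde{\alpha}\colon X_S \to \alpha^*(X_T)$ for $\alpha\colon S \to T$, evaluate $X$ on the ``purely horizontal'' morphism
\[
(\alpha, \id)\colon (S;\bld{s},\sigma) \longrightarrow (T; \alpha_*(\bld{s},\sigma))
\]
of $\GammaJ$; naturality in $(\bld{s},\sigma)$ is immediate from functoriality of $X$. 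The condition $X_{0^+}=*$ is exactly $X(0^+; \bld{0}_{0^+}) = *$ from the definition of $\GammaJS$.

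For the backward direction, given the collection $\{X_S, \widetilde{\alpha}\}$, I would define a functor on $\GammaJ$ by $X(S;\bld{s},\sigma) = X_S(\bld{s},\sigma)$ and on a morphism $(\alpha, f)\colon (S;\bld{s},\sigma)\to (T;\bld{t},\tau)$ by the composite
\[
X_S(\bld{s},\sigma) \xrightarrow{\widetilde{\alpha}_{(\bld{s},\sigma)}} (\alpha^*X_T)(\bld{s},\sigma) = X_T(\alpha_*(\bld{s},\sigma)) \xrightarrow{X_T(f)} X_T(\bld{t},\tau).
\]
This uses the canonical factorization $(\alpha,f) = (\id_T,f)\circ(\alpha,\id)$ in $\GammaJ$. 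Functoriality on composable morphisms $(\alpha,f)$ and $(\beta,g)$ then reduces, after expanding the composition law $(\beta\alpha,\, g\circ F(\beta)(f))$ of the Grothendieck construction, to the two hypotheses: naturality of $\widetilde{\beta}$ in $\HcJ(T)$ (which lets one interchange $X_T(f)$ past $\widetilde{\beta}$ in the form $\widetilde{\beta}_{\alpha_*(\bld{s},\sigma)}$) and the cocycle identity $(\alpha^*\widetilde{\beta})\widetilde{\alpha} = \widetilde{\beta\alpha}$.

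The two constructions are mutually inverse essentially by construction: starting from $X$, restricting and then reassembling via the factorization recovers $X$ on both objects and morphisms, because every morphism of $\GammaJ$ is $(\id,f)\circ(\alpha,\id)$. The analogous check on morphisms of $\GammaJ$-spaces gives the bijection on hom-sets. The only step requiring care is the bookkeeping in the composition identity above; this is routine but is the main point where the precise composition law in $\GammaJ$ must be used. Since nothing in the argument depends on particular features of $\cJ$, one could also phrase the lemma as a general statement about diagram categories indexed by Grothendieck constructions and apply it.
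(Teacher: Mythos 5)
Your proof is correct and takes essentially the same approach as the paper: both identify the lemma as the standard unwinding of the Grothendieck construction, with the paper stating this in two terse sentences while you make the correspondence and the functoriality verification explicit. The added detail is accurate, in particular the factorization $(\alpha,f)=(\id_T,f)\circ(\alpha,\id)$ and the use of naturality of $\widetilde{\beta}$ together with the cocycle identity to get functoriality on composites.
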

\begin{proof}
If $X$ is a $\GammaJ$-space, then the $X_S = X(S;-)$ have this property. The other
direction follows from the definition of the Grothendieck construction. 
\end{proof}
The lemma applies to the $\HcJ(S)$-spaces $A_S$ and maps
$\widetilde{\alpha}$ defined in Section~\ref{subsec:Gamma-from-CSK}:
\begin{corollary}\label{cor:GammaJ-sp-from-CSJ}
  The collection of $\HcJ(S)$-spaces $A_S$ associated with a
  commutative $\cJ$-space monoid $A$ defines a $\GammaJ$-space.\qed
\end{corollary}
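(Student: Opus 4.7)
The plan is to apply Lemma~\ref{lem:GammaJS-equivalent} directly to the data already produced in Section~\ref{subsec:Gamma-from-CSK}. Concretely, for each finite based set $S$ the assignment $(\bld{s},\sigma)\mapsto \prod_{i\in\ovl{S}}A(\bld{s}_i)$ is a functor $A_S\colon\HcJ(S)\to\cS$, with the action on a morphism $f\colon(\bld{s},\sigma)\to(\bld{t},\tau)$ of $\HcJ(S)$ defined componentwise from the maps $f_{\{i\}}\colon\bld{s}_i\to\bld{t}_i$. Taking $S=0^+$ yields an empty product, so $A_{0^+}=*$, which is one of the two conditions required by the lemma.

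For every morphism $\alpha\colon S\to T$ in $\Gamma^{\op}$ the map of $\HcJ(S)$-spaces $\widetilde{\alpha}\colon A_S\to\alpha^*(A_T)$ was constructed explicitly in Section~\ref{subsec:Gamma-from-CSK} by combining the multiplication of $A$ with the structure isomorphisms $\sigma$ supplied by each object of $\HcJ(S)$; commutativity of $A$ is what makes the factor-by-factor construction independent of the chosen orderings of the preimages $\alpha^{-1}(j)$. The remaining condition in Lemma~\ref{lem:GammaJS-equivalent}, namely the cocycle identity $(\alpha^*(\widetilde{\beta}))\widetilde{\alpha}=\widetilde{\beta\alpha}$ for composable morphisms in $\Gamma^{\op}$, is precisely the content of Lemma~\ref{lem:compatibilibty-of-AS}. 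With both conditions verified, Lemma~\ref{lem:GammaJS-equivalent} assembles the collection $\{A_S,\widetilde{\alpha}\}$ into a functor $\GammaJ\to\cS$ sending $(0^+;\bld{0}_{0^+})$ to a point, i.e.\ a $\GammaJ$-space. No genuine obstacle is expected here; the corollary is simply an unpacking via the equivalent description of $\GammaJS$ given by Lemma~\ref{lem:GammaJS-equivalent}.
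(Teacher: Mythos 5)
Your proof is correct and follows exactly the route the paper intends: the corollary is a $\qed$-statement whose content is that the $\HcJ(S)$-spaces $A_S$ and maps $\widetilde{\alpha}$ from Section~\ref{subsec:Gamma-from-CSK}, together with the cocycle identity of Lemma~\ref{lem:compatibilibty-of-AS} and the observation $A_{0^+}=*$, satisfy the hypotheses of Lemma~\ref{lem:GammaJS-equivalent}. Your unpacking (including the correct identification of the $\HcJ(S)$-functoriality of $A_S$ via the one-element-subset components $f_{\{i\}}$) matches the paper's argument, so there is nothing further to add.
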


Colimits in $\GammaJS$ are not the levelwise colimits of the
underlying $\HcJ(S)$-spaces because this would violate the condition
on $(0^+,\bld{0}_{0^+})$. To construct them, we note that the
basepoint condition on a $\GammaJ$-space $X$ induces a canonical map
$\GammaJ((0^+;\bld{0}_{0^+}),-)\to X$. Hence $\GammaJS$ may be viewed
as a full subcategory of
$\GammaJ((0^+,\bld{0_{0^+}}),-)\!\downarrow\!\cS^{\GammaJ}$, and
the colimits and limits in the larger category exist and provide
colimits and limits in $\GammaJS$. This proves
\begin{lemma}\label{lem:GammaJS-complete-cocomplete}
The category $\GammaJS$ is complete and cocomplete.\qed
\end{lemma}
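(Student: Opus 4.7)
The plan is to realise $\GammaJS$ as a full subcategory of an under-category of $\cS^{\GammaJ}$ that is closed under small limits and colimits. Since $\cS$ is complete and cocomplete, the unrestricted functor category $\cS^{\GammaJ}$ is complete and cocomplete with (co)limits computed objectwise, so the strategy is to transfer this structure to $\GammaJS$.

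Write $R$ for the representable $\cS^{\GammaJ}$-object $\GammaJ((0^+;\bld{0}_{0^+}),-)$. Since $(0^+;\bld{0}_{0^+})$ is terminal in $\GammaJ$, we have $R(0^+;\bld{0}_{0^+}) = \pt$, and by the Yoneda lemma a morphism $R \to X$ in $\cS^{\GammaJ}$ is the same as a $0$-simplex of $X(0^+;\bld{0}_{0^+})$. First I would use this to identify $\GammaJS$ with the full subcategory of the under-category $R \downarrow \cS^{\GammaJ}$ whose objects $(f\colon R \to X)$ satisfy the condition that $f(0^+;\bld{0}_{0^+})\colon \pt \to X(0^+;\bld{0}_{0^+})$ is an isomorphism. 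This embedding is full and faithful because the pointing condition $X(0^+;\bld{0}_{0^+}) = \pt$ picks out a unique map $R \to X$.

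Next I would invoke the standard fact that any under-category of a complete and cocomplete category is itself complete and cocomplete: limits are created by the forgetful functor to $\cS^{\GammaJ}$, and colimits of a diagram $(X_i, f_i \colon R \to X_i)_{i\in I}$ are computed as the colimit in $\cS^{\GammaJ}$ of the augmented diagram obtained by adjoining $R$ and the maps $f_i$. It then suffices to check that these constructions, applied to diagrams in $\GammaJS$, still satisfy the pointing condition at $(0^+;\bld{0}_{0^+})$. For limits this is immediate since a limit of copies of $\pt$ in $\cS$ is $\pt$. For colimits, the augmented diagram contains the object $R(0^+;\bld{0}_{0^+}) = \pt$ mapping to every $X_i(0^+;\bld{0}_{0^+}) = \pt$, so the colimit at this object is again $\pt$.

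There is no serious obstacle in this argument; the only subtlety is in the colimit step, where it is essential to work in the under-category $R \downarrow \cS^{\GammaJ}$ rather than in $\cS^{\GammaJ}$ directly. Without the augmentation by $R$, a colimit in $\cS^{\GammaJ}$ of objects with value $\pt$ at $(0^+;\bld{0}_{0^+})$ can acquire extra components (e.g.\ a coproduct of one-point sets), and the pointing condition can fail. Including $R$ in the diagram is precisely what keeps everything connected to a single basepoint and ensures that the colimit lands back in $\GammaJS$.
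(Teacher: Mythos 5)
Your proof is correct and follows exactly the same route as the paper: both identify $\GammaJS$ with a full subcategory of the under-category $\GammaJ((0^+;\bld{0}_{0^+}),-) \downarrow \cS^{\GammaJ}$ and then appeal to (co)completeness of under-categories. Your write-up is somewhat more explicit than the paper's (which leaves the preservation of the pointing condition implicit), but the content is the same.
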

Let $X$ and $Y$ be $\GammaJ$-spaces and $K$ a simplicial set. We
define $X\tensor K$ to be the $\GammaJ$-space given by the pushout
\[ \GammaJ((0^+,\bld{0}_{0^+}),-) \ot
\GammaJ((0^+,\bld{0}_{0^+}),-)\times K \to X \times K \] in
$\cS^{\GammaJ}$. Here $-\times K$ denotes the objectwise product with
$K$. Moreover, we define a $\GammaJ$-space $X^K$ by
$X^K(S;\bld{s},\sigma) = (X(S;\bld{s},\sigma))^K$ and a simplicial set
$\Map(X,Y)$ by $[m]\mapsto \GammaJS(X\tensor \Delta^m,Y)$. Using
\cite[II Lemma 2.4]{Goerss-J_simplicial} we conclude
\begin{proposition}\label{prop:GammaJS-tens-cotens-enr}
  With these definitions, $\GammaJS$ is tensored, cotensored, and
  enriched over unpointed simplicial sets. \qed
\end{proposition}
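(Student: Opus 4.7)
The plan is to verify the standard adjunctions between the tensor, cotensor, and mapping space constructions, and then invoke the cited Goerss--Jardine criterion. I will check that the three constructions actually land in $\GammaJS$, and then reduce the enrichment to the easier statement for the ambient category $\cS^{\GammaJ}$ of all $\GammaJ$-diagrams.

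First I would verify the basepoint condition. For the cotensor $X^K$ this is immediate, since $(X(0^+;\bld{0}_{0^+}))^K = \pt^K = \pt$. For the tensor $X\tensor K$, the defining pushout formula
\[ \GammaJ((0^+,\bld{0}_{0^+}),-)\times K \to X \times K,\qquad \GammaJ((0^+,\bld{0}_{0^+}),-)\times K\to \GammaJ((0^+,\bld{0}_{0^+}),-) \]
evaluated at $(0^+;\bld{0}_{0^+})$ becomes the pushout of $\pt\times K \leftarrow \pt\times K \to \pt$, which is $\pt$; so $X\tensor K$ lies in $\GammaJS$. In particular $\Map(X,Y)$ is a well-defined simplicial set.

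Next I would establish the adjunction isomorphisms. The ambient category $\cS^{\GammaJ}$ is a diagram category of simplicial sets, hence is tensored, cotensored and simplicially enriched with $X\times K$ as tensor and $X^K$ as cotensor, by standard general nonsense (this is exactly the content of Goerss--Jardine II Lemma~2.4 applied to $\cS^{\GammaJ}$). The inclusion $\GammaJS \hookrightarrow \GammaJ((0^+,\bld{0}_{0^+}),-)\!\downarrow\!\cS^{\GammaJ}$ of Lemma~\ref{lem:GammaJS-complete-cocomplete} identifies $\GammaJS$ as a full subcategory closed under limits, and one checks that the cotensor in the undercategory is computed levelwise (as our $X^K$) while the tensor in the undercategory is precisely the pushout defining $X\tensor K$ (this is the standard way to build the tensor in an undercategory). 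The adjunction
\[ \GammaJS(X\tensor K, Y) \iso \cS(K, \Map(X,Y)) \iso \GammaJS(X, Y^K) \]
then follows from the universal property of the pushout combined with the adjunction in $\cS^{\GammaJ}$.

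The one thing that needs real attention is verifying that $\Map(X,Y)$ so defined assembles into an enriched hom that satisfies the composition and unit axioms of a simplicial category, not just a bijection on $0$-simplices. This is exactly what Goerss--Jardine II Lemma~2.4 provides: it guarantees that once the tensor/cotensor adjunction holds naturally in all three variables, the simplicial enrichment is automatic, with composition induced from the evident map $X\tensor \Map(X,Y)\tensor\Map(Y,Z)\to Z$. So no obstacle arises beyond checking naturality of our adjunction isomorphism in $X$, $Y$, and $K$, which is straightforward from the pushout construction and the objectwise cotensor formula.
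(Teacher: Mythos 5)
Your proposal takes the same route as the paper: the paper's own proof is just the one-line citation of Goerss--Jardine II Lemma~2.4, leaving the verification that the constructions land in $\GammaJS$ and satisfy the adjunction isomorphisms to the reader. You have filled in exactly those details (basepoint check for $X\tensor K$ and $X^K$, identification of the tensor/cotensor with those of the undercategory $\GammaJ((0^+,\bld{0}_{0^+}),-)\!\downarrow\!\cS^{\GammaJ}$ restricted to the full subcategory $\GammaJS$), and the argument is correct.
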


For the construction of model structures on $\GammaJS$ we will need
certain free functors that we introduce next. Let $X$ be a
$\GammaJ$-space and let $X_S$ be the associated $\HcJ(S)$-space. The
unique map $\alpha\colon 0^+ \to S$ induces a map $* =
X_{0^+}(\bld{0}_{0^+}) \to \alpha^*(X)(\bld{0}_{0^+}) =
X_S(\bld{0}_S)$ that makes $X_S(\bld{0}_S)$ a pointed space and $X_S$
an $\HcJ(S)$-space under the free $\HcJ(S)$-space
$\HcJ(S)(\bld{0}_S,-)$. By slight abuse of notation, we write
$\cS^{\HcJ(S)}_*$ for the category $\HcJ(S)(\bld{0}_S,-) \downarrow
\cS^{\HcJ(S)}$ and call its objects \emph{pointed
  $\HcJ(S)$-spaces}. (These are not $\HcJ(S)$-diagrams in pointed
spaces!) The proof of Lemma~\ref{lem:GammaJS-complete-cocomplete}
implies
\begin{corollary}\label{cor:colimits-in-GammaJS}
The evaluation $\GammaJS \to \cS^{\HcJ(S)}_*$ preserves colimits.\qed
\end{corollary}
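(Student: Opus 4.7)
The plan is to use directly the construction of colimits in $\GammaJS$ that was just set up before the corollary, and reduce to the fact that colimits in a diagram category are computed levelwise and that under-categories inherit colimits from the ambient category.

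First I would factor the evaluation functor $\GammaJS \to \cS^{\HcJ(S)}_*$ as the composite
\[ \GammaJS \hookrightarrow \GammaJ((0^+,\bld{0}_{0^+}),-)\!\downarrow\! \cS^{\GammaJ} \xrightarrow{\mathrm{ev}_S} \HcJ(S)(\bld{0}_S,-)\!\downarrow\! \cS^{\HcJ(S)}, \]
where the second functor takes an object $(\GammaJ((0^+,\bld{0}_{0^+}),-) \to X)$ to its restriction to the full subcategory of $\GammaJ$ spanned by the objects $(S;\bld{s},\sigma)$ with $(\bld{s},\sigma) \in \HcJ(S)$. This restriction lands in $\cS^{\HcJ(S)}_*$ because, as explained in the paragraph preceding Corollary~\ref{cor:colimits-in-GammaJS}, the canonical morphism $(0^+,\bld{0}_{0^+}) \to (S;\bld{s},\sigma)$ in $\GammaJ$ is given by the unique map $0^+ \to S$ together with the identity $\bld{0}_S \to \bld{s}_\emptyset = \bld{0}$, so that the evaluation of $\GammaJ((0^+,\bld{0}_{0^+}),-)$ at $(S;-)$ is naturally isomorphic to $\HcJ(S)(\bld{0}_S,-)$ as an $\HcJ(S)$-space.

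The first arrow in the factorization preserves colimits by the proof of Lemma~\ref{lem:GammaJS-complete-cocomplete}: $\GammaJS$ is a full subcategory of the larger under-category, and colimits in the larger under-category restrict to colimits in $\GammaJS$. For the second arrow, I would invoke the standard fact that colimits in an under-category $Z\!\downarrow\!\cC$ are computed as colimits in $\cC$ equipped with the unique cocone map from $Z$, together with the fact that colimits in any diagram category of simplicial sets are formed objectwise. Restricting a $\GammaJ$-diagram of spaces to the full subcategory $\{(S;-)\} \subset \GammaJ$ therefore preserves colimits, and under the identification $\GammaJ((0^+,\bld{0}_{0^+}),-)|_{(S;-)} \iso \HcJ(S)(\bld{0}_S,-)$ the induced cocone maps match.

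There is no real obstacle here; the only subtle point is to verify the identification of the restriction of $\GammaJ((0^+,\bld{0}_{0^+}),-)$ at $(S;-)$ with $\HcJ(S)(\bld{0}_S,-)$ as a pointed $\HcJ(S)$-space, which follows directly from the definition of morphisms in the Grothendieck construction and the fact that $\alpha_*(\bld{0}_{0^+}) = \bld{0}_S$ for the unique $\alpha \colon 0^+ \to S$ in $\Gamma^{\op}$.
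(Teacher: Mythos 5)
Your factorization and the identification of the restriction of $\GammaJ((0^+,\bld 0_{0^+}),-)$ with $\HcJ(S)(\bld 0_S,-)$ are correct, and this is what the paper's one-line proof has in mind. One caution: the ``standard fact'' you invoke about undercategory colimits is false as stated. For a non-connected diagram $D$ in $Z\downarrow\cC$, the colimit is the pushout $Z\sqcup_{\colim \text{const}_Z}\colim\bar D$ in $\cC$, not $\colim\bar D$ equipped with the induced cocone from $Z$ --- for instance, the coproduct of $(Z\to A)$ and $(Z\to B)$ in $Z\downarrow\cC$ is $A\sqcup_Z B$, not $A\sqcup B$ --- and the paper later applies this corollary precisely to coproducts (in the proof of Lemma~\ref{lem:psi-preserves-sums}), so the non-connected case cannot be ignored. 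The conclusion you want survives because the pushout formula is itself built from colimits in $\cC$ and therefore is carried along by the restriction $\cS^{\GammaJ}\to\cS^{\HcJ(S)}$, which preserves all colimits (they are pointwise); you should say this rather than that the forgetful functor from the undercategory preserves colimits. A minor additional imprecision: $\HcJ(S)$ sits in $\GammaJ$ faithfully but not fully, since $\GammaJ$ contains morphisms over non-identity self-maps $\alpha\colon S\to S$; this is harmless here because restriction along any functor (full or not) preserves pointwise colimits, but ``full subcategory'' is not accurate.
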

The functor $\alpha_*\colon \HcJ(S)\to\HcJ(T)$
induced by $\alpha\colon S \to T$ in $\Gamma^{\op}$ gives rise to an
adjunction
\begin{equation}\label{eq:pointed-adj-alpha}
\alpha_{!}\colon \cS^{\HcJ(S)}_* \rightleftarrows \cS^{\HcJ(T)}_*\colon\alpha^*.
\end{equation}
The right adjoint is precomposition, and
$\alpha_*(\bld{0}_S)=\bld{0}_T$ ensures that $\alpha^*(Y)(\bld{0}_S)$
is pointed. The left adjoint is defined by a left Kan extension.
\begin{lemma}\label{lem:free-functor-FS}
  The evaluation functor $\GammaJS \to \cS^{\HcJ(S)}_*, \; X \mapsto X(S;-)$ has a left
  adjoint $F_S$. For a pointed $\HcJ(S)$-space $Z$, there is an
  isomorphism
  \begin{equation}\label{eq:free-on-ptd-hjS-space}
    (F_SZ)_T \iso \textstyle\coprod_{\alpha\colon S \to T} \alpha_{!}(Z) 
  \end{equation}
  where $\alpha$ ranges over all non-zero maps in $\Gamma^{\op}$ and
  the coproduct is taken in~$\cS^{\HcJ(T)}_*$.
\end{lemma}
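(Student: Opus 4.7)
The plan is to define $F_S Z$ explicitly by the given formula, show that it canonically inherits the structure of a $\GammaJ$-space, and then verify the claimed adjunction by hand.

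First I would take the formula in~\eqref{eq:free-on-ptd-hjS-space} as a definition, reading the coproduct as formed in $\cS^{\HcJ(T)}_*$ so that all summands share a common ``basepoint'' map from $\HcJ(T)(\bld{0}_T,-)$. I would check the basepoint condition $(F_S Z)_{0^+} = \ast$: there are no non-zero maps $S \to 0^+$ in $\Gamma^{\op}$, so the coproduct is empty, i.e.\ equals the initial object of $\cS^{\HcJ(0^+)}_*$, which is the one-point space because $\HcJ(0^+)$ is the terminal category. Via Lemma~\ref{lem:GammaJS-equivalent}, it then remains to produce structure maps $\widetilde{\beta}\colon (F_S Z)_T \to \beta^*(F_S Z)_U$ for $\beta\colon T \to U$ in $\Gamma^{\op}$. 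On the summand indexed by a non-zero $\alpha\colon S \to T$, if $\beta\alpha$ is non-zero I would take the unit of the adjunction~\eqref{eq:pointed-adj-alpha} to obtain $\alpha_!(Z) \to \beta^*\beta_!\alpha_!(Z) = \beta^*(\beta\alpha)_!(Z)$ and then include into the $\beta\alpha$-summand of $\beta^*(F_S Z)_U$; if $\beta\alpha$ is zero I would factor through the basepoint of $\beta^*(F_S Z)_U$.

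Next I would verify that this defines a functor on $\GammaJ$ by checking $(\alpha^*(\widetilde{\gamma}))\widetilde{\beta} = \widetilde{\gamma\beta}$ for composable $\beta,\gamma$ in $\Gamma^{\op}$. This reduces to naturality of the unit of $\alpha_!\dashv\alpha^*$ together with the identity $\gamma_!\beta_! = (\gamma\beta)_!$, with the case distinction according to whether intermediate composites become zero maps handled using the pointedness (the basepoint summand is absorbing in the coproduct).

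For the adjunction, given a morphism $g\colon Z \to X(S;-)$ in $\cS^{\HcJ(S)}_*$, I would define $\phi(g)\colon F_S Z \to X$ on the summand indexed by $\alpha\colon S \to T$ as the transpose under $\alpha_!\dashv\alpha^*$ of the composite
\[
Z \xrightarrow{\;g\;} X(S;-) \xrightarrow{\widetilde{\alpha}} \alpha^*(X(T;-)),
\]
mapping the basepoint coherently. Compatibility of $\phi(g)$ with the $\GammaJ$-space structure maps comes from functoriality of the assignment $\alpha \mapsto \widetilde{\alpha}$ on $X$. Conversely, given $f\colon F_S Z \to X$, the restriction to the summand $\alpha = \id_S$ in $(F_S Z)_S = \coprod_\alpha \alpha_!(Z)$ yields a map $Z \to X(S;-)$. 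The two constructions are mutually inverse by the triangle identities for $\alpha_! \dashv \alpha^*$ specialized to $\alpha=\id_S$, and the compatibility of $f$ with structure maps forces its values on the other summands to agree with $\phi(g)$.

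The main obstacle will be the bookkeeping around zero maps: since the coproduct is restricted to non-zero $\alpha$, one must consistently redirect the summand indexed by $\alpha$ to the basepoint whenever a post-composition makes the composite factor through $0^+$, and then argue that the resulting $\widetilde{\beta}$'s still assemble into a functor and are compatible with the bijection exhibiting the adjunction. Once the pointed structure is used to absorb these zero summands into the basepoint, the rest is a formal application of $\coprod$ and the adjunctions $\alpha_!\dashv\alpha^*$.
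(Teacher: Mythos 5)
Your proof is essentially the same as the paper's, just written out in more detail. The paper also defines $F_S Z$ by the formula~\eqref{eq:free-on-ptd-hjS-space}, verifies via Lemma~\ref{lem:GammaJS-equivalent} that it is a $\GammaJ$-space, defines structure maps on the summand indexed by $\alpha$ in exactly the two cases you describe (inclusion into the $\beta\alpha$-summand when $\beta\alpha$ is non-zero; collapse to the basepoint representable $\HcJ(U)(\bld{0}_U,-)$ when $\beta\alpha$ factors through $0^+$), and then asserts the adjunction. The only cosmetic difference is that the paper works with the adjoint form $\widehat{\beta}\colon \beta_!(F_SZ)_T \to (F_SZ)_U$ instead of your $\widetilde{\beta}\colon (F_SZ)_T \to \beta^*(F_SZ)_U$; your explicit verification of the unit/counit bijection is additional detail that the paper leaves to the reader. (One small slip: in your compatibility condition for composable $\beta,\gamma$, the first factor should be $\beta^*(\widetilde{\gamma})$, not $\alpha^*(\widetilde{\gamma})$.)
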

\begin{proof}
  We apply Lemma~\ref{lem:GammaJS-equivalent} to see that the right
  hand site in~\eqref{eq:free-on-ptd-hjS-space} defines a
  $\GammaJ$-space. By~\eqref{eq:pointed-adj-alpha} it is enough that
  there are maps $\widehat{\beta}\colon\beta_{!}(F_SZ)_T \to (F_SZ)_U$
  for every $\beta \colon T \to U$ in $\Gamma^{\op}$ such that
  $\widehat{\gamma}(\gamma_!\widehat{\beta})=\widehat{\gamma\beta}$
  holds for composable maps. On the summand indexed by $\alpha\colon S
  \to T$, we define $\widehat{\beta}$ to be the inclusion of the
  summand $(\beta\alpha)_!(Z)$ if $\beta\alpha$ is non-zero. If
  $\beta\alpha$ factors as the composite $\nu \varepsilon$ with
  $\nu\colon 0^+\to U$, then $(\beta\alpha)_!(Z) \iso
  \HcJ(U)(\bld{0}_U,-)\times \varepsilon_!(Z)$, and we define
  $\beta_!\alpha_!(Z) \to (F_SZ)_U$ to be the map induced by the
  collapse of $\varepsilon_!(Z)$ and the inclusion of the
  basepoint. It is easy to check that the $\widehat{\beta}$ satisfy
  the required compatibility and that this defines the desired left
  adjoint $F_S$.
\end{proof} 
Using various free-forgetful adjunctions, free $\GammaJ$-spaces on
free pointed $\HcJ(S)$-spaces can also be expressed using the tensor
introduced in Proposition~\ref{prop:GammaJS-tens-cotens-enr}:
\begin{corollary}\label{cor:tensor-and-free}
  For objects $(\bld{s},\sigma)$ of $\HcJ(S)$, there
  is a natural isomorphism \[F_S\left(F_{\bld{0}_S}^{\HcJ(S)}(*)
    \coprod F_{(\bld{s},\sigma)}^{\HcJ(S)}(K)\right) \iso
  \GammaJ((S;\bld{s},\sigma),-)\tensor K.\]\qed
\end{corollary}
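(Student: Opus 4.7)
The plan is to verify by the Yoneda lemma that both sides corepresent the same functor $\GammaJS\to\mathrm{Set}$. Concretely, for every $Y\in\GammaJS$ I will produce a natural bijection between the $\GammaJS$-morphisms out of either side into $Y$ and the simplicial set $\Hom_{\cS}(K, Y(S;\bld{s},\sigma))$; naturality in $(S;\bld{s},\sigma)$ and $K$ is inherited from the constructions.

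For the left hand side, I interpret $F^{\HcJ(S)}_{\bld{0}_S}(*)\sqcup F^{\HcJ(S)}_{(\bld{s},\sigma)}(K)$ as a pointed $\HcJ(S)$-space whose underlying unpointed $\HcJ(S)$-space is the indicated coproduct in $\cS^{\HcJ(S)}$ and whose basepoint is the inclusion of the first summand $\HcJ(S)(\bld{0}_S,-)$. The adjunction $F_S \dashv \Ev_S$ of Lemma~\ref{lem:free-functor-FS} then identifies $\GammaJS$-morphisms into $Y$ with pointed $\HcJ(S)$-space maps from this coproduct to $Y_S$. Since $F^{\HcJ(S)}_{\bld{0}_S}(*)=\HcJ(S)(\bld{0}_S,-)$ is initial in $\cS^{\HcJ(S)}_{*}$, the restriction to the first summand is forced to be the basepoint map of $Y_S$, while the restriction to the second summand is an unconstrained unpointed $\HcJ(S)$-map, which by the free-forgetful adjunction corresponds to a simplicial map $K\to Y_S(\bld{s},\sigma)=Y(S;\bld{s},\sigma)$.

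For the right hand side, I would unfold the pushout definition of $X\otimes K$. A map $\GammaJ((S;\bld{s},\sigma),-)\otimes K \to Y$ in $\GammaJS$ amounts, by the universal property of the pushout, to a pair of $\cS^{\GammaJ}$-maps $g_1\colon \GammaJ((0^+;\bld{0}_{0^+}),-)\to Y$ and $g_2\colon \GammaJ((S;\bld{s},\sigma),-)\times K \to Y$ agreeing after restriction along the two projections from $\GammaJ((0^+;\bld{0}_{0^+}),-)\times K$. Because $Y(0^+;\bld{0}_{0^+})=*$, the Yoneda lemma forces $g_1$ to be unique and renders the compatibility condition vacuous, while the Yoneda lemma together with the cartesian product-hom adjunction in $\cS$ identifies $g_2$ with a single map $K\to Y(S;\bld{s},\sigma)$. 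Comparing the two descriptions yields a natural bijection, and Yoneda produces the claimed isomorphism of $\GammaJ$-spaces.

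No serious obstacle is anticipated, as the whole argument is formal bookkeeping with adjunctions, universal properties, and the Yoneda lemma. The one conceptual point worth emphasizing is that the extra summand $F^{\HcJ(S)}_{\bld{0}_S}(*)$ on the left and the basepoint glued in by the pushout defining the tensor on the right play exactly the same role, namely enforcing the condition $Y(0^+;\bld{0}_{0^+})=*$ built into $\GammaJS$. This matching is what makes the two corepresentability computations agree on the nose rather than only up to some correction term.
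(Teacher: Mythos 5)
Your proposal is correct and takes essentially the same route the paper intends: the paper states the corollary with a \qed, indicating it follows directly from the free--forgetful adjunctions, and you have simply written out the adjunction chain and the Yoneda comparison, correctly identifying $F^{\HcJ(S)}_{\bld{0}_S}(*)\sqcup F^{\HcJ(S)}_{(\bld{s},\sigma)}(K)$ as the disjoint-basepoint construction in $\cS^{\HcJ(S)}_*$ and correctly observing that the compatibility condition in the pushout defining $\tensor K$ is vacuous because $Y(0^+;\bld{0}_{0^+})=*$.
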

\subsection{Commutative \texorpdfstring{$\cJ$}{J}-space monoids and
  \texorpdfstring{$\GammaJ$}{Gamma-J}-spaces}
We give an alternative description of the passage from a commutative
$\cJ$-space monoid to a $\GammaJ$-space provided by
Corollary~\ref{cor:GammaJ-sp-from-CSJ} in order to see that this
construction is a right adjoint.

The free commutative $\cJ$-space monoid $\mC
F_{(\bld{n_1},\bld{n_2})}^{\cJ}(*)$ on a point in degree
$(\bld{n_1},\bld{n_2})$ is the image of the free $\cJ$-space
$F_{(\bld{n_1},\bld{n_2})}^{\cJ}(*) = \cJ((\bld{n_1},\bld{n_2}), -)$
under the free functor $\mC\colon \cS^{\cJ} \to \cC\cS^{\cJ}$
(compare~\eqref{eq:free-com-J}).  It is contravariant functorial in
$(\bld{n_1},\bld{n_2})$, and using the coproduct in the category
$\cC\cS^{\cJ}$ we obtain functors
\begin{equation}\label{eq:functor-CFS} \mC F_S \colon \HcJ(S)^{\op}
  \to \cC\cS^{\cJ}, \qquad (\bld{s},\sigma)\mapsto \textstyle\coprod_{i\in
    \ovl{S}} \mC F_{\bld{s}_i}^{\cJ}(*). 
\end{equation}
Since the sum in $\cC\cS^{\cJ}$ is the $\boxtimes$-product, a
choice of an ordering of $\ovl{S}$ induces an isomorphism between
$(\mC F_S)(\bld{s},\sigma)$ and the iterated $\boxtimes$-product of
the commutative $\cJ$-space monoids $\mC F_{\bld{s}_i}^{\cJ}(*)$.

The functors $\mC F_S$ for varying $S$ are related: A morphism $\alpha
\colon S \to T$ in $\Gamma^{\op}$ induces a natural transformation
$\alpha^* \colon \mC F_T \circ \alpha_* \to \mC F_S$ of functors
$\HcJ(S)^{\op}\to\cC\cS^{\cJ}$ such that for a second morphism
in $\beta\colon T\to U$ in $\Gamma^{\op}$, the composite
\[ \mC F_U \circ \beta_* \circ \alpha_* \to \mC F_T \circ \alpha_* \to
\mC F_S \] equals $(\beta\alpha)^* \colon \mC F_U \circ (\beta
\alpha)_* \to \mC F_S$. The definition of $\alpha^*$ is similar to
Section~\ref{subsec:Gamma-from-CSK}: Choosing an ordering
$\{i_1,\dots,i_v\}$ of $V = \alpha^{-1}(j)$ for $j\in\ovl{T}$, we have
to define a map
\[ \mC F_{\bld{s}_V}^{\cJ}(*) \to \mC
F_{\bld{s}_{i_1}}^{\cJ}(*)\boxtimes \dots \boxtimes \mC
F_{\bld{s}_{i_v}}^{\cJ}(*).\] It is equivalent to specify a point in
the evaluation of the codomain at $\bld{s}_V$. The canonical points in
$(\mC F_{\bld{s}_{i}}^{\cJ}(*))(\bld{s_i})$ together with the morphism
$s_{i_1} \concat \dots \concat s_{i_v} \to s_{V}$ induced by the
chosen ordering and the isomorphisms that are part of the data of
$(\bld{s},\sigma)$ define such a point. The resulting map does not
depend on the choices and satisfies the above naturality.

By the definition of the Grothendieck construction we obtain
\begin{corollary}
  The functors $\mC F_S$ induce a functor $\mC F\colon (\GammaJ)^{op}
  \to \cC\cS^{\cJ}$ with $(\mC F)(S;\bld{s},\sigma) = \left(\mC F_S)(\bld{s},\sigma\right)$.\qed
\end{corollary}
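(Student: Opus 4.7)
The plan is to invoke the universal property of the Grothendieck construction. Since $\GammaJ = \Gamma^{\op}\!\int\!\HcJ$, to specify a functor $\mC F \colon (\GammaJ)^{\op} \to \cC\cS^{\cJ}$ amounts to giving, for each object $S$ of $\Gamma^{\op}$, a functor $G_S \colon \HcJ(S)^{\op} \to \cC\cS^{\cJ}$, and for each morphism $\alpha \colon S \to T$ in $\Gamma^{\op}$ a natural transformation $\alpha^*\colon G_T \circ \alpha_* \to G_S$, subject to the unit condition $\id_S^* = \id_{G_S}$ and to the cocycle condition that for composable $\alpha\colon S\to T$ and $\beta\colon T \to U$ the composite $G_U \beta_* \alpha_* \to G_T \alpha_* \to G_S$ agrees with $(\beta\alpha)^*$.

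Both ingredients are exactly what has been set up in the paragraphs preceding the corollary. I would take $G_S = \mC F_S$, defined on objects in~\eqref{eq:functor-CFS} and contravariantly functorial in $(\bld{s},\sigma) \in \HcJ(S)$ because each factor $\mC F_{\bld{s}_i}^{\cJ}(*)$ depends contravariantly on $\bld{s}_i \in \cJ$ while the structure isomorphisms $\sigma$ provide the compatibility along morphisms of $\HcJ(S)$. For the comparison maps I would take the $\alpha^*$ constructed in the paragraph just above the statement, whose cocycle condition was explicitly verified there. The resulting functor sends a morphism of $(\GammaJ)^{\op}$ from $(S;\bld{s},\sigma)$ to $(T;\bld{t},\tau)$, represented by a pair $(\alpha, f)$ in $\GammaJ$ with $f \colon \alpha_*(\bld{s},\sigma) \to (\bld{t},\tau)$, to the composite $\alpha^*_{(\bld{s},\sigma)} \circ (\mC F_T)(f)$.

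Functoriality under composition with a second morphism $(\beta, g)$ in $\GammaJ$ reduces, after applying the naturality of $\beta^*$ to the $\HcJ(T)$-morphism $f$, to the cocycle identity for the $\alpha^*$ already in hand; preservation of identities is immediate. I do not anticipate any genuine obstacle: the corollary is essentially a repackaging statement, and the non-trivial work — contravariant functoriality of each $\mC F_S$, naturality of $\alpha^*$, and the cocycle — has already been carried out in the preceding discussion. The only mild care required is to track the two contravariances simultaneously, namely the opposite of $\GammaJ$ on the outside and the $\HcJ(S)^{\op}$ on the individual fibers.
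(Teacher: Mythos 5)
Your proposal is correct and takes the same route the paper intends: the corollary is, as you say, a repackaging of the data constructed in the preceding paragraphs via the universal property of the Grothendieck construction, and the paper itself gives no proof beyond the phrase ``By the definition of the Grothendieck construction we obtain.'' One small slip worth flagging: if $(\alpha,f)$ is a morphism $(S;\bld{s},\sigma)\to(T;\bld{t},\tau)$ \emph{in} $\GammaJ$ (so $\alpha\colon S\to T$ and $f\colon\alpha_*(\bld{s},\sigma)\to(\bld{t},\tau)$), then in $(\GammaJ)^{\op}$ it is a morphism $(T;\bld{t},\tau)\to(S;\bld{s},\sigma)$, not the other way around as you wrote; your formula $\alpha^*_{(\bld{s},\sigma)}\circ(\mC F_T)(f)$ is nevertheless the right one, landing in $\cC\cS^{\cJ}(\mC F_T(\bld{t},\tau),\,\mC F_S(\bld{s},\sigma))$ as it should for a contravariant functor.
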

The category of commutative $\cJ$-space monoids is tensored,
cotensored, and enriched over unpointed simplicial sets. Tensor and
mapping space are defined by
\[ A \tensor K = \left|[m]\mapsto A^{\boxtimes K_m}\right| \quad\text{
  and }\quad \Map(A,B) = ([m]\mapsto \cC\cS^{\cJ}(A\tensor
\Delta^m,B))\] where $|-|$ denotes the realization of simplicial
objects defined in terms of the diagonal. The cotensor is defined on
the underlying $\cJ$-spaces. Together with this structure, the
positive $\cJ$-model structure on $\cC\cS^{\cJ}$ is a simplicial model
category~\cite[Definition 9.1.6]{Hirschhorn_model} since the positive
$\cJ$-model structure on $\cJ$-space is simplicial~\cite[Proposition
6.19]{Sagave-S_diagram} and the compatibility with the model structure
can be checked on the cotensor.

\begin{definition}
  For a commutative $\cJ$-space monoid $A$, we let $\Psi(A)$ be the
  $\GammaJ$-space $\Map_{\cC\cS^{\cJ}}(\mC F (-), A)$. For a
  $\GammaJ$-space $X$, we let $\Phi(X)$ be the coend over $\GammaJ$ of
  the functor $ \mC F \tensor X$.
\end{definition}
The adjunction of mapping space and tensor with a space imply 
\begin{corollary}\label{cor:Phi-Psi-adj}
The functors
$\Phi$ and $\Psi$ are the left and right adjoints in an adjunction 
$\Phi\colon\GammaJS  \rightleftarrows \cC\cS^{\cJ}\colon\Psi$.
\qed
\end{corollary}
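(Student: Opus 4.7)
The plan is to obtain $\Phi\dashv\Psi$ as an instance of the standard coend--Hom adjunction. The three ingredients are already in place: $\cC\cS^{\cJ}$ is cocomplete and tensored, cotensored, and enriched over $\cS$ (as recorded just before the definition of $\Psi$); $\GammaJS$ is a full subcategory of $\cS^{\GammaJ}$ (by Lemma~\ref{lem:GammaJS-complete-cocomplete} and the discussion preceding it); and, by the empty coproduct in~\eqref{eq:functor-CFS}, the contravariant functor $\mC F$ carries the distinguished object $(0^+;\bld{0}_{0^+})$ of $\GammaJ$ to the initial object of $\cC\cS^{\cJ}$.

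Given these, the main step is a routine chain of natural bijections: for $X\in\GammaJS$ and $A\in\cC\cS^{\cJ}$,
\[
\cC\cS^{\cJ}(\Phi(X),A)\iso \int_{(S;\bld{s},\sigma)} \cS\bigl(X(S;\bld{s},\sigma),\,\Map_{\cC\cS^{\cJ}}(\mC F(S;\bld{s},\sigma),A)\bigr)\iso \cS^{\GammaJ}(X,\Psi(A)),
\]
combining the universal property of the coend defining $\Phi(X)$, the tensor--Hom adjunction $\cC\cS^{\cJ}(B\tensor K,A)\iso\cS(K,\Map_{\cC\cS^{\cJ}}(B,A))$ applied termwise on $\GammaJ$, and the standard identification of an end of hom sets with the set of natural transformations.

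The only point that deserves a moment's thought is that this restricts correctly to $\GammaJS$ on the right-hand side, that is, that $\Psi(A)$ actually lies in $\GammaJS$. This follows because $\mC F(0^+;\bld{0}_{0^+})$ is an empty $\boxtimes$-coproduct and hence the initial commutative $\cJ$-space monoid $\eins_{\cJ}$, so that $\Psi(A)(0^+;\bld{0}_{0^+})=\Map_{\cC\cS^{\cJ}}(\eins_{\cJ},A)=\ast$. Fullness of $\GammaJS\hookrightarrow\cS^{\GammaJ}$ then identifies $\cS^{\GammaJ}(X,\Psi(A))$ with $\GammaJS(X,\Psi(A))$, completing the adjunction. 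I do not expect a genuine obstacle here: the content is essentially a formal packaging of coend calculus, and the only delicate piece is the basepoint verification, which is dictated by the definition of $\mC F$.
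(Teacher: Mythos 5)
Your proof is correct and takes essentially the same approach as the paper, which dispatches the corollary in one line by appealing to the tensor–mapping space adjunction (i.e., the formal coend/end calculus you spell out). The basepoint check that $\Psi(A)(0^+;\bld{0}_{0^+})=\ast$, which you rightly flag as the one non-automatic step, is exactly what makes the adjunction restrict to $\GammaJS$.
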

Since $\mC F_s$ is defined as a coproduct of free commutative
$\cJ$-space monoids, there is an isomorphism $\Map_{\cC\cS^{\cJ}}(\mC
F^{\cJ}_{S}(*), A) \iso A_S$. Comparing the definition of the
structure maps, we get an alternative description of the
$\GammaJ$-space of Corollary~\ref{cor:GammaJ-sp-from-CSJ}:
\begin{corollary}\label{cor:identification-of-Psi}
  Let $A$ be a commutative $\cJ$-space monoid. The $\GammaJ$-space
  associated with the $\HcJ(S)$-spaces $A_S$ is isomorphic to
  $\Psi(A)$.\qed
\end{corollary}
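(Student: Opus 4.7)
The plan is to establish the isomorphism first at the level of objects of $\GammaJ$ and then verify it is natural with respect to all morphisms. The object-level identification is a direct consequence of two facts: the free-forgetful adjunction $\mC F^{\cJ}_{(\bld{n_1},\bld{n_2})}(*) \dashv \mathrm{Ev}_{(\bld{n_1},\bld{n_2})}$ provides a natural isomorphism $\Map_{\cC\cS^{\cJ}}(\mC F^{\cJ}_{(\bld{n_1},\bld{n_2})}(*),A) \iso A(\bld{n_1},\bld{n_2})$, and the coproduct in $\cC\cS^{\cJ}$ agrees with $\boxtimes$, so that $\Map$ converts the coproduct in~\eqref{eq:functor-CFS} into the product. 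Combining these gives
\[
\Psi(A)(S;\bld{s},\sigma) = \Map_{\cC\cS^{\cJ}}\Bigl(\textstyle\coprod_{i \in \ovl{S}} \mC F^{\cJ}_{\bld{s}_i}(*),\; A\Bigr) \iso \textstyle\prod_{i \in \ovl{S}} A(\bld{s}_i) = A_S(\bld{s},\sigma),
\]
and the basepoint condition at $(0^+;\bld{0}_{0^+})$ is automatic since the empty coproduct is the initial commutative $\cJ$-space monoid.

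Next I would verify that this object-level isomorphism is compatible with morphisms. Every morphism in $\GammaJ$ factors as a vertical morphism (inside a fibre $\HcJ(S)$) followed by a horizontal one coming from $\alpha \colon S \to T$ in $\Gamma^{\op}$, so I would treat the two cases separately. For vertical morphisms, unravelling the definition of $\mC F_S$ on morphisms of $\HcJ(S)$ shows that the induced map on $\Psi(A)$ is precisely the product of the maps $A(\bld{s}_i) \to A(\bld{t}_i)$ together with the multiplications in $A$ required to absorb the isomorphisms $\sigma$ that are part of the data of $(\bld{s},\sigma)$. This matches the definition of $A_S$ as an $\HcJ(S)$-space from Section~\ref{subsec:Gamma-from-CSK}. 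For the horizontal part, one must compare the natural transformation $\alpha^* \colon \mC F_T \circ \alpha_* \to \mC F_S$ defined just after~\eqref{eq:functor-CFS} with the structure map $\widetilde{\alpha}\colon A_S \to \alpha^*(A_T)$ defined in~\eqref{eq:map-of-HKS-diagrams}.

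The key computation here is that under the adjunction, the map $\mC F^{\cJ}_{\bld{s}_V}(*) \to \boxtimes_{i \in V}\mC F^{\cJ}_{\bld{s}_i}(*)$ that was defined by choosing the canonical point in the evaluation of the codomain at $\bld{s}_V$ is adjoint precisely to the iterated multiplication map $\prod_{i \in V} A(\bld{s}_i) \to A(\concat_{i \in V} \bld{s}_i) \to A(\bld{s}_V)$ appearing in~\eqref{eq:map-of-HKS-diagrams-components}. Once this identification is made for a single $j \in \ovl{T}$, taking the product over all $j \in \ovl{T}$ and comparing with~\eqref{eq:map-of-HKS-diagrams} gives the required agreement; the unit $\ast \to A(\bld{0})$ handles the case when $\alpha^{-1}(j)$ is empty, matching the unit map of the free commutative $\cJ$-space monoid.

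The only real obstacle is the bookkeeping in this last step: one has to fix orderings of the preimages $V = \alpha^{-1}(j)$ consistently on both sides and check that the resulting maps are independent of those choices (which on the $A_S$ side follows from commutativity of $A$, and on the $\mC F$ side follows from the symmetry of $\boxtimes$ applied to the canonical points of free objects). Assembling the two cases yields a natural transformation $\Psi(A) \to (A_S)_S$ of $\GammaJ$-spaces that is a levelwise isomorphism, which completes the proof.
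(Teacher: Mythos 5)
Your proof is correct and follows the same route the paper takes: the paper states the objectwise identification $\Map_{\cC\cS^{\cJ}}(\mC F_S(\bld{s},\sigma),A)\iso A_S(\bld{s},\sigma)$ via the free–forgetful adjunction and then simply invokes ``comparing the definition of the structure maps'' to justify the $\qed$. Your write-up fills in precisely that comparison (vertical and horizontal morphisms, the adjoint of $\alpha^*$ being the iterated multiplication, independence of the chosen orderings), so it is a more detailed version of the same argument rather than a different one.
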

Generalizing the functor $\gamma\colon \cC\cS^{\cJ} \to \GammaS$
introduced in Definition~\ref{def:Gamma-space-from-K-space-monoid}, we
obtain a functor $\gamma'\colon \GammaJS \to \GammaS$ by setting
$\gamma'(X)(S) = X(S;-)_{h\HcJ(S)}$.
\begin{corollary}\label{cor:gamma-vs-gamma-prime}
  There is a natural isomorphism $\gamma'(\Psi(A)) \iso
  \gamma(A)$.\qed
\end{corollary}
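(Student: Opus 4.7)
The plan is to prove the corollary by unwinding the definitions on both sides and observing that the two $\Gamma$-spaces are built from literally the same data: namely, the $\HcJ(S)$-spaces $A_S$ of Section~\ref{subsec:Gamma-from-CSK} together with the transition maps $\widetilde{\alpha}\colon A_S \to \alpha^*(A_T)$ of~\eqref{eq:map-of-HKS-diagrams}. No substantive argument is required beyond tracking how the Grothendieck construction turns these two pieces of data into a single functor on $\GammaJ$.

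First I would record the equivalent description of $\Psi(A)$. By Lemma~\ref{lem:GammaJS-equivalent} applied to the construction in Corollary~\ref{cor:GammaJ-sp-from-CSJ}, the $\GammaJ$-space $\Psi(A)$ is precisely the functor $\GammaJ \to \cS$ whose restriction to the fiber $\HcJ(S)$ over $S \in \Gamma^{\op}$ is the $\HcJ(S)$-space $A_S$, and whose structure maps $\Psi(A)(\alpha,\id)\colon \Psi(A)(S;\bld{s},\sigma) \to \Psi(A)(T;\alpha_*(\bld{s},\sigma))$ assemble into the map $\widetilde{\alpha}\colon A_S \to \alpha^*(A_T)$ of~\eqref{eq:map-of-HKS-diagrams}.

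Next I would evaluate $\gamma'$ on this data. By definition of $\gamma'$ one has
\[
\gamma'(\Psi(A))(S) \;=\; \Psi(A)(S;-)_{h\HcJ(S)} \;=\; \hocolim_{\HcJ(S)} A_S,
\]
which equals $\gamma(A)(S)$ by Definition~\ref{def:Gamma-space-from-K-space-monoid}. For the structure map associated with $\alpha\colon S \to T$ in $\Gamma^{\op}$, I would use that in the Grothendieck construction $\GammaJ$ every morphism lying over $\alpha$ factors as $(\alpha,\id)$ followed by an identity-over-$T$ morphism in $\HcJ(T)$. The resulting map on homotopy colimits factors as
\[
\hocolim_{\HcJ(S)} A_S \;\xrightarrow{\;\widetilde{\alpha}_*\;}\; \hocolim_{\HcJ(S)} \alpha^*(A_T) \;\xrightarrow{\;(\alpha_*)_*\;}\; \hocolim_{\HcJ(T)} A_T,
\]
which is exactly the structure map of $\gamma(A)$ prescribed in Definition~\ref{def:Gamma-space-from-K-space-monoid}. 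Naturality in $A$ is automatic because each piece of the construction (the $A_S$, the $\widetilde{\alpha}$, and the homotopy colimits) is functorial in the commutative $\cJ$-space monoid $A$.

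There is no real obstacle here; the only point one has to be careful about is the bookkeeping of the Grothendieck construction, making sure that the map $\gamma'(\Psi(A))(\alpha)$ produced from the functoriality of $\Psi(A)$ on $\GammaJ$ really does decompose as the ``transition then reindexing'' composite used in Definition~\ref{def:Gamma-space-from-K-space-monoid}. Once Lemma~\ref{lem:GammaJS-equivalent} is invoked, this decomposition is forced, and the isomorphism of $\Gamma$-spaces $\gamma'(\Psi(A)) \iso \gamma(A)$ is tautological and evidently natural in $A$.
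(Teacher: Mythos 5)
Your proof is correct and is essentially the argument the paper intends when it marks this corollary with an immediate \qed: both $\gamma'(\Psi(A))$ and $\gamma(A)$ are assembled from the same underlying data, namely the $\HcJ(S)$-spaces $A_S$ and the transition maps $\widetilde\alpha$, via Lemma~\ref{lem:GammaJS-equivalent} and the factorization of morphisms in the Grothendieck construction into a ``horizontal'' morphism over $\alpha$ followed by a fiberwise morphism. The one small remark is that formally identifying $\Psi(A)$ with the $\GammaJ$-space of Corollary~\ref{cor:GammaJ-sp-from-CSJ} is the content of Corollary~\ref{cor:identification-of-Psi}, which you invoke implicitly; citing it explicitly would tidy the bookkeeping, but this does not affect the correctness of your argument.
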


\begin{remark}
  The usual bar construction $BA = B(*,A,*)$ in $\cC\cS^{\cJ}$ seems
  to be of limited use for our purposes since the terminal object of
  $\cC\cS^{\cJ}$ is not initial. (This means in particular that we
  cannot form $\Omega(BA)$.)

  However, one may interpret $\Psi$ as a generalized bar construction:
  The $\Psi(A)$ encodes iterated products of the spaces
  $A(\bld{m_1},\bld{m_2})$ and the structure maps between them induced
  by the multiplication and unit of $A$, without ever attempting to
  realize this as an object of $\cC\cS^{\cJ}$.
  Corollary~\ref{cor:Phi-Psi-adj} justifies why the more elaborate
  category $\GammaJS$ is an appropriate codomain of such a generalized
  bar construction.
\end{remark}

\subsection{Level model structures on
  \texorpdfstring{$\GammaJ$}{Gamma-J}-spaces}
Standard model category arguments (e.g.~\cite[\S
11.6]{Hirschhorn_model}) show that the category $\GammaS$ admits a
cofibrantly generated proper level model structure in which a map $X
\to Y$ is a weak equivalence or a fibration if and only if $X(S) \to
Y(S)$ is a weak equivalence or fibration of spaces for every finite
based set $S$. The cofibrations are determined by the left lifting
property.  Following Schwede's
terminology~\cite{Schwede_Gamma-spaces}, we call this the \emph{level
  Q-model structure} and write $(\GammaS)_{\lev}$ for this model
category. It is the first step towards the \emph{stable} Q-model
structure on $\GammaS$ appearing in
Theorem~\ref{thm:identification-of-group-compl-model-structure}.

Next we will build a corresponding ``level'' model structure on the
category $\GammaJS$. It is based on model structures on the categories
$\cS^{\HcJ(S)}$ we will treat first.

\begin{definition} An object $(\bld{s},\sigma)$ in $\HcJ(S)$ is
\emph{positive} if for every $i \in \ovl{S}$ the object
$\bld{s}_{i}=(\bld{m_1},\bld{m_2})$ of $\cJ$ satisfies $m_1 \geq 1$.
\end{definition}
A map $X \to Y$ in $\cS^{\HcJ(S)}$ is a \emph{$\HcJ(S)$-equivalence}
if the induced map of homotopy colimits $\hocolim_{\HcJ(S)}X \to
\hocolim_{\HcJ(S)}Y$ is a weak equivalence of spaces.  It is a
\emph{positive $\HcJ(S)$-fibration} if for every morphism $f\colon
(\bld{s},\sigma) \to (\bld{t},\tau)$ in $\HcJ(S)$ between positive
objects the induced square
\[\xymatrix@-1pc{
X(\bld{s},\sigma) \ar[r] \ar[d] & X(\bld{t},\tau) \ar[d]\\
Y(\bld{s},\sigma)\ar[r]& Y(\bld{t},\tau)
}
\]
is a homotopy cartesian square in which the vertical maps are Kan
fibrations. \emph{Positive $\HcJ(S)$-cofibrations} are the maps with
the left lifting property with respect to all maps which are both
positive $\HcJ(S)$-fibrations and $\HcJ(S)$-equivalences.

A map $X \to Y$ in $\cS^{\HcJ(S)}$ is an \emph{absolute
  $\HcJ(S)$-fibration} if it satisfies the condition of a positive
$\HcJ(S)$-fibration for all morphisms of $\HcJ(S)$ (without requiring
that their domains and codomains are positive). The
\emph{absolute $\HcJ(S)$-cofibrations} are the maps with the left
lifting property with respect to all maps which are both absolute
$\HcJ(S)$-fibrations and $\HcJ(S)$-equivalences.

\begin{lemma}\label{lem:HJS-model-structures} If $S$ is non-trivial, 
  the $\HcJ(S)$-equivalences, the positive $\HcJ(S)$-fibrations, and
  the positive $\HcJ(S)$-cofibrations define a cofibrantly generated
  proper simplicial \emph{positive $\HcJ(S)$-model structure}.  The
  same holds in the absolute case.
\end{lemma}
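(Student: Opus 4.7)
The plan is to reduce to the positive $\cJ$-model structure on $\cS^{\cJ}$ of~\cite{Sagave-S_diagram} by exploiting the equivalence of categories $\HcJ(S)\simeq\cJ^{\times\ovl{S}}$ from Lemma~\ref{lem:Shi-Shi-functorial}. After choosing an ordering of $\ovl{S}$, this equivalence pulls back to an adjoint equivalence of diagram categories $\cS^{\HcJ(S)}\simeq\cS^{\cJ^{\times\ovl{S}}}$. Hence it suffices to construct the corresponding model structures on $\cS^{\cJ^{\times\ovl{S}}}$ and transport them across this equivalence, after checking that the descriptions of weak equivalences, fibrations, and cofibrations translate correctly.

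First I would build a positive (resp.\ absolute) model structure on $\cS^{\cJ^{\times\ovl{S}}}$ by imitating the construction in~\cite{Sagave-S_diagram}. One convenient formulation is to take the generating (acyclic) cofibrations to be the external $\boxtimes$-products over $\cJ^{\times\ovl{S}}$ of generating (acyclic) cofibrations from the positive (or absolute) $\cJ$-model structure on $\cS^{\cJ}$, and to apply the small object argument. Alternatively, one can iterate: assuming the model structure on $\cS^{\cJ^{\times n}}$ has been constructed with the required properties, equip $\cS^{\cJ^{\times (n+1)}}\iso(\cS^{\cJ^{\times n}})^{\cJ}$ with a positive $\cJ$-model structure taking values in the simplicial model category $\cS^{\cJ^{\times n}}$ by rerunning the Sagave-Schlichtkrull argument. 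Either way, the identification of the weak equivalences rests on Fubini for homotopy colimits: $\hocolim_{\cJ^{\times\ovl{S}}}$ agrees with the iterated $\hocolim_{\cJ}$ and, under the equivalence, with $\hocolim_{\HcJ(S)}$. Since positivity in $\HcJ(S)$ corresponds under the equivalence to coordinatewise positivity, the fibration condition translates as required. Properness and the simplicial enrichment are inherited levelwise from the simplicial set model structure.

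The absolute case is handled by the same argument with ``positive'' removed throughout; it is strictly easier since no positivity condition must be tracked. The assumption that $S$ is non-trivial (i.e.\ $\ovl{S}\neq\emptyset$) enters precisely so that positive objects exist at all: for $S=0^+$ the only object of $\HcJ(S)$ is $\bld{0}_{0^+}$, which is not positive, so there would be no morphisms between positive objects to test fibrations against and the positive model structure would degenerate. The main technical obstacle will be verifying the Sagave-Schlichtkrull construction in the product setting, namely checking that the external-product generators (or the iterated construction) yield a cofibrantly generated model structure whose fibrations are correctly characterized by homotopy cartesian squares over morphisms between positive tuples. Once this is granted, the transport across the equivalence and the verification of the remaining model-category axioms are routine.
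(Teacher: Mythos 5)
Your proposal shares the paper's starting point---reducing via Lemma~\ref{lem:Shi-Shi-functorial} to the product category $\cJ^{\times\ovl{S}}$---but from there the routes diverge, and several of your steps either duplicate machinery the paper already has in black-box form or gloss over the parts that actually require work.

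The paper does \emph{not} rebuild the model structure on $\cS^{\cJ^{\times\ovl{S}}}$ from scratch. It invokes the general existence result for ``homotopy colimit model structures'' from~\cite[Proposition 6.16]{Sagave-S_diagram}, which takes as input a \emph{well structured relative index category} and outputs a cofibrantly generated model structure of exactly this type. The remaining work is purely categorical: verify that $\HcJ(S)$ with the positive (resp.\ all) objects forms such a well structured relative index category. This follows because $\cJ$ is one by~\cite[Corollary 5.9]{Sagave-S_diagram}, the property is stable under finite products (summing degree functions) and under equivalences of symmetric monoidal categories, and Lemma~\ref{lem:Shi-Shi-functorial} provides the equivalence. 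Properness is then not a levelwise triviality but is quoted from~\cite[\S 11]{Sagave-S_diagram}, and the simplicial structure from~\cite[Proposition 6.19]{Sagave-S_diagram}. Your proposal essentially proposes to reprove this framework by hand in the product setting---which is possible but is exactly the ``main technical obstacle'' you flag and which the abstract approach sidesteps.

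Two concrete problems. First, your suggested generating cofibrations---``external $\boxtimes$-products of generating (acyclic) cofibrations''---are not the right generators: the external $\boxtimes$ of $F_{\bld{k_1}}^{\cJ}(\partial\Delta^{n_1}\hookrightarrow\Delta^{n_1})$ and $F_{\bld{k_2}}^{\cJ}(\partial\Delta^{n_2}\hookrightarrow\Delta^{n_2})$ is a pushout-product, and the class of relative cell complexes built from pushout-products of generators does not recover the expected cofibrations (this is the content of a pushout-product axiom, not a definition of generators). The correct generators are simply the free $\cJ^{\times\ovl{S}}$-spaces $F_{(\bld{k_1},\dots,\bld{k_s})}(\partial\Delta^n\hookrightarrow\Delta^n)$ on a single positive tuple. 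Second, dismissing properness as ``inherited levelwise'' does not work: the weak equivalences are not levelwise weak equivalences but $\hocolim$-equivalences, so right properness in particular requires an argument about homotopy pullbacks over positive levels, which is what~\cite[\S 11]{Sagave-S_diagram} supplies. Finally, your explanation of the non-triviality hypothesis is wrong: the unique object $\bld{0}_{0^+}$ of $\HcJ(0^+)$ \emph{is} positive (vacuously, as $\ovl{S}=\emptyset$), and the would-be positive model structure for $S=0^+$ is just the Kan model structure on $\cS$; the hypothesis is there because the well-structured-index-category framework being invoked requires it, not because positive objects fail to exist.
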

We write $\cS^{\HcJ(S)}_{\pos}$ and $\cS^{\HcJ(S)}_{\abs}$ for these
model categories.  It is immediate that the identity functors form a
Quillen equivalence
$\cS^{\HcJ(S)}_{\pos}\rightleftarrows\cS^{\HcJ(S)}_{\abs}$.
\begin{proof}
  We apply the general existence result for such ``homotopy colimit
  model structures'' on diagram categories~\cite[Proposition
  6.16]{Sagave-S_diagram}. Using this, it remains to show that
  $\HcJ(S)$ together with the discrete subcategory on the identity
  morphisms of positive objects (resp. all objects in the absolute
  case) is a \emph{well structured relative index category} in the
  sense of~\cite[Definition 5.2]{Sagave-S_diagram}. By~\cite[Corollary
  5.9]{Sagave-S_diagram}, this is the case for $\cJ$. If one defines
  the degree function of a product to be the sum of the degree
  functions of the factors, this structure is preserved under
  products. It is also preserved under equivalences of symmetric
  monoidal categories. Hence Lemma~\ref{lem:Shi-Shi-functorial} shows
  the claim for $\HcJ(S)$. Properness is shown in~\cite[\S
  11]{Sagave-S_diagram}. By~\cite[Proposition 6.19]{Sagave-S_diagram},
  the model structure is simplicial.
\end{proof}
\begin{remark}\label{rem:pos-abs-GammaJS} 
  We use a \emph{positive} model structure because of the connection
  to commutative $\cJ$-space monoids in
  Lemma~\ref{lem:Phi-Psi-level-Q-adj} below. The \emph{absolute}
  counterpart is needed in some proofs since it has the advantage that
  the $\alpha^*\colon \cS^{\HcJ(T)}_{\abs}\to\cS^{\HcJ(S)}_{\abs}$
  induced by $\alpha\colon S \to T$ preserves fibrations and weak
  equivalences. This is does not hold in the positive case because
  $\alpha^*(X)$ may evaluate $X$ at
  $\bld{s}_{\emptyset}=(\bld{0},\bld{0})$ in some components if
  $\alpha$ is not surjective.
\end{remark}

We now use these model categories for varying $S$ to construct model
structures on the category of $\GammaJ$-spaces. A map $f \colon X \to
Y$ in $\GammaJS$ is a \emph{level $\HcJ$-equivalence} if the map $X(S;
-) \to Y(S; -)$ is a $\HcJ(S)$-equivalence for every finite based set
$S$. It is a \emph{positive level $\HcJ$-fibration} if $X(S; -) \to
Y(S; -)$ is a positive $\HcJ(S)$-fibration for every finite based
set $S$. The \emph{positive level $\HcJ$-cofibrations} are the maps with
the left lifting property to all maps which are both level
  $\HcJ$-equivalences and positive level $\HcJ$-fibrations.

\begin{proposition}\label{prop:existence-HJ-level-model-str}
  The level $\HcJ$-equivalences, the positive level $\HcJ$-fibrations
  and the positive level $\HcJ$-cofibrations define a cofibrantly
  generated simplicial \emph{positive level model structure} on the
  category $\GammaJS$.
\end{proposition}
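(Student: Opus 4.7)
The plan is to apply a standard lifting/transfer argument along the adjunctions $F_S \dashv \Ev_S$ of Lemma~\ref{lem:free-functor-FS}, lifting the positive $\HcJ(S)$-model structures of Lemma~\ref{lem:HJS-model-structures} (for varying non-trivial $S$) simultaneously to $\GammaJS$. Let $I_S$ and $J_S$ denote generating sets of cofibrations and trivial cofibrations for $\cS^{\HcJ(S)}_{\pos}$ whose domains and codomains are free pointed $\HcJ(S)$-spaces of the form described in Corollary~\ref{cor:tensor-and-free}. Define
\[ I \;=\; \bigcup_{S \,\neq\, 0^+} F_S(I_S), \qquad J \;=\; \bigcup_{S \,\neq\, 0^+} F_S(J_S), \]
and declare a map in $\GammaJS$ to be a (trivial) fibration if it is a positive level $\HcJ$-fibration (and a level $\HcJ$-equivalence). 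By adjunction $(F_S,\Ev_S)$ and the characterization of the model structure on each $\cS^{\HcJ(S)}_{\pos}$, these are exactly the maps with the right lifting property with respect to $I$ (respectively $J$).

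First I would verify the smallness hypotheses of the small object argument. Since the basepoint inclusion $\GammaJ((0^+;\bld{0}_{0^+}),-)\to X$ is fixed across all $X$, colimits in $\GammaJS$ are computed levelwise on the pointed $\HcJ(S)$-spaces $X_S$ by Corollary~\ref{cor:colimits-in-GammaJS}; combined with smallness of the domains of $I_S$ and $J_S$ in $\cS^{\HcJ(S)}_{\pos}$ and the adjunction $F_S \dashv \Ev_S$, this gives smallness of the domains of $I$ and $J$ in $\GammaJS$.

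The crux of the proof, and the main obstacle, is showing that every relative $J$-cell complex is a level $\HcJ$-equivalence. Again by Corollary~\ref{cor:colimits-in-GammaJS}, evaluation at each non-trivial $T$ sends such a cell complex to a relative cell complex in $\cS^{\HcJ(T)}_*$ built from the images of $J_S$ under the functors $X \mapsto (F_S X)_T$. By the explicit formula~\eqref{eq:free-on-ptd-hjS-space} of Lemma~\ref{lem:free-functor-FS}, these images are coproducts $\coprod_{\alpha\colon S\to T}\alpha_!(-)$ of the values of the left Kan extensions $\alpha_!$ on $J_S$. To close the argument I would use the observation of Remark~\ref{rem:pos-abs-GammaJS}: the precomposition functors $\alpha^{*}\colon \cS^{\HcJ(T)}_{\abs}\to \cS^{\HcJ(S)}_{\abs}$ preserve fibrations and weak equivalences, so each $\alpha_!$ is a left Quillen functor between the absolute $\HcJ$-model structures and hence sends generating trivial cofibrations in $J_S$ to trivial cofibrations in $\cS^{\HcJ(T)}_{\abs}$. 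Since the positive and absolute model structures on $\cS^{\HcJ(T)}$ share the same weak equivalences, transfinite composition and cobase change in $\cS^{\HcJ(T)}_{\abs}$ preserve level $\HcJ$-equivalences, and the conclusion follows.

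With the two halves of Quillen's small object argument in place, the axioms of a cofibrantly generated model structure follow by the standard recognition theorem (e.g.~\cite[Theorem~11.3.1]{Hirschhorn_model}). The simplicial structure provided by Proposition~\ref{prop:GammaJS-tens-cotens-enr} is compatible with this model structure: by the usual reduction it suffices to check pushout-product with $\partial\Delta^n \to \Delta^n$ on generators, and using Corollary~\ref{cor:tensor-and-free} this reduces to the already established simpliciality of each $\cS^{\HcJ(S)}_{\pos}$.
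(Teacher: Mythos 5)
Your proposal is correct and follows essentially the same route as the paper's own proof: you transfer the positive $\HcJ(S)$-model structures along the free–evaluation adjunctions of Lemma~\ref{lem:free-functor-FS}, and the crucial step — showing that relative $J$-cell complexes are level $\HcJ$-equivalences — is handled exactly as in the paper, by combining Corollary~\ref{cor:colimits-in-GammaJS}, the explicit description $(F_SZ)_T \iso \coprod_\alpha \alpha_!(Z)$, the observation that $(\alpha_!,\alpha^*)$ is a Quillen adjunction for the \emph{absolute} $\HcJ(S)$-model structures, and the containment of positive acyclic cofibrations in absolute ones. The only cosmetic difference is that the paper packages the transfer into a single adjunction $F \colon \prod_{S\neq 0^+}\cS^{\HcJ(S)}_* \rightleftarrows \GammaJS \colon U$ and invokes the lifting theorem~\cite[Theorem~11.3.2]{Hirschhorn_model}, whereas you apply the recognition theorem~\cite[Theorem~11.3.1]{Hirschhorn_model} directly to the unions $\bigcup_S F_S(I_S)$ and $\bigcup_S F_S(J_S)$, and you verify the simplicial model category axiom via pushout-products on generators while the paper checks the dual condition on cotensors; these choices lead to the same proof.
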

\begin{proof}
The adjunction of Lemma~\ref{lem:free-functor-FS} induces an adjunction
\[ F \colon \textstyle\prod_{S \neq 0^+} \cS^{\HcJ(S)}_*
\rightleftarrows \GammaJS \colon U \] in which $U$ is the evident
forgetful functor and $F((Z_S)_{S}) = \coprod F_S(Z_S)$. For every
$S$, the comma category $\cS^{\HcJ(S)}_* = \HcJ(S)(\bld{0}_S,-)
\downarrow \cS^{\HcJ(S)}$ inherits a cofibrantly generated positive
$\HcJ(S)$-model structure from $\cS^{\HcJ(S)}$. We apply the general
lifting criterion of a cofibrantly generated model structures along a
right adjoint provided by~\cite[Theorem
11.3.2]{Hirschhorn_model}. Since each of the $\cS^{\HcJ(S)}_*$ is
cofibrantly generated, so is their product~\cite[Proposition
11.1.10]{Hirschhorn_model}. The category $\GammaJS$ is locally
presentable, so all objects are small relative to the whole category.

It remains to show that for a set of generating acyclic cofibrations
$J$ of the product model structure, $U$ sends relative $FJ$-cell
complexes to weak equivalences. For this we use that the
$(\alpha_{!},\alpha^{*})$ are Quillen adjunctions for the absolute
model structures. The explicit description of $F_S$ in
Lemma~\ref{lem:free-functor-FS} shows that each component of a
relative cell complex built from maps which are in the image of a set
acyclic cofibration under $F$ is an absolute acyclic
$\HcJ(S)$-cofibration. Since the positive model structures have less
acyclic cofibrations and the same weak equivalences than the absolute
ones, this implies the claim.

The property of being simplicial is inherited from the $\HcJ(S)$-model
structures because it is enough to check this condition on the
cotensors.
\end{proof}
We write $(\GammaJS)_{\lev}$ for the model category resulting from
Proposition~\ref{prop:existence-HJ-level-model-str}. 
\begin{remark}
  We call this the \emph{level} model structure because it is related
  to $(\GammaS)_{\lev}$ by Proposition~\ref{prop:chain-level-Q-equiv}
  below. There are also level model structures on $\HcJ(S)$-spaces,
  but those will not be considered in this paper.
\end{remark}
The argument used in the proof of the last proposition also
implies
\begin{corollary}\label{cor:pos-lev_HcJ-cof-levelw-abs-cof}
  The positive level $\HcJ$-cofibrations are levelwise absolute
  $\HcJ(S)$-cofibrations.\qed
\end{corollary}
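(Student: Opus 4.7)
The proof follows the same line of reasoning used in the second-to-last paragraph of the proof of Proposition~\ref{prop:existence-HJ-level-model-str}, only for generating cofibrations rather than generating acyclic cofibrations. The plan is as follows.

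First I would verify the auxiliary fact that every positive $\HcJ(S)$-cofibration is an absolute $\HcJ(S)$-cofibration. This is immediate from the definitions: an absolute $\HcJ(S)$-fibration is in particular a positive $\HcJ(S)$-fibration (the defining square condition is required on a larger class of morphisms), hence any absolute acyclic $\HcJ(S)$-fibration is a positive acyclic $\HcJ(S)$-fibration, so the LLP characterizing positive cofibrations implies the LLP characterizing absolute cofibrations.

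Next, I would use that the positive level $\HcJ$-model structure on $\GammaJS$ is cofibrantly generated. Any positive level $\HcJ$-cofibration is therefore a retract of a relative cell complex built from generating cofibrations. By the construction via the adjunction $F \dashv U$ in the proof of Proposition~\ref{prop:existence-HJ-level-model-str}, these generating cofibrations have the form $F_S(j)$ where $j$ ranges over a set of generating cofibrations of $\cS^{\HcJ(S)}_*$ for some $S$. By Corollary~\ref{cor:colimits-in-GammaJS} the evaluation functor $\GammaJS \to \cS^{\HcJ(T)}_*$ at $T$ preserves colimits, so it carries such a relative cell complex to a relative cell complex in $\cS^{\HcJ(T)}_*$ built from the maps $F_S(j)_T$.

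The core step, and the only part requiring care, is to show that each $F_S(j)_T$ is an absolute $\HcJ(T)$-cofibration. Using the explicit formula
\[ F_S(Z)_T \iso \textstyle\coprod_{\alpha\colon S \to T,\,\alpha\neq 0} \alpha_{!}(Z) \]
from Lemma~\ref{lem:free-functor-FS}, it suffices to observe that each $\alpha_!(j)$ is an absolute $\HcJ(T)$-cofibration: indeed, $j$ is a positive $\HcJ(S)$-cofibration, hence an absolute $\HcJ(S)$-cofibration by the first step, and $(\alpha_!,\alpha^*)$ is a Quillen adjunction for the absolute model structures on pointed $\HcJ(S)$- and $\HcJ(T)$-spaces (this was already invoked in the proof of Proposition~\ref{prop:existence-HJ-level-model-str}). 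Coproducts of cofibrations are cofibrations, so $F_S(j)_T$ is an absolute $\HcJ(T)$-cofibration. Absolute $\HcJ(T)$-cofibrations are closed under pushouts, transfinite compositions and retracts in $\cS^{\HcJ(T)}_*$, so the original positive level $\HcJ$-cofibration evaluates at $T$ to an absolute $\HcJ(T)$-cofibration, as required. The only mild subtlety is distinguishing positive and absolute cofibrations throughout; once the first paragraph is in place, everything else is formal.
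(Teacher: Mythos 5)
Your proposal is correct and takes essentially the same approach as the paper: the paper's proof of the corollary is just the remark that ``a similar argument as used in the proof of the last proposition implies,'' referring to Proposition~\ref{prop:existence-HJ-level-model-str}, and you have correctly carried that argument over from generating acyclic cofibrations to generating cofibrations, using the explicit description of $F_S$ from Lemma~\ref{lem:free-functor-FS}, the Quillen adjunction $(\alpha_!,\alpha^*)$ for the absolute structures, and Corollary~\ref{cor:colimits-in-GammaJS}.
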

\begin{corollary}\label{cor:pos-lev-HcJ-proper}
  The positive level $\HcJ$-model structure is proper.
\end{corollary}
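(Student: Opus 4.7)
The plan is to deduce properness of the positive level $\HcJ$-model structure directly from the properness of the $\HcJ(S)$-model structures established in Lemma~\ref{lem:HJS-model-structures}, by checking the properness conditions one level at a time. The first observation is that pushouts and pullbacks in $\GammaJS$ are computed levelwise in $\cS^{\HcJ(S)}$. For colimits this is the content of Corollary~\ref{cor:colimits-in-GammaJS}, together with the fact that the forgetful functor $\cS^{\HcJ(S)}_* \to \cS^{\HcJ(S)}$ preserves connected colimits (hence pushouts). For limits this is immediate since the basepoint condition $X_{0^+}=*$ is preserved under levelwise limits, so that limits in $\GammaJS$ coincide with limits in the over/under category of representable-type objects and are therefore computed pointwise.

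For right properness, let
\[
\xymatrix@-1pc{ W \ar[r] \ar[d] & Y \ar[d]^{g} \\ X \ar[r]^{f} & Z }
\]
be a pullback square in $\GammaJS$ in which $f$ is a level $\HcJ$-equivalence and $g$ is a positive level $\HcJ$-fibration. By the observation above, this evaluates at each $S$ to a pullback square in $\cS^{\HcJ(S)}$ in which $f(S;-)$ is a $\HcJ(S)$-equivalence and $g(S;-)$ is a positive $\HcJ(S)$-fibration. Right properness of the positive $\HcJ(S)$-model structure (Lemma~\ref{lem:HJS-model-structures}) then ensures that the resulting map $W(S;-)\to Y(S;-)$ is a $\HcJ(S)$-equivalence. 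Varying $S$ gives a level $\HcJ$-equivalence, as desired.

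For left properness, consider a pushout square in which $f\colon A\to B$ is a level $\HcJ$-equivalence pushed out along a positive level $\HcJ$-cofibration $h\colon A\to C$. At each level $S$ this is a pushout in $\cS^{\HcJ(S)}$, and by Corollary~\ref{cor:pos-lev_HcJ-cof-levelw-abs-cof} the map $h(S;-)$ is an absolute $\HcJ(S)$-cofibration. Since the absolute and positive $\HcJ(S)$-model structures share the same class of weak equivalences, and the absolute model structure is left proper by Lemma~\ref{lem:HJS-model-structures}, the pushout of $f(S;-)$ along $h(S;-)$ is a $\HcJ(S)$-equivalence. Assembling these across $S$ yields a level $\HcJ$-equivalence.

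Essentially no step is a real obstacle here; the only point that requires a moment's thought is that pushouts and pullbacks in $\GammaJS$ can be computed levelwise despite the non-trivial basepoint constraint, but this is already encoded in Lemma~\ref{lem:GammaJS-complete-cocomplete} and Corollary~\ref{cor:colimits-in-GammaJS}. The rest is a direct transfer of the levelwise properness supplied by Lemma~\ref{lem:HJS-model-structures}, with the asymmetric use of the absolute model structure for left properness (to absorb positive level cofibrations) and the positive model structure for right properness (matching the definition of positive level fibrations).
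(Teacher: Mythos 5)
Your proof is correct and takes essentially the same approach as the paper's (very terse) proof: you verify right properness levelwise using right properness of the positive $\HcJ(S)$-model structures, and left properness by combining Corollary~\ref{cor:pos-lev_HcJ-cof-levelw-abs-cof} with left properness of the absolute $\HcJ(S)$-model structures and the fact that the absolute and positive model structures share weak equivalences. You simply spell out more carefully the preliminary observation — implicit in the paper — that pushouts and pullbacks in $\GammaJS$ are computed levelwise in $\cS^{\HcJ(S)}$.
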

\begin{proof}
  Right properness lifts from the positive $\HcJ(S)$-model
  structure. With the previous corollary, left properness of the
  absolute $\HcJ(S)$-model structure implies left properness of the
  positive level $\HcJ$-model structure. 
\end{proof}

\begin{lemma}\label{lem:Phi-Psi-level-Q-adj}
  The adjunction $\Phi\colon(\GammaJS)_{\lev} \rightleftarrows
  \cC\cS^{\cJ}\colon\Psi$
  of Corollary~\ref{cor:Phi-Psi-adj} is a Quillen adjunction with
  respect to the positive $\cJ$-model structure on $\cC\cS^{\cJ}$.
\end{lemma}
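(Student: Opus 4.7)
The plan is to show that $\Phi$ is left Quillen by verifying that it sends the generating (acyclic) cofibrations of $(\GammaJS)_{\lev}$ to (acyclic) cofibrations of $\cC\cS^{\cJ}$ in the positive $\cJ$-model structure. By the transfer along $F\dashv U$ used in the proof of Proposition~\ref{prop:existence-HJ-level-model-str}, together with the fact that $\Phi$ is a left adjoint and so preserves colimits, it suffices to check this on maps of the form $F_S(h)$ for $h$ a generating (acyclic) cofibration of $\cS^{\HcJ(S)}_{\pos,*}$. By Lemma~\ref{lem:HJS-model-structures}, each such $h$ is obtained from $F^{\HcJ(S)}_{(\bld{s},\sigma)}(\partial\Delta^n \hookrightarrow \Delta^n)$, respectively $F^{\HcJ(S)}_{(\bld{s},\sigma)}(\Lambda^n_k \hookrightarrow \Delta^n)$, with $(\bld{s},\sigma)$ a positive object of $\HcJ(S)$, by taking a coproduct with the basepoint $F^{\HcJ(S)}_{\bld{0}_S}(*)$.

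Applying $F_S$, Corollary~\ref{cor:tensor-and-free} identifies the resulting generators in $\GammaJS$ with
\[ \GammaJ((S;\bld{s},\sigma),-) \tensor (\partial\Delta^n \hookrightarrow \Delta^n) \quad\text{and}\quad \GammaJ((S;\bld{s},\sigma),-) \tensor (\Lambda^n_k \hookrightarrow \Delta^n). \]
Now I apply $\Phi$. The adjunction $\Phi\dashv\Psi$ is a simplicial adjunction because $\Psi$ is defined via the simplicial mapping space $\Map_{\cC\cS^{\cJ}}(\mC F(-),-)$, so $\Phi$ preserves simplicial tensors. The coend definition of $\Phi$ combined with the co-Yoneda lemma gives $\Phi(\GammaJ((S;\bld{s},\sigma),-)) \iso \mC F(S;\bld{s},\sigma) = \textstyle\coprod_{i\in\ovl{S}} \mC F^{\cJ}_{\bld{s}_i}(*)$. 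Consequently, applying $\Phi$ to the above generators yields maps of the form $\bigl(\textstyle\coprod_{i\in\ovl{S}} \mC F^{\cJ}_{\bld{s}_i}(*)\bigr) \tensor (\partial\Delta^n \hookrightarrow \Delta^n)$, and similarly with $\Lambda^n_k$ in place of $\partial\Delta^n$.

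To conclude, I invoke SM7 for the simplicial model category $\cC\cS^{\cJ}$ with its positive $\cJ$-model structure. Positivity of $(\bld{s},\sigma)$ ensures that each $\bld{s}_i$ is a positive object of $\cJ$, so that each $\mC F^{\cJ}_{\bld{s}_i}(*)$ is cofibrant (as the codomain of a generating cofibration out of the initial object), and a finite coproduct of cofibrant objects remains cofibrant. The pushout-product axiom then forces the tensor with $\partial\Delta^n \hookrightarrow \Delta^n$ to be a cofibration in $\cC\cS^{\cJ}$ and the tensor with $\Lambda^n_k \hookrightarrow \Delta^n$ to be an acyclic cofibration. The main point requiring care is the bookkeeping in the reduction to generators: the transfer provides generators of $\GammaJS$ as $F_S$ of pointed generators in $\cS^{\HcJ(S)}_{\pos,*}$, and one must track how the coproduct with $F^{\HcJ(S)}_{\bld{0}_S}(*)$ assembles, via Corollary~\ref{cor:tensor-and-free}, into the clean tensor description above. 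Once that identification is in hand, the rest is a formal consequence of the enrichment of $\Phi\dashv\Psi$ and SM7.
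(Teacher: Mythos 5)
Your proof is correct, but it takes a genuinely different route from the paper. The paper proves the lemma from the right-adjoint side: by Corollary~\ref{cor:identification-of-Psi}, $\Psi(A)(S;\bld{s},\sigma)=\prod_{i\in\ovl{S}}A(\bld{s}_i)$, and from that explicit description one reads off directly that a positive $\cJ$-fibration (resp.\ $\cJ$-equivalence) is sent to a positive level $\HcJ$-fibration (resp.\ level $\HcJ$-equivalence), using that the $\HcJ(S)$-fibration condition is tested on morphisms between \emph{positive} objects, each of whose components is a positive object of $\cJ$, and that $\HcJ(S)\simeq\cJ^{\times\ovl{S}}$ for the equivalence part. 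That is a two-line verification. You instead work on the left-adjoint side: you unwind the transferred generating (acyclic) cofibrations of $(\GammaJS)_{\lev}$ via $F_S$ and Corollary~\ref{cor:tensor-and-free} into maps of the form $\GammaJ((S;\bld{s},\sigma),-)\tensor(\partial\Delta^n\hookrightarrow\Delta^n)$, compute $\Phi$ on them, and invoke SM7. The identification $\Phi(\GammaJ((S;\bld{s},\sigma),-)\tensor K)\iso\mC F(S;\bld{s},\sigma)\tensor K$ that you attribute to ``$\Phi$ preserves simplicial tensors'' is the one place where the argument is stated loosely rather than proved; it does hold, and can be checked by running the universal property of the tensor through the adjunctions and the enriched Yoneda identification $\Map_{\GammaJS}(\GammaJ((S;\bld{s},\sigma),-),\Psi(A))\iso\Map_{\cC\cS^{\cJ}}(\mC F(S;\bld{s},\sigma),A)$. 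The remaining ingredients you use --- positivity of each $\bld{s}_i$, cofibrancy of $\mC F^{\cJ}_{\bld{s}_i}(*)$ and of finite coproducts thereof, and that $\cC\cS^{\cJ}$ with the positive $\cJ$-model structure is a simplicial model category --- are all available. What your approach buys is a proof that does not rely on the explicit formula for $\Psi$ and makes the interaction with the simplicial enrichment explicit; what the paper's approach buys is brevity, since $\Psi$ has such a transparent description.
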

\begin{proof}
The identification of the right adjoint in Corollary~\ref{cor:identification-of-Psi} makes
it easy to check that it preserves weak equivalences and fibrations. 
\end{proof}

\subsection{\texorpdfstring{$\GammaJ$}{Gamma-J}-spaces and augmented
  \texorpdfstring{$\Gamma$}{Gamma}-spaces} We will now relate
$\GammaJ$-spaces to ordinary $\Gamma$-spaces by applying the
comparison between $\cK$-spaces and spaces over $B\cK$
of~\cite[Theorem 13.2]{Sagave-S_diagram} levelwise.

As in Definition~\ref{def:K-space}, let $\cS^{\cK}$ be the category of
$\cK$-spaces for a small category $\cK$. Viewing the comma category
$(\cK\downarrow \bld{k})$ as a functor in $\bld{k}$, we obtain a
$\cK$-space $E\cK = B(\cK\downarrow -)$. Composition with $E\cK \to *$ and
the product with $E\cK$ provide the first adjunction in  
\begin{equation}\label{eq:diagram-spaces-graded-spaces}
\cS^{\cK} \leftrightarrows \cS^{\cK}/E\cK \rightleftarrows \cS/B\cK. 
\end{equation}
The right adjoint in the second adjunction is obtained by viewing a
map of spaces $Y \to B\cK$ as a map of constant $\cK$-spaces and
forming the pullback $E\cK\times_{B\cK}Y$ along the map $E\cK \to
B\cK$ induced by the projection $(\cK\downarrow \bld{k}) \to \cK$. Its
left adjoint is obtained by applying $\colim_{\cK}$ and composing with 
$\colim_{\cK}E\cK \xrightarrow{\iso} B\cK$.

\begin{lemma}\label{lem:HcJS-spaces-vs-graded-sp}
  Let $S$ be a non-trivial finite based set. Then for $\cK = \HcJ(S)$,
  the diagram~\eqref{eq:diagram-spaces-graded-spaces} is a chain of
  Quillen equivalences with respect to the positive $\HcJ(S)$-model
  structure on $\cS^{\HcJ(S)}$, the positive overcategory model
  structure on $\cS^{\HcJ(S)}/E\HcJ(S)$, and the overcategory model
  structure on $\cS/B\HcJ(S)$. The same holds in the absolute case.
\end{lemma}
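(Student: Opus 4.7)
The plan is to reduce the statement to the general comparison between $\cK$-spaces and spaces over $B\cK$ established in \cite[Theorem 13.2]{Sagave-S_diagram}, which is formulated exactly for a well-structured (relative) index category $\cK$ in either the positive or absolute flavor. So the real content of this lemma is that $\cK = \HcJ(S)$, with the prescribed subcategory of identity maps on positive objects (or on all objects), fits into that general framework.

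First I would recall from the proof of Lemma~\ref{lem:HJS-model-structures} that this has already been established: $\cJ$ is a well-structured relative index category by \cite[Corollary 5.9]{Sagave-S_diagram}, the well-structured property is preserved under finite products (using the sum of the degree functions) and under equivalences of symmetric monoidal categories, and Lemma~\ref{lem:Shi-Shi-functorial} identifies $\HcJ(S)$ up to equivalence with a product of copies of $\cJ$ indexed by $\ovl{S}$. Hence $\HcJ(S)$ is a well-structured relative index category in both the positive and the absolute variant, for any non-trivial $S$.

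Next I would simply invoke \cite[Theorem 13.2]{Sagave-S_diagram} for this $\cK$. That theorem asserts precisely that the chain of adjunctions \[\cS^{\cK} \leftrightarrows \cS^{\cK}/E\cK \rightleftarrows \cS/B\cK\] is a chain of Quillen equivalences between the (positive or absolute) $\cK$-model structure on $\cS^{\cK}$, the corresponding overcategory model structure on $\cS^{\cK}/E\cK$ (which exists because this is an overcategory of a model category), and the usual overcategory model structure on $\cS/B\cK$. Applied with $\cK = \HcJ(S)$, this gives the lemma.

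The only non-routine point will be making sure that the hypotheses of \cite[Theorem 13.2]{Sagave-S_diagram} really transport along the equivalence $\HcJ(S) \simeq \cJ^{\times \ovl{S}}$ of Lemma~\ref{lem:Shi-Shi-functorial}, and that passing to a product of well-structured index categories (with the sum degree function) preserves both the positive and the absolute variants of the well-structured axioms. This is straightforward but is where I would expect to spend the most attention, since it is the only step that is not quoted directly. Everything else is a matter of observing that the claims for both the positive and absolute $\HcJ(S)$-model structures are formally parallel and follow from the same citation.
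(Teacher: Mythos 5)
The overall strategy matches the paper, but you have likely mis-stated the scope of \cite[Theorem 13.2]{Sagave-S_diagram}, and that creates a gap. That theorem is stated for a \emph{well structured index category} in the sense of \cite[Definition 5.5]{Sagave-S_diagram}, which is the \emph{absolute} notion; it does not come equipped with a ``positive flavor'' that you could invoke directly. What the proof of Lemma~\ref{lem:HJS-model-structures} establishes is the \emph{well structured relative index category} condition \cite[Definition 5.2]{Sagave-S_diagram}, whose absolute instance matches Definition~5.5, and the paper accordingly invokes Theorem~13.2 only to get the \emph{absolute} chain of Quillen equivalences for $\HcJ(S)$.

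The positive case is then deduced, not quoted: the positive $\HcJ(S)$-model structure (and the resulting positive overcategory model structure on $\cS^{\HcJ(S)}/E\HcJ(S)$) has the same weak equivalences as the absolute one but fewer cofibrations. Since the left Quillen functors are the same and a positive cofibrant object is absolute cofibrant, the Quillen adjunctions remain Quillen adjunctions and the Quillen-equivalence criterion (cofibrant source, fibrant target) is inherited from the absolute case. This comparison step is what you need to add; without it, your proof asserts more generality for Theorem~13.2 than it actually provides.
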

\begin{proof}
  We used in the proof of Lemma~\ref{lem:HJS-model-structures} that
  $\HcJ(S)$ is a \emph{well structured index category} in the
  sense of~\cite[Definition 5.5]{Sagave-S_diagram}.
  Hence~\cite[Theorem 13.2]{Sagave-S_diagram} applies and provides the
  desired Quillen equivalences for the \emph{absolute} $\HcJ(S)$-model
  structures on $\HcJ(S)$-spaces. Since the \emph{positive} model
  structures have the same weak equivalences and less cofibrations
  than the absolute ones, this implies the claim.
\end{proof}

The $\cK$-space $E\cK$ appearing
in~\eqref{eq:diagram-spaces-graded-spaces} is functorial in $\cK$: A
functor $F\colon \cK \to \cL$ induces a natural transformation $E\cK
\to F^*(E\cL)$ of $\cK$-spaces. Inspecting the definition of the
Grothendieck construction, one verifies that these natural
transformations make the collection of $\HcJ(S)$-spaces $E\HcJ(S)$
into a $\GammaJ$-space~$E\HcJ$.

\begin{proposition}\label{prop:chain-level-Q-equiv}
There is a chain of Quillen equivalences
\begin{equation}\label{eq:diagram-spaces-graded-spaces-lifted}
  (\GammaJS)_{\lev} \leftrightarrows \left(\GammaJS\right)_{\lev} / E\HcJ \rightleftarrows \left(\GammaS\right)_{\lev} / \bof{\cJ}
\end{equation}
induced by the chain of adjunctions~\eqref{eq:diagram-spaces-graded-spaces}.
\end{proposition}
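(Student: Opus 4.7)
The plan is to lift the levelwise Quillen equivalences of Lemma~\ref{lem:HcJS-spaces-vs-graded-sp} to the setting of $\GammaJ$-spaces. First, I would make the functors in~\eqref{eq:diagram-spaces-graded-spaces-lifted} explicit. The left adjoint in the first adjunction sends a $\GammaJ$-space $X$ to $X \times E\HcJ \to E\HcJ$, where the product is formed levelwise in $\HcJ(S)$ (this preserves the basepoint condition at $(0^+;\bld{0}_{0^+})$ since $E\HcJ$ is itself a $\GammaJ$-space), with forgetful right adjoint. For the second adjunction, the left adjoint sends $(X \to E\HcJ)$ to the $\Gamma$-space $S \mapsto \colim_{\HcJ(S)} X(S;-)$, augmented via $\colim_{\HcJ(S)} E\HcJ(S) \iso B\HcJ(S) = \bof{\cJ}(S)$, and the right adjoint sends $(Y \to \bof{\cJ})$ to the levelwise pullback $S \mapsto E\HcJ(S) \times_{B\HcJ(S)} Y(S)$.

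Second, I would check that these are Quillen adjunctions. The fibrations and weak equivalences in $(\GammaJS)_{\lev}$ are detected at each $S$ by Proposition~\ref{prop:existence-HJ-level-model-str}, in the overcategory $(\GammaJS)_{\lev}/E\HcJ$ via~\cite[\S 7.6]{Hirschhorn_model}, and in $(\GammaS)_{\lev}/\bof{\cJ}$ by the definition of the level Q-model structure on $\Gamma$-spaces. Since the right adjoints above are obtained by applying the corresponding right adjoint of~\eqref{eq:diagram-spaces-graded-spaces} at each level $S$, Lemma~\ref{lem:HcJS-spaces-vs-graded-sp} yields preservation of fibrations and acyclic fibrations at each $S$, and hence globally.

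Third, I would promote each Quillen adjunction to a Quillen equivalence. A Quillen adjunction $(L,R)$ is a Quillen equivalence iff $R$ reflects weak equivalences between fibrant objects and the derived counit $L(RY)^c \to Y$ is a weak equivalence for every fibrant $Y$. Reflection of weak equivalences between fibrant objects is detected levelwise in $S$ and thus reduces directly to Lemma~\ref{lem:HcJS-spaces-vs-graded-sp}. The case $S=0^+$ is vacuous because of the basepoint condition on $\GammaJ$-spaces.

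The main obstacle will be verifying that the derived counit is genuinely computable levelwise in $S$. While fibrant replacements in $(\GammaJS)_{\lev}$ and its overcategories are levelwise positive $\HcJ(S)$-fibrant by construction, cofibrant replacements are only levelwise \emph{absolute} $\HcJ(S)$-cofibrant (Corollary~\ref{cor:pos-lev_HcJ-cof-levelw-abs-cof}), not positive. I would resolve this by appealing to the absolute case of Lemma~\ref{lem:HcJS-spaces-vs-graded-sp}: since the absolute and positive $\HcJ(S)$-model structures share the same class of weak equivalences, the derived counit computed via a levelwise-absolute cofibrant replacement agrees up to weak equivalence with the one computed via any positive cofibrant replacement, so the absolute version of Lemma~\ref{lem:HcJS-spaces-vs-graded-sp} provides the required levelwise identification at every $S$.
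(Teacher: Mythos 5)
Your overall strategy matches the paper's: reduce to the levelwise Quillen equivalences of Lemma~\ref{lem:HcJS-spaces-vs-graded-sp}, use that the right adjoints are computed levelwise to get the Quillen adjunctions, and then observe that cofibrant objects of $(\GammaJS)_{\lev}$ are only levelwise \emph{absolute} $\HcJ(S)$-cofibrant (Corollary~\ref{cor:pos-lev_HcJ-cof-levelw-abs-cof}), so the absolute case of Lemma~\ref{lem:HcJS-spaces-vs-graded-sp} must be invoked to finish. That last observation is the main content of the proof, and you identify it correctly.

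Two technical slips need correcting, though. First, the roles of the adjoints in the first adjunction are swapped: with the paper's convention that the left adjoint is the upper arrow in $\leftrightarrows$, the left adjoint $(\GammaJS)_{\lev}/E\HcJ \to (\GammaJS)_{\lev}$ is the forgetful functor (composition with $E\HcJ\to *$), and the right adjoint is $X\mapsto (X\times E\HcJ\to E\HcJ)$. With your swapped labeling, your statement that ``the right adjoints above are obtained by applying the corresponding right adjoint of~\eqref{eq:diagram-spaces-graded-spaces} at each level $S$'' becomes false for the first adjunction, and the check you run ends up addressing the wrong functor. Second, the Quillen-equivalence criterion you quote is not one of the standard ones: in Hovey's Corollary 1.3.16, ``$R$ reflects weak equivalences between fibrant objects'' pairs with the derived \emph{unit} being a weak equivalence, while the derived \emph{counit} pairs with ``$L$ reflects weak equivalences between cofibrant objects.'' Your mixed version is not equivalent to being a Quillen equivalence in general, so the step should be rephrased with one of the correct pairings (either works here, but you must match them). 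Two smaller points: the paper handles the first adjunction more economically by noting that $E\HcJ\to *$ is a level $\HcJ$-equivalence and appealing to right properness of $(\GammaJS)_{\lev}$; and you should say a word about why the levelwise adjunctions of~\eqref{eq:diagram-spaces-graded-spaces} assemble to functors on $\GammaJS$ at all (functoriality in $\Gamma^{\op}$ via Lemma~\ref{lem:GammaJS-equivalent} and the maps $E\HcJ(S)\to\alpha^*E\HcJ(T)$), which the paper makes explicit.
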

\begin{proof}
  The first adjunction is induced by the level $\HcJ$-equivalence
  $E\HcJ \to *$.  It is a Quillen equivalence since
  $(\GammaJS)_{\lev}$ is right proper.

  Using the description of the category $\GammaJS$ in
  Lemma~\ref{lem:GammaJS-equivalent} one can check that the
  adjunctions $\cS^{\HcJ(S)}/E\HcJ(S) \rightleftarrows \cS/B\HcJ(S)$
  defined in~\eqref{eq:diagram-spaces-graded-spaces} and the maps
  $E\HcJ(S)\to\alpha^*(E\HcJ(T))$ do indeed induce the second
  adjunction in~\eqref{eq:diagram-spaces-graded-spaces-lifted}.  The
  positive instance of the Quillen equivalence
  $\cS^{\HcJ(S)}/E\HcJ(S)\rightleftarrows \cS/B\HcJ(S)$ of
  Lemma~\ref{lem:HcJS-spaces-vs-graded-sp} and the definition of the
  model structures imply that the right adjoint is a right Quillen
  functor. Corollary~\ref{cor:pos-lev_HcJ-cof-levelw-abs-cof} and the
  absolute case of Lemma~\ref{lem:HcJS-spaces-vs-graded-sp} show that
  it is a Quillen equivalence.
\end{proof}

We write $\cK$ for $\HcJ(S)$ for the rest of this section. The
composed derived functor of the Quillen
equivalences~\eqref{eq:diagram-spaces-graded-spaces} sends a
$\cK$-space $Z$ to $\colim_{\cK}\left((Z^{\fib} \times
  E\cK)^{\cof})\right)$. (Here \emph{composed derived functor} has the
same meaning as in
Theorem~\ref{thm:identification-of-group-compl-model-structure}.)

The maps $(Z^{\fib} \times E\cK)^{\cof} \to Z^{\fib} \times E\cK \to
Z^{\fib} \ot Z$ and the natural map from the homotopy colimit to the
colimit induce weak equivalences
\begin{equation}\label{eq:hocolim-colim-zig-zag}
Z_{h\cK}\to(Z^{\fib})_{h\cK} \ot \left((Z^{\fib} \times E\cK)^{\cof})\right)_{h\cK} \to
\colim_{\cK}\left((Z^{\fib} \times E\cK)^{\cof})\right)
\end{equation}
by~\cite[Lemma 6.22]{Sagave-S_diagram} and the right properness of 
the $\HcJ(S)$-model structure. 
\begin{corollary}\label{cor:hocolim-colim-zig-zag}
  After forgetting the augmentation to $B\HcJ(S)$, the value of the
  composed derived functor of the chain of Quillen equivalences of
  Lemma~\ref{lem:HcJS-spaces-vs-graded-sp} at an $\HcJ(S)$-space $Z$
  is weakly equivalent to the homotopy colimit $Z_{h\HcJ(S)}$.\qed
\end{corollary}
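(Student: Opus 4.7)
The plan is to unpack the formula for the composed derived functor and then apply the zigzag of weak equivalences~\eqref{eq:hocolim-colim-zig-zag} immediately preceding the corollary. Writing $\cK = \HcJ(S)$, the discussion just above the corollary already exhibits the value of the composed derived functor at $Z$ as the colimit $\colim_{\cK}((Z^{\fib} \times E\cK)^{\cof})$, where $Z^{\fib}$ is a positive $\HcJ(S)$-fibrant replacement of $Z$ and $(-)^{\cof}$ is a cofibrant replacement in $(\cS^{\cK}/E\cK)_{\pos}$. After forgetting the augmentation to $B\cK$, this is precisely the rightmost term of~\eqref{eq:hocolim-colim-zig-zag}, so the claim amounts to showing that the zigzag there is a chain of weak equivalences.

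To check this I would treat each of the three arrows in turn. The leftmost arrow $Z_{h\cK} \to (Z^{\fib})_{h\cK}$ is induced by the positive $\HcJ(S)$-fibrant replacement $Z \to Z^{\fib}$, which is by definition a $\cK$-equivalence. The middle arrow comes from the composite $(Z^{\fib} \times E\cK)^{\cof} \to Z^{\fib} \times E\cK \to Z^{\fib}$; its first factor is a cofibrant-replacement equivalence, and its second factor is a levelwise weak equivalence because each $E\cK(\bld{k}) = B(\cK \downarrow \bld{k})$ is contractible (the comma category has $\id_{\bld{k}}$ as a terminal object), so applying $(-)_{h\cK}$ yields an equivalence. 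The third arrow is the canonical comparison from the homotopy colimit to the colimit, which is a weak equivalence on sufficiently cofibrant $\cK$-diagrams by~\cite[Lemma~6.22]{Sagave-S_diagram}.

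The one point that warrants a brief remark, and is the closest thing to an obstacle, is the applicability of~\cite[Lemma~6.22]{Sagave-S_diagram} to $(Z^{\fib} \times E\cK)^{\cof}$: the lemma requires cofibrancy as a $\cK$-space rather than as an object of the overcategory. But cofibrations in $(\cS^{\cK}/E\cK)_{\pos}$ are detected on underlying $\cK$-spaces, so $(Z^{\fib} \times E\cK)^{\cof}$ is positive $\HcJ(S)$-cofibrant and hence absolute $\HcJ(S)$-cofibrant (cf.\ Remark~\ref{rem:pos-abs-GammaJS}), and Lemma~6.22 applies. The right properness of the positive $\HcJ(S)$-model structure, recorded in Lemma~\ref{lem:HJS-model-structures}, is what permits the free interchange of $Z$ with $Z^{\fib}$ throughout; no further ingredient is needed.
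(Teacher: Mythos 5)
Your proof is correct and matches the paper's intended argument: the corollary is a direct consequence of the zigzag of weak equivalences~\eqref{eq:hocolim-colim-zig-zag}, which you unpack arrow-by-arrow in the way the text intends. One small nit: the implication ``positive $\HcJ(S)$-cofibrant $\Rightarrow$ absolute $\HcJ(S)$-cofibrant'' that you invoke is not actually the content of Remark~\ref{rem:pos-abs-GammaJS}; it is instead the observation made in the proof of Lemma~\ref{lem:HcJS-spaces-vs-graded-sp} (``the positive model structures have the same weak equivalences and less cofibrations than the absolute ones''), which is the more accurate citation.
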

To simplify the notation we now assume that $Z$ is positive
$\HcJ(S)$-fibrant and compare $Z_{h\cK}$ and $\colim_{\cK}\left((Z
  \times E\cK)^{\cof})\right)$ as spaces over $B\cK$. This is
more subtle since $Z \to *$ induces the augmentation
$Z_{h\cK} \to (*)_{h\cK}\iso B\cK$ of the former, while the
augmentation of latter is induced by the maps $(Z \times
E\cK)^{\cof} \to E\cK$ and $\colim_{\cK}E\cK
\xrightarrow{\iso} B\cK$.  These maps fit into a commutative diagram
\begin{equation}\label{eq:hocolim-colim-over-BK}
  \xymatrix@-1pc{&Z_{h\cK}    \ar[d] & (Z \times E\cK)_{h\cK}\ar[l]_-{\sim} \ar[d]
    & \left((Z \times E\cK)^{\cof})\right)_{h\cK} \ar[l]_-{\sim}
    \ar[r]^-{\sim} \ar[d]
    & \colim_{\cK}\left((Z \times E\cK)^{\cof})\right) \ar[d] \\
    B\cK \ar@{}[r]|-{\iso}& (*)_{h\cK} & (E\cK)_{h\cK} \ar[l]_-{\sim}
    \ar@{=}[r]& (E\cK)_{h\cK} \ar[r]^-{\sim} & B\cK }
\end{equation}
in which the two maps $(E\cK)_{h\cK} \to B\cK$ in the bottom line
differ. So the zig-zag of weak
equivalences~\eqref{eq:hocolim-colim-zig-zag} does not lie over
$B\cK$. However, using the notation $(*)_{h\cK}$ to distinguish the
two copies of $B\cK$, we have
\begin{lemma}\label{lem:hocolim-colim-over-BK}
  Composition with and base change along the bottom maps
  in~\eqref{eq:hocolim-colim-over-BK} induce a chain of Quillen
  equivalences
  \[ \cS/(*)_{h\HcJ(S)} \leftrightarrows \cS/(E\HcJ(S))_{h\HcJ(S)}
  \rightleftarrows \cS/B\HcJ(S). \] For a $\HcJ(S)$-space $Z$, there
  is a chain of $\HcJ(S)$-equivalences relating the value of
  $Z_{h\HcJ(S)} \to (*)_{h\HcJ(S)}$ under the composed derived functor
  of this chain of Quillen equivalences with
  $\colim_{\cK}\left((Z^{\fib} \times
    E\cK)^{\cof})\right) \to B\HcJ(S)$.
\end{lemma}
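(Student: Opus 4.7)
Abbreviate $\cK = \HcJ(S)$. The plan is to first verify that the two bottom maps $f_1\colon (E\cK)_{h\cK} \to (*)_{h\cK}$ and $f_2\colon (E\cK)_{h\cK} \to B\cK$ in~\eqref{eq:hocolim-colim-over-BK} are weak equivalences, and then to trace the composed derived functor applied to $Z_{h\cK}\to(*)_{h\cK}$ through the zigzag. For $f_1$: the map $E\cK \to *$ is a level equivalence, since $\cK\downarrow\bld{k}$ has the pair $(\id_{\bld{k}},\bld{k})$ as a terminal object and hence $B(\cK\downarrow\bld{k})$ is contractible. For $f_2$: this is the natural map $\hocolim_{\cK}E\cK \to \colim_{\cK}E\cK \iso B\cK$, which is a weak equivalence by~\cite[Lemma 6.22]{Sagave-S_diagram}. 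Since the model structure on $\cS$ is right proper, composition with and base change along a weak equivalence of base objects form a Quillen equivalence of overcategory model structures (\cite[\S 7.6]{Hirschhorn_model}). This establishes the chain of Quillen equivalences.

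The composed derived functor to be analyzed is the composite $L(f_2)_! \circ Rf_1^*$. Given $Z_{h\cK} \to (*)_{h\cK}$, I first replace $Z$ by $Z^{\fib}$ (harmless since $Z_{h\cK} \to (Z^{\fib})_{h\cK}$ is a weak equivalence lying over $(*)_{h\cK}$). Next, $Rf_1^*$ computes the homotopy pullback of $(Z^{\fib})_{h\cK} \to (*)_{h\cK}$ along $f_1$, while $L(f_2)_!$ is simply composition with $f_2$ (no cofibrant replacement is required, as composition with $f_2$ preserves all weak equivalences of spaces). It therefore suffices to identify the homotopy pullback, as an object of $\cS/(E\cK)_{h\cK}$, with $(Z^{\fib}\times E\cK)_{h\cK} \to (E\cK)_{h\cK}$.

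The key step, and the main obstacle, is to show that the commutative square
\[
\xymatrix@-1pc{(Z^{\fib}\times E\cK)_{h\cK} \ar[r] \ar[d] & (Z^{\fib})_{h\cK} \ar[d] \\ (E\cK)_{h\cK} \ar[r]_-{f_1} & (*)_{h\cK}}
\]
induced by the projection $Z^{\fib}\times E\cK \to Z^{\fib}$ is homotopy cartesian. I would verify this via Quillen's Theorem~B in the form of~\cite[Lemma 4.12]{Sagave-S_diagram}: over a vertex of $(*)_{h\cK} = B\cK$ corresponding to $\bld{k}$, the homotopy fiber of the right vertical map is $Z^{\fib}(\bld{k})$, the homotopy fiber of the left vertical map is $Z^{\fib}(\bld{k})\times E\cK(\bld{k})$, and the top horizontal map restricts to the projection $Z^{\fib}(\bld{k})\times E\cK(\bld{k}) \to Z^{\fib}(\bld{k})$, which is a weak equivalence because $E\cK(\bld{k}) = B(\cK\downarrow\bld{k})$ is contractible. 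The subtlety is that this verification must be carried out compatibly with the two distinct augmentations $(E\cK)_{h\cK} \to B\cK$ appearing in the bottom row of~\eqref{eq:hocolim-colim-over-BK} (one via $E\cK \to *$, the other via the hocolim-to-colim map), which is what makes the comparison go beyond purely formal manipulations.

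Once this identification is in place, the remaining arrows in the top row of~\eqref{eq:hocolim-colim-over-BK}, namely
\[
(Z^{\fib}\times E\cK)_{h\cK} \ot ((Z^{\fib}\times E\cK)^{\cof})_{h\cK} \to \colim_{\cK}((Z^{\fib}\times E\cK)^{\cof}),
\]
form a zigzag of weak equivalences in $\cS/B\cK$ (the first by homotopy-invariance of the homotopy colimit under cofibrant replacement, the second by~\cite[Lemma 6.22]{Sagave-S_diagram} applied to the cofibrantly replaced diagram), assembling into the desired chain of $\cK$-equivalences connecting the derived value of $Z_{h\cK}\to (*)_{h\cK}$ to $\colim_{\cK}((Z^{\fib}\times E\cK)^{\cof}) \to B\cK$.
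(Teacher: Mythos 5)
Your overall plan matches the paper's, and the first and last steps (establishing the Quillen equivalences via right properness of $\cS$, and tracing the middle and right squares of~\eqref{eq:hocolim-colim-over-BK}) are correct. Where you diverge is the central step of showing the left-hand square is homotopy cartesian: the paper simply cites~\cite[Corollary 11.4]{Sagave-S_diagram}, whereas you attempt a fiberwise argument via Quillen's Theorem~B in the form of~\cite[Lemma 4.12]{Sagave-S_diagram}.

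There is a gap in that fiberwise argument as written. You assert that the homotopy fiber of $(Z^{\fib}\times E\cK)_{h\cK}\to (E\cK)_{h\cK}\to (*)_{h\cK}$ over a vertex corresponding to $\bld{k}$ is $Z^{\fib}(\bld{k})\times E\cK(\bld{k})$, but the Theorem~B-style statement in~\cite[Lemma 4.12]{Sagave-S_diagram} is formulated under a positive fibrancy hypothesis, and $Z^{\fib}\times E\cK$ need not be positive $\HcJ(S)$-fibrant: the spaces $E\cK(\bld{k})=B(\cK\downarrow\bld{k})$, while contractible, need not be Kan, so the product is not an application of the lemma as stated. (The conclusion is still true, but you would need to justify it by other means, e.g.\ by comparing with $(Z^{\fib})_{h\cK}\to B\cK$ using that the top map of the square is a weak equivalence.) Your remark about a ``subtlety'' involving the two distinct augmentations of $(E\cK)_{h\cK}$ is a red herring: the homotopy cartesianness of the left square only involves $f_1$; the second augmentation $f_2$ enters only in the right square, after the pullback has already been identified.

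In fact both your Theorem-B argument and the paper's appeal to properness of diagram categories are more than is needed at this point. Since $f_1\colon (E\cK)_{h\cK}\to(*)_{h\cK}$ is a weak equivalence of simplicial sets and the top map $(Z^{\fib}\times E\cK)_{h\cK}\to (Z^{\fib})_{h\cK}$ is also a weak equivalence (it is the homotopy colimit of the level equivalence $Z^{\fib}\times E\cK\to Z^{\fib}$), right properness of $\cS$ alone implies the square is homotopy cartesian: in a right proper model category a commutative square with a weak equivalence along the bottom is homotopy cartesian if and only if the top map is a weak equivalence. This closes the gap without invoking any Theorem~B machinery and is somewhat more elementary than both routes.
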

\begin{proof}
  Right properness of $\cS$ implies that composition with and pullback
  along a weak equivalence induces a Quillen equivalence of comma
  categories.  By~\cite[Corollary 11.4]{Sagave-S_diagram}, the left
  hand square in~\eqref{eq:hocolim-colim-over-BK} is homotopy
  cartesian. Hence~\eqref{eq:hocolim-colim-over-BK} computes the
  composed derived functor and provides the desired chain of
  equivalences.
\end{proof}

\section{Stable model structures on
  \texorpdfstring{$\GammaJ$}{Gamma-J}-spaces}\label{sec:stable-model-str}
In this section we build \emph{pre-stable} and \emph{stable} model
structures on the category of $\GammaJ$-spaces and use them to prove
Theorem~\ref{thm:group-completion-model-str}
and Theorem~\ref{thm:identification-of-group-compl-model-structure}.

\subsection{Left Bousfield localizations} Below we will frequently
employ \emph{left Bousfield localizations} and therefore recall some
terminology about these. To simplify this, we only consider
localizations of simplicial model categories.  We refer to
Hirschhorn's book~\cite{Hirschhorn_model} as an extensive reference
about left Bousfield localizations.

Let $\cM$ be left proper simplicial model category and $S$ be is a set
of maps between cofibrant objects in $\cM$. An object $W$ of $\cM$ is
called \emph{$S$-local} if it is fibrant in $\cM$ and if every
$f\colon X \to Y$ in $S$ induces a weak equivalence $f^*\colon
\Map(Y,W) \to \Map(X,W)$ of simplicial mapping spaces. A map of
cofibrant objects $g\colon X \to Y$ is a \emph{$S$-local equivalence}
if the induced map $g^*\colon \Map(Y,W) \to \Map(X,W)$ of simplicial
mapping spaces is a weak equivalence for every $S$-local object $W$. A
general map $g$ is a $S$-local equivalence if the induced map of
cofibrant replacements is.

If it exists, the \emph{left Bousfield localization} of $\cM$ with
respect to $S$ is a new model category structure on $\cM$ with the
same cofibrations as before and the $S$-local equivalences as weak
equivalences. The fibrations are determined by a lifting
property. Left properness of $\cM$ ensures that the fibrant objects of
the localization are the $S$-local objects~\cite[Proposition
3.4.1]{Hirschhorn_model}. Left Bousfield localizations exist if $\cM$
is sufficiently well behaved, for example left proper and
\emph{cellular}~\cite[Theorem 4.1.1]{Hirschhorn_model} or left proper
and \emph{combinatorial}~\cite[Theorem 4.7]{Barwick_left-right}.

\subsection{Pre-stable model structures}
Lemma~\ref{lem:Phi-Psi-level-Q-adj} and
Proposition~\ref{prop:chain-level-Q-equiv} provide a chain of one
Quillen adjunction and two Quillen equivalences
\begin{equation}\label{eq:four-Q-adj-level}
  \cC\cS^{\cJ} \leftrightarrows (\GammaJS)_{\lev} \leftrightarrows 
  \left(\GammaJS\right)_{\lev} / E\HcJ \rightleftarrows \left(\GammaS\right)_{\lev} / \bof{\cJ} 
\end{equation}
with respect to the positive $\cJ$-model on $\cC\cS^{\cJ}$ and the
(positive) level model structures on the remaining categories.  We
will now localize the three model categories on the right hand site so
that all adjunctions in~\eqref{eq:four-Q-adj-level} become Quillen
equivalences. The idea is to build certain \emph{pre-stable} model
structures that lie between the level model structures of the last
paragraph and the stable model structures that occur in the
identification of the group completion model structure in
Theorem~\ref{thm:identification-of-group-compl-model-structure}. 

We begin with ordinary $\Gamma$-spaces. Let $p_S\colon S\wdg T \to S$
and $p_T\colon S\wdg T \to T$ be the projections associated with a
pair of objects in $\Gamma^{\op}$.  We consider the set of maps
\[ P'=\{p_S^* \wdg p_T^* \colon \Gamma^{\op}(S,-) \wdg
\Gamma^{\op}(T,-) \to \Gamma^{\op}(S\wdg T, -) \; | \; S, T \in
\obj(\Gamma^{\op})\}\] 
in $\GammaS$. 
\begin{lemma}\label{lem:GammaSlev-existence}
  The left Bousfield localization of $(\GammaS)_{\lev}$ with respect to
  $P'$ exists. It is a cofibrantly generated left proper model category.
\end{lemma}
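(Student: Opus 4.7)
The plan is to apply a standard existence result for left Bousfield localizations, namely Hirschhorn's Theorem~4.1.1 of~\cite{Hirschhorn_model} (or equivalently the combinatorial version). What needs to be verified is that $(\GammaS)_{\lev}$ satisfies the hypotheses of that theorem and that $P'$ is a legitimate set of maps between cofibrant objects; the two stated conclusions (left properness and cofibrant generation) then follow automatically from the general machinery.

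First I would record that $(\GammaS)_{\lev}$ is a proper, cofibrantly generated, simplicial model category: properness is inherited levelwise from simplicial sets, cofibrant generation follows from the standard construction of the level Q-model structure as recalled just before Proposition~\ref{prop:existence-HJ-level-model-str}, and the simplicial structure on $\GammaS$ (tensor, cotensor and mapping space formed levelwise with the basepoint condition) makes it into a simplicial model category. Next I would observe that $\GammaS$ is cellular (or alternatively combinatorial, since it is locally presentable as a full reflective subcategory of functors $\Gamma^{\op}\to \cS$); the generating cofibrations and acyclic cofibrations obtained from the free-forgetful adjunctions $\cS \rightleftarrows \GammaS$ along each object of $\Gamma^{\op}$ satisfy the required smallness, effective monomorphism, and compactness conditions of~\cite[Definition~12.1.1]{Hirschhorn_model}.

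Then I would check that $P'$ is a set of maps between cofibrant objects: the representables $\Gamma^{\op}(S,-)$ are cofibrant in the level Q-model structure (being obtained from the free functor applied to a point), and coproducts of cofibrant objects are cofibrant, so the domains and codomains of the projections $p_S^*\wdg p_T^*\colon \Gamma^{\op}(S,-)\wdg\Gamma^{\op}(T,-)\to \Gamma^{\op}(S\wdg T,-)$ are cofibrant. With these verifications in place, Hirschhorn's Theorem~4.1.1 produces the left Bousfield localization and guarantees that the result is a left proper cofibrantly generated simplicial model category with the same cofibrations as $(\GammaS)_{\lev}$.

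There is no real obstacle here; the only step requiring any care is the cellularity check, which is a routine verification for a category of simplicial presheaves with a single basepoint condition and is essentially identical to the argument for the level Q-model structure on simplicial $\Gamma$-spaces found in~\cite{Bousfield-F_Gamma-bisimplicial}. If one prefers to avoid cellularity, one can instead appeal to Barwick's existence theorem~\cite[Theorem~4.7]{Barwick_left-right} for combinatorial model categories, the combinatoriality of $(\GammaS)_{\lev}$ being immediate from local presentability.
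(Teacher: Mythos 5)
Your proof is correct, and you in fact cover two valid routes: the paper's proof is exactly your fallback. The paper simply notes that $\GammaS$ is locally presentable and $(\GammaS)_{\lev}$ is cofibrantly generated and left proper, then invokes Barwick's existence theorem for left Bousfield localizations of combinatorial model categories (\cite[Theorem~4.7]{Barwick_left-right}) in a two-line argument. Your primary route instead goes through Hirschhorn's cellular machinery (\cite[Theorem~4.1.1]{Hirschhorn_model}), which requires verifying cellularity in addition to left properness. Both are standard and both give the desired conclusion (cofibrantly generated, left proper, simplicial, same cofibrations), but the combinatorial route is preferable here precisely because the only thing one must check beyond what is already known about $(\GammaS)_{\lev}$ is local presentability, which is immediate for this presheaf-like category; the cellular route saddles you with the compactness and effective-monomorphism verifications of \cite[Definition~12.1.1]{Hirschhorn_model}, which, while routine, are extra work. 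Your check that the domains and codomains of $P'$ are cofibrant (representables are cofibrant, coproducts of cofibrant objects are cofibrant) is correct and matches the convention the paper sets up in Section~6.1, where the localizing set is required to consist of maps between cofibrant objects.
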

\begin{proof}
  The category $\GammaS$ is locally presentable, and
  $(\GammaS)_{\lev}$ is cofibrantly generated and left proper. Hence
  Smith's existence theorem for left Bousfield localizations of
  combinatorial model categories~\cite[Theorem
  4.7]{Barwick_left-right} applies.
\end{proof}
We write $\left(\GammaS\right)_{\pre}$ for the resulting
\emph{pre-stable} model structure on $\GammaS$ and call its weak
equivalences \emph{pre-stable equivalences}. The above
characterization of fibrant objects in the localization implies that
its fibrant objects are the levelwise fibrant $\Gamma$-spaces which
are special (as defined in Section~\ref{subsec:GammaS}).
\begin{remark}
  The pre-stable model structure is related to Segal's original
  definition of $\Gamma$-spaces~\cite{Segal_categories}. In our
  terminology, Segal only considers special $\Gamma$-spaces, and the
  homotopy category of the model category
  $\left(\GammaS\right)_{\pre}$ is a model for the homotopy category
  of special $\Gamma$-spaces that Segal uses. The homotopy category of
  special $\Gamma$-spaces is also considered by
  Mandell~\cite{Mandell_inverse-K-theory} who proves that it is
  equivalent to the homotopy categories of $E_{\infty}$ spaces and of
  permutative categories.
\end{remark}

To obtain pre-stable model structures on $\GammaJ$-spaces, we need to
choose a set of maps similar to $P'$.  If $(\bld{u},\nu)$ is an object
of $\HcJ(S\wdg T)$, the projections $p_S$ and $p_T$ induce morphisms
$p_S = (p_S;\id)\colon (S\wdg T;\bld{u},\nu) \to
(S;(p_S)_*(\bld{u},\nu))$ and $p_T = (p_T;\id)$ in $\GammaJ$. These
maps in turn induce morphisms
\begin{equation}\label{eq:map-for-pre-st-loc}
\xymatrix@-1pc{
\GammaJ\left((S;(p_S)_*(\bld{u},\nu)),-\right) \textstyle\coprod 
\GammaJ\left((T;(p_T)_*(\bld{u},\nu)),-\right) \ar[d]^{p_S^*\wdg p_T^*} \\
\GammaJ\left((S\wdg T;\bld{u},\nu),-\right)}
\end{equation}
of the free $\GammaJ$-spaces represented by the objects involved. 
\begin{definition}\label{def:pre-loc-maps}
  Let $P$ be the set of maps in $\GammaJS$ of the
  form~\eqref{eq:map-for-pre-st-loc} where $S$ and $T$ are non-trivial
  objects of $\Gamma^{\op}$ and $(\bld{u},\nu)$ is a positive object
  of $\HcJ(S\wdg T)$. Let $P_E$ be the set of all maps in $
  \left(\GammaJS\right) / E\HcJ$ whose projection to $\GammaJS$ is a
  map in $P$, and let $P_B$ be the image of $P_E$ under the left
  adjoint $\left(\GammaJS\right) / E\HcJ \to \left(\GammaS\right) /
  \bof{\cJ}$.
\end{definition}
\begin{lemma}\label{lem:P-is-image}
  The image of $P_E$ under $\GammaJS/E\HcJ \to \GammaJS$ is $P$.
\end{lemma}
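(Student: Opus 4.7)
By the very definition of $P_E$, its image under the forgetful functor $\GammaJS/E\HcJ \to \GammaJS$ is contained in $P$, so only the reverse inclusion requires proof. The plan is to show that every map $f \in P$ admits at least one lift to $\GammaJS/E\HcJ$, i.e.\ that one can equip the domain and codomain of $f$ with compatible augmentations to $E\HcJ$.

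Given $f \in P$, its codomain $Y = \GammaJ\left((S\wdg T;\bld{u},\nu),-\right)$ is a free (representable) $\GammaJ$-space on the object $(S\wdg T;\bld{u},\nu)$. Since $(0^+;\bld{0}_{0^+})$ is terminal in $\GammaJ$, the representable functor $\GammaJ\left((S\wdg T;\bld{u},\nu),-\right)$ sends it to the one-point space and therefore lies in $\GammaJS$. By the Yoneda lemma, maps $Y \to E\HcJ$ in $\GammaJS$ correspond bijectively to elements of $E\HcJ(S\wdg T;\bld{u},\nu) = B(\HcJ(S\wdg T)\!\downarrow\!(\bld{u},\nu))$, which is non-empty because the identity morphism of $(\bld{u},\nu)$ is a $0$-simplex of this nerve. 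Hence some augmentation $g\colon Y \to E\HcJ$ exists.

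Given such a $g$, I would then define the augmentation of the domain $X = \GammaJ\left((S;(p_S)_*(\bld{u},\nu)),-\right) \coprod \GammaJ\left((T;(p_T)_*(\bld{u},\nu)),-\right)$ simply as the composite $g \circ f \colon X \to E\HcJ$. The resulting triangle commutes by construction, so $f$ lifts to a morphism in $\GammaJS/E\HcJ$ whose underlying map of $\GammaJ$-spaces is the originally given $f \in P$. This produces the desired element of $P_E$ projecting to $f$.

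There is no real obstacle here; the only point to double-check is that this lifting is genuinely available, which reduces to the non-emptiness of $B(\HcJ(S\wdg T)\!\downarrow\!(\bld{u},\nu))$ observed above, together with the functoriality properties of $E\HcJ$ as a $\GammaJ$-space already established in Section~\ref{sec:GammaJ-spaces}.
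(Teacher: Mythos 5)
Your proposal is correct and follows the same route as the paper: the nontrivial direction is to lift a map in $P$ to $\GammaJS/E\HcJ$, and this reduces (exactly as you argue) to producing some augmentation of the codomain $\GammaJ\left((S\wdg T;\bld{u},\nu),-\right) \to E\HcJ$, which is available by Yoneda because $E\HcJ(S\wdg T;\bld{u},\nu) \iso B(\HcJ(S\wdg T)\!\downarrow\!(\bld{u},\nu))$ is non-empty. You merely spell out the composition step (augmenting the domain by $g\circ f$) that the paper leaves implicit.
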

\begin{proof} 
  It is enough to show that there exists a map $\GammaJ\left((S\wdg
    T;\bld{u},\nu),-\right) \to E\HcJ$ in $\GammaJS$. This holds
  because the space $E\HcJ(S\wdg T;\bld{u},\nu) \iso
  B(\HcJ(S\wdg T)\downarrow (\bld{u},\nu))$ is non-empty.
\end{proof}
\begin{lemma}\label{lem:pre-stable-model-str}
  The left Bousfield localizations of $(\GammaJS)_{\lev}$ with respect
  to $P$, of $\left(\GammaJS\right)_{\lev} / E\HcJ$ with respect to
  $P_E$, and of $\left(\GammaS\right)_{\lev} / \bof{\cJ}$ with respect
  to $P_B$ exist. They are cofibrantly generated and left proper.
\end{lemma}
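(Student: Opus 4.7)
The plan is to apply the same existence theorem for left Bousfield localizations of combinatorial model categories that was used in Lemma~\ref{lem:GammaSlev-existence}, namely Smith's theorem (Barwick~\cite[Theorem 4.7]{Barwick_left-right}), to each of the three base model categories. By that theorem, a left Bousfield localization of a left proper combinatorial model category at a set of maps always exists, and the resulting model structure is again cofibrantly generated and left proper. Hence the task reduces to verifying that each of $(\GammaJS)_{\lev}$, $(\GammaJS)_{\lev}/E\HcJ$, and $(\GammaS)_{\lev}/\bof{\cJ}$ is left proper and combinatorial (i.e.\ cofibrantly generated with locally presentable underlying category).

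First I would handle $(\GammaJS)_{\lev}$. Cofibrant generation is already recorded in Proposition~\ref{prop:existence-HJ-level-model-str}, left properness in Corollary~\ref{cor:pos-lev-HcJ-proper}, and local presentability of $\GammaJS$ was noted in the proof of Proposition~\ref{prop:existence-HJ-level-model-str} (it is a full subcategory of a functor category cut out by a single value condition, hence presentable). For the two overcategories, I would invoke the general principle that if $\cM$ is a left proper combinatorial model category and $Z \in \cM$, then the overcategory model structure on $\cM/Z$ is again left proper and combinatorial: generating (acyclic) cofibrations are obtained by equipping the generators of $\cM$ with an arbitrary map to $Z$ (more precisely, one uses the slice construction applied to the generating sets), local presentability passes to slices, and both properness conditions pass to slices because pushouts and pullbacks in $\cM/Z$ are computed in $\cM$. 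Since $(\GammaS)_{\lev}$ is known to be cofibrantly generated and left proper, the same holds for $(\GammaS)_{\lev}/\bof{\cJ}$.

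Having verified the hypotheses of Barwick's theorem in each case, the theorem immediately produces the three left Bousfield localizations and guarantees that each is cofibrantly generated and left proper. The main (and essentially only) obstacle is the bookkeeping around overcategories: one must make sure that the overcategory model structures on $(\GammaJS)_{\lev}/E\HcJ$ and $(\GammaS)_{\lev}/\bof{\cJ}$ really do inherit cofibrant generation and local presentability from the base, but this is standard and requires no new ingredients. Nothing in the argument uses special features of the sets $P$, $P_E$, $P_B$ themselves; those will only play a role in later sections where the pre-stable fibrant objects and the compatibility of the three localizations under the Quillen adjunctions of~\eqref{eq:four-Q-adj-level} are analyzed.
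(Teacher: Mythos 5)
Your proposal is correct and takes essentially the same approach as the paper: the paper's proof is a one-line appeal to Smith's existence theorem for left Bousfield localizations of left proper combinatorial model categories, exactly as you propose. Your version simply spells out in more detail where the hypotheses (local presentability, cofibrant generation, left properness, including their inheritance by overcategories) come from, which the paper treats as known.
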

\begin{proof}
  Since the categories are locally presentable and the level model
  structures are cofibrantly generated and left proper, Smith's
  existence theorem for left Bousfield localizations~\cite[Theorem
  4.7]{Barwick_left-right} applies. 
\end{proof}
We refer to these model structures as the \emph{pre-stable} model
structures, write $(\GammaJS)_{\pre}, \left(\left(\GammaJS\right) /
  E\HcJ\right)_{\pre}$, and $\left(\left(\GammaS\right) /
  \bof{\cJ}\right)_{\pre}$ for these model categories and call the
weak equivalences \emph{pre-stable equivalences}.

It is important to note (and non-trivial to prove) that two possibly
distinct model structures on $\GammaS/\bof{\cJ}$ arising from these
constructions coincide:
\begin{lemma}\label{lem:pre-stable-vs-overcategory}
  The overcategory model structure
  $\left(\GammaS\right)_{\pre}/\bof{\cJ}$ and the pre-stable model
  structure $\left(\GammaS/\bof{\cJ}\right)_{\pre}$ of the last lemma
  coincide.
\end{lemma}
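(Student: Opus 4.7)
The plan is to show that both model structures on $\GammaS/\bof{\cJ}$ have the same cofibrations and the same fibrant (i.e., local) objects, which forces them to coincide. The cofibrations agree by construction: the overcategory $(\GammaS)_{\pre}/\bof{\cJ}$ has the same cofibrations as $(\GammaS)_{\lev}/\bof{\cJ}$ because left Bousfield localization preserves cofibrations, and $(\GammaS/\bof{\cJ})_{\pre}$ is itself a left Bousfield localization of $(\GammaS)_{\lev}/\bof{\cJ}$. Both model structures are thus left Bousfield localizations of the same base, and two such localizations coincide as soon as they have the same class of local objects.

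For the overcategory side, after a level fibrant replacement of $\bof{\cJ}$ (harmless by right properness of $(\GammaS)_{\lev}$), the corollary following Definition~\ref{def:Shi-Shi-construction} makes $\bof{\cJ}$ level fibrant and special, hence pre-stably fibrant. A fibrant object of $(\GammaS)_{\pre}/\bof{\cJ}$ is then a pre-stable fibration $W\to\bof{\cJ}$, which by the standard characterization of fibrations in a left Bousfield localization~\cite[\S 3.3]{Hirschhorn_model} is precisely a level fibration with $W$ itself level fibrant and special. On the other side, a fibrant object of $(\GammaS/\bof{\cJ})_{\pre}$ is a $P_B$-local object: a level fibration $W\to\bof{\cJ}$ such that every map in $P_B$ induces a weak equivalence of overcategory simplicial mapping spaces.

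The key computation is the identification of the $P_B$-local condition with specialness of $W$. Applying the left adjoint of Proposition~\ref{prop:chain-level-Q-equiv} to the generators of $P_E$, and using that the colimit over $\HcJ(T)$ of a representable $\HcJ(T)$-space is a point, one finds that the underlying $\Gamma$-space maps of elements of $P_B$ are the Segal maps
\[
p_S^*\wdg p_T^*\colon \Gamma^{\op}(S,-)\wdg \Gamma^{\op}(T,-)\to \Gamma^{\op}(S\wdg T,-),
\]
equipped with specific augmentations to $\bof{\cJ}$. By Yoneda in $\GammaS$, the overcategory mapping space $\Map_{/\bof{\cJ}}(\Gamma^{\op}(S,-)_{\mathrm{aug}}, W\to\bof{\cJ})$ is the homotopy fiber of $W(S)\to\bof{\cJ}(S)$ over the chosen basepoint, so the $P_B$-local condition becomes the statement that the map of homotopy fibers from $W(S\wdg T)\to\bof{\cJ}(S\wdg T)$ to $W(S)\times W(T)\to\bof{\cJ}(S)\times\bof{\cJ}(T)$ is a weak equivalence.

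The main obstacle is to see that the specific augmentations used in defining $P_B$ are inessential, and this is precisely where specialness of $\bof{\cJ}$ enters: the Segal map on the base is already a weak equivalence, so the long exact sequence of a fibration converts the fiber condition into the total Segal condition $W(S\wdg T)\xrightarrow{\sim} W(S)\times W(T)$, with no dependence on basepoints. Hence $P_B$-locality of a level fibration $W\to\bof{\cJ}$ is equivalent to $W$ being special, which matches the description of the overcategory fibrant objects, and the two model structures coincide.
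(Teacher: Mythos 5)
The overall strategy — compare cofibrations and fibrant objects, using the mapping-space characterization of local objects — is the same as the paper's, and most of the supporting observations (representables mapping to a point under colimit, Yoneda computation of overcategory mapping spaces as fibers, level-fibrant replacement of $\bof{\cJ}$) are correct. However, there is a genuine gap in the crucial step.

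You argue that the $P_B$-local condition (a fiber condition over the \emph{specific} basepoint of $\bof{\cJ}(S\wdg T)$ carried by each map in $P_B$) implies the full Segal condition on $W$, and you attribute this to specialness of $\bof{\cJ}$: ``the Segal map on the base is already a weak equivalence, so the long exact sequence of a fibration converts the fiber condition into the total Segal condition, with no dependence on basepoints.'' This inference is not valid. Given a map of fibrations over a weak equivalence of bases, a weak equivalence between the fibers over a \emph{single} basepoint only yields a weak equivalence of total spaces restricted to the path components hitting that basepoint; if the base has several components (and $\bof{\cJ}(S\wdg T)$ does, since $\pi_0(B\cJ)\iso\mZ$), the fiber condition at one basepoint says nothing about the other components of $W(S\wdg T)$. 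Specialness of $\bof{\cJ}$ does not remove this dependence.

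What actually makes the argument work, and what the paper proves as its first and central step, is that $P_B$ is (up to isomorphism) the set of \emph{all} maps over $\bof{\cJ}$ whose underlying $\Gamma$-space maps lie in $P'$; equivalently, every augmentation $\Gamma^{\op}(S\wdg T,-)\to\bof{\cJ}$ (corresponding to an object of $\HcJ(S\wdg T)$, and in particular a representative of each path component of $\bof{\cJ}(S\wdg T)$) occurs among the maps in $P_B$. Once one knows this, the $P_B$-local condition yields the fiber condition at representatives of \emph{all} components, and then specialness of $\bof{\cJ}$ together with the long exact sequence (or five lemma) does give the total Segal condition. Your proposal skips exactly this surjectivity-of-augmentations argument and replaces it with an unjustified claim of basepoint independence; without it, the passage from ``$P_B$-local'' to ``special'' does not go through.
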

\begin{proof}
  Let $P'_B$ be the set of all maps in $\GammaS/\bof{\cJ}$ whose
  projection to $\GammaS$ is in $P'$. Since the levelwise colimit of a
  free $\GammaJ$-space is a free $\Gamma$-space, the left adjoint
  $\GammaJS/ E\HcJ \to\GammaS/\bof{\cJ}$ sends maps in $P_E$ to maps
  in $P'_B$.  We claim that up to isomorphism, every map in $P'_B$ is
  in the image.  For this we have to show that every possible
  augmentation $\Gamma^{\op}(S,-)\to\bof{\cJ}$ of the codomain of a
  map in $P'$ is in the image. Such maps correspond to objects
  $(\bld{s},\sigma)$ of $\HcJ(S)$, and the map
  $\GammaJ(S;\bld{s},\sigma),-) \to E\HcJ$ specified by the point in
  $B(\HcJ(S)\downarrow (\bld{s},\sigma))$ given by the identity on
  $(\bld{s},\sigma)$ is the desired preimage under the left adjoint.
  This proves the claim about $P'_B$.

  It remains to prove that the localization of $\GammaS/\bof{\cJ}$
  with respect to $P'_B$ is the overcategory model structure
  $\left(\GammaS\right)_{\pre}/\bof{\cJ}$. Since all maps in $P'_B$
  are weak equivalences in $\left(\GammaS\right)_{\pre}$ when
  forgetting the augmentation, the identity functor is a left Quillen
  functor $\left(\GammaS/\bof{\cJ}\right)_{\pre} \to
  \left(\GammaS\right)_{\pre}/\bof{\cJ}$. To see that the model
  structures coincide, it is therefore enough to show that the fibrant
  objects in $\left(\GammaS/\bof{\cJ}\right)_{\pre}$ are fibrant in
  $\left(\GammaS\right)_{\pre}/\bof{\cJ}$. An object $f\colon X \to
  \bof{\cJ}$ is fibrant in $\left(\GammaS/\bof{\cJ}\right)_{\pre}$ if
  $f$ is a level fibration in $\GammaS$ and every map $g\colon U \to
  V$ in $P'_B$ induces a weak equivalence $\Map_{\bof{\cJ}}(g,X)$. The
  space $\Map_{\bof{\cJ}}(U,X)$ of maps from $U\to \bof{\cJ}$ to $f$
  in $\GammaS/\bof{\cJ}$ can be defined as the fiber of $f_*\colon
  \Map_{\GammaS}(U,X) \to \Map_{\GammaS}(U,\bof{\cJ})$ over the
  augmentation $U \to \bof{\cJ}$ of $U$. Since $f$ is a level
  fibration, $f_*$ is a fibration of spaces. Hence every $g\colon U
  \to V$ in $P'_B$ induces a map of homotopy fiber sequences
  \[\xymatrix@-1pc{
    \Map_{\bof{\cJ}}(V,X) \ar[r] \ar[d] & \Map_{\GammaS}(V,X) \ar[r]
    \ar[d]
    & \Map_{\GammaS}(V,\bof{\cJ}) \ar[d] \\
    \Map_{\bof{\cJ}}(U,X) \ar[r] & \Map_{\GammaS}(U,X) \ar[r] &
    \Map_{\GammaS}(U,\bof{\cJ}). }\] By assumption, the first and
  third vertical maps are weak equivalences. Since the maps $g$ in
  $P'_B$ that project to a given map $U \to V$ in $P'$ vary over all
  possible maps over $\bof{\cJ}$, we obtain the previous diagram of
  homotopy fiber sequences for all possible basepoints in
  $\Map_{\GammaS}(V,\bof{\cJ})$. It follows that the middle vertical
  map is a weak equivalence for all $U\to V$ in $P'$. Let $f'\colon X'
  \to \bof{\cJ}'$ be a level fibration of level fibrant objects that
  is level equivalent to $f$. Since the $U$ is a coproduct of free
  $\Gamma$-spaces, the mapping space $\Map_{\GammaS}(U,-)$ maps level
  equivalences between all objects to weak equivalences, and it
  follows that $X'$ and $\bof{\cJ}'$ are fibrant in
  $\left(\GammaS\right)_{\pre}$. So $f'$ is a fibration in
  $\left(\GammaS\right)_{\pre}$ by~\cite[Proposition
  3.4.7]{Hirschhorn_model}. Forming the base change along
  $\bof{\cJ}\to\bof{\cJ}'$, ~\cite[Proposition
  3.4.6]{Hirschhorn_model} shows that $f$ is a fibration in
  $\left(\GammaS\right)_{\pre}$.
\end{proof}

\begin{corollary}\label{cor:chain-pre-stable-equiv}
There is a chain of Quillen equivalences 
\begin{equation}\label{eq:chain-pre-stable-equiv}
  (\GammaJS)_{\pre} \leftrightarrows 
  \left(\GammaJS\right)_{\pre} / E\HcJ \rightleftarrows \left(\GammaS\right)_{\pre} / \bof{\cJ} 
\end{equation}
with respect to the pre-stable model structures.\qed
\end{corollary}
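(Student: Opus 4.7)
The plan is to invoke a standard stability property of left Bousfield localization: if $F \colon \cM \rightleftarrows \cN \colon G$ is a Quillen equivalence between left proper combinatorial model categories and $C$ is a set of maps between cofibrant objects of $\cM$, then the induced functors form a Quillen equivalence $L_C\cM \rightleftarrows L_{F(C)}\cN$ (a combinatorial analog of \cite[Theorem 3.3.20]{Hirschhorn_model}). Applied to the level chain of Quillen equivalences provided by Proposition~\ref{prop:chain-level-Q-equiv}, this will reduce the corollary to matching up the three localizing sets $P$, $P_E$, $P_B$ under the respective left adjoints.

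First I would verify the cofibrancy hypothesis. The maps in $P$ are of the form $p_S^{*} \wdg p_T^{*}$ between free $\GammaJ$-spaces $F_S(\cdots)$ on positive objects, and by Corollary~\ref{cor:tensor-and-free} (combined with the explicit description in Lemma~\ref{lem:free-functor-FS}) these free objects are cofibrant in $(\GammaJS)_{\lev}$. Cofibrancy of the objects in $P_E$ and $P_B$ then follows because the overcategory cofibrations are created by the forgetful functor and because left Quillen functors preserve cofibrant objects.

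Next I would handle the two adjunctions in turn. For the adjunction $\left(\GammaJS\right)_{\lev} / E\HcJ \rightleftarrows \left(\GammaS\right)_{\lev} / \bof{\cJ}$, Definition~\ref{def:pre-loc-maps} was arranged so that $P_B$ is literally the image of $P_E$ under the left adjoint, so the stability principle yields a Quillen equivalence between the localizations at $P_E$ and at $P_B$. By Lemma~\ref{lem:pre-stable-vs-overcategory}, the target localization is the overcategory model structure $\left(\GammaS\right)_{\pre}/\bof{\cJ}$ appearing in the statement. For the adjunction $(\GammaJS)_{\lev} \leftrightarrows (\GammaJS)_{\lev}/E\HcJ$ (with the forgetful functor as the left adjoint by our convention), Lemma~\ref{lem:P-is-image} identifies the image of $P_E$ under this forgetful functor as precisely $P$, so the same principle delivers a Quillen equivalence between the localizations at $P$ and at $P_E$. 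Combining both gives the chain in~\eqref{eq:chain-pre-stable-equiv}.

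The main obstacle is really bookkeeping: in each adjunction one must track which functor is the left adjoint (complicated by the symbols $\leftrightarrows$ versus $\rightleftarrows$) and check that the localizing set upstairs is sent onto, rather than merely into, the localizing set downstairs. This compatibility is exactly what Definition~\ref{def:pre-loc-maps} and Lemma~\ref{lem:P-is-image} were set up to guarantee, so once the general principle is in hand the argument is essentially formal.
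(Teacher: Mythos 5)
Your proposal matches the paper's proof: both invoke the preservation of Quillen equivalences under left Bousfield localization (the paper cites \cite[Theorem 3.3.20(1)(b)]{Hirschhorn_model}), applied to the level chain~\eqref{eq:diagram-spaces-graded-spaces-lifted}, together with the cofibrancy of the domains and codomains of the localizing maps and the identification of the target via Lemma~\ref{lem:pre-stable-vs-overcategory}. You make explicit the role of Lemma~\ref{lem:P-is-image} and Definition~\ref{def:pre-loc-maps} in matching the localizing sets under the left adjoints, which the paper's terse proof leaves implicit, but the argument is the same.
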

\begin{proof}
  All maps in the sets $P$, $P_E$, and $P_B$ have cofibrant
  domains and codomains in the respective categories. Hence this is a
  consequence of the last lemma and the fact that Quillen equivalences
  are preserved under left Bousfield localizations~\cite[Theorem
  3.3.20(1)(b)]{Hirschhorn_model} applied to the Quillen equivalences
  in~\eqref{eq:diagram-spaces-graded-spaces-lifted}.
\end{proof}
The corollary enables us to give a more explicit characterization of
the weak equivalences and the fibrant objects in $(\GammaJS)_{\pre}$
in terms of ordinary $\Gamma$-spaces and the functor $\gamma'\colon
\GammaJS \to \GammaS$ introduced in connection with
Corollary~\ref{cor:gamma-vs-gamma-prime}:
\begin{lemma}\label{lem:gamma-prime-detects-we}
  A map $f\colon X \to Y$ in $\GammaJS$ is a pre-stable equivalence if
  and only if $\gamma'(f)$ is a pre-stable equivalence in $\GammaS$. A
  level fibrant object $X$ is fibrant in $(\GammaJS)_{\pre}$ if and
  only if the $\Gamma$-space $\gamma'(X)$ is special.
\end{lemma}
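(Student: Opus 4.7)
The plan is to transfer the characterization of pre-stable equivalences and fibrant objects from $(\GammaJS)_{\pre}$ to $(\GammaS)_{\pre}$ via the chain of Quillen equivalences
\[ (\GammaJS)_{\pre} \leftrightarrows \left(\GammaJS\right)_{\pre} / E\HcJ \rightleftarrows \left(\GammaS\right)_{\pre} / \bof{\cJ} \]
of Corollary~\ref{cor:chain-pre-stable-equiv}, combined with the key fact (Lemma~\ref{lem:pre-stable-vs-overcategory}) that the pre-stable model structure on $\GammaS/\bof{\cJ}$ coincides with the overcategory model structure $(\GammaS)_{\pre}/\bof{\cJ}$. The crux is to identify the composed derived functor $\Ho(\GammaJS) \to \Ho(\GammaS)$ (forgetting the augmentation to $\bof{\cJ}$) with $\gamma'$ up to natural weak equivalence.

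For this identification, I would apply the zig-zag of weak equivalences~\eqref{eq:hocolim-colim-zig-zag} from Corollary~\ref{cor:hocolim-colim-zig-zag} at each level $S \in \Gamma^{\op}$. The description of $\GammaJ$-spaces in Lemma~\ref{lem:GammaJS-equivalent} makes the constructions involved (product with $E\HcJ(S)$, cofibrant replacement, colimit) natural in $S$, so that the zig-zag assembles into a natural chain of level equivalences of $\Gamma$-spaces relating $\gamma'(X)$ to the underlying $\Gamma$-space of the composed derived functor value. Level equivalences are in particular pre-stable equivalences, and the computation is valid at the pre-stable level since the pre-stable and level model structures share cofibrations, so the same cofibrant replacements compute the derived functor in both.

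Part~(1) then follows: a map $f\colon X \to Y$ is a pre-stable equivalence in $\GammaJS$ iff its image under the composed derived equivalence is a pre-stable equivalence in $(\GammaS)_{\pre}/\bof{\cJ}$. By the overcategory description, the latter is a weak equivalence iff its projection to $\GammaS$ is a pre-stable equivalence, and by the identification above this is the case iff $\gamma'(f)$ is a pre-stable equivalence. Part~(2) follows analogously: for level fibrant $X$, being fibrant in $(\GammaJS)_{\pre}$ corresponds under the Quillen equivalence to fibrancy of the image in $(\GammaS)_{\pre}/\bof{\cJ}$, which by the overcategory structure is equivalent to the underlying $\Gamma$-space being level fibrant and special. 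Since specialness is invariant under weak equivalence and the underlying $\Gamma$-space is weakly equivalent to $\gamma'(X)$, this reduces to $\gamma'(X)$ being special.

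The main obstacle is making the derived-functor identification rigorous. Concretely, one must check that the fibrant-then-cofibrant replacements appearing in~\eqref{eq:hocolim-colim-zig-zag} can be chosen functorially in $S$ so as to produce an honest natural zig-zag of $\GammaJ$-spaces (and thence of $\Gamma$-spaces after evaluation), and that this zig-zag respects the augmentation to $\bof{\cJ}$ in the manner of Lemma~\ref{lem:hocolim-colim-over-BK}. Once this naturality is in place, the transport of weak equivalences and fibrant objects through the Quillen equivalences is routine.
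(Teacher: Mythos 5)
Your proposal follows the same overall strategy as the paper: compare the composed derived functor of the zig-zag of \emph{level} Quillen equivalences~\eqref{eq:diagram-spaces-graded-spaces-lifted} with that of the \emph{pre-stable} zig-zag~\eqref{eq:chain-pre-stable-equiv}, identify the underlying $\Gamma$-space of the derived functor with $\gamma'$ via Corollary~\ref{cor:hocolim-colim-zig-zag}, and use Lemma~\ref{lem:pre-stable-vs-overcategory} to reduce everything to the honest overcategory model structure on $\GammaS/\bof{\cJ}$. The ``main obstacle'' you flag (naturality in $S$ of the zig-zag~\eqref{eq:hocolim-colim-zig-zag}) is not a real gap: the positive level model structure on $\GammaJS$ has fibrations that are positive $\HcJ(S)$-fibrations for every $S$ (Proposition~\ref{prop:existence-HJ-level-model-str}), and its cofibrations are levelwise absolute $\HcJ(S)$-cofibrations (Corollary~\ref{cor:pos-lev_HcJ-cof-levelw-abs-cof}); so fibrant and cofibrant replacements performed once in $\GammaJS$ and $\GammaJS/E\HcJ$ restrict, level by level, to the replacements needed in~\eqref{eq:hocolim-colim-zig-zag}, and the chain assembles automatically into a chain of level equivalences of $\Gamma$-spaces. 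Likewise, for the derived functors to agree across the level and pre-stable zig-zags one also wants the right adjoint $-\times E\HcJ$ to preserve all level $\HcJ$-equivalences (it does, since $E\HcJ$ is objectwise contractible); noting that the cofibrations agree is only half the story.

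The one place your argument is genuinely loose is part~(2). The phrase ``being fibrant in $(\GammaJS)_{\pre}$ corresponds under the Quillen equivalence to fibrancy of the image in $(\GammaS)_{\pre}/\bof{\cJ}$'' is not a self-contained step: the ``image'' involves a left Quillen functor (the colimit) which does not preserve fibrant objects, so one cannot transport fibrancy directly. The paper instead unwinds fibrancy as $P$-locality in the level structure and transports locality step by step through the adjunctions using~\cite[Propositions 3.1.12 and 3.2.2]{Hirschhorn_model} together with Lemma~\ref{lem:P-is-image}, and then uses Lemma~\ref{lem:pre-stable-vs-overcategory} plus~\cite[Propositions 3.3.16, 3.4.7]{Hirschhorn_model} and the fact that $\bof{\cJ}$ is very special to translate $P_B$-locality into specialness of the source. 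Your final conclusion is correct, but the intermediate step needs this local-object machinery rather than an unqualified ``fibrancy corresponds''.
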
 
\begin{proof}
  The strategy of the proof is to compare the zig-zag of Quillen
  equivalences of level model
  structures~\eqref{eq:diagram-spaces-graded-spaces-lifted} with the
  zig-zag of pre-stable model
  structures~\eqref{eq:chain-pre-stable-equiv}.

  A map $f$ in $\GammaJS$ is a pre-stable equivalence if and only if
  it represents an isomorphism in $\Ho((\GammaJS)_{\pre})$. Let $g$ be
  the image of $f$ under the composed derived functor of the zig-zag
  of level model structures. Then $g$ represents the image of the
  homotopy class of $f$ under the equivalence $\Ho((\GammaJS)_{\lev})
  \to \Ho((\GammaS)_{\lev}/\bof{\cJ})$ induced
  by~\eqref{eq:diagram-spaces-graded-spaces-lifted}. Using the
  equivalence $\Ho((\GammaJS)_{\pre}) \to
  \Ho((\GammaS)_{\pre}/\bof{\cJ})$ induced
  by~\eqref{eq:chain-pre-stable-equiv}, we see that $f$ represents an
  isomorphism in $\Ho((\GammaJS)_{\pre})$ if and only if $g$
  represents an isomorphism in
  $\Ho((\GammaJS)_{\pre}/\bof{\cJ})$. Since the chain of weak
  equivalences~\eqref{eq:hocolim-colim-zig-zag} induces a chain of
  level equivalences of $\Gamma$-spaces between $\gamma'(f)$ and the
  underlying map of $\Gamma$-spaces associated with $g$, the claim
  follows.

  An object $X$ is fibrant in $(\GammaJS)_{\pre}$ if and only if it is
  $P$-local in the level-model structure. By~\cite[Proposition
  3.1.12]{Hirschhorn_model} and Lemma~\ref{lem:P-is-image}, $X$ is
  $P$-local if and only if the image $X\times E\HcJ$ of $X$ under the
  right adjoint to $\left((\GammaJS) / E\HcJ\right)_{\lev}$ is
  $P_E$-local. Combining~\cite[Proposition 3.1.12]{Hirschhorn_model}
  with~\cite[Proposition 3.2.2]{Hirschhorn_model}, it follows that an
  object of $\left((\GammaJS) / E\HcJ\right)_{\lev}$ is $P_E$-local if
  and only if the left adjoint to $\left((\GammaS) /
    \bof{\cJ}\right)_{\lev}$ sends a cofibrant replacement of it to an
  object that is $P_B$-local after level fibrant
  replacement. Lemma~\ref{lem:pre-stable-vs-overcategory},
  ~\cite[Proposition 3.3.16]{Hirschhorn_model} and the fact that
  $\bof{\cJ}$ is (very) special show that a fibration over $\bof{\cJ}$
  in $(\GammaS)_{\pre}$ is a fibrant object in $\left((\GammaS) /
    \bof{\cJ}\right)_{\pre}$ if an only if its source is special.
  Hence $X$ if fibrant in $(\GammaJS)_{\pre}$ if and only if it is
  level fibrant and the underlying $\Gamma$-space of the image of $X$
  under the composed derived functor of the zig-zag of
  Quillen-adjunctions of the level model structures is
  special. Comparing with $\gamma'$ as above proves the claim.
\end{proof}
The following result about the relation to commutative $\cJ$-space
monoids is the most difficult step towards
Theorem~\ref{thm:group-completion-model-str} and will be proved at the
end of this section:
\begin{proposition}\label{prop:J-model-pre-stable-equiv}
  The adjunction $\Phi\colon\GammaJS \rightleftarrows
  \cC\cS^{\cJ}\colon\Psi$ is a Quillen equivalence with respect to the
  positive $\cJ$-model structure on $\cC\cS^{\cJ}$ and the positive
  pre-stable model structure on $\GammaJS$.
\end{proposition}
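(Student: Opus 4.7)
The strategy is to promote the level Quillen adjunction of Lemma~\ref{lem:Phi-Psi-level-Q-adj} to one between the pre-stable structure on $\GammaJS$ and the positive $\cJ$-model structure on $\cC\cS^{\cJ}$, then to upgrade it to a Quillen equivalence by combining a reflection-of-weak-equivalences statement with an identification of the composed derived functor of $\Psi$ with a previously known equivalence of homotopy categories. For the Quillen adjunction step, by~\cite[Theorem 3.3.20]{Hirschhorn_model} it suffices to check that $\Phi$ sends each map in the localizing set $P$ of Definition~\ref{def:pre-loc-maps} to a weak equivalence in $\cC\cS^{\cJ}$; since the coproduct in $\cC\cS^{\cJ}$ is the $\boxtimes$-product, $\Phi(\GammaJ((S;\bld{s},\sigma),-)) \iso \boxtimes_{i \in \ovl{S}}\mC F^{\cJ}_{\bld{s}_i}(*)$, and direct inspection shows that $\Phi$ sends a generating map~\eqref{eq:map-for-pre-st-loc} of $P$ to the canonical $\boxtimes$-decomposition isomorphism comparing $\boxtimes_{i \in \ovl{S}}\mC F^{\cJ}_{\bld{u}_i}(*) \boxtimes \boxtimes_{j \in \ovl{T}}\mC F^{\cJ}_{\bld{u}_j}(*)$ with $\boxtimes_{k \in \ovl{S\wdg T}}\mC F^{\cJ}_{\bld{u}_k}(*)$, which is in particular a $\cJ$-equivalence.

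For the reflection property, Corollary~\ref{cor:gamma-vs-gamma-prime} gives $\gamma'(\Psi(A)) \iso \gamma(A)$, which is a special $\Gamma$-space by Proposition~\ref{prop:gamma-A-special}; Lemma~\ref{lem:gamma-prime-detects-we} then ensures that $\Psi$ sends positive $\cJ$-fibrant objects to pre-stable fibrant ones. Since a pre-stable equivalence between pre-stable fibrant objects is automatically a level equivalence (a standard feature of left Bousfield localizations, cf.~\cite[Theorem 3.2.13]{Hirschhorn_model}), such an equivalence $\Psi(A) \to \Psi(B)$ restricts at $S = 1^+$—where $\HcJ(1^+) \iso \cJ$ and where $\Psi(A)_{1^+}$ is the underlying $\cJ$-space of $A$ by Corollary~\ref{cor:identification-of-Psi}—to a $\cJ$-equivalence of underlying $\cJ$-spaces, so $A \to B$ is a $\cJ$-equivalence.

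The main obstacle is to show that $\Psi$ induces an equivalence of homotopy categories. I would argue this by identifying the composed derived functor $\Ho(\cC\cS^{\cJ}) \to \Ho(((\GammaS)/\bof{\cJ})_{\pre})$ obtained from $\Psi$ and the chain~\eqref{eq:chain-pre-stable-equiv} with the composition of the Quillen equivalence $\cC\cS^{\cJ} \simeq_Q (E_{\infty}\text{-spaces over } B\cJ)$ of~\cite[Theorem 1.7]{Sagave-S_diagram} with the classical Segal-type equivalence between special $\Gamma$-spaces over $\bof{\cJ}$ and $E_{\infty}$-spaces over $B\cJ$ (cf.~\cite{Mandell_inverse-K-theory}). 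Both composites should send a positive $\cJ$-fibrant $A$ to the augmented $\Gamma$-space $\gamma(A) \to \bof{\cJ}$: for the first composite this follows from $\gamma'(\Psi(A)) \iso \gamma(A)$ together with the analysis of the derived functors of~\eqref{eq:diagram-spaces-graded-spaces-lifted} provided by Corollary~\ref{cor:hocolim-colim-zig-zag} and Lemma~\ref{lem:hocolim-colim-over-BK}, while for the second it is essentially the content of the recognition principle applied over a base. The technical heart of the argument—and the main obstacle—is the bookkeeping required to ensure that these identifications are compatible with the augmentations over $\bof{\cJ}$. Once this is carried out, $\Psi$ induces an equivalence of homotopy categories, and combined with the reflection property this forces the derived unit and counit to be weak equivalences, hence $(\Phi, \Psi)$ is a Quillen equivalence.
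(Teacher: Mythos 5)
The first two parts of your proposal are correct and, for the Quillen adjunction step, arguably cleaner than the paper's. You observe directly that $\Phi$ sends each generating map in $P$ to an isomorphism of commutative $\cJ$-space monoids (since $\ovl{S\wdg T}=\ovl{S}\sqcup\ovl{T}$ and coproduct is $\boxtimes$), which by~\cite[Theorem 3.3.20]{Hirschhorn_model} keeps $\Phi$ a left Quillen functor after localization. The paper instead verifies the right Quillen condition on $\Psi$, via Lemma~\ref{lem:gamma-prime-detects-we} and Proposition~\ref{prop:gamma-A-special}. Your reflection argument is essentially the content of the paper's Lemma~\ref{lem:Psi-preserves-detects-we}, phrased for fibrant objects.

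The third step, however, has a genuine gap, and it is precisely the piece you defer as ``bookkeeping''. You want to deduce the equivalence of homotopy categories by composing the Quillen equivalence of~\cite[Theorem 1.7]{Sagave-S_diagram} with a ``classical Segal-type equivalence between special $\Gamma$-spaces over $\bof{\cJ}$ and $E_{\infty}$-spaces over $B\cJ$''. Such a relative (over-a-base) version of the Segal--Mandell equivalence is not an off-the-shelf result; and even granting it, you would still need to show that it agrees, as a functor of homotopy categories including the augmentations to $\bof{\cJ}$, with the composed derived functor of $\Psi$ and the chain~\eqref{eq:chain-pre-stable-equiv}. The paper itself points out (in connection with Lemma~\ref{lem:hocolim-colim-over-BK}) that the zig-zag identifying $Z_{h\HcJ(S)}$ with the derived colimit is \emph{not} a zig-zag over the base, so such compatibilities require a separate argument. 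In effect your proposal replaces the technical core of the proof by an appeal to an unproved comparison whose justification would require work comparable to the paper's own.

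The paper instead attacks the derived unit condition of~\cite[Corollary 1.3.16(c)]{Hovey_model} head on, by cell induction over cofibrant objects of $(\GammaJS)_{\pre}$: first on the free $\GammaJ$-spaces $\GammaJ((1^+;\bld{s},\sigma),-)$ via an explicit analysis of $\gamma(\mC F^{\cJ}_{\bld{k}}(*))$ and Barratt--Priddy--Quillen (Lemma~\ref{lem:adjunction-unit-on-free-in-deg-one}), then on $\GammaJ((S;\bld{s},\sigma),-)\otimes K$ (Lemma~\ref{lem:adjunction-unit-on-general-free}), and then through pushouts and sequential colimits. That induction needs that $\Psi$ preserves coproducts up to weak equivalence (Lemma~\ref{lem:psi-preserves-sums}) and homotopy cocartesian squares (Lemma~\ref{lem:psi-preserves-hty-cart}), plus the realization Lemma~\ref{lem:pre-stable-realization-lemma}. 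None of these ingredients appears in your outline, and they cannot be bypassed by the external comparison you sketch.
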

Together with Corollary~\ref{cor:chain-pre-stable-equiv}, the proposition implies
\begin{corollary}\label{cor:chain-pre-stable-equiv-and-CSJ}
There is a chain of Quillen equivalences 
\begin{equation}\label{eq:chain-pre-stable-equiv-and-CSJ}
  \cC\cS^{\cJ} \leftrightarrows (\GammaJS)_{\pre} \leftrightarrows 
  \left(\GammaJS/ E\HcJ\right)_{\pre}  \rightleftarrows \left(\GammaS\right)_{\pre} / \bof{\cJ} 
\end{equation}
with respect to the positive $\cJ$-model structure on $\cC\cS^{\cJ}$.\qed
\end{corollary}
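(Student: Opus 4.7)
The plan is to obtain the chain by concatenation: Proposition~\ref{prop:J-model-pre-stable-equiv} supplies the leftmost Quillen equivalence $\cC\cS^{\cJ} \leftrightarrows (\GammaJS)_{\pre}$, while Corollary~\ref{cor:chain-pre-stable-equiv} supplies the remaining two Quillen equivalences $(\GammaJS)_{\pre} \leftrightarrows \left(\GammaJS\right)_{\pre}/E\HcJ \rightleftarrows \left(\GammaS\right)_{\pre}/\bof{\cJ}$. Since the $(\GammaJS)_{\pre}$ appearing in both statements refers to the same pre-stable model structure (the left Bousfield localization of $(\GammaJS)_{\lev}$ at $P$ from Lemma~\ref{lem:pre-stable-model-str}), the two chains splice along this common middle term.

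Concretely, I would simply observe that a chain of Quillen equivalences is closed under composition, so splicing the two chains yields~\eqref{eq:chain-pre-stable-equiv-and-CSJ}. There is no compatibility condition to verify beyond noting that the functors~$\Phi$ and $\Psi$ in Proposition~\ref{prop:J-model-pre-stable-equiv} are unchanged from the level adjunction of Lemma~\ref{lem:Phi-Psi-level-Q-adj}; only the model structure on the target $\GammaJS$ has been localized.

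The only genuine content of the corollary is therefore Proposition~\ref{prop:J-model-pre-stable-equiv}, whose proof is deferred to the end of the section. In the proof proposal for the corollary itself, the main (indeed only) step is this concatenation, so no obstacle arises beyond citing the two inputs. I would present the argument as one or two lines: invoke Proposition~\ref{prop:J-model-pre-stable-equiv} for the first adjunction, invoke Corollary~\ref{cor:chain-pre-stable-equiv} for the remaining two, and conclude by composition.
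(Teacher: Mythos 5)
Your proposal is correct and matches the paper exactly: the corollary is stated as following immediately from Proposition~\ref{prop:J-model-pre-stable-equiv} (supplying the leftmost Quillen equivalence) together with Corollary~\ref{cor:chain-pre-stable-equiv} (supplying the remaining two), and the paper offers no further argument beyond this concatenation.
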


\subsection{Stable model structures on
  \texorpdfstring{$\GammaJ$}{Gamma-J}-spaces}\label{subsec:stable-model-Gamma}
To get to the stable model structures on $\GammaJ$-spaces and the group completion
model structure on $\cC\cS^{\cJ}$ we are after, we will localize the model categories
in Corollary~\ref{cor:chain-pre-stable-equiv} one more time. 

For this we consider the maps of finite based sets $d\colon 2^+ \to
1^+$ and $ p_1\colon 2^+ \to 1^+$ defined by $d(1) = 1 = d(2)$ and
$p_1(1) = 1, p_1(2) = 0$. The same arguments as for
Lemma~\ref{lem:GammaSlev-existence} imply
\begin{lemma}
  The left Bousfield localization of $(\GammaS)_{\pre}$ with
  respect to the map $d^*\wdg p_1^*\colon \Gamma^{\op}(1^+,-)\wdg
  \Gamma^{\op}(1^+,-) \to \Gamma^{\op}(2^+,-)$ exists. \qed 
\end{lemma}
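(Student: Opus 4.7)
The proof reduces to a direct application of Smith's existence theorem for left Bousfield localizations of combinatorial model categories~\cite[Theorem 4.7]{Barwick_left-right}, exactly following the pattern of Lemma~\ref{lem:GammaSlev-existence} and Lemma~\ref{lem:pre-stable-model-str}. The plan is therefore to verify the two hypotheses (combinatorial and left proper) for the input model structure $(\GammaS)_{\pre}$ and to check that the map in question has cofibrant domain and codomain.

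First I would note that $(\GammaS)_{\pre}$ is itself a combinatorial left proper model category. The underlying category $\GammaS$ is locally presentable, and $(\GammaS)_{\pre}$ was constructed in Lemma~\ref{lem:GammaSlev-existence} as the left Bousfield localization of the cofibrantly generated left proper model category $(\GammaS)_{\lev}$ with respect to the set $P'$. Both cofibrant generation over a locally presentable category and left properness are preserved under left Bousfield localization, so the properties are inherited.

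Next I would observe that the map $d^*\wdg p_1^*\colon \Gamma^{\op}(1^+,-)\wdg \Gamma^{\op}(1^+,-) \to \Gamma^{\op}(2^+,-)$ has as domain and codomain (coproducts of) free $\Gamma$-spaces represented by objects of $\Gamma^{\op}$. These are cofibrant in $(\GammaS)_{\lev}$ and hence also cofibrant in the localization $(\GammaS)_{\pre}$. With these hypotheses in place, Smith's theorem produces the desired left Bousfield localization, and the result is automatically cofibrantly generated and left proper. No real obstacle arises at this stage: all the content will be in the subsequent identification of the fibrant objects in this new stable model structure as the \emph{very} special $\Gamma$-spaces, where the map $d^*\wdg p_1^*$ enforces the grouplike condition on $\pi_0$.
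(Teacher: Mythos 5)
Your proposal is correct and follows the same route as the paper, which simply invokes the argument of Lemma~\ref{lem:GammaSlev-existence}: verify that $(\GammaS)_{\pre}$ is combinatorial and left proper (inherited through the localization from $(\GammaS)_{\lev}$) and apply Smith's existence theorem \cite[Theorem 4.7]{Barwick_left-right}. The extra observation about cofibrant domain and codomain is harmless but not required for existence.
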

We call this the stable Q-model structure and write $(\GammaS)_{\sta}$
for this model category. It is immediate from the construction that
this coincides with the stable Q-model structure on $\Gamma$-spaces
considered in \cite[Theorem 1.5]{Schwede_Gamma-spaces}. This means
that a map in $(\GammaS)_{\sta}$ is a weak equivalence if the
associated map of spectra is a stable equivalence and that an object
is fibrant if and only if it level fibrant and very special. Moreover,
the Quillen adjunction $\GammaS \rightleftarrows \SpN$ exhibits the
homotopy category of $(\GammaS)_{\sta}$ as the homotopy category of
connective spectra. Note that however, $(\GammaS)_{\sta}$ is
\emph{not} a stable model category in the sense that the suspension
and loop functor induce inverse equivalences on homotopy categories.

We need again more elaborate sets of maps to localize the various
categories of $\GammaJ$-spaces: As in~\eqref{eq:map-for-pre-st-loc},
every object $(\bld{u},\nu)$ of $\HcJ(2^+)$ gives rise to a map
\begin{equation}\label{eq:map-for-st-loc}
\xymatrix@-1pc{
\GammaJ\left((1^+;(d)_*(\bld{u},\nu)),-\right) \textstyle\coprod
\GammaJ\left((1^+;(p_1)_*(\bld{u},\nu)),-\right) \ar[d]^{d^*\wdg p_1^*} \\
\GammaJ\left((2^+;\bld{u},\nu),-\right)}
\end{equation}
of free $\GammaJ$-spaces. 
\begin{definition}\label{def:st-loc-maps}
  Let $Q$ be the set of maps in $\GammaJS$ of the
  form~\eqref{eq:map-for-st-loc} where $(\bld{u},\nu)$ is a positive
  object of $\HcJ(2^+)$. Let $Q_E$ and $Q_B$ be defined as the
  corresponding sets in Definition~\ref{def:pre-loc-maps}.
\end{definition}
Applying the functor $\Phi\colon \GammaJS \to \cC\cS^{\cJ}$ to the set
$Q$ provides a set of maps in~$\cC\cS^{\cJ}$. Up to isomorphism, a map
in $\Phi(Q)$ is of the form
\begin{equation}\label{eq:shear-maps-for-CSJ}
  \xymatrix@-1pc{
    \left( \mC F^{\cJ}_{(\bld{m_1}\concat\bld{n_1},\bld{m_2}\concat\bld{n_2})}(*)\right) \boxtimes \left(\mC F^{\cJ}_{(\bld{n_1},\bld{n_2})}(*)\right) \ar[d]\\   \left( \mC F^{\cJ}_{(\bld{m_1},\bld{m_2})}(*)\right) \boxtimes  \left(\mC F^{\cJ}_{(\bld{n_1},\bld{n_2})}(*)\right)}
\end{equation}
where $(\bld{m_1},\bld{m_2})$ and $(\bld{n_1},\bld{n_2})$ run through
the positive objects in $\cJ$. 

\begin{lemma}\label{lem:grp-loc-existence}
The left Bousfield localization of the positive $\cJ$-model structure on $\cC\cS^{\cJ}$
with respect to $\Phi(Q)$ exists. 
\end{lemma}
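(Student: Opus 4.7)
The plan is to apply Smith's existence theorem for left Bousfield localizations of left proper combinatorial model categories (\cite[Theorem 4.7]{Barwick_left-right}), exactly as was done in Lemma~\ref{lem:GammaSlev-existence} and Lemma~\ref{lem:pre-stable-model-str} for the Bousfield localizations of $(\GammaS)_{\lev}$ and $(\GammaJS)_{\lev}$.

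First I would recall from~\cite[Proposition 4.10]{Sagave-S_diagram} that the positive $\cJ$-model structure on $\cC\cS^{\cJ}$ is cofibrantly generated and proper, so in particular left proper. To invoke Barwick's theorem it then remains to check that the underlying category $\cC\cS^{\cJ}$ is locally presentable. This is a standard consequence of the general theory of accessible monads: $\cC\cS^{\cJ}$ is the category of algebras for the finitary free commutative monoid monad on the locally presentable functor category $\cS^{\cJ}$, and such algebra categories are themselves locally presentable. The positive $\cJ$-model structure is therefore combinatorial. Since $\Phi(Q)$ is visibly a set, indexed up to isomorphism by pairs of positive objects $(\bld{m_1},\bld{m_2})$ and $(\bld{n_1},\bld{n_2})$ of $\cJ$ as displayed in~\eqref{eq:shear-maps-for-CSJ}, Barwick's theorem produces the desired localization with the same cofibrations as the positive $\cJ$-model structure and with the $\Phi(Q)$-local equivalences as weak equivalences.

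No serious obstacle arises at the level of this existence statement; the only verification beyond a direct citation is the local presentability of $\cC\cS^{\cJ}$, for which one may alternatively appeal to the fact that $\cS^{\cJ}$ is locally presentable and $\mC\colon \cS^{\cJ}\to\cC\cS^{\cJ}$ is monadic with finitary monad. The substantive work, namely the identification of the fibrant objects of this localization with the grouplike positive $\cJ$-fibrant commutative $\cJ$-space monoids and of the fibrant replacement with a group completion as required by Theorem~\ref{thm:group-completion-model-str}, lies outside of this lemma and is carried out in the remainder of Section~\ref{sec:stable-model-str} by comparison with the stable model structures on $\GammaJ$-spaces via the Quillen equivalences of Corollary~\ref{cor:chain-pre-stable-equiv-and-CSJ}.
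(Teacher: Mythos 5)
Your proof is correct and follows essentially the same route as the paper's: cite \cite[Proposition 4.10]{Sagave-S_diagram} for left properness and cofibrant generation, establish local presentability of $\cC\cS^{\cJ}$ via the accessible/finitary monad argument (the paper phrases this as the forgetful functor preserving filtered colimits and invokes \cite[5.2.2b, Theorem 5.5.9]{Borceux_handbook-II}, which is the same point), and then apply \cite[Theorem 4.7]{Barwick_left-right}.
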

\begin{proof}
  The positive $\cJ$-model structure on $\cC\cS^{\cJ}$ is cofibrantly
  generated and left proper~\cite[Proposition
  4.10]{Sagave-S_diagram}. Since the forgetful functor $\cC\cS^{\cJ}
  \to \cS^{\cJ}$ preserves filtered colimits, it follows
  from~\cite[5.2.2b and Theorem 5.5.9]{Borceux_handbook-II} that
  $\cC\cS^{\cJ}$ is locally presentable.  Hence the existence
  theorem~\cite[Theorem 4.7]{Barwick_left-right} applies also here.
\end{proof}
The resulting model structure $\cC\cS^{\cJ}_{\grp}$ will be the group
completion model structure of
Theorem~\ref{thm:group-completion-model-str}. 

Counterparts of Lemma~\ref{lem:P-is-image},
Lemma~\ref{lem:pre-stable-model-str} and
Lemma~\ref{lem:pre-stable-vs-overcategory} for the sets $P\cup Q$,
$P_E\cup Q_E$, and $P_B\cup Q_B$ can be proved by the same arguments
we used in these lemmas for $P$, $P_E$, and $P_B$.  This provides
stable model structures on $\GammaJS$ and $\GammaJS/E\HcJ$, and the
same arguments as for Corollary~\ref{cor:chain-pre-stable-equiv} imply
\begin{corollary}\label{cor:chain-sta-stable-equiv} There is a chain of
  Quillen equivalences
\begin{equation}\label{eq:chain-sta-stable-equiv}
  \cC\cS^{\cJ}_{\grp} 
  \leftrightarrows \left(\GammaJS/ E\HcJ\right)_{\sta} 
  \rightleftarrows \left(\GammaS\right)_{\sta} / \bof{\cJ}.
  \end{equation}
with respect to the group completion model structure on $\cC\cS^{\cJ}$.\qed
\end{corollary}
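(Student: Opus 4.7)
The plan is to invoke the preservation of Quillen equivalences under left Bousfield localization, \cite[Theorem 3.3.20(1)(b)]{Hirschhorn_model}, applied link-by-link to the chain of Quillen equivalences already established in Corollary~\ref{cor:chain-pre-stable-equiv-and-CSJ}. The localizing sets are, from left to right, $\Phi(Q)$ on $\cC\cS^{\cJ}$, $Q$ on $\GammaJS$, $Q_E$ on $\GammaJS/E\HcJ$, and $Q_B$ on $\GammaS/\bof{\cJ}$, giving rise to the four stable (resp.\ group completion) model structures in question. The existence of these localizations is guaranteed by Lemma~\ref{lem:grp-loc-existence} together with the counterparts of Lemma~\ref{lem:pre-stable-model-str} and Lemma~\ref{lem:pre-stable-vs-overcategory} indicated in the paragraph preceding the corollary.

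To apply the preservation theorem, I would check the two hypotheses. First, the localizing sets must consist of maps with cofibrant domains and codomains in the respective pre-stable model structures; this holds because the generators in $Q, Q_E, Q_B$ are built from free $\Gamma\cJ$- and $\Gamma$-space objects and hence are cofibrant in the pre-stable model structures, while $\Phi(Q)$ inherits cofibrancy because $\Phi$ is a left Quillen functor with respect to the pre-stable structures (Lemma~\ref{lem:Phi-Psi-level-Q-adj} together with the fact that it remains left Quillen after localization). Second, each left adjoint in the chain of Corollary~\ref{cor:chain-pre-stable-equiv-and-CSJ} must carry its localizing set into a set defining the same local objects as the target localizing set. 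On the right-hand chain this proceeds exactly as in Lemma~\ref{lem:P-is-image} and Lemma~\ref{lem:pre-stable-vs-overcategory} with $Q$ in place of $P$; on the leftmost adjunction $\Phi$ sends $Q$ to $\Phi(Q)$ by the very definition of $\Phi(Q)$.

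Having upgraded each of the three Quillen equivalences in Corollary~\ref{cor:chain-pre-stable-equiv-and-CSJ} to a Quillen equivalence of stable localizations, composing the first two adjunctions yields the three-term chain of the corollary. The main obstacle — although one parallel to the pre-stable case — will be the stable analogue of Lemma~\ref{lem:pre-stable-vs-overcategory}, identifying the localization of $(\GammaS)_{\pre}/\bof{\cJ}$ at $Q_B$ with the overcategory $(\GammaS)_{\sta}/\bof{\cJ}$ of the stable model structure. This should proceed by the same mapping-space and fiber-sequence argument as in the original proof: the key inputs are that $\bof{\cJ}$ is very special, that a level fibration between very special $\Gamma$-spaces is a fibration in $(\GammaS)_{\sta}$, and that the fibers of $\Map_{\GammaS}(U, X) \to \Map_{\GammaS}(U, \bof{\cJ})$ compute $\Map_{\bof{\cJ}}(U,X)$ for the codomains $U$ of maps in $Q_B$, now with $d^*\wdg p_1^*$ playing the role of the Segal projections. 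Once this identification is in place, the rest is a formal application of Bousfield-localization machinery.
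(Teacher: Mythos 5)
Your proposal is correct and follows essentially the same route as the paper: localize each model category in the chain from Corollary~\ref{cor:chain-pre-stable-equiv-and-CSJ}, verify that each left adjoint carries the localizing set on its source to one defining the same local objects on its target, and then invoke preservation of Quillen equivalences under left Bousfield localization, with the stable counterpart of Lemma~\ref{lem:pre-stable-vs-overcategory} as the only substantial new input. The one cosmetic difference is that the paper phrases the localizations as being of the \emph{level} model structures at $P\cup Q$, $P_E\cup Q_E$, $P_B\cup Q_B$, while you iterate, localizing the pre-stable structures at $Q$, $Q_E$, $Q_B$ — these produce identical model categories, so the substance is unchanged (and note that the pre-stable Quillen adjunction $(\Phi,\Psi)$ you want to cite is Lemma~\ref{lem:Phi-Psi-Q-adj}, not Lemma~\ref{lem:Phi-Psi-level-Q-adj}).
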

\begin{proof}[Proof of Theorem~\ref{thm:group-completion-model-str}]
  Lemma~\ref{lem:grp-loc-existence} provides the existence of the
  model structure. It remains to identify the weak equivalences, the
  fibrant objects and the fibrant replacement. The characterization of
  the weak equivalences and the fibrant objects in terms of $\gamma$
  is completely analogous to Lemma~\ref{lem:gamma-prime-detects-we},
  this time comparing the zig-zag of Quillen equivalences of
  pre-stable model
  structures~\eqref{eq:chain-pre-stable-equiv-and-CSJ} with the
  zig-zag of stable model
  structures~\eqref{eq:chain-sta-stable-equiv}.  This uses that the
  stable model structures can be obtained from the pre-stable ones by
  localizing with respect to $Q_E$ and~$Q$.

  The identification of the weak equivalences and fibrant objects in
  $\cC\cS^{\cJ}_{\grp}$ implies that the fibrant replacement is a
  group completion in the sense of
  Definition~\ref{def:group-completion}.
\end{proof}
\begin{remark}
  A more direct way to see that fibrant objects in
  $\cC\cS^{\cJ}_{\grp}$ are grouplike is that the maps
  in~\eqref{eq:map-for-st-loc} corepresent shear maps $(x,y)\mapsto
  (x,x+y)$ when applying $\pi_0(\Map(-,A))$. 
\end{remark}

\begin{proof}[Proof of Theorem~\ref{thm:identification-of-group-compl-model-structure}]
  Corollary~\ref{cor:chain-sta-stable-equiv} already gives one
  possible choice for a zig-zag of Quillen equivalences relating
  $\cC\cS^{\cJ}_{\grp}$ and $\left(\GammaS\right)_{\sta} /
  \bof{\cJ}$. We used in the proof of
  Theorem~\ref{thm:group-completion-model-str} that its composed
  derived functor is stably equivalent to $\gamma$ if we forget the
  augmentation to $\bof{\cJ}$. As discussed in connection with
  Corollary~\ref{cor:hocolim-colim-zig-zag}, this is not a stable
  equivalence over $\bof{\cJ}$. We lift the approach of
  Lemma~\ref{lem:hocolim-colim-over-BK} to $\GammaS$ to overcome
  this: Applying the natural map from $\hocolim_{\HcJ(S)}$ to
  $\colim_{\HcJ(S)}$ levelwise defines a level equivalence
  $\gamma'(E\HcJ) \to \bof{\cJ}$ which is different from the level
  equivalence $\gamma'(E\HcJ)\to\gamma'(*) \iso \bof{\cJ}$ induced by
  the map $E\HcJ \to *$. Composition with and base change along these maps
  induces a zig-zag of Quillen equivalences
  \[(\GammaS)_{\sta}/\bof{\cJ} \rightleftarrows
  (\GammaS)_{\sta}/\gamma'(E\HcJ) \leftrightarrows
  (\GammaS)_{\sta}/\gamma'(*),\] and Lemma~\ref{lem:hocolim-colim-over-BK}
  shows that the composite of~\eqref{eq:chain-sta-stable-equiv} with
  this zig-zag has a composed derived functor that is stably
  equivalent to $\gamma$ in $\GammaS/\bof{\cJ}$.
\end{proof}

\subsection{Proof of Proposition~\ref{prop:J-model-pre-stable-equiv}}
The following lemmas will be used to prove that the adjunction
$\Phi\colon(\GammaJS)_{\pre} \rightleftarrows \cC\cS^{\cJ}\colon\Psi$
is a Quillen equivalence with respect to the positive $\cJ$-model
structure on $\cC\cS^{\cJ}$.
\begin{lemma}\label{lem:Phi-Psi-Q-adj}
  The adjunction $\Phi\colon(\GammaJS)_{\pre} \rightleftarrows
  \cC\cS^{\cJ}\colon\Psi$ is a Quillen adjunction.
\end{lemma}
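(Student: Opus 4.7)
The plan is to invoke Hirschhorn's descent criterion for Quillen adjunctions under left Bousfield localization~\cite[Theorem~3.3.20]{Hirschhorn_model}. By Lemma~\ref{lem:Phi-Psi-level-Q-adj}, the adjunction $\Phi \dashv \Psi$ is already a Quillen adjunction between $(\GammaJS)_{\lev}$ and $\cC\cS^{\cJ}$ equipped with the positive $\cJ$-model structure, and $(\GammaJS)_{\pre}$ is by construction the left Bousfield localization of $(\GammaJS)_{\lev}$ at the set $P$ of Definition~\ref{def:pre-loc-maps}. Hence it suffices to verify that the total left derived functor of $\Phi$ sends every map in $P$ to an isomorphism in $\Ho(\cC\cS^{\cJ})$.

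First I would observe that each map in $P$ already lies between cofibrant objects of $(\GammaJS)_{\lev}$, so no derived replacement is needed. This follows from Corollary~\ref{cor:tensor-and-free} applied with $K=\ast$ together with the description of the generating cofibrations of $(\GammaJS)_{\lev}$ in the proof of Proposition~\ref{prop:existence-HJ-level-model-str}: the free objects $\GammaJ((S;\bld{s},\sigma),-)$ with $(\bld{s},\sigma)$ positive are cofibrant, and coproducts of cofibrant objects are cofibrant.

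The heart of the argument is the explicit computation that $\Phi$ in fact sends each element of $P$ to an isomorphism on the nose. Either by adjointness or directly from the coend formula defining $\Phi$, together with the formula~\eqref{eq:functor-CFS}, there is a natural isomorphism
\[
\Phi\!\left(\GammaJ((S;\bld{s},\sigma),-)\right) \iso \mC F_S(\bld{s},\sigma) \iso \coprod_{i\in\ovl{S}} \mC F^{\cJ}_{\bld{s}_i}(\ast),
\]
where the coproduct is taken in $\cC\cS^{\cJ}$, equivalently an iterated $\boxtimes$-product. Unwinding the definition of $\alpha_{*}$ from Section~\ref{subsec:Gamma-from-CSK} with $\alpha = p_S$ or $\alpha = p_T$, one sees that $(p_S)_{*}(\bld{u})_i = \bld{u}_i$ for $i \in \ovl{S}$ and $(p_T)_{*}(\bld{u})_j = \bld{u}_j$ for $j \in \ovl{T}$, because $p_S^{-1}(\{i\}) = \{i\}$ and $p_T^{-1}(\{j\}) = \{j\}$. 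Since $\Phi$ preserves coproducts as a left adjoint and $\ovl{S \wdg T} = \ovl{S} \sqcup \ovl{T}$, the image under $\Phi$ of the map~\eqref{eq:map-for-pre-st-loc} is precisely the canonical identification of a coproduct over a disjoint union with the corresponding iterated coproduct; in particular it is an isomorphism in $\cC\cS^{\cJ}$ and a fortiori a $\cJ$-equivalence.

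There is no serious obstacle; the argument is essentially a bookkeeping check confirming that the combinatorics of the projections $p_S, p_T$ fit exactly with the way the $\boxtimes$-product decomposes coproducts of free commutative $\cJ$-space monoids. Once the displayed isomorphism is established, \cite[Theorem~3.3.20]{Hirschhorn_model} immediately delivers the desired Quillen adjunction between $(\GammaJS)_{\pre}$ and $\cC\cS^{\cJ}$.
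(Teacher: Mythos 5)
Your proof is correct, but it takes a genuinely different route from the paper. You work on the left adjoint side: starting from the level Quillen adjunction of Lemma~\ref{lem:Phi-Psi-level-Q-adj}, you invoke the universal property of left Bousfield localization and verify directly that $\Phi$ sends each map in the localizing set $P$ to an isomorphism in $\cC\cS^{\cJ}$. The key computation -- that under $\Phi$ the map~\eqref{eq:map-for-pre-st-loc} becomes the canonical identification
\[
\coprod_{i\in\ovl{S}} \mC F^{\cJ}_{\bld{u}_i}(\ast) \;\coprod\; \coprod_{j\in\ovl{T}} \mC F^{\cJ}_{\bld{u}_j}(\ast) \;\xrightarrow{\;\iso\;}\; \coprod_{k\in\ovl{S\wdg T}} \mC F^{\cJ}_{\bld{u}_k}(\ast)
\]
-- is correct: since the fibers $p_S^{-1}(\{i\})$ and $p_T^{-1}(\{j\})$ are singletons, the component maps in the natural transformations $p_S^*$ and $p_T^*$ are identities, and the induced map from the coproduct is the canonical isomorphism. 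The paper instead works on the right adjoint side: having a Quillen adjunction at the level structure, it only needs $\Psi$ to preserve (not necessarily acyclic) fibrations; this is obtained by observing via Lemma~\ref{lem:gamma-prime-detects-we} and Proposition~\ref{prop:gamma-A-special} that the level fibrant replacements of $\Psi(A)$ and $\Psi(B)$ are already $P$-local, so that right properness of $(\GammaJS)_{\lev}$ and~\cite[Proposition 3.4.7]{Hirschhorn_model} give the fibration in the localization. Your argument is more self-contained and elementary (pure combinatorics of the $\GammaJ$ maps, no reliance on Lemma~\ref{lem:gamma-prime-detects-we}); the paper's argument trades that computation for the already-established characterization of pre-stably fibrant objects via $\gamma'$.

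One small caveat: \cite[Theorem~3.3.20]{Hirschhorn_model} produces a Quillen pair $L_S\cM \rightleftarrows L_{\mathbf{L}F(S)}\cN$, so you should note that since $\mathbf{L}\Phi(P)$ consists of weak equivalences (in fact isomorphisms), localizing $\cC\cS^{\cJ}$ at this set leaves the model structure unchanged; also, like the paper itself in the proof of Corollary~\ref{cor:chain-pre-stable-equiv}, you are tacitly applying a result stated for cellular model categories in the combinatorial setting, which is standard but worth a word.
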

\begin{proof}
  We know from Lemma~\ref{lem:Phi-Psi-level-Q-adj} that $(\Phi,\Psi)$
  is a Quillen adjunction with respect to the level model structure on
  $\GammaJS$. So it is enough to show $\Psi$ sends a positive
  $\cJ$-fibration $f\colon A \to B$ to a fibration in
  $(\GammaJS)_{\pre}$.  By Lemma~\ref{lem:gamma-prime-detects-we} and
  Proposition~\ref{prop:gamma-A-special}, the level fibrant
  replacements of $\Psi(A)$ and $\Psi(B)$ are fibrant in
  $(\GammaJS)_{\pre}$. Since $(\GammaJS)_{\lev}$ is right proper, the
  general criterion~\cite[Proposition 3.4.7]{Hirschhorn_model} applies
  to show that $\Psi(f)$ is a fibration in $(\GammaJS)_{\pre}$.
\end{proof}

\begin{lemma}\label{lem:Psi-preserves-detects-we}
A map $f\colon A \to B$ in $\cC\cS^{\cJ}$ is a $\cJ$-equivalence if and only if
$\Psi(f)$ is a pre-stable equivalence in $\GammaJS$.
\end{lemma}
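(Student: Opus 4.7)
The plan is to reduce to a statement about ordinary $\Gamma$-spaces via the functor $\gamma'$ and then exploit the specialness of $\gamma(A)$ and $\gamma(B)$.

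First, by Corollary~\ref{cor:gamma-vs-gamma-prime} there is a natural isomorphism $\gamma'(\Psi(f)) \iso \gamma(f)$, and Lemma~\ref{lem:gamma-prime-detects-we} states that $\Psi(f)$ is a pre-stable equivalence in $\GammaJS$ if and only if $\gamma'(\Psi(f)) = \gamma(f)$ is a pre-stable equivalence in $\GammaS$. So the claim reduces to showing that $\gamma(f)\colon \gamma(A) \to \gamma(B)$ is a pre-stable equivalence of $\Gamma$-spaces if and only if $f_{h\cJ}\colon A_{h\cJ} \to B_{h\cJ}$ is a weak equivalence of spaces.

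Next, by Proposition~\ref{prop:gamma-A-special} the $\Gamma$-spaces $\gamma(A)$ and $\gamma(B)$ are special. I will argue that a map between special $\Gamma$-spaces is a pre-stable equivalence if and only if its evaluation at $1^+$ is a weak equivalence of spaces. Recall that the fibrant objects of $\left(\GammaS\right)_{\pre}$ are precisely the level fibrant special $\Gamma$-spaces. Choose a level fibrant replacement functor in $\GammaS$; applied to $\gamma(f)$ it produces a map $\gamma(f)^{\mathrm{lf}}\colon \gamma(A)^{\mathrm{lf}} \to \gamma(B)^{\mathrm{lf}}$ of level fibrant $\Gamma$-spaces whose source and target remain special, hence are fibrant in $\left(\GammaS\right)_{\pre}$. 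Thus $\gamma(f)$ is a pre-stable equivalence if and only if $\gamma(f)^{\mathrm{lf}}$ is a weak equivalence between fibrant objects, which by the general theory of left Bousfield localizations is equivalent to being a level equivalence. Specialness then implies that a level equivalence between $\gamma(A)^{\mathrm{lf}}$ and $\gamma(B)^{\mathrm{lf}}$ is detected by the map at $1^+$, since $\gamma(A)^{\mathrm{lf}}(S) \to \gamma(A)^{\mathrm{lf}}(1^+)^{\times \ovl{S}}$ is a weak equivalence, and similarly for $\gamma(B)^{\mathrm{lf}}$.

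Finally, under the identification $\gamma(A)(1^+) \iso A_{h\cJ}$ from Definition~\ref{def:Gamma-space-from-K-space-monoid}, the map $\gamma(f)(1^+)$ is identified with $f_{h\cJ}$. Combining the three reductions yields the desired characterization.

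The main obstacle is the middle step, namely verifying that pre-stable equivalences between special $\Gamma$-spaces are detected at $1^+$. This requires arranging a level fibrant replacement that preserves specialness and citing (or briefly justifying via the standard characterization of weak equivalences between fibrant objects in a left Bousfield localization) the reduction to level equivalences. Once this is in hand, the rest of the proof is a chain of immediate identifications using results already established in the paper.
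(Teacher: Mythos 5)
Your proof is correct and follows essentially the same route as the paper: reduce via Lemma~\ref{lem:gamma-prime-detects-we} and Corollary~\ref{cor:gamma-vs-gamma-prime} to a question about $\gamma(f)$, invoke Proposition~\ref{prop:gamma-A-special} to get specialness, observe that pre-stable equivalences between special $\Gamma$-spaces are detected at $1^+$, and conclude using the identification $\gamma(A)(1^+)\iso A_{h\cJ}$ (equivalently $\HcJ(1^+)\iso\cJ$). You simply spell out the middle detection-at-$1^+$ step, which the paper states without comment; your justification via level fibrant replacement and the fact that weak equivalences between local objects in a left Bousfield localization are weak equivalences in the underlying model structure is exactly the right argument.
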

\begin{proof}
  Lemma~\ref{lem:gamma-prime-detects-we} implies that $\Psi(f)$ is a
  pre-stable equivalence if and only if
  $(\gamma'\Psi)(f)\iso\gamma(f)$ is a pre-stable equivalence in
  $\GammaS$.  By Proposition~\ref{prop:gamma-A-special}, $\gamma(A)$
  and $\gamma(B)$ are special. Hence $\gamma(f)$ is a pre-stable
  equivalence if and only if it is a weak equivalence when evaluated
  at $1^+$. Because $\HcJ(1^+)\iso \cJ$, this holds if and only if $f$
  is a $\cJ$-equivalence.
\end{proof}

The right adjoint $\Psi$ preserves coproducts up to weak equivalence:
\begin{lemma}\label{lem:psi-preserves-sums}
  Let $A$ and $B$ be positive cofibrant commutative $\cJ$-space
  monoids, and let $(\Psi(A))^{\cof}$ and
  $(\Psi(B))^{\cof}$ be cofibrant replacements of $\Psi(A)$
  and $\Psi(B)$ in $(\GammaJS)_{\pre}$. The canonical map
  $(\Psi(A))^{\cof}\coprod(\Psi(B))^{\cof} \to
  \Psi(A\boxtimes B)$ is a pre-stable equivalence.
\end{lemma}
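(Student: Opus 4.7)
The plan is to reduce the statement to a computation on ordinary $\Gamma$-spaces via the functor $\gamma'\colon \GammaJS \to \GammaS$, and ultimately to the identity $(A\boxtimes B)_{h\cJ}\simeq A_{h\cJ}\times B_{h\cJ}$.

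First I would apply Lemma~\ref{lem:gamma-prime-detects-we} to reduce the problem to showing that the image of our map under $\gamma'$ is a pre-stable equivalence in $\GammaS$. For the codomain, Corollary~\ref{cor:gamma-vs-gamma-prime} gives $\gamma'(\Psi(A\boxtimes B)) \iso \gamma(A\boxtimes B)$. For the domain, the cofibrations in $(\GammaJS)_{\pre}$ and $(\GammaJS)_{\lev}$ coincide, so the pre-stable cofibrant replacement $(\Psi(A))^{\cof}\to \Psi(A)$ is a trivial fibration in the level structure, hence a level equivalence, which $\gamma'$ preserves. Since $\gamma'$ commutes with coproducts up to level equivalence (coproducts in $\GammaJS$ at non-zero levels are wedges of pointed spaces, and homotopy colimits commute with wedges), the image under $\gamma'$ is pre-stably equivalent to the canonical map
\[ \gamma(A) \sqcup \gamma(B) \longrightarrow \gamma(A\boxtimes B). \]

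Next, I would exploit that the target is special by Proposition~\ref{prop:gamma-A-special}. A pre-stable equivalence between special $\Gamma$-spaces is a level equivalence, and by the Segal condition such an equivalence is detected at $1^+$. Hence the task is to identify the value at $1^+$ of the pre-stable fibrant replacement of $\gamma(A)\sqcup\gamma(B)$. Using that the pre-stable homotopy category of $\GammaS$ models commutative monoid objects in spaces via evaluation at $1^+$, and that the coproduct of commutative monoids is their componentwise product, the pre-stable fibrant replacement of $\gamma(A)\sqcup\gamma(B)$ has value at $1^+$ equivalent to $\gamma(A)(1^+)\times \gamma(B)(1^+) = A_{h\cJ}\times B_{h\cJ}$. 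The remaining step is therefore to verify that the natural map
\[ A_{h\cJ}\times B_{h\cJ} \longrightarrow (A\boxtimes B)_{h\cJ}, \]
induced by the inclusions of $A$ and $B$ into $A\boxtimes B$ followed by the multiplication, is a weak equivalence for positive cofibrant $A$ and $B$.

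The main obstacle is this final identification. Unlike the case of commutative $\cI$-space monoids, where the contractibility of $B\cI$ makes the corresponding statement essentially formal, the classifying space $B\cJ\simeq QS^0$ is non-trivial and the argument must be more careful. I would handle this by unwinding the definition of $\boxtimes$ as a left Kan extension along $\sqcup\colon \cJ\times\cJ\to\cJ$ and applying a Fubini-type rearrangement of the iterated homotopy colimit, using positive cofibrancy of $A$ and $B$ to guarantee homotopy invariance of the relevant orbit constructions. An alternative is to verify the claim first for coproducts of free commutative $\cJ$-space monoids $\mC F^{\cJ}_{(\bld{m_1},\bld{m_2})}(*)$ (where both sides can be computed explicitly via the Barratt--Priddy--Quillen-type description of $(\mC F^{\cJ}_{(\bld{m_1},\bld{m_2})}(*))_{h\cJ}$ as in the example after Theorem~\ref{thm:group-completion-model-str}), and then to extend to arbitrary positive cofibrant $A$ and $B$ by a transfinite cell induction along attachments in the positive $\cJ$-model structure on $\cC\cS^{\cJ}$.
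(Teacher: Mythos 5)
Your overall strategy matches the paper's: reduce via $\gamma'$ and Lemma~\ref{lem:gamma-prime-detects-we} to showing that $\gamma(A)\wdg\gamma(B)\to\gamma(A\boxtimes B)$ is a pre-stable equivalence, then use specialness to get down to an evaluation at $1^+$. The paper makes one step more explicit than you do: it factors the map as $\gamma(A)\wdg\gamma(B)\to\gamma(A)\times\gamma(B)\to\gamma(A\boxtimes B)$, shows the first map is a pre-stable equivalence because coproduct agrees with product in $\Ho((\GammaS)_{\pre})$, and then identifies the second map at $1^+$ as exactly $A_{h\cJ}\times B_{h\cJ}\to(A\boxtimes B)_{h\cJ}$. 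Your version phrases this via fibrant replacement rather than via the factorization, which is morally the same but glosses over why the resulting map to $\gamma(A\boxtimes B)(1^+)$ is the expected monoidal structure map (this is precisely what the explicit factorization pins down). Also, your justification for coproduct $=$ product in $\Ho((\GammaS)_{\pre})$ by appeal to ``commutative monoid objects in spaces'' is imprecise; the paper instead invokes Mandell's equivalence with the homotopy category of $E_\infty$ spaces and the corresponding $E_\infty$-level result.

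On the final step, your remark that the $\cI$-case is ``essentially formal'' because $B\cI$ is contractible while the $\cJ$-case needs more care is mistaken. Contractibility of $B\cI$ plays no role in the comparison $A_{h\cK}\times B_{h\cK}\to(A\boxtimes B)_{h\cK}$; what matters is that cofibrant objects have \emph{flat} underlying $\cK$-spaces, which makes $\boxtimes$ compute the derived product. The paper simply observes that positive cofibrant commutative $\cJ$-space monoids have flat underlying $\cJ$-spaces and cites the $\cI$-space argument (\cite[Lemma 2.25]{Sagave-S_group-compl}), which goes through verbatim. Your proposed alternatives (Fubini for the Kan extension or cell induction over free monoids) would also work, but are considerably more labor than the flatness argument the paper uses.
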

\begin{proof}
  By Lemma~\ref{lem:gamma-prime-detects-we}, it is enough to show that
  the map in question induces a pre-stable equivalence after applying
  $\gamma'\colon \GammaJS\to\GammaS$. Since $\gamma'$ is levelwise
  defined as a homotopy colimit,
  Corollary~\ref{cor:colimits-in-GammaJS} and the fact that
  $(\HcJ(S)(\bld{0}_S,-))_{h\HcJ(S)}$ is contractible imply that
  $\gamma'(\Psi(A)^{\cof}) \wdg \gamma'(\Psi(B)^{\cof}) \to
  \gamma'(\Psi(A)^{\cof}\textstyle\coprod \Psi(B)^{\cof})$ is a level
  equivalence. Hence it suffices to show that $\gamma(A) \wdg
  \gamma(B) \to \gamma(A\boxtimes B)$ is a pre-stable equivalence in
  $\GammaS$. We claim that the last map factors as
\begin{equation}\label{eq:wdg-to-boxtimes-w-factorization}
\gamma(A) \wdg \gamma(B) \to \gamma(A) \times \gamma(B) \to \gamma(A\boxtimes B) 
\end{equation}
where $\gamma(A)\times\gamma(B)$ denotes the cartesian product in
$\GammaS$. (This is not the product in $\GammaS/\bof{\cJ}$!) In each level,
the second map in~\eqref{eq:wdg-to-boxtimes-w-factorization} is defined as 
the composite
\begin{equation}\label{eq:gen-mon-str-map}
  \xymatrix@-1pc{(A_S)_{h\HcJ(S)} \times (B_S)_{h\HcJ(S)} \ar[r]&
    ((-\concat_{\HcJ(S)}-)^*(A\boxtimes B)_S)_{h(\HcJ(S)\times\HcJ(S))} \ar[d] \\ 
    & ((A\boxtimes B)_S)_{h\HcJ(S)}.
  }\end{equation} Here $\concat_{\HcJ(S)}$ is the monoidal product in
$\HcJ(S)$, and the map~\eqref{eq:gen-mon-str-map} is induced by the canonical natural
transformation $A_S \times B_S \to (-\concat_{\HcJ(S)}-)^*(A\boxtimes
B)_S$. We omit the long but straightforward proof that this defines
a map of $\Gamma$-spaces. The first map
in~\eqref{eq:wdg-to-boxtimes-w-factorization} is the canonical map
from the coproduct to the product, and the composite
is indeed the map we are interested in.

The first map in~\eqref{eq:wdg-to-boxtimes-w-factorization} is a
pre-stable equivalence because for all pairs of objects in
$\Ho((\GammaS)_{\pre})$, the map from the coproduct to the product is
an isomorphism. This is a consequence of the equivalence between the
homotopy category of special $\Gamma$-spaces and the homotopy category
of $E_{\infty}$ spaces~\cite[Theorem 1.9]{Mandell_inverse-K-theory}
and the corresponding statement about the homotopy category of
$E_{\infty}$ spaces. The latter follows for example from~\cite[Theorem
1.2]{Sagave-S_diagram} and~\cite[Proposition
2.27]{Sagave-S_group-compl}.

Since $\gamma(A)\times \gamma(B)$ and $\gamma(A\boxtimes B)$ are
special, the second map in~\eqref{eq:wdg-to-boxtimes-w-factorization}
is a pre-stable equivalence if its evaluation at $1^+$ is a weak
equivalence of spaces. This evaluation is isomorphic to the monoidal
structure map $A_{h\cJ} \times B_{h\cJ} \to (A\boxtimes B)_{h\cJ}$ of
$(-)_{h\cJ}$. Since $A$ and $B$ are assumed to be cofibrant, their
underlying $\cJ$-spaces are flat in the sense of~\cite[Section
4.27]{Sagave-S_diagram}. Hence a similar argument as given for the
corresponding statement about $\cI$-spaces~\cite[Lemma
2.25]{Sagave-S_group-compl} shows that the map is a weak equivalence.
\end{proof}

We call a commutative square
\begin{equation}\label{eq:hty-cocartesian-test-square}
\xymatrix@-1pc{U \ar[d] \ar[r] & X \ar[d] \\ V \ar[r] & Y}
\end{equation}
in $(\GammaJS)_{\pre}$ \emph{homotopy cocartesian} if for a
factorization $\xymatrix@1@-1pc{U\, \ar@{>->}[r] & \,V'\,
  \ar@{->>}[r]^{\sim} & \, V}$ of $U \to V$ into a cofibration
followed by an acyclic fibration the induced map $V'\coprod_U X \to Y$
is a pre-stable equivalence. This is well defined since
$(\GammaJS)_{\pre}$ is left proper and left properness implies the
gluing lemma~\cite[II.\S 8]{Goerss-J_simplicial}. Homotopy cocartesian
squares in $\cC\cS^{\cJ}$ are defined similarly.
\begin{lemma}\label{lem:psi-preserves-hty-cart}
  The right adjoint $\Psi$ preserves homotopy cocartesian squares.
\end{lemma}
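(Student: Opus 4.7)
The plan is to reduce the claim, via Lemma~\ref{lem:gamma-prime-detects-we}, to a statement about the functor $\gamma$ on pushouts of commutative $\cJ$-space monoids, and then to model the relevant pushouts by a two-sided bar construction and apply $\gamma$ levelwise using Lemma~\ref{lem:psi-preserves-sums}. Since $\gamma' \circ \Psi \iso \gamma$ by Corollary~\ref{cor:gamma-vs-gamma-prime}, Lemma~\ref{lem:gamma-prime-detects-we} implies that a square in $\GammaJS$ is homotopy cocartesian in $(\GammaJS)_{\pre}$ if and only if its image under $\gamma$ is homotopy cocartesian in $(\GammaS)_{\pre}$. Using the cofibrant factorization built into the definition of homotopy cocartesian squares, it suffices to show that for a positive $\cJ$-cofibration $U \to V$ between positive cofibrant commutative $\cJ$-space monoids and any map $U \to X$ with $X$ positive cofibrant, the natural comparison map $\gamma(V) \cup^h_{\gamma(U)} \gamma(X) \to \gamma(V \cup_U X)$ is a pre-stable equivalence in $\GammaS$.

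For this, I would express $V \cup_U X \iso V \boxtimes_U X$ in $\cC\cS^{\cJ}$ as the geometric realization of the two-sided simplicial bar construction $B_\bullet(V, U, X)$ with $n$-simplices $V \boxtimes U^{\boxtimes n} \boxtimes X$. Under appropriate flatness assumptions on the underlying $\cJ$-spaces (cf.~\cite[\S 4.27]{Sagave-S_diagram}), the canonical augmentation $|B_\bullet(V, U, X)| \to V \cup_U X$ is a $\cJ$-equivalence. Applying $\gamma$ levelwise and iterating Lemma~\ref{lem:psi-preserves-sums}, the resulting simplicial $\Gamma$-space is levelwise pre-stably equivalent to the wedge-based two-sided bar construction $B_\bullet(\gamma(V), \gamma(U), \gamma(X))$ in $\GammaS$ formed using $\wdg$. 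Since fibrant objects in $(\GammaS)_{\pre}$ are special, wedges and products of such objects are pre-stably equivalent, so the realization of this wedge-based bar construction models the homotopy pushout $\gamma(V) \cup^h_{\gamma(U)} \gamma(X)$ in $(\GammaS)_{\pre}$. Combining these equivalences yields the claim.

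The main obstacle will be the bar construction step in $\cC\cS^{\cJ}$: without sufficient flatness or cofibrancy, the canonical map from the realization to the pushout need not be a $\cJ$-equivalence, so one must either control the cofibrant replacements carefully, or reduce to pushouts along generating cofibrations between free commutative $\cJ$-space monoids, where the pushout admits an explicit filtration by iterated $\boxtimes$-products to which Lemma~\ref{lem:psi-preserves-sums} can be applied directly. A secondary subtlety is the identification of the wedge-based bar construction realization with the homotopy pushout in the pre-stable model structure, which rests on the fact that wedges and products agree on special $\Gamma$-spaces and hence on fibrant replacements in $(\GammaS)_{\pre}$.
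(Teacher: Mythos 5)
Your strategy is close in spirit to the paper's, but uses a different simplicial resolution. The paper works with the $\cP$-shaped diagram $P=(V\ot U\to X)$, replaces it by a cofibrant diagram, and compares $\colim$ and the Bousfield--Kan $\hocolim$ of $P$ and of a cofibrant replacement $\Psi(P)^{\cof}$; the degreewise application of Lemma~\ref{lem:psi-preserves-sums} and the realization lemma then finish the argument. You instead resolve $V\cup_U X$ by the two-sided bar construction $B_\bullet(V,U,X)$ in $\cC\cS^{\cJ}$ (with $\boxtimes$ being the coproduct) and push through $\gamma=\gamma'\circ\Psi$; this is a genuinely different, but closely related, route.

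Two places where the proposal has real gaps. First, to go from ``levelwise pre-stable equivalence of simplicial $\Gamma$-spaces'' to ``pre-stable equivalence of realizations/diagonals'' you must invoke a realization lemma for pre-stable equivalences --- this is precisely the paper's Lemma~\ref{lem:pre-stable-realization-lemma} --- and your sketch never cites or proves such a statement; without it the final step does not follow. Second, the justification you give for ``the realization of the wedge-based bar construction models the homotopy pushout $\gamma(V)\cup^h_{\gamma(U)}\gamma(X)$'' is attributed to ``wedges and products agree on special $\Gamma$-spaces,'' but that is not what is doing the work: the realization of a two-sided bar construction over a coproduct computes the homotopy pushout because of (Reedy) cofibrancy of the simplicial object, not because $\wdg\simeq\times$ on fibrant objects. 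That the wedge is the categorical product up to pre-stable equivalence is already the content of Lemma~\ref{lem:psi-preserves-sums}, which you use correctly in the levelwise comparison, so it is playing its role there and should not reappear as the explanation for why the bar realization computes the derived pushout. The cofibrancy/flatness issues you flag in the last paragraph are real and must be addressed: they are the analogue of the paper's reduction (via Lemma~\ref{lem:Psi-preserves-detects-we}) to a cofibrant $\cP$-diagram before comparing $\colim$ and $\hocolim$.
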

\begin{proof}
  Let~\eqref{eq:hty-cocartesian-test-square} be a homotopy cocartesian
  test square, let $\cP$ be the category $(b \ot a \to c)$, and let
  $P\colon \cP \to \cC\cS^{\cJ}$ be the diagram $V \ot U \to X$.  In
  view of Lemma~\ref{lem:Psi-preserves-detects-we}, we may assume that
  $\cP$ is a cofibrant diagram. This means that all objects are
  cofibrant and $U\to V$ and $U \to X$ are cofibrations. Let
  $\Psi(P)^{\cof}$ be an cofibrant replacement of $\Psi(P)$ in the
  category of $\cP$-diagrams in $(\GammaJS)_{\pre}$.  Again using
  Lemma~\ref{lem:Psi-preserves-detects-we}, it is enough to show that
  the top map in
\begin{equation}\label{eq:colim-hocolim-square}\xymatrix@-1pc{
\colim_{\cP}\left(\Psi(P)^{\cof}\right) \ar[r] & \Psi(\colim_{\cP}P)\\
\hocolim_{\cP}\left(\Psi(P)^{\cof}\right) \ar[r] \ar[u] & \Psi(\hocolim_{\cP}P)\ar[u]}
\end{equation}
is a weak equivalence. Using the coproducts in the respective
categories and the realization functor given by the diagonal, we may
also form the Bousfield-Kan homotopy colimits of $P$ and
$\Psi(P)^{\cof}$ displayed in the bottom row (compare
Definition~\ref{def:K-equivalences}). The vertical maps
in~\eqref{eq:colim-hocolim-square} are the canonical maps from the
homotopy colimit to the colimit~\cite[Example
18.3.8]{Hirschhorn_model} and make the diagram commutative. Since $P$
and $\Psi(P)^{\cof}$ are cofibrant diagrams, an argument similar
to~\cite[Proposition 18.9.4]{Hirschhorn_model} shows that the vertical
maps are weak equivalences.

It remains to show that the bottom map
in~\eqref{eq:colim-hocolim-square} is a weak equivalence. By
Lemma~\ref{lem:psi-preserves-sums}, it is a weak equivalence in every
degree of the simplicial replacement used to define the homotopy
colimit. Hence Lemma~\ref{lem:pre-stable-realization-lemma} shows the
claim.
\end{proof}
\begin{lemma}\label{lem:pre-stable-realization-lemma}
  If a map of simplicial objects in $\GammaJS$ is a pre-stable
  equivalence in every simplicial degree, then its diagonal is a
  pre-stable equivalence.
\end{lemma}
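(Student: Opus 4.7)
The plan is to apply the functor $\gamma'\colon\GammaJS\to\GammaS$ in order to reduce the claim to the analogous realization lemma for pre-stable equivalences of ordinary $\Gamma$-spaces. By Lemma~\ref{lem:gamma-prime-detects-we}, $\gamma'$ preserves and detects pre-stable equivalences. Since $\gamma'(X)(S)=X(S;-)_{h\HcJ(S)}$ is at each $S$ a Bousfield--Kan homotopy colimit, and homotopy colimits in simplicial sets commute with diagonals of bisimplicial sets, there is a natural isomorphism $\gamma'(\diag(X_{\bullet}))\iso \diag(\gamma'(X_{\bullet}))$ in $\GammaS$. Hence it suffices to show that for a simplicial map $g_{\bullet}\colon W_{\bullet}\to Z_{\bullet}$ of simplicial objects in $\GammaS$ that is a pre-stable equivalence in each simplicial degree, $\diag(g_{\bullet})$ is a pre-stable equivalence.

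For this ordinary-$\Gamma$-space statement I would use that $(\GammaS)_{\pre}$ is the left Bousfield localization of $(\GammaS)_{\lev}$ at the Segal maps $P'$. Choose a functorial pre-stable fibrant replacement $L\colon \GammaS \to \GammaS$, taken to be of Segal type, so that $L(X)(S)$ is built as a homotopy limit of $X$-values indexed by projections out of $S$. Such an $L$ sends every $\Gamma$-space to a level fibrant special one, preserves level equivalences, and is built from finite homotopy limits, so it commutes with diagonals of simplicial objects up to a natural level equivalence. Applied to $g_{\bullet}$, the map $L(g_n)$ is then a pre-stable equivalence between pre-stable fibrant objects, hence a level equivalence for each $n$. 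The classical realization lemma for bisimplicial sets applied at each object of $\Gamma^{\op}$ shows that $\diag(L(g_{\bullet}))$ is a level equivalence of $\Gamma$-spaces, so in particular a pre-stable equivalence. Transferring back via the compatibility of $L$ with diagonals yields the claim.

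The main obstacle is the verification that $L$ commutes with diagonals up to a natural equivalence, and that the \emph{diagonal} of the natural level equivalence $L(\diag(g_{\bullet})) \simeq \diag(L(g_{\bullet}))$ is itself a pre-stable equivalence. A cleaner alternative that avoids constructing such an $L$ is to argue directly from the localization picture: the class $\mathcal{C}$ of simplicial maps $h_{\bullet}$ in $\GammaS$ whose diagonal is a pre-stable equivalence contains all simplicial maps that are levelwise level equivalences (by the classical realization lemma applied pointwise on $\Gamma^{\op}$), is closed under transfinite compositions and coproducts, and is closed under pushouts along simplicial maps freely generated by the Segal maps in $P'$ with cofibrant sources. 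A small-object-argument style induction then shows that every simplicial map that is levelwise a pre-stable equivalence lies in $\mathcal{C}$, which completes the proof.
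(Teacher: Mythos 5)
Your first reduction --- passing through $\gamma'\colon\GammaJS\to\GammaS$ via Lemma~\ref{lem:gamma-prime-detects-we} and using that $\gamma'$ commutes with diagonals --- is exactly the paper's opening move. Where you diverge is in handling the resulting statement for ordinary $\Gamma$-spaces, and both of your alternatives have gaps there.

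For alternative (a), the crucial claim is that the pre-stable fibrant replacement $L$ commutes with diagonals of simplicial objects ``up to a natural level equivalence'' because it is ``built from finite homotopy limits.'' That inference does not go through: the diagonal of a simplicial object is a homotopy colimit, and finite homotopy limits (e.g.\ homotopy pullbacks) do not commute with homotopy colimits in simplicial sets. You flag this as ``the main obstacle,'' and it remains one; without a concrete construction of $L$ and a genuine verification, the step is missing. For alternative (b), the stated closure properties do not yield the conclusion: pre-stable equivalences are the $P'$-local equivalences of a left Bousfield localization, and they are not generated from degreewise level equivalences by coproducts, transfinite compositions, and pushouts along the $P'$ maps. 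What \emph{is} true, and what a correct version of your sketch would have to say, is that a degreewise pre-stable equivalence admits a degreewise functorial factorization into a degreewise pre-stable acyclic cofibration followed by a degreewise acyclic fibration; the second factor is a degreewise level equivalence (acyclic fibrations in a left Bousfield localization coincide with those in the underlying model structure), but the first factor still requires an argument that its diagonal is a pre-stable equivalence, and your sketch does not supply one.

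The paper resolves this cleanly by working in the Reedy model structure on simplicial objects in $(\GammaS)_{\pre}$: by Hirschhorn's realization theorem, the diagonal of a degreewise weak equivalence between Reedy cofibrant simplicial objects is a weak equivalence, and the general case follows because a Reedy cofibrant replacement is a Reedy acyclic fibration, hence a \emph{degreewise} acyclic fibration in $(\GammaS)_{\pre}$, hence a degreewise level equivalence, which the classical realization lemma already handles. This sidesteps both of your obstacles: no special fibrant replacement functor is required, and the only ``generation'' argument needed is the one Reedy theory supplies for free.
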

\begin{proof}
  The functor $\gamma'\colon \GammaJS\to\GammaS$ and the induced
  functor on the simplicial objects in these categories commute up to
  isomorphism with the diagonal. So by
  Lemma~\ref{lem:gamma-prime-detects-we}, it is enough to show that a
  map $X_{\bullet}\to Y_{\bullet}$ of simplicial objects in $\GammaS$
  such that $X_p \to Y_p$ is a pre-stable equivalence gives a
  pre-stable equivalence after taking the diagonal. Viewing
  $X_{\bullet}\to Y_{\bullet}$ as a weak equivalence in the Reedy
  model structure on simplicial objects in $(\GammaS)_{\pre}$, this
  follows from~\cite[Theorem 18.6.6]{Hirschhorn_model} if
  $X_{\bullet}$ and $Y_{\bullet}$ are Reedy cofibrant. The general
  case follows because a Reedy cofibrant replacement $X'_{\bullet} \to
  X_{\bullet}$ is a Reedy acyclic fibration and hence a degreewise
  acyclic fibration. These are degreewise level equivalences in
  $\GammaS$ and hence level equivalence after taking diagonal by the
  realization lemma for bisimplicial sets.
\end{proof}
\begin{lemma}\label{lem:adjunction-unit-on-free-in-deg-one}
The adjunction unit $\id_{\GammaJS} \to \Psi\Phi$ is a pre-stable equivalence
on the free $\GammaJ$-space $\GammaJ((1^+;\bld{s},\sigma),-)$ for every
positive object $(\bld{s},\sigma)$ of $\HcJ(1^+)$. 
\end{lemma}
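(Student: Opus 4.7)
The plan is to apply the functor $\gamma'$ to translate the claim into a pre-stable comparison of ordinary $\Gamma$-spaces, and then exploit the fact that the target is special.

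First, note that the coend/Yoneda formula for $\Phi$ gives $\Phi(Y)\iso\mC F^{\cJ}_{(\bld{m_1},\bld{m_2})}(*)$ for $Y=\GammaJ((1^+;(\bld{m_1},\bld{m_2})),-)$. By Lemma~\ref{lem:gamma-prime-detects-we} combined with the natural isomorphism $\gamma'\circ\Psi\iso\gamma$ of Corollary~\ref{cor:gamma-vs-gamma-prime}, the unit $\eta_Y$ is a pre-stable equivalence in $\GammaJS$ if and only if the induced map $\gamma'(Y)\to\gamma(\mC F^{\cJ}_{(\bld{m_1},\bld{m_2})}(*))$ is a pre-stable equivalence in $\GammaS$. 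By Proposition~\ref{prop:gamma-A-special} the target is already special, and the Example following Theorem~\ref{thm:group-completion-model-str} identifies its value at $1^+$ with $\coprod_{n\geq 0}B\Sigma_n$, using that positivity of $(\bld{m_1},\bld{m_2})$ makes the $\Sigma_n$-actions free and that the over-category $(\bld{m_1},\bld{m_2})\downarrow\cJ$ is contractible.

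Next, a direct calculation with the Yoneda description $Y(S;\bld{t},\tau)\iso\{*\}\sqcup\coprod_{i\in\bar S}\cJ((\bld{m_1},\bld{m_2}),\bld{t}_i)$ and the equivalence $\HcJ(S)\simeq\cJ^{\times\bar S}$ of Lemma~\ref{lem:Shi-Shi-functorial} yields
\[
\gamma'(Y)(S)\simeq B\HcJ(S)\sqcup\textstyle\bigsqcup_{i\in\bar S}(B\cJ)^{|\bar S|-1}.
\]
Under the unit map, the basepoint component (arising from maps $\alpha\colon 1^+\to S$ through $0^+$) lands in the $n=0$ summand of the decomposition $\mC F^{\cJ}_{(\bld{m_1},\bld{m_2})}(*)\iso\coprod_n F^{\cJ}_{(\bld{m_1},\bld{m_2})}(*)^{\boxtimes n}/\Sigma_n$, while each non-basepoint summand is mapped into the generating $n=1$ piece. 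The higher summands ($n\geq 2$) of the target arise only from the commutative monoid structure and have no direct counterpart in $Y$.

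The main step is to show that this map is nonetheless a pre-stable equivalence, i.e. that the target, viewed as an object of $\Ho((\GammaS)_{\pre})$ (equivalently, of $E_{\infty}$ spaces via \cite{Mandell_inverse-K-theory}), is the pre-stable fibrant replacement of the source. I would approach this by combining Proposition~\ref{prop:gamma-A-special} (which reduces the comparison to $B(A_{h\cJ})$ at level $S^1$) with a monadic bar construction argument: the free commutative $\cJ$-space monoid functor $\mC$ applied to the generator encoded by $Y$ is, up to pre-stable equivalence, precisely the free special $\Gamma$-space on $\gamma'(Y)$. The Barratt--Priddy--Quillen identification of $B\cJ$ in Example~\ref{ex:spectrum-of-J} is what makes the higher-degree contributions match those coming from the $\Sigma_n$-orbits of $F^{\cJ}_{(\bld{m_1},\bld{m_2})}(*)^{\boxtimes n}$. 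I expect this last identification to be the hard part, since it requires carefully tracking the $\GammaJ$-space structure through the specialization; the positivity hypothesis $m_1\geq 1$ is essential throughout to ensure freeness of the symmetric group actions.
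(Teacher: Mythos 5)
There is a genuine gap, and also a concrete computational error along the way.

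First, the computation of $Y(S;\bld{t},\tau)$ for $Y=\GammaJ((1^+;\bld{s},\sigma),-)$ is wrong. By definition of the Grothendieck construction, a morphism $(1^+;\bld{s},\sigma)\to(S;\bld{t},\tau)$ is a pair $(\alpha,f)$ with $\alpha\colon 1^+\to S$ in $\Gamma^{\op}$ and $f\colon\alpha_*(\bld{s},\sigma)\to(\bld{t},\tau)$ in $\HcJ(S)$, and a morphism in $\HcJ(S)$ is a \emph{family} $(f_U)_{U\subseteq\ovl S}$ determined by its values on singletons. For the zero map $\alpha_0$ one has $(\alpha_0)_*(\bld{s},\sigma)=\bld{0}_S$, and $\HcJ(S)(\bld{0}_S,(\bld{t},\tau))\iso\prod_{j\in\ovl S}\cJ((\bld{0},\bld{0}),\bld{t}_j)$ is \emph{not} a point; similarly for each $\alpha_i$ one picks up factors $\cJ((\bld{0},\bld{0}),\bld{t}_j)$ for $j\neq i$. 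So $Y(S;-)$ decomposes as a coproduct of $|\ovl S|+1$ corepresentable $\HcJ(S)$-functors, and its homotopy colimit over $\HcJ(S)$ is a disjoint union of $|\ovl S|+1$ \emph{contractible} pieces (nerves of under-categories with initial objects), i.e. $\gamma'(Y)$ is level equivalent to $\Gamma^{\op}(1^+,-)$. Your claimed $\gamma'(Y)(S)\simeq B\HcJ(S)\sqcup\bigsqcup_{i\in\ovl S}(B\cJ)^{|\ovl S|-1}$ is therefore wrong on its face: $B\HcJ(S)\simeq(QS^0)^{|\ovl S|}$ and $(B\cJ)^{|\ovl S|-1}$ are far from contractible, so it cannot be the homotopy type of the source side either.

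Second, and more importantly, the final step is not actually carried out. You correctly identify the comparison as living in special $\Gamma$-spaces and you correctly guess that Barratt--Priddy--Quillen is the engine, but ``a monadic bar construction argument: the free commutative $\cJ$-space monoid \ldots is, up to pre-stable equivalence, precisely the free special $\Gamma$-space on $\gamma'(Y)$'' is a restatement of the lemma, not a proof, and you yourself flag it as the hard part you do not do. The paper's proof instead compares \emph{both} sides to explicit special $\Gamma$-spaces: it uses the level equivalence $\Gamma^{\op}(1^+,-)\to\gamma'(X)$ above, realizes $\gamma(\Phi(X))$ as $\bof{\cC_{\bld{k}}}$ for an explicit symmetric monoidal category $\cC_{\bld{k}}$, produces a symmetric monoidal functor $\Sigma\to\cC_{\bld{k}}$ inducing a level equivalence $\bof{\Sigma}\to\bof{\cC_{\bld{k}}}$ (positivity of $\bld{k}$ enters here to identify $(\mC F^{\cJ}_{\bld{k}}(*))_{h\cJ}\simeq\coprod_p B\Sigma_p$), invokes Segal's theorem that $\Gamma^{\op}(1^+,-)\to\bof{\Sigma}$ is a pre-stable equivalence, and finally checks the two composites hit the same path component. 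That chain of explicit level equivalences is exactly the missing content; without it your outline remains a plausible strategy rather than a proof.
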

\begin{proof}
  We write $X$ for $\GammaJ((1^+;\bld{s},\sigma),-)$ and $\bld{k}$ for
  the image of $(\bld{s},\sigma)$ under the isomorphism
  $\HcJ(1^+)\to\cJ$.  By Lemma~\ref{lem:gamma-prime-detects-we}, it is
  enough to show that the map $\gamma'(X) \to \gamma'(\Psi(\Phi(X)))
  \iso \gamma(\Phi(X))$ is a pre-stable equivalence. The map
  $\Gamma^{\op}(1^+,-) \to \gamma'(X)$ sending $\id_{1^+}$ to the
  $0$-simplex in $X(1^+;-)_{h\HcJ(1^+)}$ specified by $\id_{1^+}$ and
  $\id_{\bld{k}}$ is a level equivalence since the
  $\HcJ(T)(T;-)_{h\HcJ(T)}$ are contractible.

  The definition of $\Phi$ implies that there are isomorphisms 
  \[ \Phi(X) \iso \mC F^{\cJ}_{\bld{k}}(*) \iso
  \textstyle\coprod_{p\geq 0} \cJ(\bld{k}^{\concat p}, -)/\Sigma_p.\]
  Let $\cC_{\bld{k}}$ be the category whose objects are pairs
  $([\rho],\bld{l})$ where $\rho \colon \bld{k}^{\concat p} \to
  \bld{l}$ is a morphism in $\cJ$ and $[\rho]$ is its equivalence
  class in $ \cJ(\bld{k}^{\concat p}, \bld{l})/\Sigma_p.$ Morphisms in
  $\cC_{\bld{k}}$ are maps $\varphi\colon \bld{l_1}\to\bld{l_2}$ such
  that $[\rho_2]=[\varphi\rho_1]$. Since $\mC F^{\cJ}_{\bld{k}}(*)$ is
  a $\cJ$-space with values in discrete simplicial sets, it follows
  from the definition in the homotopy colimit that $B\cC_{\bld{k}}
  \iso (\mC F^{\cJ}_{\bld{k}}(*))_{h\cJ}$. Moreover, $\cC_{\bld{k}}$
  is symmetric monoidal with
  \[ ([\rho\colon \bld{k}^{\concat p} \to \bld{l}],\bld{l}) \concat
  ([\rho\colon \bld{k}^{\concat p'} \to \bld{l'}],\bld{l'}) =
  ([\rho\concat\rho' \colon \bld{k}^{\concat p+p'} \to
  \bld{l}\concat\bld{l'}],\bld{l}\concat\bld{l'}).\] Inspecting the
  definitions of Section~\ref{sec:Gamma-spaces-from-monoids}, we see
  that the last isomorphism extends to an isomorphism of
  $\Gamma$-spaces $\bof{\cC_{\bld{k}}} \iso \gamma(\mC
  F^{\cJ}_{\bld{k}}(*))$.

  Now we let $\Sigma=\mathrm{Iso}(\cI)$ be the symmetric monoidal
  category of finite sets and bijections. Sending $\bld{p}$ to
  $([\id_{\bld{k}^{\concat p}}],\bld{k}^{\concat p})$ defines a
  symmetric monoidal functor $\Sigma \to \cC_{\bld{k}}$. It induces a map
  of special $\Gamma$-spaces $\bof{\Sigma} \to \Gamma(\mC
  F^{\cJ}_{\bld{k}}(*))$ which is a level equivalence because
  \[ \bof{\Sigma}(1^+) \to \gamma(\mC F^{\cJ}_{\bld{k}}(*)) (1^+) \iso
  (\mC F^{\cJ}_{\bld{k}}(*))_{h\cJ} \simeq \textstyle\coprod_{p\geq 0}
  B\Sigma_p \] is a weak equivalence. The last weak equivalence uses
  that $\bld{k}$ is positive and follows from a similar argument as in
  the case of $\cI$-spaces~\cite[Example 3.7]{Sagave-S_group-compl}.
 
  Segal's proof of the Barratt-Priddy-Quillen
  theorem~\cite[Proposition 3.5]{Segal_categories} implies that the
  map $\Gamma^{\op}(1^+,-) \to \bof{\Sigma}$ sending $\id_{1^+}$ to the
  component of $\id_{\bld{1}}$ in $(\bof\Sigma)(1^+)$ induces an
  isomorphism in the homotopy category of special $\Gamma$-spaces. So
  \mbox{$\Gamma^{\op}(1^+,-)\to\bof{\Sigma}$} is a pre-stable
  equivalence. Because the component of $\id_{\bld{k}}$ is in the
  image of $\pi_0(\gamma'(X)(1^+)) \to \pi_0(\gamma(\mC
  F^{\cJ}_{\bld{k}}(*))(1^+))$, the above level equivalences shows
  that $\gamma'(X)\to\gamma(\Phi(X))$ is a pre-stable equivalence.
\end{proof}
\begin{lemma}\label{lem:adjunction-unit-on-general-free}
  The adjunction unit $\id_{\GammaJS} \to \Psi\Phi$ is a pre-stable
  equivalence on the $\GammaJ$-space
  $\GammaJ((S;\bld{s},\sigma),-)\tensor K$ for every positive object
  $(\bld{s},\sigma)$ of $\HcJ(S)$ and every finite simplicial set $K$.
\end{lemma}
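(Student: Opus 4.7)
The plan is to prove the claim by two nested inductions: an inner induction on $|\ovl{S}|$ with $K = \Delta^0$ fixed, and an outer induction on the skeletal structure of $K$. Throughout write $X = \GammaJ((S;\bld{s},\sigma),-)$ and note that both $X$ and $X \tensor K$ are cofibrant in $(\GammaJS)_{\pre}$ for any finite simplicial set $K$. For the inner induction, the base case $|\ovl{S}| = 1$ is Lemma~\ref{lem:adjunction-unit-on-free-in-deg-one}. For the inductive step choose a splitting $S = S' \vee 1^+$ with $|\ovl{S'}| < |\ovl{S}|$ and consider the map
\[
\phi\colon \GammaJ\bigl((S';(p_{S'})_*(\bld{s},\sigma)),-\bigr) \textstyle\coprod \GammaJ\bigl((1^+;(p_{1^+})_*(\bld{s},\sigma)),-\bigr) \to X
\]
in the localizing set $P$ of Definition~\ref{def:pre-loc-maps}; by definition $\phi$ is a pre-stable equivalence between cofibrant objects. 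Since $\Phi$ is left Quillen for the pre-stable structure by Lemma~\ref{lem:Phi-Psi-Q-adj}, Ken Brown's lemma gives that $\Phi(\phi)$ is a $\cJ$-equivalence, and then Lemma~\ref{lem:Psi-preserves-detects-we} implies that $\Psi\Phi(\phi)$ is a pre-stable equivalence. Two-out-of-three in the naturality square for the unit $\eta$ reduces the verification to $\eta$ on the domain coproduct, and combining the inductive hypothesis on each summand with Lemma~\ref{lem:psi-preserves-sums} (which distributes $\Psi\Phi$ over a coproduct up to pre-stable equivalence) completes the inner step.

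The outer induction first handles $K = \Delta^n$: since $(\GammaJS)_{\lev}$ is simplicial and $X$ is cofibrant, the simplicial homotopy equivalence $\Delta^n \to \Delta^0$ induces a level equivalence $X \tensor \Delta^n \to X$ between cofibrant objects, and applying $\Phi$ (via Ken Brown) followed by $\Psi$ (via Lemma~\ref{lem:Psi-preserves-detects-we}) preserves this equivalence, so two-out-of-three reduces $K = \Delta^n$ to the inner induction's conclusion for $K = \Delta^0$. For a general finite $K$ I induct on $\dim K$, writing $K$ as the pushout of $K^{n-1} \leftarrow \coprod_\alpha \partial\Delta^n \to \coprod_\alpha \Delta^n$ over the attaching maps of its non-degenerate $n$-cells. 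Tensoring with $X$ and applying $\Phi$ yields a pushout along a cofibration between cofibrant objects in $\cC\cS^{\cJ}$, which is therefore homotopy cocartesian there. By Lemma~\ref{lem:psi-preserves-hty-cart} its image under $\Psi$ is homotopy cocartesian in $(\GammaJS)_{\pre}$, the original tensor pushout is homotopy cocartesian by left properness, and the gluing lemma gives the required pre-stable equivalence on $X \tensor K$ provided $\eta$ is already known on $X \tensor K^{n-1}$ (outer inductive hypothesis), $X \tensor \coprod_\alpha \partial\Delta^n$ (outer inductive hypothesis, since $\dim \partial\Delta^n < n$), and $X \tensor \coprod_\alpha \Delta^n$ (already treated); the two finite coproducts are reduced to their summands by another application of Lemma~\ref{lem:psi-preserves-sums}.

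The main technical obstacle is the cell attachment step in the outer induction, which relies crucially on Lemma~\ref{lem:psi-preserves-hty-cart} to ensure that $\Psi$ converts the relevant pushouts in $\cC\cS^{\cJ}$ into homotopy cocartesian squares in $(\GammaJS)_{\pre}$; without this preservation property there is no mechanism to propagate the inner induction's conclusion from the representable $X$ to $X \tensor K$ as the dimension of $K$ grows.
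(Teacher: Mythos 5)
Your proposal is correct, though the second half takes a genuinely different route than the paper. In the base case ($K=\Delta^0$) the only difference is cosmetic: you peel off one summand of $S$ at a time via a single map in $P$, whereas the paper applies the analogous map collapsing $S$ onto all of its singletons at once; both reduce to Lemma~\ref{lem:adjunction-unit-on-free-in-deg-one} via Lemma~\ref{lem:psi-preserves-sums}. The real divergence is in passing to general $K$. The paper invokes the identification $X \tensor K \iso \left|[m]\mapsto \coprod_{K_m}X\right|$ of~\eqref{eq:tensor-as-realization-of-lev-coprod} and handles the general $K$ in one shot via Lemma~\ref{lem:psi-preserves-sums} plus the realization lemma, Lemma~\ref{lem:pre-stable-realization-lemma}. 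You instead induct on the skeleta of $K$ using cell attachment, which forces you to bring in Lemma~\ref{lem:psi-preserves-hty-cart} to push the homotopy pushout through $\Psi$. That lemma is itself proved in the paper by exactly the realization argument you are trying to avoid, so your route is logically sound (there is no circularity) but somewhat more roundabout: it effectively rederives the realization input per cell rather than applying it once to the whole simplicial decomposition. The paper's approach is shorter and also makes the identification of the resulting map with the adjunction unit more transparent, since the realization formula keeps everything manifestly natural; with the skeletal induction one has to argue separately (as you do via the naturality square) that the comparison maps assemble to $\eta$. One small thing worth being explicit about: in the $K = \Delta^n$ step, the fact that $X \tensor \Delta^n \to X \tensor \Delta^0$ is a weak equivalence uses that a vertex inclusion $\Delta^0 \to \Delta^n$ is an acyclic cofibration of simplicial sets together with SM7 for the simplicial level model structure (Proposition~\ref{prop:existence-HJ-level-model-str}); you gesture at this with ``simplicial homotopy equivalence'' but it is the pushout-product axiom that does the work.
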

\begin{proof}
  We write $X$ for the $\GammaJ$-space $\GammaJ((S;\bld{s},\sigma),-)$
  and show first that the map $X \to \Psi\Phi(X)$ is a pre-stable
  equivalence. Setting $X_i=\GammaJ((S;(p_i)_*(\bld{s},\sigma)),-)$,
  the maps $(S;\bld{s},\sigma) \to (1^+;(p_i)_*(\bld{s},\sigma))$
  induce a commutative diagram
\[\xymatrix@-1pc{
  \textstyle\coprod_{i\in \ovl{S}} X_i \ar[r] \ar[d] & X \ar[d] \\
  \textstyle\coprod_{i\in \ovl{S}} (\Psi\Phi(X_i))^{\cof}
  \ar[r] & (\Psi\Phi(X)). }\] The top horizontal map is a pre-stable
equivalence by the definition of the set $P$ used to construct the
localization. Because $\coprod_{i\in\ovl{S}}\Phi(X_i)\to\Phi(X)$ is an
isomorphism, Lemma~\ref{lem:psi-preserves-sums} implies that the
bottom horizontal map is a pre-stable equivalence. By
Lemma~\ref{lem:adjunction-unit-on-free-in-deg-one}, the left map is a
coproduct of pre-stable equivalences between cofibrant objects. It
follows that $X\to(\Psi\Phi(X))$ is a pre-stable equivalence.

To show that $X\tensor K\to(\Psi\Phi(X\tensor K))$ is a pre-stable equivalence
we first note that for $\GammaJ$-space $Y$ there is an isomorphism 
\begin{equation}\label{eq:tensor-as-realization-of-lev-coprod}
Y \to K = Y \sm K_+ \iso \left|[m]\mapsto \textstyle\coprod_{K_m}Y\right|.
\end{equation}
Choosing a cofibrant replacement $\xymatrix@1{X \ar@{>->}[r] &
  (\Psi\Phi(X))^{\cof} \ar[r]^-{\sim} & \Psi\Phi(X)}$ of
$\Psi\Phi(X)$, the universal property of the coproduct induces a map
\begin{equation}\label{eq:tensor-vs-adjunction-unit}
  \xymatrix{\left|[m]\mapsto \textstyle\coprod_{K_m}(\Psi\Phi(X))^{\cof}\right| \ar[r] & 
    \left|[m]\mapsto (\Psi\Phi(\textstyle\coprod_{K_m} X))^{\cof}\right|.
  }\end{equation}
Under the isomorphism~\eqref{eq:tensor-as-realization-of-lev-coprod},
this becomes  $(\Psi\Phi(X))^{\cof} \tensor K \to
(\Psi\Phi(X\tensor K))$. Applying Lemma~\ref{lem:psi-preserves-sums} and
Lemma~\ref{lem:pre-stable-realization-lemma}
to~\eqref{eq:tensor-vs-adjunction-unit} shows that the last map is a
pre-stable equivalence. Precomposing it with $(X \to
(\Psi\Phi(X))^{\cof})\tensor K$ gives the adjunction unit on
$X\tensor K$. Since the pre-stable model structure is simplicial and
we already showed that $X \to \Psi\Phi(X)$ is a pre-stable
equivalence, the claim follows by the 2-out-of-3 property for
pre-stable equivalences.
\end{proof}

\begin{proof}[Proof of
  Proposition~\ref{prop:J-model-pre-stable-equiv}]
  We use the criterion of~\cite[Corollary 1.3.16 (c)]{Hovey_model} to
  see that $(\Phi,\Psi)$ is a Quillen equivalence. In view of
  Lemma~\ref{lem:Phi-Psi-Q-adj} and
  Lemma~\ref{lem:Psi-preserves-detects-we}, it remains to show that the
  adjunction unit $X \to \Psi\Phi(X)$ is a pre-stable equivalence
  for all cofibrant objects $X$ in $(\GammaJS)_{\pre}$.

  Lemma~\ref{lem:adjunction-unit-on-general-free} and
  Corollary~\ref{cor:tensor-and-free} imply that the adjunction
  unit is a pre-stable equivalence on the domains and codomains of
  the generating cofibrations. The adjunction unit is also a
  pre-stable equivalence on the initial object in $\GammaJS$ because
  $\Phi$ maps it to the initial commutative $\cJ$-space monoid
  $F_{\bld{0}}^{\cJ}(*)$ and $(F_{\bld{0}}^{\cJ}(*))_{h\cJ}$ is
  contractible.

  Now let $\xymatrix@1@-1pc{V & \ar@{>->}[l] \,\,U\, \ar[r] & \, X}$
  be a diagram of cofibrant objects in $(\GammaJS)_{\pre}$ such that
  the adjunction unit is a pre-stable equivalence on $U, V,$ and
  $X$. The fact that the left Quillen functor $\Phi$ preserves
  homotopy cocartesian squares of cofibrant objects,
  Lemma~\ref{lem:psi-preserves-hty-cart}, and the gluing lemma in
  $(\GammaJS)_{\pre}$ imply that the adjunction unit on the pushout of
  the diagram is a pre-stable equivalence.

  Next let $ \dots \to X_i \to X_{i+1} \to \dots $ be a sequence of
  cofibrations between cofibrant objects in $(\GammaJS)_{\pre}$ such
  that the unit is a pre-stable equivalence on every $X_i$. We claim
  that it is a pre-stable equivalence on $ X = \colim X_i$. The
  forgetful functor $\cC\cS^{\cJ} \to \cS^{\cJ}$ preserves filtered
  colimits. Hence the composite $\Psi\Phi$ commutes with sequential
  colimits, and it is enough to show that the map $\colim X_i \to
  \colim \Psi(\Phi(X_i))$ is a pre-stable equivalence. Since the maps
  between the $X_i$  are cofibrations, the map $\hocolim X_i \to \colim
  X_i$ is a level equivalence. Combining~\cite[Proposition
  7.1(v)]{Sagave-S_diagram} and~\cite[Proposition
  12.7]{Sagave-S_diagram} with the fact that filtered colimits commute
  with products implies that $\hocolim \Psi(\Phi(X_i)) \to \colim
  \Psi(\Phi(X_i))$ is a level equivalence. Together with the homotopy
  invariance of the homotopy colimit this shows that $X \to
  \Psi(\Phi(X))$ is a pre-stable equivalence.

  Every cofibrant object $X$ in $(\GammaJS)_{\pre}$ is the retract of
  the colimit of a sequence of cofibrations starting at the initial
  object in which all maps are cobase changes of the generating
  cofibrations. So the above implies that the adjunction unit is a
  pre-stable equivalence on all cofibrant objects.
\end{proof}
\begin{bibdiv}
\begin{biblist}

\bib{Ando-B-G_parametrized}{misc}{
      author={Ando, Matthew},
      author={Blumberg, Andrew~J.},
      author={Gepner, David},
       title={Parametrized spectra, multiplicative {T}hom spectra, and the
  twisted {U}mkehr map},
        date={2011},
        note={\arxivlink{1112.2203}},
}

\bib{ABGHR_Thom-infinity}{article}{
      author={Ando, Matthew},
      author={Blumberg, Andrew~J.},
      author={Gepner, David},
      author={Hopkins, Michael~J.},
      author={Rezk, Charles},
       title={An {$\infty$}-categorical approach to {$R$}-line bundles,
  {$R$}-module {T}hom spectra, and twisted {$R$}-homology},
        date={2014},
        ISSN={1753-8416},
     journal={J. Topol.},
      volume={7},
      number={3},
       pages={869\ndash 893},
         url={http://dx.doi.org/10.1112/jtopol/jtt035},
}

\bib{ABGHR_Thom-rigid}{article}{
      author={Ando, Matthew},
      author={Blumberg, Andrew~J.},
      author={Gepner, David},
      author={Hopkins, Michael~J.},
      author={Rezk, Charles},
       title={Units of ring spectra, orientations and {T}hom spectra via rigid
  infinite loop space theory},
        date={2014},
        ISSN={1753-8416},
     journal={J. Topol.},
      volume={7},
      number={4},
       pages={1077\ndash 1117},
         url={http://dx.doi.org/10.1112/jtopol/jtu009},
}

\bib{Barwick_left-right}{article}{
      author={Barwick, Clark},
       title={On left and right model categories and left and right {B}ousfield
  localizations},
        date={2010},
        ISSN={1532-0073},
     journal={Homology, Homotopy Appl.},
      volume={12},
      number={2},
       pages={245\ndash 320},
}

\bib{Bousfield-F_Gamma-bisimplicial}{incollection}{
      author={Bousfield, A.~K.},
      author={Friedlander, E.~M.},
       title={Homotopy theory of {$\Gamma $}-spaces, spectra, and bisimplicial
  sets},
        date={1978},
   booktitle={Geometric applications of homotopy theory ({P}roc. {C}onf.,
  {E}vanston, {I}ll., 1977), {II}},
      series={Lecture Notes in Math.},
      volume={658},
   publisher={Springer},
     address={Berlin},
       pages={80\ndash 130},
}

\bib{Borceux_handbook-II}{book}{
      author={Borceux, Francis},
       title={Handbook of categorical algebra. 2},
      series={Encyclopedia of Mathematics and its Applications},
   publisher={Cambridge University Press},
     address={Cambridge},
        date={1994},
      volume={51},
        ISBN={0-521-44179-X},
        note={Categories and structures},
}

\bib{Boardman-V_homotopy-invariant}{book}{
      author={Boardman, J.~M.},
      author={Vogt, R.~M.},
       title={Homotopy invariant algebraic structures on topological spaces},
      series={Lecture Notes in Mathematics, Vol. 347},
   publisher={Springer-Verlag},
     address={Berlin},
        date={1973},
}

\bib{Dundas_GMc_local}{book}{
      author={Dundas, Bj{\o}rn~Ian},
      author={Goodwillie, Thomas~G.},
      author={McCarthy, Randy},
       title={The local structure of algebraic {K}-theory},
      series={Algebra and Applications},
   publisher={Springer-Verlag London Ltd.},
     address={London},
        date={2013},
      volume={18},
        ISBN={978-1-4471-4392-5; 978-1-4471-4393-2},
}

\bib{Goerss-J_simplicial}{book}{
      author={Goerss, Paul~G.},
      author={Jardine, John~F.},
       title={Simplicial homotopy theory},
      series={Progress in Mathematics},
   publisher={Birkh\"auser Verlag},
     address={Basel},
        date={1999},
      volume={174},
        ISBN={3-7643-6064-X},
}

\bib{Hirschhorn_model}{book}{
      author={Hirschhorn, Philip~S.},
       title={Model categories and their localizations},
      series={Mathematical Surveys and Monographs},
   publisher={American Mathematical Society},
     address={Providence, RI},
        date={2003},
      volume={99},
        ISBN={0-8218-3279-4},
}

\bib{Hovey_model}{book}{
      author={Hovey, Mark},
       title={Model categories},
      series={Mathematical Surveys and Monographs},
   publisher={American Mathematical Society},
     address={Providence, RI},
        date={1999},
      volume={63},
        ISBN={0-8218-1359-5},
}

\bib{Lind-diagram}{article}{
      author={Lind, John~A.},
       title={Diagram spaces, diagram spectra and spectra of units},
        date={2013},
        ISSN={1472-2747},
     journal={Algebr. Geom. Topol.},
      volume={13},
      number={4},
       pages={1857\ndash 1935},
         url={http://dx.doi.org/10.2140/agt.2013.13.1857},
}

\bib{Mandell_inverse-K-theory}{article}{
      author={Mandell, Michael~A.},
       title={An inverse {$K$}-theory functor},
        date={2010},
        ISSN={1431-0635},
     journal={Doc. Math.},
      volume={15},
       pages={765\ndash 791},
}

\bib{May_geometry}{book}{
      author={May, J.~Peter},
       title={The geometry of iterated loop spaces},
   publisher={Springer-Verlag},
     address={Berlin},
        date={1972},
        note={Lectures Notes in Mathematics, Vol. 271},
}

\bib{May_ring_spaces}{book}{
      author={May, J.~Peter},
       title={{$E_{\infty }$} ring spaces and {$E_{\infty }$} ring spectra},
      series={Lecture Notes in Mathematics, Vol. 577},
   publisher={Springer-Verlag},
     address={Berlin},
        date={1977},
        note={With contributions by Frank Quinn, Nigel Ray, and J{\o}rgen
  Tornehave},
}

\bib{MMSS}{article}{
      author={Mandell, M.~A.},
      author={May, J.~P.},
      author={Schwede, S.},
      author={Shipley, B.},
       title={Model categories of diagram spectra},
        date={2001},
        ISSN={0024-6115},
     journal={Proc. London Math. Soc. (3)},
      volume={82},
      number={2},
       pages={441\ndash 512},
}

\bib{Rognes_TLS}{incollection}{
      author={Rognes, John},
       title={Topological logarithmic structures},
        date={2009},
   booktitle={New topological contexts for {G}alois theory and algebraic
  geometry ({BIRS} 2008)},
      series={Geom. Topol. Monogr.},
      volume={16},
   publisher={Geom. Topol. Publ., Coventry},
       pages={401\ndash 544},
}

\bib{Rognes-ICM}{inproceedings}{
      author={Rognes, John},
       title={Algebraic {$K$}-theory of strict ring spectra},
        date={2014},
   booktitle={Proceedings of the {I}nternational {C}ongress of
  {M}athematicians, {V}ol. {II} ({S}eoul, 2014)},
       pages={1259\ndash 1283},
}

\bib{RSS_LogTHH-II}{misc}{
      author={Rognes, John},
      author={Sagave, Steffen},
      author={Schlichtkrull, Christian},
       title={Logarithmic topological {H}ochschild homology of topological
  {$K$}-theory spectra},
        date={2014},
        note={\arxivlink{1410.2170}},
}

\bib{RSS_LogTHH-I}{article}{
      author={Rognes, John},
      author={Sagave, Steffen},
      author={Schlichtkrull, Christian},
       title={Localization sequences for logarithmic topological {H}ochschild
  homology},
        date={2015},
     journal={Math. Ann.},
        note={\doilink{10.1007/s00208-015-1202-3}},
}

\bib{Sagave_log-on-k-theory}{article}{
      author={Sagave, Steffen},
       title={Logarithmic structures on topological {$K$}-theory spectra},
        date={2014},
        ISSN={1465-3060},
     journal={Geom. Topol.},
      volume={18},
      number={1},
       pages={447\ndash 490},
         url={http://dx.doi.org/10.2140/gt.2014.18.447},
}

\bib{Schlichtkrull_units}{article}{
      author={Schlichtkrull, Christian},
       title={Units of ring spectra and their traces in algebraic
  {$K$}-theory},
        date={2004},
        ISSN={1465-3060},
     journal={Geom. Topol.},
      volume={8},
       pages={645\ndash 673 (electronic)},
}

\bib{Schwede_SymSp}{misc}{
      author={Schwede, Stefan},
       title={Symmetric spectra},
        date={2012},
        note={Book project, available at the author's home page},
}

\bib{Schwede_Gamma-spaces}{article}{
      author={Schwede, Stefan},
       title={Stable homotopical algebra and {$\Gamma$}-spaces},
        date={1999},
        ISSN={0305-0041},
     journal={Math. Proc. Cambridge Philos. Soc.},
      volume={126},
      number={2},
       pages={329\ndash 356},
}

\bib{Segal_categories}{article}{
      author={Segal, Graeme},
       title={Categories and cohomology theories},
        date={1974},
        ISSN={0040-9383},
     journal={Topology},
      volume={13},
       pages={293\ndash 312},
}

\bib{Sagave-S_diagram}{article}{
      author={Sagave, Steffen},
      author={Schlichtkrull, Christian},
       title={Diagram spaces and symmetric spectra},
        date={2012},
        ISSN={0001-8708},
     journal={Adv. Math.},
      volume={231},
      number={3-4},
       pages={2116\ndash 2193},
         url={http://dx.doi.org/10.1016/j.aim.2012.07.013},
}

\bib{Sagave-S_group-compl}{article}{
      author={Sagave, Steffen},
      author={Schlichtkrull, Christian},
       title={Group completion and units in {$\mathcal I$}-spaces},
        date={2013},
        ISSN={1472-2747},
     journal={Algebr. Geom. Topol.},
      volume={13},
      number={2},
       pages={625\ndash 686},
         url={http://dx.doi.org/10.2140/agt.2013.13.625},
}

\bib{Sagave-S_Virtual-vector}{misc}{
      author={Sagave, Steffen},
      author={Schlichtkrull, Christian},
       title={Virtual vector bundles and graded {T}hom spectra},
        date={2014},
        note={\arxivlink{1410.4492}},
}

\bib{Sagave-S_Thom-graded-units}{misc}{
      author={Sagave, Steffen},
      author={Schlichtkrull, Christian},
       title={Graded units of ring spectra and graded {T}hom spectra},
        date={2015},
        note={{I}n preparation},
}

\bib{Shimada-Shimakawa_delooping}{article}{
      author={Shimada, Nobuo},
      author={Shimakawa, Kazuhisa},
       title={Delooping symmetric monoidal categories},
        date={1979},
        ISSN={0018-2079},
     journal={Hiroshima Math. J.},
      volume={9},
      number={3},
       pages={627\ndash 645},
         url={http://projecteuclid.org/getRecord?id=euclid.hmj/1206134749},
}

\end{biblist}
\end{bibdiv}

\end{document}